\theoremstyle{definition}
\newtheorem{Definition}{Definition}[section]
\theoremstyle{plain}
\newtheorem{Theorem}{Theorem}[section]
\newtheorem{Corollary}{Corollary}[section]
\newtheorem{Lemma}{Lemma}[section]
\newtheorem{Proposition}{Proposition}[section]
\theoremstyle{remark}
\newtheorem{Remark}{\bf {\rm{{\bf Remark}}}} [section]
\newenvironment{theorem}{\begin{Theorem}$\!\!\!$}{\end{Theorem}}
\newenvironment{lemma}{\begin{Lemma}$\!\!\!$}{\end{Lemma}}
\newenvironment{proposition}{\begin{Proposition}$\!\!\!$}{\end{Proposition}}
\newenvironment{corollary}{\begin{Corollary}$\!\!\!$}{\end{Corollary}}
\newenvironment{remark}{\begin{Remark}$\!\!\!$}{\end{Remark}}
\newenvironment{definition}{\begin{Definition}$\!\!\!$}{\end{Definition}}
\numberwithin{equation}{section}
\def\wt#1{\widetilde{#1}}
\definecolor{orange}{cmyk}{0,0.8,1,0.3}
\newcommand{\cH}{\mathcal{H}}
\newcommand{\cO}{\mathcal{O}}
\newcommand{\cS}{\mathcal{S}}
\newcommand{\ov}[1]{\overline{#1}}
\newcommand{\scrF}{\mathscr{F}}
\newcommand{\scrS}{\mathscr{S}}
\DeclareMathOperator{\dist}{dist}
\DeclareMathOperator{\diver}{div}
\DeclareMathOperator{\supp}{supp}
\newcommand{\R}{\mathbb{R}}
\newcommand{\RN}{\mathbb{R}^N}
\newcommand{\HS}{\mathbb{R}^{N+1}_+}
\newcommand{\N}{\mathbb{N}}
\newcommand{\e}{\varepsilon}
\newcommand{\la}{\left\langle}
\newcommand{\ra}{\right\rangle}
\newcommand{\Hsloc}{H^s_{\mathrm{loc}}}
\newcommand{\Hloc}{H^1_{\mathrm{loc}}}
\newcommand{\dHs}{\dot{H}^s}
\newcommand{\rd}{\mathrm{d}}
\begin{document}
\title{On weak solutions to a fractional Hardy-H\'enon equation:\\ Part II: Existence}
\author{
	Shoichi Hasegawa\\
	Department of Mathematics,\\ 
	School of Fundamental Science and Engineering,\\
	Waseda University,\\
	3-4-1 Okubo, Shinjuku-ku, Tokyo 169-8555, Japan
	\\
	\\
	Norihisa Ikoma\\
	Department of Mathematics,
	Faculty of Science and Technology,\\
	Keio University,\\
	3-14-1 Hiyoshi, Kohoku-ku, Yokohama, 223-8522, Japan
	\\
	\\
	Tatsuki Kawakami\\
	Applied Mathematics and Informatics Course,\\
	Faculty of Advanced Science and Technology,\\ 
	Ryukoku University,\\
	1-5 Yokotani, Seta Oe-cho, Otsu, Shiga 520-2194, Japan
}
\date{}
\maketitle

%\newpage
%%%%%%%%%%%%%%%%%%%%%%%%%%%%%%%%%%%%%%
%%%%%%%%%%%%%%%%%%%%%%%%%%%%%%%%%%%%%%
\begin{abstract}
This paper and \cite{HIK-20} treat the existence and nonexistence of stable
weak solutions 
to a fractional Hardy--H\'enon equation 
$(-\Delta)^s u = |x|^\ell |u|^{p-1} u$ in $\mathbb{R}^N$, 
where $0 < s < 1$, $\ell > -2s$, $p>1$, $N \geq 1$ 
and $N > 2s$. 
In this paper, when $p$ is critical or supercritical in the sense of the Joseph--Lundgren, 
we prove the existence of a family of positive radial stable solutions, 
which satisfies the separation property.
We also show the multiple existence of the Joseph--Lundgren critical exponent 
for some $\ell \in (0,\infty)$ and $s \in (0,1)$, 
and this property does not hold in the case $s=1$.

\end{abstract}

\bigskip
\noindent \textbf{Keywords}: fractional Hardy--H\'enon equation, stable solutions, 
separation property. 

\noindent \textbf{AMS Mathematics Subject Classification System 2020}: 
35R11, 35B09, 35B33, 35B35, 35D30.

%%%%%%%%%%%%%%%%%%%%%%%%%%%%%%%%%%%%%
%%%%%%%%%%%%%%%%%%%%%%%%%%%%%%%%%%%%%
\section{Introduction}
\label{section:1}
%%%%%%%%%%%%%%%%%%%%%%%%%%%%%%%%%%%%%
%%%%%%%%%%%%%%%%%%%%%%%%%%%%%%%%%%%%%

	This paper is a continuation of \cite{HIK-20}, and 
we consider the existence of stable solutions for a fractional Hardy-H\'enon equation
\begin{equation}
	\label{eq:1.1}
	(-\Delta)^s u=|x|^\ell |u|^{p-1}u\qquad\mbox{in}\quad{\mathbb R}^N.
\end{equation}
Throughout this paper, 
we always assume the following condition on $s,\ell,p,N$:
\begin{equation}\label{eq:1.2}
	0 < s < 1, \quad  \ell>-2s, \quad p>1, \quad  N\ge 1, \quad N>2s. 
\end{equation}
Here $(-\Delta)^s$ is the fractional Laplacian, which is defined for any $\varphi\in C^\infty_c(\mathbb R^N)$ by
\[
	(-\Delta)^s\varphi(x)
	:= C_{N,s}\mbox{P.V.}\int_{\mathbb R^N}\frac{\varphi(x)-\varphi(y)}{|x-y|^{N+2s}}\,dy
	=C_{N,s}\lim_{\varepsilon\to0}\int_{|x-y|>\varepsilon}\frac{\varphi(x)-\varphi(y)}{|x-y|^{N+2s}}\,dy
\]
for $x\in \mathbb R^N$,
where P.V. stands for the Cauchy principal value integral and
\[
	C_{N,s}:=2^{2s}s(1-s)\pi^{-\frac{N}{2}}\frac{\Gamma(\frac{N}{2}+s)}{\Gamma(2-s)}
\]
with the gamma function $\Gamma$. For the notion of solutions and stability, 
see Definitions \ref{Definition:1.1} and \ref{Definition:1.3} below.

%%%%%%%%%%%%%%%%%%%%%%%%%%%%%%%%%%%%%
\subsection{The case $s=1$}
\label{section:1.1}
%%%%%%%%%%%%%%%%%%%%%%%%%%%%%%%%%%%%%

	When $s=1$, we regard $(-\Delta)^s$ in \eqref{eq:1.1} as the usual Laplacian $-\Delta$ and \eqref{eq:1.1} becomes 
	\begin{equation}\label{eq:1.3}
		-\Delta u = |x|^\ell |u|^{p-1} u \quad \text{in} \ \RN.
	\end{equation}
Equation \eqref{eq:1.3} is called the Lane--Emden equation ($\ell = 0$), the H\'enon equation ($\ell \geq 0$) or 
the Hardy--H\'enon equation ($\ell > -2$) and there are a lot of works for \eqref{eq:1.3} in which 
the existence/nonexistence of solutions and their properties were studied. 
For instance, we refer to \cite{Farina,QS19} ($\ell = 0$), \cite{DDG,LLD00,Wa93,WY-12} ($\ell >-2$) 
and references therein. 
There are also works for \eqref{eq:1.3} on manifolds \cite{BS12,BFG14,BGGV13,Ha15,Ha17,MS08}
and for \eqref{eq:1.3} with the higher order operators \cite{DDWW14,GG06,GW10,HHY14,Ka09,WY13}.

	For \eqref{eq:1.3} with $\ell = 0$, among other things, in the seminal work \cite{Farina}, Farina 
proved the nonexistence of nontrivial stable solutions for $ p \in (1, p_c(N) )$ and the existence of positive radial stable solutions 
for $p \in [ p_c(N) , \infty )$, where $p_c(N)$ is the Joseph--Lundgren exponent and defined by 
	\[
		p_c(N) := \infty \quad \text{if} \ N \leq 10, \quad 
		p_c(N) := \frac{ (N-2)^2 -4 N + 8 \sqrt{N-1}  }{(N-2) (N-10)} \quad \text{if} \ N \geq 11.
	\] 
This result is extended to the case $\ell > -2$ by \cite{DDG,WY-12} and in this case, 
the threshold $p_c(N)$ is replaced by 
	\begin{equation}\label{eq:1.4}
		p_+(N,\ell) := \left\{\begin{aligned}
			& \frac{ (N-2)^2 - 2 (\ell + 2) (\ell + N) + 2\sqrt{ (\ell +2)^3 (\ell + 2 N - 2) } }
			{(N-2) (N-4 \ell - 10) } 
			& &
			\text{if} \ N > 10 + 4 \ell ,
			\\
			& \infty 
			& &
			\text{if} \ 2 \leq N \leq 10 + 4 \ell.
		\end{aligned}\right.
	\end{equation}

%%%%%%%%%%%%%%%%%%%%%%%%%%%%%%%%%%%%%
\subsection{The case $0<s<1$}
\label{section:1.2}
%%%%%%%%%%%%%%%%%%%%%%%%%%%%%%%%%%%%%

	Next, we turn to \eqref{eq:1.1}, which is a fractional counterpart of \eqref{eq:1.3}. 
For the nonexistence of stable solutions to \eqref{eq:1.1}, 
in \cite{HIK-20}, the authors of this paper addressed this issue 
under the condition that $p$ is subcritical in the sense of Joseph--Lundgren (see Definition~\ref{Definition:1.2} below). 
For references on the nonexistence of stable solutions, we refer to \cite{HIK-20} and references therein. 
In this paper, we focus on the existence of stable solutions.

	To state known results of \eqref{eq:1.1}, we first remark that due to \cite{CS-07}, 
the fractional Laplacian $(-\Delta)^s u$ can be expressed as the limit 
$- \lim_{t \searrow 0} t^{1-2s}  \partial_t U (x,t)$, 
where $U(x,t)$ is a solution of some elliptic equation on $\HS$ with 
$U(x,0) = u(x)$. 
Exploiting this idea, we may rewrite \eqref{eq:1.1} as the equation in $\HS$ with 
the nonlinear boundary condition:
	\begin{equation}\label{eq:1.5}
		\left\{\begin{aligned}
		- \diver \left( t^{1-2s} \nabla U \right) &= 0 & & \text{in} \ \HS, 
		\\
		- \lim_{t \searrow 0} t^{1-2s} \partial_t U(x,t) &= \kappa_s |x|^\ell 
		\left| U(x,0) \right|^{p-1} U(x,0) & &\text{for $x \in \RN$},
		\end{aligned}\right.
	\end{equation}
where $\kappa_s := 2^{1-2s} \Gamma (1-s) / \Gamma (s)$. 
In particular, when $s=1/2$, \eqref{eq:1.5} becomes 
	\begin{equation}\label{eq:1.6}
		-\Delta U = 0 \quad \text{in} \ \HS, \quad - \lim_{t \searrow 0} \partial_t U(x,t) 
		= \kappa_s |x|^\ell \left| U(x,0) \right|^{p-1} U(x,0) \quad \text{for $x \in \RN$}.
	\end{equation}

	We first state results for $s = 1/2$ and $\ell = 0$, namely, \eqref{eq:1.6}. 
In \cite{CCFS}, Chipot, Chleb\'{\i}k, Fila and Shafrir proved
the existence of positive solutions for $p \geq (N+1)/(N-1)=p_S(N,0)$, 
where $p_S(N,\ell)$ is defined in \eqref{eq:1.7}. 
On the other hand, 
Quittner and Reichel \cite{QR} showed
the existence of the singular solution. 
Moreover, in \cite{H}, under the assumption that $p$ is either critical or 
supercritical in the sense of the Joseph--Lundgren, 
Harada showed the existence of a family $(u_\alpha)_{\alpha>0}$ of solutions of \eqref{eq:1.1} 
with the separation property $u_{\alpha_1} < u_{\alpha_2}$ for $\alpha_1 < \alpha_2$ and $u_\alpha \to u_\infty$ as $\alpha \to \infty$, 
where $u_\infty$ is a singular solution of \eqref{eq:1.1}. 
In addition, though it is not explicitly stated in \cite{H}, the family $(u_\alpha)_{\alpha>0}$ becomes stable. 
We also refer to \cite{DDM} for the notion of the Joseph--Lundgren exponent when $s=1/2$ and $\ell = 0$.

	In the case $s \in (0,1)$ with $\ell = 0$, 
the existence of a family $(u_\alpha)_{\alpha > 0}$ of positive radial stable solutions of \eqref{eq:1.1} was proved in \cite{DDW}. 
Moreover, we remark that the existence of singular solution of \eqref{eq:1.1} was pointed out in \cite{FW-16} 
(see also \cite{Du-16,Fall}) for the case $s \in (0,1)$ and $\ell > 0$. 
Here we note that 
in \cite{DDW}, the separation property $u_{\alpha_1} < u_{\alpha_2}$ for $\alpha_1 < \alpha_2$ was not proved. 
Furthermore, in \cite{FW-16}, the nonexistence of stable solutions was shown, however, 
the existence of a family of positive radial stable solutions was not dealt.

	From the above results, the following are not known:
	\begin{itemize}
	\item 
	the existence of a family $(u_\alpha)_{\alpha > 0}$ 
	of stable solutions to \eqref{eq:1.1} when $\ell \neq 0$;
	\item 
	the separation property $u_{\alpha_1} < u_{\alpha_2}$ for $\alpha_1 < \alpha_2$ 
	for the case $0<s<1$ and $\ell > -2s$.
	\end{itemize}
Our aim in this paper is to address these questions. 
In addition, we also investigate the range of $p$ 
where $p$ is subcritical, critical or supercritical in the sense of Joseph--Lundgren.

%%%%%%%%%%%%%%%%%%%%%%%%%%%%%%%%%%%%%
\subsection{Results}
\label{section:1.3}
%%%%%%%%%%%%%%%%%%%%%%%%%%%%%%%%%%%%%

	To state our results, we first give the definition of solutions of \eqref{eq:1.1}. 
To this end, we set
	\[
		\la u , v \ra_{\dHs(\RN)} := \frac{C_{N,s}}{2} \int_{ \RN \times \RN } 
		\frac{ \left( u(x) - u(y) \right) \left( v (x) - v (y) \right) }
		{|x-y|^{N+2s}} \, dx dy.
	\]
It is known (see \cite{DPV12}) that for each $u,\varphi \in C^\infty_c(\RN)$, 
	\[
		\int_{  \RN} \left( \left( -\Delta \right)^s u \right) \varphi \, dx 
		= \la u , \varphi \ra_{\dHs (\RN)}.
	\]
In \cite{HIK-20}, we treat solutions in 
$H^s_{\rm loc} (\RN) \cap L^\infty_{\rm loc} (\RN) \cap L^1( \RN , (1+|x|)^{-N-2s} dx )$, where 
	\[
		L^1 ( \RN , (1+|x|)^{-N-2s} dx ) := 
		\Set{ u : \RN \to \R | \int_{  \RN} \left( 1 + |x| \right)^{-N-2s} \left| u(x) \right| dx < \infty  }.
	\]
However, since we deal with singular solutions of \eqref{eq:1.1} in this paper, 
we slightly change the notion of solutions as follows:
	\begin{definition}\label{Definition:1.1}
		We say that $u$ is a \emph{solution of \eqref{eq:1.1}} 
		if $u \in H^s_{\rm loc} (\RN) \cap L^1( \RN , (1+|x|)^{-N-2s} dx )$ and $u$ satisfies 
		\[
			|x|^\ell |u|^{p-1} u \in L^{ \frac{2N}{N+2s} }_{\rm loc} (\RN), \quad 
			\la u , \varphi \ra_{\dHs (\RN)} = \int_{  \RN} |x|^\ell |u|^{p-1} u \varphi \, dx \quad 
			\text{for each $\varphi \in C^\infty_c(\RN)$}.
		\]
	\end{definition}
	
	\begin{remark}\label{Remark:1.1}
		\begin{enumerate}
		\item 
		For later use, we require the stronger condition $|x|^\ell |u|^{p-1} u \in L^{ \frac{2N}{N+2s} }_{\rm loc} (\RN)$ 
		than $|x|^\ell |u|^{p-1} u \in L^1_{\rm loc} (\RN)$ in Definition \ref{Definition:1.1}. 
		\item 
		Since $\ell>-2s$ and $N>2s$ hold by \eqref{eq:1.2}, we see that 
		if $u \in L^\infty_{\rm loc} (\RN)$, then $|x|^\ell |u|^{p-1} u \in L^{ \frac{2N}{N+2s} }_{\rm loc} (\RN)$. 
		We remark that in Theorem \ref{Theorem:1.1}, we find positive solutions of \eqref{eq:1.1} belonging to 
		$H^s_{\rm loc} (\RN) \cap L^\infty (\RN) \cap C(\RN)$.
		\end{enumerate}
	\end{remark}

	Next, following \cite{DDM,DDW,H}, we define \emph{subcritical, critical and supercritical in the sense of Joseph--Lundgren}. 
For this purpose, we set 
	\begin{equation}\label{eq:1.7}
		\begin{aligned}
		p_S(N,\ell) := \frac{N+2s+2\ell}{N-2s}, 
		\quad 
		\lambda(\alpha)
		:=2^{2s}\frac{\Gamma(\frac{N+2s+2\alpha}{4})\,\Gamma(\frac{N+2s-2\alpha}{4})}
		{\Gamma(\frac{N-2s-2\alpha}{4})\,\Gamma(\frac{N-2s+2\alpha}{4})}.
		\end{aligned}
	\end{equation}

\begin{definition}\label{Definition:1.2}
	We say that $p$ is 
	\emph{subcritical in the sense of the Joseph--Lundgren} (for short, we write \emph{JL-subcritical}) if 
	\[
		\text{either} \quad 1 < p \leq p_S(N,\ell) \quad \text{or} \quad 
		p>p_S(N,\ell) , \quad p \lambda \left( \frac{N-2s}{2} - \frac{2s+\ell}{p-1} \right) > \lambda (0).
	\]
	On the other hand, $p$ is said to be \emph{supercritical in the sense of the Joseph--Lundgren} 
	(for short, we write \emph{JL-supercritical}) provided 
	\[
		p>p_S(N,\ell),  \quad p \lambda \left( \frac{N-2s}{2} - \frac{2s+\ell}{p-1} \right) < \lambda (0).
	\]
	We also call $p$ \emph{critical in the sense of the Joseph--Lundgren} 
	(for short, \emph{JL-critical}) when  
	\[
		p>p_S(N,\ell) , \quad p \lambda \left( \frac{N-2s}{2} - \frac{2s+\ell}{p-1} \right) = \lambda (0).
	\]
\end{definition}
We remark that Definition \ref{Definition:1.2} coincides with those in \cite{DDM,DDW,H} when $\ell = 0$.

	Finally, we introduce the notation of stability of solutions of \eqref{eq:1.1}:

\begin{definition}\label{Definition:1.3}
	Let $u$ be a solution of \eqref{eq:1.1}. 
	We say that $u$ is \emph{stable} if $u$ satisfies 
	\[
		p \int_{\RN} |x|^\ell |u|^{p-1} \varphi^2 \, dx 
		\leq \| \varphi \|_{\dHs(\RN)}^2 \quad \text{for every $\varphi \in C^\infty_c(\RN )$},
	\]
	where 
		\[
			\begin{aligned}
				&\| \varphi \|_{\dHs (\RN)}^2 := \int_{  \RN} \left( 2\pi |\xi|^2 \right)^s 
				\left| \scrF \varphi (\xi) \right|^2 d \xi 
				= \frac{C_{N,s}}{2} \left[ \varphi \right]_{\dHs(\RN)}^2, 
				\\
				&
				\scrF \varphi (\xi) := \int_{  \RN} e^{-2\pi i x \cdot \xi} \varphi(x) \, dx, 
				\quad 
				\left[ \varphi \right]_{\dHs(\RN)}^2 
				:= \int_{  \RN \times \RN} \frac{  \left( \varphi(x) - \varphi(y) \right)^2  }{|x-y|^{N+2s}} \, dx dy.
			\end{aligned}
		\]
\end{definition}

	Now we are in position to state our main results in this paper:

\begin{theorem}\label{Theorem:1.1}
	Suppose \eqref{eq:1.2} and put
	\begin{equation}\label{eq:1.8}
	\theta_0:=\frac{2s+\ell}{p-1}.
	\end{equation}
	Assume $p$ is JL-critical or JL-supercritical, namely
	\begin{equation}\label{eq:1.9}
		p>p_S(N,\ell), \quad p \lambda \left( \frac{N-2s}{2} - \theta_0 \right)
		\leq \lambda (0).
	\end{equation}
	Then the following hold: 
	\begin{enumerate}
		\item[{\rm (i)}]
		      There exists a singular stable solution $u_S(x) := A_0 |x|^{ - \theta_0} \in H^s_{\rm loc} (\RN) $ of \eqref{eq:1.1} 
		      with $u_S \not \in \dHs (\RN)$, where
		      \begin{equation}\label{eq:1.10}
			      A_0 := \lambda \left( \frac{N-2s}{2} - \theta_0 \right)^{\frac{1}{p-1}}.
		      \end{equation}
		\item[{\rm (ii)}]
		      There exists a family $( u_\alpha )_{\alpha>0} \subset \Hsloc(\RN) \cap L^\infty(\RN) \cap C(\RN) $ of 
		      solutions of \eqref{eq:1.1} such that 
		      \begin{enumerate}
			      \item[\rm(a)]
			            $u_\alpha$ is positive, radial, stable and 
			            $u_\alpha (x) < u_S(x)$ for each $x \in \RN \setminus \set{0}$,
			      \item[\rm(b)]
			            for each $\alpha > 0$ and $x \in \RN$, 
			            $ u_\alpha (0) = \alpha$ and 
			            $ u_\alpha (x) = \alpha u_1 ( \alpha^{1/\theta_0} x )$,
			      \item[\rm(c)]
			            for every $\alpha < \beta$ and $x \in \RN$, 
			            $u_\alpha(x) < u_\beta (x)$,
			      \item[\rm(d)]
			            if $u \in \Hsloc (\RN) \cap L^\infty (\RN) $
			            is a positive radial stable solution of \eqref{eq:1.1} satisfying $u(x) \to 0$ as $|x| \to \infty$, then 
			            $u = u_\alpha$ with $u(0) = \alpha$,
			      \item[\rm(e)]
			      		As $\alpha \to \infty$, 
			      		$u_\alpha (x) \to u_S(x)$ for each $x \in \RN \setminus \set{0}$ and 
			            $u_\alpha (x) \to u_S(x)$ in $\Hsloc(\RN)$. 
		      \end{enumerate}
	\end{enumerate}
\end{theorem}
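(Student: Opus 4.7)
The plan is to dispose of the singular solution (i) by direct computation together with the sharp fractional Hardy inequality, and to produce the family in (ii) by constructing one positive bounded radial solution $u_1$ (via the Caffarelli--Silvestre extension with $u_S$ as a barrier) and then generating the whole family by the scale invariance of the equation.

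For part (i), the classical identity
\[
  (-\Delta)^s \bigl( |x|^{-\gamma} \bigr) = \lambda\!\Bigl( \tfrac{N-2s}{2} - \gamma \Bigr)\,|x|^{-\gamma-2s}, \qquad 0 < \gamma < N-2s,
\]
(in the distributional sense of Definition~\ref{Definition:1.1}) together with the algebraic relation $p\theta_0 = \theta_0 + 2s + \ell$ reduces $(-\Delta)^s u_S = |x|^\ell u_S^p$ to the identity $A_0^{p-1} = \lambda((N-2s)/2 - \theta_0)$, which is \eqref{eq:1.10}. The assumption $p > p_S(N,\ell)$ combined with $\ell > -2s$ places $\theta_0$ in the admissible range $(0,N-2s)$, so $u_S \in \Hsloc(\RN) \cap L^1(\RN,(1+|x|)^{-N-2s}\,dx)$, while non-membership in $\dHs(\RN)$ is seen from the tail at infinity. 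Substituting $u_S$ into Definition~\ref{Definition:1.3} reduces stability to
\[
  p A_0^{p-1} \int_{\RN} |x|^{-2s} \varphi^2\, dx \leq \| \varphi \|_{\dHs(\RN)}^2,
\]
which is the sharp fractional Hardy inequality with best constant $\lambda(0)$; hence stability of $u_S$ is equivalent to $p\lambda((N-2s)/2-\theta_0)\le\lambda(0)$, namely \eqref{eq:1.9}.

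For part (ii), the central object is $u_1$, a positive bounded radial solution with $u_1(0)=1$; the family is then defined by $u_\alpha(x) := \alpha\, u_1(\alpha^{1/\theta_0} x)$, which gives (b). I would construct $u_1$ as the boundary trace of a bounded nonnegative radial extension $U_1$ solving \eqref{eq:1.5}, using $u_S$ and its $s$-harmonic extension $U_S$ as upper barriers: on each ball $B_R$ a positive solution $u_R$ with $0<u_R\le u_S$ is produced by a sub/super-solution scheme applied to a truncated nonlinearity; uniform elliptic regularity plus the $u_S$-barrier yields $C^{0,\gamma}_{\rm loc}$ and weak $\Hsloc$ convergence of a diagonal subsequence to a solution $u_*$ of \eqref{eq:1.1} with $0 < u_* \le u_S$. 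A continuous shooting in $u_*(0)$ combined with scaling normalizes $u_*(0) = 1$ and produces $u_1$. Stability of $u_\alpha$ is then automatic from $u_\alpha\le u_S$: one has $p|x|^\ell u_\alpha^{p-1}\le p A_0^{p-1}|x|^{-2s}$, and fractional Hardy transfers stability from $u_S$ to $u_\alpha$.

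Properties (a), (c)--(e) are structural. Property (a) is the strong maximum principle for $u_S-u_\alpha$, using that $u_\alpha$ is bounded at $0$ while $u_S$ is not. Property (c) follows from monotone dependence of radial solutions of \eqref{eq:1.5} on the value at the origin, itself a consequence of the comparison principle for the extended problem. For (d), any bounded positive radial stable solution decaying at infinity is shown to lie below $u_S$ via the maximum principle, and two such solutions with matching value at $0$ coincide by (c). For (e), monotonicity plus the barrier $u_S$ yield a pointwise limit $u_\infty\le u_S$, and passing the invariance $u_{\lambda^{\theta_0}\alpha}(x)=\lambda^{\theta_0}u_\alpha(\lambda x)$ to the limit forces $u_\infty(\lambda x)=\lambda^{-\theta_0}u_\infty(x)$, hence $u_\infty(x)=C|x|^{-\theta_0}$; substitution into \eqref{eq:1.1} identifies $C=A_0$, so $u_\infty=u_S$, and $\Hsloc$-convergence follows from dominated convergence together with uniform $\Hsloc$ bounds on compact sets. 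The main obstacle is the very construction of $u_1$: classical ODE shooting is unavailable for $(-\Delta)^s$, so the analogous shooting must be organized inside the extension \eqref{eq:1.5}, and the limit $R\to\infty$ must be controlled to prevent collapse to the trivial solution; a related delicacy, sharpest at the JL-critical exponent, is the strict bound $u_\alpha<u_S$, which at criticality must be extracted from the boundedness of $u_\alpha$ at the origin rather than from a spectral gap in Hardy's inequality.
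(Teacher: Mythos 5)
Part (i) of your proposal is correct and follows the paper's own route (the Fourier identity for $(-\Delta)^s|x|^{-\gamma}$ plus the sharp fractional Hardy inequality). For part (ii) there are two genuine gaps. The first is the construction of $u_1$. You correctly note that ODE shooting is unavailable, but your replacement --- ``a continuous shooting in $u_*(0)$'' organized inside the extension --- is not a procedure that can actually be run: the value at the origin is not a shooting parameter for a nonlocal problem, and a sub/supersolution scheme on $B_R$ with subsolution $0$ produces the minimal solution, which may be identically zero, so the diagonal limit can collapse exactly as you fear. The paper's mechanism is different and is what makes the argument work: it solves the truncated problem \eqref{eq:3.3} on $B_1$ with the \emph{nonzero exterior datum} $\mu\eta_\mu u_S$, which pins the minimal solution $u_\mu$ away from zero by the strong maximum principle; it builds a \emph{bounded} supersolution via the Brezis--Cazenave--Martel--Ramiandrisoa truncation $\Phi_\mu(\mu u_S)$ (note $u_S$ itself is unbounded near $0$ and cannot serve as a bounded barrier); it shows $m_j:=\|u_{\mu_j}\|_{L^\infty}\to\infty$ as $\mu_j\nearrow1$; and it renormalizes $\wt{u}_j:=m_j^{-1}u_{\mu_j}(m_j^{-1/\theta_0}\cdot)$ so that $\|\wt{u}_j\|_{L^\infty}=1$ forces the limit to be nontrivial. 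Your sketch contains no substitute for this normalization step.

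The second, more serious gap is property (c). You assert the separation $u_\alpha<u_\beta$ ``follows from monotone dependence of radial solutions of \eqref{eq:1.5} on the value at the origin, itself a consequence of the comparison principle for the extended problem.'' No such comparison principle exists: two solutions of the same superlinear equation $(-\Delta)^su=|x|^\ell u^p$ need not be ordered just because their values at the origin are, and the nonlinearity is \emph{increasing} in $u$, which is the wrong sign for comparison. The separation property is the main new content of the paper (Proposition \ref{Proposition:4.1}) and its proof occupies all of Section \ref{section:4}: assuming the difference changes sign, one uses the \emph{stability} of both solutions to show that each nodal set has infinitely many components, lifts the difference to $V=U_2-U_1$ on $\HS$, proves every component of $[V>0]$ and $[V<0]$ is unbounded (again by stability), establishes compact support of suitable boundary traces and the decay $|V_{\cO_{k,-}}|\leq C_0(|x|^2+t^2)^{-(N-2s)/2}$ with $\nabla V_{\cO_{k,-}}\in L^2(\HS,t^{1-2s}dX)$, and finally contradicts the stability inequality for $u_1$. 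Since your arguments for (d) and (e) rest on (c) --- in particular the monotone pointwise limit you invoke in (e) is exactly the separation property --- they inherit this gap, even though your scaling identification of the limit as $u_S$ in (e) does match the paper's Proposition \ref{Proposition:4.2}.
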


	Next we look at the range of $p$ in which $p$ is JL-subcritical, JL-critical and JL-supercritical 
for the case $ \ell \leq 0$:

\begin{theorem}\label{Theorem:1.2}
	Suppose \eqref{eq:1.2}. Then the following hold:
	\begin{enumerate}
		\item[\emph{(i)}]
		      Assume $-2s < \ell \leq 0$. Then there exists a unique $p_{JL} \in (p_S(N,\ell) , \infty ]$ such that 
		      each $p \in (1, p_{JL} )$ is JL-subcritical. In addition, if $p_{JL} < \infty$, 
		      then $p=p_{JL}$ (resp. $p>p_{JL}$) is JL-critical (resp. JL-supercritical);
		\item[\emph{(ii)}]
		      Assume $\ell = 0$. 
		      \begin{itemize}
			      \item[\rm(a)]
			            When $N=1$ with $0<s<1/2$ and $2 \leq N \leq 7$ with $0<s<1$, $p_{JL} = \infty$ holds. 
			      \item[\rm(b)] 
			            When $N=8,9$, there exists a unique $s_N \in (0,1)$ such that 
			            $p_{JL} < \infty$ if $0 < s < s_N$ and $p_{JL} = \infty$ if $s_N \leq s < 1$. 
			      \item[\rm(c)] 
			            When $N \geq 10$ with $0<s<1$, $p_{JL} < \infty$ always holds;
		      \end{itemize}
		\item[\emph{(iii)}]
		      Assume $-2 < \ell < 0$. 
		      \begin{itemize}
			      \item[\rm(a)]
			            When $N=1$, there exists an $s_{1,\ell} \in (-\ell/2, 1/2)$ such that 
			            $p_{JL} < \infty$ if $ - \ell /2 < s < s_{1,\ell}$ and 
			            $p_{JL} = \infty$ if $ s_{1,\ell} \leq s < 1/2$. 
			      \item[\rm(b)] 
			            When $2 \leq N < 10 + 4\ell $, there exist $s_{N,\ell}$, $\tilde{s}_{N,\ell}\in ( - \ell/2 , 1 )$ 
			            with $s_{N,\ell}\le \tilde{s}_{N,\ell}$ such that $p_{JL} < \infty$ if $ - \ell / 2 < s < s_{N,\ell}$ and 
			            $p_{JL} = \infty$ if $ \tilde{s}_{N,\ell} \leq s < 1$. 
			            Moreover, $s_{N,\ell}= \tilde{s}_{N,\ell}$ holds when $ 2 \leq N \leq 6$. 
			      \item[\rm(c)] 
			            When $N \geq 10+ 4\ell$, 
			            $p_{JL} < \infty$ holds for $-\ell/2 < s < 1$. 
		      \end{itemize}
	\end{enumerate}
\end{theorem}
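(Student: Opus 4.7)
The plan is to reduce the JL-trichotomy to a single-variable inequality and then exploit log-concavity of the resulting function.

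\emph{Reformulation.} For $p>p_S(N,\ell)$ I substitute $\beta:=\frac{N-2s}{2}-\theta_0(p)\in(0,B)$ with $B:=\frac{N-2s}{2}$, so $p=1+\frac{2s+\ell}{B-\beta}$. Setting $A:=\frac{N+2s}{2}+\ell$, the quantity in Definition \ref{Definition:1.2} becomes $\Psi(\beta)-\lambda(0)$, where
\[
\Psi(\beta):=p\,\lambda(\beta)=\frac{A-\beta}{B-\beta}\,\lambda(\beta).
\]
One checks directly that $\Psi(0^{+})=p_S(N,\ell)\lambda(0)>\lambda(0)$, and a pole expansion of $\Gamma$ at $0$ yields
\[
L(N,s,\ell):=\lim_{\beta\to B^{-}}\Psi(\beta)=2^{2s-1}(2s+\ell)\,\frac{\Gamma(N/2)\,\Gamma(s)}{\Gamma((N-2s)/2)}.
\]

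\emph{Part (i): strict log-concavity.} For $\ell\le 0$ I would establish $(\log\Psi)''<0$ on $(0,B)$. Using the series $\psi'(z)=\sum_{n\ge 0}(n+z)^{-2}$ and the recursion $\psi'(z)=z^{-2}+\psi'(z+1)$,
\[
(\log\Psi)''(\beta)=\frac{1}{(B-\beta)^{2}}-\frac{1}{(A-\beta)^{2}}+\tfrac{1}{4}\bigl[\psi'(a+x)+\psi'(a-x)-\psi'(b+x)-\psi'(b-x)\bigr],
\]
where $a=(N+2s)/4$, $b=(N-2s)/4$, $x=\beta/2$. When $\ell=0$ (so $A=2a$, $B=2b$), the singular contributions $1/(b-x)^2$ and $1/(a-x)^2$ extracted from $\psi'(b-x)$ and $\psi'(a-x)$ via the recursion cancel the $\pm 1/(B-\beta)^2$ and $\mp 1/(A-\beta)^2$ pieces, leaving
\[
(\log\Psi)''(\beta)=\tfrac{1}{4}\bigl[\psi'(a+x)-\psi'(b+x)+\psi'(a-x+1)-\psi'(b-x+1)\bigr]<0
\]
by strict monotonicity of $\psi'$. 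For $-2s<\ell<0$ one has $A<2a$, which makes $-1/(A-\beta)^{2}$ strictly more negative than $-1/(2a-\beta)^{2}$, so the inequality is preserved. Strict log-concavity of $\Psi$ together with $\Psi(0^{+})>\lambda(0)$ forces $\{\beta:\Psi(\beta)>\lambda(0)\}=[0,\beta^{\ast})$ for a unique $\beta^{\ast}\in(0,B]$. Since $\beta\mapsto p(\beta)$ is a homeomorphism $(0,B)\to(p_{S},\infty)$, the JL-subcritical set in $p$ is the interval $(1,p_{JL})$ with $p_{JL}=p(\beta^{\ast})\in(p_{S},\infty]$, and $p_{JL}<\infty$ if and only if $L(N,s,\ell)<\lambda(0)$.

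\emph{Parts (ii) and (iii).} Substituting $\lambda(0)=2^{2s}\Gamma((N+2s)/4)^{2}/\Gamma((N-2s)/4)^{2}$, the condition $L<\lambda(0)$ reduces to
\[
(2s+\ell)\,\Gamma(N/2)\,\Gamma(s)\,\Gamma((N-2s)/4)^{2}<2\,\Gamma((N+2s)/4)^{2}\,\Gamma((N-2s)/2).
\]
Fixing $N$ and $\ell$, I would rewrite this via the Legendre duplication formula and treat both sides as functions of $s$ on the admissible interval, computing the signs at the endpoints $s\to 0^{+}$, $s\to 1^{-}$ (and $s\to(-\ell/2)^{+}$ when $\ell<0$) and establishing monotonicity or at most one sign change in $s$ to locate the thresholds $s_{N}$ (for $N=8,9$), $s_{1,\ell}$, $s_{N,\ell}$, $\tilde{s}_{N,\ell}$ claimed in the theorem. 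The main technical obstacle is the log-concavity step via the telescoping of $\psi'$; in (iii)(b), strict monotonicity in $s$ fails to hold throughout the admissible range, which is precisely what produces the gap $s_{N,\ell}\le\tilde{s}_{N,\ell}$ in the statement.
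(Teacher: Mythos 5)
Your reduction and your proof of part (i) are correct and essentially identical to the paper's: your $\Psi(\beta)=p\,\lambda(\beta)$ is, after the substitution $\beta=2x$, exactly $2^{2s}g_{\ell}(x)$ from \eqref{eq:5.4}, your limit $L(N,s,\ell)$ corresponds to $2^{2s}g_\ell(A_{N,s})$ (so $L<\lambda(0)$ is precisely the paper's condition $H_\ell(s,N)<0$), and your telescoping of $\psi'$ via $\psi'(z)=z^{-2}+\psi'(z+1)$ is the same computation as Lemma \ref{Lemma:5.2}(i). The deduction that strict log-concavity plus $\Psi(0^+)>\lambda(0)$ yields the interval structure and the criterion $p_{JL}<\infty\Leftrightarrow L<\lambda(0)$ is sound.

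The gap is in parts (ii) and (iii), which you reduce correctly but do not prove. Knowing the signs of $s\mapsto H_\ell(s,N)$ at the endpoints of the admissible $s$-interval is not enough to conclude that there is a \emph{unique} threshold $s_N$ (or $s_{1,\ell}$, $s_{N,\ell}$, $\tilde s_{N,\ell}$): for $N=8,9$, $\ell=0$ one has $H_0(0^+,N)=0$ and $H_0(1^-,N)>0$, yet $p_{JL}<\infty$ on a nontrivial interval $(0,s_N)$ — detecting this requires the sign of $\partial_s H_0(0^+,N)$ (a concrete computation giving $-1/6$ at $N=8$, versus $2/5+4\log 2-\pi>0$ at $N=7$) together with a global structural fact, namely strict convexity of $s\mapsto H_0(s,N)$ for $N\ge 6$ and monotonicity of $\partial_s H_0$ in $N$, to rule out additional sign changes. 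Likewise, the claim $s_{N,\ell}=\tilde s_{N,\ell}$ for $2\le N\le 6$ needs $\partial_s H_\ell>0$ on the whole interval, and case (iii)(c) for $N\ge 10+4\ell$ needs $\partial_N H_\ell<0$ to propagate negativity from small $N$ to all $N$; the $10+4\ell$ dichotomy itself comes from the closed-form evaluation $H_\ell(1,N)=\log\bigl[4(1+\ell/2)(N/2-1)^{-1}\bigr]$. These monotonicity, convexity and endpoint-derivative computations (the paper's Lemmata \ref{Lemma:5.3}--\ref{Lemma:5.5}) are the bulk of the proof of (ii) and (iii), and your proposal only announces that they should be "established" rather than carrying them out; "monotonicity or at most one sign change in $s$" is precisely what must be proved, and it is false in general (it fails for $\ell>0$, which is why Theorem \ref{Theorem:1.3} exhibits multiple JL-critical exponents), so it cannot be waved through.
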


	Theorem \ref{Theorem:1.2} is also useful for $\ell > 0$. 
To see this, we introduce the following condition on $N$ and $s$:
	\begin{enumerate}
	\item[(A)] $N=1$ with $0<s<1/2$, or $2 \leq N \leq 7$ with $0<s<1$, or $N=8,9$ with $s_N \leq s < 1$.
	\end{enumerate}
By Theorem \ref{Theorem:1.2} (ii), $p_{JL} = \infty$ and each $p \in (1,\infty)$ is JL-subcritical under (A) and $\ell = 0$. 
As a corollary of Theorem \ref{Theorem:1.2}, we obtain the following result in which 
we also compare $s_N$ and $s_{N,\ell}$ for the case $\ell < 0$ and $N=8,9$:

	\begin{Corollary}\label{Corollary:1.1}
		Suppose \eqref{eq:1.2}, and let $s_N$ and $s_{N,\ell}$ be the numbers in Theorem {\rm\ref{Theorem:1.2}}. 
		\begin{enumerate}
		\item[{\rm (i)}] 
			Assume {\rm (A)} and $\ell > 0$. Then every $p \in (1,\infty)$ is JL-subcritical. 
		\item[{\rm (ii)}] 
			Assume $-2s<\ell < 0$ and $N=8,9$ with $0<s<s_N$ and $N < 10+4\ell$.
			Then $p_{JL} < \infty$. Therefore, $s_{N} \leq s_{N,\ell}$ holds.
		\end{enumerate}
	\end{Corollary}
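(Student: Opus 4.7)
The approach is to reduce both parts to the baseline $\ell = 0$ case already handled by Theorem \ref{Theorem:1.2}(ii), by matching the argument of $\lambda$ through a change of exponent. Write $\alpha(p,\ell) := (N-2s)/2 - (2s+\ell)/(p-1)$. Two basic facts set up the reduction: first, inspecting \eqref{eq:1.7} yields the equivalence $p > p_S(N,\ell) \iff \alpha(p,\ell) > 0$; second, for $\alpha \in (0,(N-2s)/2)$ all four $\Gamma$-arguments in $\lambda(\alpha)$ are strictly positive, so $\lambda(\alpha) > 0$.

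For part (i), I fix $\ell > 0$ and $p > p_S(N,\ell)$ (the range $p \leq p_S(N,\ell)$ being trivially JL-subcritical) and put $\alpha := \alpha(p,\ell) > 0$. I then introduce the companion exponent $p_0 := 1 + 2s(p-1)/(2s+\ell)$, chosen precisely so that $\alpha(p_0,0) = \alpha$; a short check shows $p_0 < p$ and, via the equivalence above, $p_0 > p_S(N,0)$. Since condition (A) combined with Theorem \ref{Theorem:1.2}(ii) gives $p_{JL} = \infty$ at $\ell = 0$, the exponent $p_0$ is JL-subcritical, i.e.\ $p_0 \lambda(\alpha) > \lambda(0)$. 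Using $\lambda(\alpha) > 0$ and $p/p_0 > 1$, I conclude $p\lambda(\alpha) > \lambda(0)$, which establishes JL-subcriticality of $p$.

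For part (ii), the reduction is run in reverse. Since $N \in \{8,9\}$ and $0 < s < s_N$ place us in the regime of Theorem \ref{Theorem:1.2}(ii) where $p_{JL} < \infty$ at $\ell = 0$, there is an exponent $p^* > p_S(N,0)$ that is JL-critical or JL-supercritical; thus $p^* \lambda(\alpha^*) \leq \lambda(0)$ with $\alpha^* := \alpha(p^*,0) > 0$. For the target $\ell \in (-2s,0)$ I set $p := 1 + (2s+\ell)(p^*-1)/(2s)$; since $(2s+\ell)/(2s) \in (0,1)$ one has $1 < p < p^*$, while by design $\alpha(p,\ell) = \alpha^* > 0$, which forces $p > p_S(N,\ell)$. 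Therefore $p\lambda(\alpha^*) < p^* \lambda(\alpha^*) \leq \lambda(0)$, so $p$ is JL-supercritical and hence $p_{JL} < \infty$ for this $(s,\ell)$. Because the construction works uniformly for every $s \in (-\ell/2, s_N)$, and the hypothesis $N < 10 + 4\ell$ places us in case (iii)(b) of Theorem \ref{Theorem:1.2}, the characterization of $s_{N,\ell}$ as the largest threshold up to which $p_{JL} < \infty$ persists yields $s_N \leq s_{N,\ell}$.

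I do not foresee any substantial obstacle. The only nontrivial point is book-keeping the equivalence $\alpha(p,\ell) > 0 \iff p > p_S(N,\ell)$ as $\ell$ varies, which is what guarantees that the companion exponents automatically remain in the supercritical range $p > p_S$ at the shifted value of $\ell$; together with the positivity of $\lambda$ on $(0,(N-2s)/2)$, this suffices to preserve the strict inequalities when multiplying by the ratio of exponents.
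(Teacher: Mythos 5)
Your proof is correct and is essentially the paper's argument in different coordinates: the paper compares $g_\ell(x)$ with $g_0(x)$ at a fixed value of the transformed variable $x$ via the identity \eqref{eq:5.6}, whose prefactor $\frac{A_{N,s}-x+s+\ell/2}{A_{N,s}-x+s}$ is exactly your ratio $p/p_0$ of companion exponents sharing the same $\lambda$-argument. Both parts then follow from the same monotonicity in $\ell$ combined with Theorem \ref{Theorem:1.2}(ii), so the two proofs coincide in substance.
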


	Finally, we focus on the case $0<s \ll 1$ and $\ell > 0$. 
	Under a certain condition, we show that 
	there are at least \emph{two JL-critical exponents} 
	(Theorem \ref{Theorem:1.3} (ii)), which never occurs in the case $s=1$.

\begin{theorem}\label{Theorem:1.3}
Let $N \geq 8$. 
Then there exist $\wt{s}_N \in (0,1)$, $(\ell_{1,s})_{0<s<\wt{s}_{N}}$ 
and  $(\ell_{2,s})_{0<s<\wt{s}_N}$ with $\ell_{i,s} = \ell_i(N,s)$ and 
$0<\ell_{1,s} < \ell_{2,s}$ for each $s \in (0,\wt{s}_N)$ such that 
\begin{enumerate}
\item[{\rm (i)}] 
if $ s \in (0, \wt{s}_N ) $ and $\ell \in (0,\ell_{1,s})$,
then there exist $p_i = p_i(N,s,\ell)$ $(i=1,2)$ such that 
	\begin{enumerate}
	\item[\rm(a)] $p_S (N,\ell) < p_{1} \leq p_{2} < \infty$,
	\item[\rm(b)] every $p \in (1, p_{1}) $ is JL-subcritical,
	\item[\rm(c)] each $p \in ( p_{2} , \infty )$ is JL-supercritical,
	\item[\rm(d)] $p_1$ and $p_2$ are JL-critical,
	\end{enumerate}
\end{enumerate}
and
\begin{enumerate}
\item[{\rm (ii)}] 
	if $ s \in (0,\wt{s}_N) $ and $\ell \in (\ell_{1,s},\ell_{2,s})$, then 
	there exist $p_i = p_i(N,s,\ell)$ $(i=1,2,3,4)$ such that 
		\begin{enumerate}
		\item[\rm(a)] $p_S(N,\ell) < p_1 \leq p_2 < p_3 \leq p_4 < \infty$,
		\item[\rm(b)] every $p \in (1,p_1) \cup (p_4,\infty)$ is JL-subcritical,
		\item[\rm(c)] each $p \in (p_2,p_3)$ is JL-supercritical,
		\item[\rm(d)] $p_1,p_2,p_3$ and $p_4$ are JL-critical. 
		\end{enumerate}
\end{enumerate}
In addition, there exist $(\ell_{3,s})_{0<s<1}$ when $N \geq 10$ and 
$(\ell_{3,s})_{0<s<s_N}$ when $N=8,9$ with $\ell_{3,s} = \ell_3(N,s)$ such that 
if $\ell \geq \ell_{3,s}$, then each $p>1$ is JL-subcritical. 
\end{theorem}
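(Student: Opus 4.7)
The plan is to reduce the JL-criticality condition to a level-set problem in a single variable and then exploit the linearity of $p\lambda$ in $\ell$. Set $\beta:=(2s+\ell)/(p-1)$, a decreasing bijection of $(p_S(N,\ell),\infty)$ onto $(0,(N-2s)/2)$, and let $\alpha:=(N-2s)/2-\beta$. Rewriting $\lambda(\alpha)$ in $\beta$-variables and using the identity $\Gamma(\beta/2)=(2/\beta)\Gamma(\beta/2+1)$ gives
\[
p\lambda(\alpha) \;=\; (\beta+2s+\ell)\,K(\beta), \qquad K(\beta):=2^{2s-1}\frac{\Gamma((N-\beta)/2)\,\Gamma(s+\beta/2)}{\Gamma(\beta/2+1)\,\Gamma((N-2s-\beta)/2)}.
\]
Since $K$ is independent of $\ell$, the JL-critical condition $p\lambda(\alpha)=\lambda(0)$ becomes $\ell=\Psi(\beta)$ with
\[
\Psi(\beta) := \frac{\lambda(0)}{K(\beta)} - \beta - 2s, \qquad \beta\in\left(0,\tfrac{N-2s}{2}\right).
\]
The JL-critical exponents are thus in one-to-one correspondence with the roots of $\Psi(\beta)=\ell$, and a given $p>p_S$ is JL-supercritical (resp.\ JL-subcritical) iff $\ell<\Psi(\beta(p))$ (resp.\ $\ell>\Psi(\beta(p))$).

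A direct computation using $\Gamma((N-2s)/4+1)=((N-2s)/4)\,\Gamma((N-2s)/4)$ gives $K((N-2s)/2)=(2/(N-2s))\lambda(0)$ and hence $\Psi\bigl(((N-2s)/2)^-\bigr)=-2s$. The other endpoint is $\Psi(0^+)=\lambda(0)/K(0)-2s$; using $\Gamma(s)\sim 1/s$ as $s\to 0^+$, this admits the expansion
\[
\Psi(0^+) = 2s^2\,\bigl(2\psi_0(N/4)-\psi_0(N/2)+\gamma\bigr) + O(s^3),
\]
where $\psi_0 := \Gamma'/\Gamma$ is the digamma function and $\gamma$ is the Euler--Mascheroni constant. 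The leading coefficient equals $1/6>0$ at $N=8$ and is non-decreasing in $N$ by $\psi_0'>0$, so $\Psi(0^+)>0$ for every $N\ge 8$ and $s$ sufficiently small. Define $\widetilde s_N$ to be the supremum of those $s\in(0,1)$ for which $\Psi(0^+)>0$; this provides the $s$-threshold of the statement.

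The geometric heart of the proof is to establish, for $N\ge 8$ and $0<s<\widetilde s_N$, that $\Psi$ is not monotone on $(0,(N-2s)/2)$ but has an interior local minimum $m$ separating two interior local maxima $M_1\le M_2$, with the vertical ordering $0<\Psi(0^+)<m<M_1\le M_2$. I would verify this by expanding
\[
(\log K)'(\beta) = \tfrac12\bigl(-\psi_0((N-\beta)/2)+\psi_0(s+\beta/2)+\psi_0((N-2s-\beta)/2)-\psi_0(\beta/2+1)\bigr)
\]
to leading order in $s\to 0^+$, extracting a transcendental model equation in $\beta$ whose three interior zeros are the critical points of $\Psi$; continuity in $s$ then propagates the three-zero structure to small positive $s$. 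With this shape in hand, set $\ell_{1,s}:=\Psi(0^+)$, $\ell_{2,s}:=m$, and $\ell_{3,s}:=\max\Psi$, and count preimages of $\Psi$: for $\ell\in(0,\ell_{1,s})$ the line $\{y=\ell\}$ meets the graph of $\Psi$ exactly once (on the outer descending branch), yielding a single JL-critical exponent $p_{JL}$ and part~(i) via $p_1=p_2=p_{JL}$; for $\ell\in(\ell_{1,s},\ell_{2,s})$ there are exactly two preimages lying outside the interior dip, producing two distinct JL-critical exponents $p_-<p_+$ (with $\Psi>\ell$ strictly between them), and writing $p_1=p_2=p_-$ and $p_3=p_4=p_+$ yields part~(ii); and for $\ell\ge\ell_{3,s}$ no preimage exists, $\Psi<\ell$ uniformly, and every $p>1$ is JL-subcritical. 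The restriction $s<s_N$ for $N=8,9$ in the $\ell_{3,s}$ statement comes from Theorem~\ref{Theorem:1.2}(ii)(b) and Corollary~\ref{Corollary:1.1}(ii), which identify this as the regime in which $\max\Psi$ is strictly positive.

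The principal obstacle is the shape claim in the previous paragraph: rigorously verifying the one-minimum/two-maxima profile for $N\ge 8$ and $s\in(0,\widetilde s_N)$. This reduces to a zero-count for the transcendental equation $(\log K)'(\beta)=0$ in $\beta$, whose three interior roots must be tracked uniformly as $s\to 0^+$ using careful digamma asymptotics; the bookkeeping at $N=8,9$ additionally requires coordination with the threshold $s_N$ of Theorem~\ref{Theorem:1.2}. Once the geometry of $\Psi$ is controlled, the rest of the proof is level-set accounting together with the sign dictionary established in the first paragraph.
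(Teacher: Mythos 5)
Your reduction is sound and, after the change of variables, is essentially a reparametrization of the paper's: your $\Psi(\beta)=\lambda(0)/K(\beta)-\beta-2s$ coincides with the function $L(x)$ appearing in the paper's proof (via $\beta=2(A_{N,s}-x)$), your sign dictionary ($p$ supercritical iff $\ell<\Psi$) matches \eqref{eq:5.3} combined with \eqref{eq:5.6}, and your endpoint computations $\Psi\bigl(\tfrac{N-2s}{2}^-\bigr)=-2s$ and $\Psi(0^+)=2s^2\bigl(2\psi(N/4)-\psi(N/2)+\gamma\bigr)+O(s^3)>0$ for $N\ge 8$ reproduce Lemmata \ref{Lemma:5.1} and \ref{Lemma:5.4}. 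The problem is that everything after that rests on the global shape claim — one interior local minimum $m$ flanked by two interior local maxima, with $0<\Psi(0^+)<m$ — which you explicitly leave unproved. This is not a routine verification you can defer: your definitions $\ell_{2,s}:=m$ and $\ell_{3,s}:=\max\Psi$, and your exact preimage counts ("exactly once", "exactly two"), all collapse without it, and the proposed route (counting zeros of a transcendental digamma equation uniformly as $s\searrow 0$) is substantially harder than the theorem requires. Note also that the critical points of $\Psi$ solve $-\lambda(0)K'/K^2=1$, not $(\log K)'=0$, so even the model equation you plan to analyze is not the right one as written.

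The gap is avoidable because the theorem's conclusions only need \emph{local} information at $\beta=0$ together with the endpoint sign at $\beta=(N-2s)/2$; the "$p_1\le p_2$" formulation is designed precisely so that no control of $\Psi$ between the crossings is required. The paper's argument: (1) show $\Psi'(0^+)>0$ for $N\ge 8$ and $s$ small — this is the computation $\lim_{s\searrow 0}L'(A_{N,s})/s=-2\bigl[2\psi(N/4)-\psi(1)-\psi(N/2)\bigr]\le -1/3<0$, which fixes $\wt{s}_N$; (2) set $\ell_{1,s}:=\Psi(0^+)$ and $\ell_{2,s}:=\Psi(\beta_1)$ for some fixed small $\beta_1>0$ with $\Psi(\beta_1)>\Psi(0^+)$; then for $\ell\in(\ell_{1,s},\ell_{2,s})$ one has $\Psi<\ell$ near both endpoints and $\Psi(\beta_1)>\ell$ at an interior point, and the four exponents $p_1\le p_2<p_3\le p_4$ are produced by taking infima/suprema of the sub- and supercritical sets, with no multiplicity count; (3) for $\ell_{3,s}$, instead of identifying $\max\Psi$, bound $\Psi$ from above using the log-concavity and monotonicity of $g_0$ past its peak (Lemma \ref{Lemma:5.2}), splitting $[0,A_{N,s}]$ at the unique zero $\wt{x}_{N,s}$ of $g_0-\wt{M}_{N,s}$. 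You should replace the shape claim by this local argument; as it stands, the central step of your proof is missing.
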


	\begin{remark}\label{Remark:1.2}
		We can also show that for any given $(\ell,s) \in (-2,\infty) \times (0,1)$ with $-2s < \ell$, 
		there exist $N_\ast (\ell,s) \in \N $ and $p_\ast (\ell,s)>1$ such that 
		if $N \geq N_\ast (\ell,s)$, then every $p \in (p_\ast (\ell,s) , \infty)$ is JL-supercritical.  
		See Lemma \ref{Lemma:5.5} and Remark \ref{Remark:5.1}. 
	\end{remark}

%%%%%%%%%%%%%%%%%%%%%%%%%%%%%%%%%%%%%
\subsection{Comments}
\label{section:1.4}
%%%%%%%%%%%%%%%%%%%%%%%%%%%%%%%%%%%%%

	We first compare our results with the literature and give some remarks on the results. 
About the existence of the family of positive radial stable solutions, 
Theorem \ref{Theorem:1.1} is an extension of \cite{H,DDW}. 
In \cite{H}, the case $s= 1/2$ and $\ell = 0$ was considered and a similar result to Theorem \ref{Theorem:1.1} was shown. 
On the other hand, in \cite{DDW}, the case $ 0 < s < 1$ and $\ell = 0$ was considered, however, 
the properties (c), (d) and (e) in Theorem \ref{Theorem:1.1} were not proved (see \cite[section 7]{DDW}). 
Therefore, Theorem \ref{Theorem:1.1} (c)--(e) are new even in the case $s \neq 1/2$ and $\ell = 0$, 
and we obtain a counterpart of \cite{H} for the case $0<s<1$ and $-2s < \ell$. 
Here we point out that for the case $s=1$, 
the separation property of positive radial stable solutions was shown in \cite{Wa93,LLD00}. 
In addition, when $\ell = 0$, the separation property is useful in the study of the corresponding parabolic problem. 
For instance, see \cite{GNW-92,GNW-01,PY03,PY05}. 
We hope that our results in this paper might be useful in the study of parabolic problem 
corresponding to \eqref{eq:1.1}.

	For Theorem \ref{Theorem:1.2},  
in \cite[section 4]{H}, when $2 \leq N \leq 5$, $\ell = 0$ and $s = 1/2$, he proved $p_{JL} = \infty$. 
Therefore, Theorem \ref{Theorem:1.2} (i) and (ii) generalize this result even in the case $s = 1/2$ and $\ell = 0$. 
Moreover, we may also treat the case $s \in (0,1)$ and $\ell \in (-2s,0)$ in Theorem \ref{Theorem:1.2}. 
From Theorem \ref{Theorem:1.2}, we see that the case $s \in (0,1)$ and $-2s < \ell \leq 0$ 
is similar to the case $s=1$ in the sense that 
the range of $p$ for being JL-subcritical is bounded by the range of $p$ for being JL-supercritical 
and there is at most one JL-critical exponent.

	On the number $10+4\ell$ in Theorem \ref{Theorem:1.2}, 
we recall that this is the threshold of $N$ for $p_+(N,\ell) = \infty$ when $s=1$ 
(see \cite{DDG} and \eqref{eq:1.4}). 
In Section \ref{section:5} (Lemma \ref{Lemma:5.4} and Remark \ref{Remark:5.1}), 
we observe that as $s \nearrow 1$ and $p \to \infty$, 
Definition \ref{Definition:1.2} coincides with $N< 10+4\ell$, $N=10+4\ell$ and $N>10+4\ell$, 
respectively. Hence, we recover the case $s=1$ from Definition \ref{Definition:1.2}. 
This seems not observed in the literature.

	About Theorem \ref{Theorem:1.3}, we find the situation which is completely different from the case $s=1$. 
In Theorem \ref{Theorem:1.3} (ii), we show that there are at least two JL-critical exponents. 
To the best of the authors' knowledge, this is the first result for the multiple existence of JL-critical exponents. 
We also remark that the range of $p$ for being JL-supercritical is bounded from above and below 
by the range of $p$ for being JL-subcritical. 
This structure is different from the case $s=1$, too. 
We emphasize that this structure and the multiple existence of JL-critical exponents 
are subtle since these phenomena disappear when $\ell > 0$ is sufficiently large 
due to the last statement of Theorem \ref{Theorem:1.3}.

	Next, we comment on the proofs of Theorems \ref{Theorem:1.1}--\ref{Theorem:1.3}. 
For Theorems \ref{Theorem:1.2} and \ref{Theorem:1.3}, 
we first introduce the change of variables and 
show the concavity of some function related to the inequalities in Definition \ref{Definition:1.2} 
for the case $\ell \leq 0$. 
We remark that this change of variables also enables us to treat the case $p = \infty$ and 
to find a function whose positivity (resp. negativity) is equivalent to being JL-subcritical (reps. JL-supercritical) 
at $p=\infty$. Through the analysis at $p=\infty$, the critical number $10+4\ell$ appears as $s \nearrow 1$ 
(see Lemma \ref{Lemma:5.4}).

	For Theorem \ref{Theorem:1.1}, we first show the existence of the family of positive radial stable solutions satisfying 
Theorem \ref{Theorem:1.1} (a) and (b). To this end, 
as in \cite{DDW}, we use an idea in \cite{BCMR-96} 
(see also \cite{D}). 
More precisely, using the singular solution $u_S$ and an auxiliary function in Proposition \ref{Proposition:3.3}, 
we construct a bounded supersolution to the equation on $B_1$ (see \eqref{eq:3.1} and \eqref{eq:3.3}). 
Then we find a family of minimal radial solutions of \eqref{eq:3.3} and 
show the existence of bounded positive stable solutions of \eqref{eq:1.1} via the family of minimal solutions. 
We remark that in \cite{DDW,H}, they considered the equation on $\HS$, namely, \eqref{eq:1.5}. 
On the other hand, we mainly study the equation on $B_1 \subset \RN$. 
Since we study the case $\ell < 0$, weak solutions and unbounded supersolutions on $B_1$, 
\eqref{eq:3.1} or \eqref{eq:3.3} seems simple to treat.  
Therefore, the details are different from \cite{DDW,H}.

	After the existence of the family of solutions, 
we show the separation property (Proposition \ref{Proposition:4.1}) 
for positive radial stable solutions of \eqref{eq:1.1}, which is a key to prove Theorem \ref{Theorem:1.1} (c)--(e). 
Here we argue in the spirit of \cite[section 7]{H}. 
As mentioned in the above, \cite{H} treated the case $s=1/2$ with $\ell = 0$ 
and the extension problem \eqref{eq:1.6} was considered. 
Since $U$ in \eqref{eq:1.6} is harmonic in $\HS$, 
he could use the analyticity of solutions to \eqref{eq:1.6} on $\ov{\HS}$ 
as well as the properties of the spherical harmonic functions for proving the separation property. 
On the other hand, we consider \eqref{eq:1.5} and clearly, the analyticity of solutions to \eqref{eq:1.5} on $\ov{\HS}$ 
cannot be used. Thus, we need to modify the argument in \cite[section 7]{H}. 
Though our proof is lengthy compared to \cite[section 7]{H}, 
we stress that our arguments are mostly elementary.

	To end this introduction, we list some open problems related to our results:
	\begin{enumerate}
	\item[(i)]
	Let $u$ be a radial stable solution of \eqref{eq:1.1}. 
	Then does $u \geq 0$ (resp. $u \leq 0$) in $\RN$ hold?
	\item[(ii)] 
	 Let $u$ be a positive radial solution with $\lim_{r\to 0}u(r)=\infty$. Then does $u=u_S$ hold?
	\end{enumerate}

	When $s=1$ and $\ell = 0$, (i) was proved in \cite[Theorem 5]{Farina}. 
On the other hand, when $s=1$, (ii) was addressed in \cite[Theorem 2.4]{DDG}. 
In both proofs, the ODE techniques were used. 
Thus, we cannot apply the arguments to the case $s \in (0,1)$ directly and 
need another idea. 
For (ii), we remark that 
we show $u = u_S$ under additional assumptions in Corollary \ref{Corollary:4.2}. 
It is not clear that we may remove these additional assumptions. 
Finally, as mentioned above, when $s=1$, the assertions in Theorem \ref{Theorem:1.1} are useful 
in the study of the corresponding parabolic problem. 
Therefore, it is interesting to study the parabolic problem corresponding to \eqref{eq:1.1}.

This paper is organized as follows. In Section \ref{section:2}, we study the existence of the singular solution and its property, 
and prove Theorem \ref{Theorem:1.1} (i). 
In Section \ref{section:3}, we construct the family of positive radial stable solutions. 
Section \ref{section:4} is devoted to the proof of the separation property (Proposition \ref{Proposition:4.1}) and 
its consequences. After that, we complete the proof of Theorem \ref{Theorem:1.1}. 
In Section \ref{section:5}, we prove Theorem \ref{Theorem:1.2}, Corollary \ref{Corollary:1.1} and Theorem \ref{Theorem:1.3}. 
Appendices contain proofs of some technical lemmata.

%%%%%%%%%%%%%%%%%%%%%%%%%%%%%%%%%%%%%%%%%%%%%%%%%%%%%%%%%
%%%%%%%%%%%%%%%%%%%%%%%%%%%%%%%%%%%%%%%%%%%%%%%%%%%%%%%%%
%%%%%%%%%%%%%%%%%%%%%%%%%%%%%%%%%%%%%%%%%%%%%%%%%%%%%%%%%

%%%%%%%%%%%%%%%%%%%%%%%%%%%%%%%%%%%%%%%%%%%%%%%%%%%%%%%%%
%%%%%%%%%%%%%%%%%%%%%%%%%%%%%%%%%%%%%%%%%%%%%%%%%%%%%%%%%
%%%%%%%%%%%%%%%%%%%%%%%%%%%%%%%%%%%%%%%%%%%%%%%%%%%%%%%%%

\section{Proof of Theorem \ref{Theorem:1.1} (i)}\label{section:2}

%%%%%%%%%%%%%%%%%%%%%%%%%%%%%%%%%%%%%%%%%%%%%%%%%%%%%%%%%
%%%%%%%%%%%%%%%%%%%%%%%%%%%%%%%%%%%%%%%%%%%%%%%%%%%%%%%%%
%%%%%%%%%%%%%%%%%%%%%%%%%%%%%%%%%%%%%%%%%%%%%%%%%%%%%%%%%

In this section, we prove Theorem \ref{Theorem:1.1} (i).

\begin{proposition}\label{Proposition:2.1}
Suppose \eqref{eq:1.2}.
Let $u_S(x) := A_0 |x|^{ - \theta_0}$, where $\theta_0$ and $A_0$ are as in \eqref{eq:1.8} and \eqref{eq:1.10}, respectively.
	\begin{enumerate}
	\item[{\rm (i)}] 
	 Let $p>(N+\ell)/(N-2s)$.
		Then $u_S$ is a singular solution of \eqref{eq:1.1} in the sense of $\scrS^\ast$, 
		where $\scrS$ denotes the Schwartz space. 
	\item[{\rm (ii)}] 
		Let $p>p_S(N,\ell) $. Then $u_{S}\in \Hsloc(\RN)$, $u_{S}\not\in \dHs(\RN)$ and 
		$u_S$ is a solution of \eqref{eq:1.1} in the sense of Definition {\rm\ref{Definition:1.1}}.
	\end{enumerate}
\end{proposition}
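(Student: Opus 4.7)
The plan is to reduce the proposition to the classical distributional identity
\[
	(-\Delta)^s\bigl(|x|^{-\beta}\bigr) = c(\beta)\,|x|^{-\beta-2s}, \qquad
	c(\beta) := 2^{2s}\,\frac{\Gamma\!\left(\frac{\beta+2s}{2}\right)\Gamma\!\left(\frac{N-\beta}{2}\right)}{\Gamma\!\left(\frac{\beta}{2}\right)\Gamma\!\left(\frac{N-\beta-2s}{2}\right)},
\]
valid for $0<\beta<N-2s$, applied with $\beta=\theta_0$. Under the substitution $\alpha=\frac{N-2s}{2}-\theta_0$ a direct computation of the four arguments of the gamma factors in $\lambda(\alpha)$ shows $c(\theta_0)=\lambda\!\left(\frac{N-2s}{2}-\theta_0\right)=A_0^{p-1}$; moreover the definition $\theta_0=(2s+\ell)/(p-1)$ is exactly the statement $-\theta_0-2s=\ell-p\theta_0$, so the power on the right matches. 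This gives $(-\Delta)^s u_S=|x|^\ell u_S^p$ formally, and the two parts of the proposition consist in identifying in each case the precise function spaces in which this identity holds.

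For part (i), the hypothesis $p>(N+\ell)/(N-2s)$ rearranges to $\theta_0<N-2s$, and an analogous algebraic manipulation (using $p\theta_0=\theta_0+2s+\ell$) yields $p\theta_0-\ell=(2ps+\ell)/(p-1)<N$. Thus $u_S\in L^1_{\mathrm{loc}}(\RN)$ and $|x|^\ell u_S^p\in L^1_{\mathrm{loc}}(\RN)$; the tail integrability $u_S\in L^1(\RN,(1+|x|)^{-N-2s}dx)$ follows from $\theta_0<N$. The identity above is then established as a tempered-distributional equality by pairing with $\varphi\in\scrS$ through the Fourier side (where $|x|^{-\beta}$ is a homogeneous distribution of order $\beta-N$) and invoking the standard Riesz composition formula; alternatively one computes $(-\Delta)^s u_S$ pointwise on $\RN\setminus\{0\}$ via the $\mathrm{P.V.}$ integral (which converges because $u_S$ is $C^\infty$ away from $0$ and homogeneous of degree $-\theta_0$) and extends the identity through the singularity using $\theta_0<N-2s$.

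For part (ii), the stronger assumption $p>p_S(N,\ell)$ rewrites as $\theta_0<(N-2s)/2$, which gives $u_S\in L^2_{\mathrm{loc}}(\RN)$ and, by the explicit computation of the Gagliardo seminorm of $|x|^{-\theta_0}$ on balls, $[u_S]_{H^s(B_R)}<\infty$ for every $R>0$; hence $u_S\in\Hsloc(\RN)$. However, $\scrF u_S(\xi)$ is a positive multiple of $|\xi|^{\theta_0-N}$, and $\int|\xi|^{2s}|\scrF u_S|^2d\xi$ diverges near $\xi=0$ because $2\theta_0+2s-N>-N$ exactly when $\theta_0>-s$ (true here), so $u_S\notin\dHs(\RN)$. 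For the integrability of the right-hand side in $L^{2N/(N+2s)}_{\mathrm{loc}}$ one checks $(p\theta_0-\ell)\cdot\frac{2N}{N+2s}<N$, which again follows from $\theta_0<(N-2s)/2$ by using $p\theta_0=\theta_0+2s+\ell$. Finally, to upgrade the distributional identity to the bilinear form $\langle u_S,\varphi\rangle_{\dHs(\RN)}=\int|x|^\ell u_S^p\varphi\,dx$ required by Definition \ref{Definition:1.1}, I would approximate $u_S$ by $u_S^{(\e)}:=\min\{u_S,1/\e\}\chi_{|x|>\e}$ (or by convolving with a mollifier after removing the singular core), compute the bilinear form using the $\Hsloc$ regularity, and pass to the limit exploiting local $H^s$ and $L^{2N/(N+2s)}$ bounds together with the already established tail condition.

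The main obstacle is the last passage: bridging the formal distributional identity with the weak formulation in Definition \ref{Definition:1.1} in the absence of global $\dHs$ regularity. The truncation argument must simultaneously keep the left-hand side expressed through $\langle\cdot,\cdot\rangle_{\dHs(\RN)}$ (which only uses local information once $\varphi\in C^\infty_c$ is fixed and the tail decays like $(1+|x|)^{-N-2s}$) and control the nonlinear right-hand side in its natural local $L^{2N/(N+2s)}$ topology; the ranges of exponents identified above are precisely what makes both limits legitimate.
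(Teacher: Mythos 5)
Your part (i) is essentially the paper's argument: both verify $(2\pi|\xi|)^{2s}\scrF u_S=\scrF(|x|^\ell u_S^p)$ in $\scrS^\ast$ via the Fourier transform of homogeneous distributions, and your constant $c(\theta_0)$ is exactly $\lambda\left(\frac{N-2s}{2}-\theta_0\right)=A_0^{p-1}$ after the substitution you indicate; the exponent ranges $\theta_0<N$ and $(2sp+\ell)/(p-1)<N$ are the same ones the paper checks. For (ii), your Gagliardo-seminorm computation on balls is a valid alternative to the paper's route (which estimates $\scrF[\varphi u_S]=\scrF\varphi\ast\scrF u_S$ and integrates $(1+|\xi|^2)^s|\scrF[\varphi u_S]|^2$), and your verification that $(\theta_0+2s)\cdot\frac{2N}{N+2s}<N$ is equivalent to $\theta_0<\frac{N-2s}{2}$ matches \eqref{eq:2.5}.

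Two points. First, your justification that $u_S\notin\dHs(\RN)$ is backwards: with $\scrF u_S(\xi)=C|\xi|^{\theta_0-N}$ the integrand is $|\xi|^{2s+2\theta_0-2N}$ (not $|\xi|^{2\theta_0+2s-N}$), and divergence near $\xi=0$ requires the exponent to be at most $-N$, i.e., $2s+2\theta_0-2N<-N$, which holds precisely because $\theta_0<\frac{N-2s}{2}$ --- not because the exponent exceeds $-N$. The conclusion is correct, but the reason you give would, if anything, prove convergence. Second, the step you flag as the main obstacle --- upgrading the $\scrS^\ast$ identity to $\la u_S,\varphi\ra_{\dHs(\RN)}=\int_{\RN}|x|^\ell u_S^p\varphi\,dx$ --- is exactly where the paper invokes Lemma 2.1 of the companion paper \cite{HIK-20}, which gives $\int_{\RN}u(-\Delta)^s\varphi\,dx=\la u,\varphi\ra_{\dHs(\RN)}$ for any $u\in\Hsloc(\RN)\cap L^1(\RN,(1+|x|)^{-N-2s}dx)$ and $\varphi\in C^\infty_c(\RN)$; with that lemma the step is immediate and no truncation of $u_S$ is needed. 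Your truncation $\min\{u_S,1/\e\}\chi_{|x|>\e}$ could be pushed through, but to justify the limit of the bilinear forms you would in effect have to reprove that lemma (local $H^s$ convergence plus control of the far-field interaction through the weighted $L^1$ norm), so as written this part of your proof remains a sketch rather than a complete argument.
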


\begin{proof}
(i) For $0< \alpha < N$, we set 
\[
	\gamma(\alpha) := \frac{\pi^{\frac{N}{2}} 2^\alpha \Gamma \left( \frac{\alpha}{2} \right) }
	{\Gamma \left( \frac{N-\alpha}{2} \right)}.
\]
It is known that (see, e.g., \cite[Chapter V]{Ste-70})
	\begin{equation}\label{eq:2.1}
			 \scrF \left[ |x|^{-\alpha} \right] (\xi) 
			 = \gamma  (N-\alpha) | (2\pi |\xi|)^{-(N-\alpha)} \quad 
			\text{in} \ (\scrS)^\ast.
	\end{equation}
Notice that for $\varphi \in \scrS$,  $(-\Delta)^s \varphi = \ov{(-\Delta)^s \ov{\varphi}}$ and 
	\[
		\left( - \Delta \right)^s \varphi = \scrF^{-1} \left( \left( 2\pi |\xi| \right)^{2s} \scrF \varphi \right) 
		= \scrF \left( \left( 2\pi |\xi| \right)^{2s} \scrF^{-1} \varphi \right). 
	\]
Thus, it suffices to show 
	\begin{equation}\label{eq:2.2}
		\left( 2\pi |\xi| \right)^{2s} \scrF u_S (\xi) = \scrF \left( |x|^\ell u_S^p(x) \right) (\xi) \quad \text{in} \ \left( \scrS \right)^\ast.
	\end{equation}
Since it follows from \eqref{eq:1.2} and $p>(N+\ell)/(N-2s)$ that $0< \theta_0<N$, 
by \eqref{eq:2.1} with $\alpha=\theta_0$ 
we observe that
	\begin{equation}\label{eq:2.3}
		\begin{aligned}
			 (2\pi |\xi|)^{2s} \scrF u_S (\xi) 
			 & = A_0 \gamma \left( N - \theta_0 \right)
			(2\pi |\xi|)^{ - \left( N - \theta_0 \right) + 2s }  
			= A_0 \gamma \left( N - \theta_0 \right)
			(2\pi |\xi|)^{ - \left( N - \frac{2sp + \ell}{p-1} \right)  } .
		\end{aligned}
	\end{equation}
	On the other hand, since
	\[
		|x|^\ell u_S^p(x) 
		= A_0^p |x|^{ \ell - p\theta_0 } 
		= A_0^p |x|^{ - \frac{\ell + 2sp }{p-1} },
	\]
	and it follows from $p>(N+\ell)/(N-2s)$ that $(2sp+\ell)/(p-1)<N$,
	by \eqref{eq:2.1} with $\alpha=(2sp+\ell)/(p-1)$ we have 
	\begin{equation}\label{eq:2.4}
		\scrF \left( |x|^\ell u_S^p \right) (\xi)
		= A_0^p \gamma \left( N - \frac{\ell + 2sp}{p-1} \right)
		\left( 2\pi |\xi| \right)^{ - \left( N - \frac{\ell + 2sp}{p-1} \right) }.
	\end{equation}
	Thus, by \eqref{eq:1.1}, \eqref{eq:2.3} and \eqref{eq:2.4}, \eqref{eq:2.2} holds provided 
	\begin{equation}
		A_0^{p-1} 
		= \frac{\gamma \left( N - \theta_0 \right)}
		{\gamma \left( N - \frac{\ell + 2sp}{p-1} \right)}.\notag
	\end{equation}
	Recalling the definitions of $\gamma$ and $\lambda$, we have
	\begin{equation*}
		\frac{\gamma \left( N - \theta_0 \right)}
		{\gamma \left( N - \frac{\ell + 2sp}{p-1} \right)}
		= 2^{2s} \frac{\Gamma \left( \frac{1}{2} \left( N - \theta_0 \right) \right)
			\Gamma \left( \frac{2sp + \ell}{2(p-1)} \right) }
		{ \Gamma \left( \frac{1}{2} \left( N - \frac{2sp+\ell}{p-1} \right) \right)
			\Gamma \left(  \frac{\theta_0}{2} \right)
		}
		= \lambda \left( \frac{N-2s}{2} - \theta_0 \right)
		= A_0^{p-1},
	\end{equation*}
	hence, \eqref{eq:2.2} and (i) hold.

	(ii) 
	We first remark that $p > p_S(N,\ell)$ yields 
		\begin{equation}\label{eq:2.5}
			\frac{\ell + 2sp}{p-1} < N, \quad 
			|x|^\ell u_S^p = A_0^p |x|^{ - \frac{\ell + 2sp}{p-1} } \in L^{ \frac{2N}{N+2s} }_{\rm loc} (\RN), \quad 
			u_S \in L^1(\RN, (1+|x|)^{-N-2s}).
		\end{equation}
	Next, we prove $u_{S}\in \Hsloc(\RN)$. 
	Let $\varphi \in \scrS$.
	We shall show $ (1+|\xi|^2)^{s} \left| \scrF[\varphi u_S] (\xi) \right|^2 \in L^1(\RN)$. 
	Since $\scrF u_S (\xi) = C |\xi|^{- N + \theta_0  }$ for some $C > 0$ 
	and $\theta_0<N$, 
	we have $\scrF[\varphi u_S]= \left( \scrF\varphi \ast \scrF u_S \right)\in L^\infty (\RN) $. 
	Moreover,
	it follows from \eqref{eq:1.2} and $p>p_S(N,\ell)$ that $\theta_0<(N-2s)/2$ and  
	\begin{equation}\label{eq:2.6}
		-N < - N + \theta_0 < - \frac{N}{2} -s.
	\end{equation}	
	Then it is easily seen that there exists a $C_0 > 0$ such that 
	\[
		\left| \left( \scrF\varphi \ast \scrF u_S \right) (\xi) \right| 
		\leq C_0 \left( 1 + |\xi| \right)^{-N + \theta_0 } \quad 
		\text{for all } \xi \in \RN.
	\]
	By \eqref{eq:2.6} one sees that 
	\[
		2s + 2 \left( - N + \theta_0 \right)  < - N, \quad 
		\left( 1 + |\xi|^2 \right)^s \left| \scrF[\varphi u_S] (\xi) \right|^2 \in L^1(\RN).
	\]
	This implies $\varphi u_S \in H^s(\RN)$, and we obtain $u_{S}\in \Hsloc(\RN)$.
	
	Next, we prove $u_{S}\not\in \dHs(\RN)$. 
	Since $\scrF u_S (\xi) = C |\xi|^{- N+\theta_0}$ for some $C>0$, 
	by considering $|\xi|^{2s} |\scrF u_S (\xi)|^2$ and looking at the index, we get 
	\[
		2 s - 2 N + 2\theta_0
		< 2s - 2N + (N-2s) = - N, \quad |\xi|^{2s} \left|\scrF u_S (\xi) \right|^2 \not \in L^1(B_1). 
	\]
	Thus, $u_S \not\in \dot{H}^s(\RN)$.

	Finally, by \cite[Lemma 2.1]{HIK-20}, $u_S \in H^s_{\rm loc} (\RN)$ and \eqref{eq:2.5}, 
	we observe that for any $\varphi \in C^\infty_c (\RN)$,
	\[
		\int_{  \RN} |x|^\ell u_S^p \varphi \, dx 
		= 
		\la \left( -\Delta \right)^s u_S, \varphi \ra_{\scrS^\ast, \scrS} 
		= \int_{  \RN} u_S \left( -\Delta \right)^s \varphi \, dx = \la u_S , \varphi \ra_{\dHs (\RN)}.
	\]
	Thus, $u_S$ is a solution of \eqref{eq:1.1} in the sense of Definition \ref{Definition:1.1} and we complete the proof. 
\end{proof}

\begin{proof}[Proof of Theorem {\rm\ref{Theorem:1.1}} {\rm(i)}]
	By Proposition \ref{Proposition:2.1} it is enough to show that $u_S$ is stable, that is, 
	\begin{equation}\label{eq:2.7}
		\| \varphi \|_{\dot{H}^s(\RN)}^2 - p \int_{\RN} |x|^\ell u_S^{p-1} \varphi^2 \,d x 
		\geq 0 
		\quad \text{for every $\varphi \in C^\infty_0(\RN)$}.
	\end{equation}
	To this end, we first recall the Hardy type inequality (see, e.g., \cite{FLS-08, H-77,Y-99})
	\begin{equation}\label{eq:2.8}
		\lambda (0) \int_{\RN} |x|^{-2s} \varphi^2 \,dx 
		\leq \int_{\RN} |\xi|^{2s} \left| F \left[\varphi\right](\xi) \right|^2 \, d \xi,
	\end{equation}
	where
	\[
	F \left[ \varphi \right] (\xi)
		:= (2\pi)^{-\frac{N}{2}} \int_{\RN} e^{- ix \cdot \xi} \varphi(x) \,d x.
	\]
	By $F[\varphi] (\xi) = (2\pi)^{-N/2} \scrF \varphi \left( \xi / (2\pi) \right)$ and 
	\begin{equation}\label{eq:2.9}
		\int_{\RN} |\xi|^{2s} \left| F \left[\varphi\right](\xi) \right|^2 \, d \xi 
		= \int_{\RN} \left( 2\pi |\xi| \right)^{2s} \left| \scrF{\varphi} (\xi)  \right|^2 \, d \xi 
		= \| \varphi \|_{\dot{H}^s(\RN)}^2, 
	\end{equation}
	it follows from \eqref{eq:2.8} and \eqref{eq:2.9} that
	\begin{equation}\label{eq:2.10}
		\lambda(0) \int_{\RN} |x|^{-2s} \varphi^2 \, dx
		\leq \| \varphi \|_{\dot{H}^s(\RN)}^2 
		\quad \text{for each $\varphi \in C^\infty_0(\RN)$}.
	\end{equation}
	On the other hand, since 
	\[
		|x|^\ell u_S^{p-1} (x) 
		=A_0^{p-1} |x|^{-2s} 
		= \lambda \left( \frac{N-2s}{2} - \theta_0 \right) |x|^{-2s},
	\]
	we observe from \eqref{eq:1.9} and \eqref{eq:2.10} that 
	\[
		\begin{aligned}
			\| \varphi \|_{\dot{H}^s(\RN)}^2 - p \int_{\RN} |x|^\ell u_S^{p-1} \varphi^2 \, dx 
			 & = \| \varphi \|_{\dot{H}^s(\RN)}^2 
			- p  \lambda \left( \frac{N-2s}{2} - \theta_0 \right) \int_{\RN} |x|^{-2s} \varphi^2 \, dx 
			\\
			 & \geq \| \varphi \|_{\dot{H}^s(\RN)}^2 
			- \lambda (0) \int_{\RN} |x|^{-2s} \varphi^2 \, dx \geq 0.
		\end{aligned}
	\]
	Hence, \eqref{eq:2.7} holds and we complete the proof. 
\end{proof}

%%%%%%%%%%%%%%%%%%%%%%%%%%%%%%%%%%%%%%%%%%%%%%%%%%%%%%%%%
%%%%%%%%%%%%%%%%%%%%%%%%%%%%%%%%%%%%%%%%%%%%%%%%%%%%%%%%%
%%%%%%%%%%%%%%%%%%%%%%%%%%%%%%%%%%%%%%%%%%%%%%%%%%%%%%%%%

\section{Construction of a family to stable radial solutions}\label{section:3}

%%%%%%%%%%%%%%%%%%%%%%%%%%%%%%%%%%%%%%%%%%%%%%%%%%%%%%%%%
%%%%%%%%%%%%%%%%%%%%%%%%%%%%%%%%%%%%%%%%%%%%%%%%%%%%%%%%%
%%%%%%%%%%%%%%%%%%%%%%%%%%%%%%%%%%%%%%%%%%%%%%%%%%%%%%%%%

In this section, we construct stable solutions of \eqref{eq:1.1}.
Throughout this section, we assume \eqref{eq:1.9}.
Let $\mu\in(0,1)$ and $u_S$ be a singular solution of \eqref{eq:1.1} in Theorem~\ref{Theorem:1.1} (i).
We first consider the following equation: 
\begin{equation}\label{eq:3.1}
	(-\Delta )^s u = |x|^\ell |u|^{p-1} u \quad \text{in} \ B_1, \quad 
	u = \mu \eta_\mu u_S \quad \text{in} \ \RN \setminus B_1,
\end{equation}
where $\eta_\mu \in C^\infty_c(\RN)$ is a cut-off function satisfying 
\begin{equation}\label{eq:3.2}
	\begin{aligned}
		 & \eta_\mu (x) := \eta_0 \left( (1-\mu) x \right), \quad \eta_0(x) = \eta_0(|x|), \quad
		0 \leq \eta_0(r) \leq 1\ \ \text{in} \ \ [0,\infty),                                               \\
		 & \eta_0 (r) \equiv 1 \ \ 
		\text{in} \ \ [0,1], \quad \eta_0(r) \equiv 0 \ \ \text{in} \ \ [2,\infty),\quad
		\eta_0'(r) \leq 0 \ \ \text{in} \ \ [0,\infty).
	\end{aligned}
\end{equation}
Remark that $\eta_{\mu} u_S \in H^s(\RN)$ and $\eta_{\mu} (x) \to 1$ as $\mu \nearrow 1$.

	In what follows, we only treat the existence of positive solutions. 
Therefore, we replace $ |u|^{p-1} u $ in \eqref{eq:3.1} by $u_+^p$ and consider 
	\begin{equation}\label{eq:3.3}
		\left( - \Delta \right)^s u = |x|^\ell u_+^p \quad \text{in} \ B_1, \quad u = \mu \eta_{\mu} u_S \quad 
		\text{in} \ \RN \setminus B_1.
	\end{equation}

First we give the definition of solutions, supersolutions and subsolutions of \eqref{eq:3.3}. 
Remark that $L^\infty(\RN) \subset L^1(\RN, (1+|x|)^{-N-2s}  d x)$.

\begin{definition}\label{Definition:3.1}
	We say that $u$ is a \emph{solution of \eqref{eq:3.3}} if $u$ satisfies 
	\[
		\begin{aligned}
			 & u \in \Hsloc (\RN), \quad  u= \mu \eta_\mu u_S  \quad 
			\text{in} \ \RN \setminus B_1, \quad 
			|x|^\ell u_+^p \in L^{ \frac{2N}{N+2s} } (B_1),
			\\
			 & \la u , \varphi \ra_{\dot{H}^s(\RN)}
			= \int_{\RN} |x|^\ell u^p_+ \varphi \, d x 
			\quad \text{for every $\varphi \in C^\infty_c(B_1)$}.
		\end{aligned}
	\]
	Similarly, $u$ is said to be a \emph{supersolution of \eqref{eq:3.3}}
	(resp. \emph{subsolution}) if $u$ satisfies 
	\[
		\begin{aligned}
			 & u \in \Hsloc(\RN) \cap L^1 ( \RN, (1+|x|)^{-N-2s} d x ), \quad 
			|x|^\ell u_+^p \in L^{ \frac{2N}{N+2s} } (B_1) ,
			\\
			 & u \geq \mu \eta_\mu u_S \quad \text{in} \ \RN \setminus B_1
			\qquad \left( \text{resp.} \ u \leq \mu \eta_\mu u_S \quad \text{in} \ \RN \setminus B_1  \right),
			\\
			 & \la u , \varphi \ra_{\dot{H}^s(\RN)} \geq \int_{\RN} |x|^\ell u_+^p \varphi  \, d x 
			\quad \text{for all $\varphi \in C^\infty_c(B_1)$ with $\varphi \geq0$}
			\\
			 & \left( \text{resp.} \ \
			\la u , \varphi \ra_{\dot{H}^s(\RN)} \leq \int_{\RN} |x|^\ell u_+^p \varphi \, d x 
			\quad \text{for all $\varphi \in C^\infty_c(B_1)$ with $\varphi \geq0$}
			\right).
		\end{aligned}
	\]
\end{definition}

\begin{remark}\label{Remark:3.1}
	\begin{enumerate}
		\item[(i)] 
			  Replacing $u_+^p$ by $|u|^{p-1}u$ in Definition \ref{Definition:3.1}, 
			  we may define the notation of solutions, supersolutions and subsolutions of \eqref{eq:3.1}. 
		\item[(ii)] 
			Since $p>p_S(N,\ell)$ implies $|x|^\ell u_S^p \in L^{ \frac{2N}{N+2s} } (B_1)$, 
			it is easily seen that $u_S$ is a supersolution of \eqref{eq:3.3} for all $ \mu \in (0,1]$. 
		\item[(iii)]
		      For solutions $u$ of \eqref{eq:3.1}, we require $u$ to satisfy $u = \mu \eta_\mu u_S$ in 
		      $\RN \setminus B_1$. Hence, $ u \in H^s(\RN) \cap L^1(\RN, (1+|x|)^{-N-2s} d x)$.
		\item[(iv)]
		      Since $H^s(\RN) \subset L^{2^\ast_s} (\RN)$ with $2^\ast_s:=(2N)/(N-2s)$, 
		      $L^{2^\ast_s} (B_1) = ( L^{ \frac{2N}{N+2s} } (B_1) )^\ast $ and 
		      we require the integrability condition $|x|^\ell u_+^p \in L^{ \frac{2N}{N+2s} } (B_1)$, 
		      in Definition \ref{Definition:3.1}, we may replace $C^\infty_c(B_1)$ by $\cH^s_0(B_1)$, where 
		      \[
			      \cH^s_0(B_1) := \Set{ u \in H^s(\RN) | u \equiv 0 \quad \text{in} \ \RN \setminus B_1 }.
		      \]
		      We remark that the following may be checked:  
		      \[
			      \cH^s_0(B_1) = \ov{C^\infty_c(B_1)}^{ \| \cdot \|_{H^s(\RN)} }. 
		      \]
	\end{enumerate}
\end{remark}

In order to prove the existence of solutions of \eqref{eq:3.1}, 
we shall find a bounded supersolution of \eqref{eq:3.3} for $0 < \mu < 1$. 
To this end, we need some preparations. 
\begin{proposition}\label{Proposition:3.1}
	Suppose that $u,v \in H^s_{\rm loc} (\RN) \cap L^1(\RN,  (1+|x|)^{-N-2s} d x ) $ satisfy 
	\begin{equation}\label{eq:3.4}
		\begin{aligned}
			 & u \leq v \quad \text{in} \ \RN \setminus B_1, \quad 
			 \la u, \varphi \ra_{\dot{H}^s(\RN)} \leq \la v, \varphi \ra_{\dot{H}^s(\RN)}
			\quad \text{for all $\varphi \in \cH^s_0(B_1)$ with $\varphi \geq 0$ in $B_1$}.
		\end{aligned}
	\end{equation}
	Then $u \leq v $ in $\RN$. In particular, if $v$ is any supersolution of \eqref{eq:3.3}, then $v \geq 0$ in $\RN$. 
\end{proposition}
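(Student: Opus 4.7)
The plan is to apply the weak comparison inequality \eqref{eq:3.4} to the test function $\varphi := (u-v)_+$. To make this legitimate, I must first check that $\varphi \in \cH^s_0(B_1)$, which is not entirely obvious because $u$ and $v$ lie only in $\Hsloc(\RN)$ rather than globally in $H^s(\RN)$. The support condition is immediate: $u \leq v$ outside $B_1$ forces $\varphi \equiv 0$ on $\RN \setminus B_1$, so $\supp \varphi \subset \ov{B_1}$. For the $H^s$-regularity, I take a cutoff $\psi \in C^\infty_c(\RN)$ with $\psi \equiv 1$ on a neighborhood of $\ov{B_1}$; since $u-v \in \Hsloc(\RN)$ we have $\psi(u-v) \in H^s(\RN)$, and the positive-part map is continuous on $H^s(\RN)$ for $s \in (0,1)$, so $[\psi(u-v)]_+ \in H^s(\RN)$. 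This function coincides with $\varphi$ on $\supp \varphi$, hence $\varphi \in H^s(\RN)$ and therefore $\varphi \in \cH^s_0(B_1)$ by Remark \ref{Remark:3.1} (iv).

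Setting $w := u-v$ and testing \eqref{eq:3.4} against $w_+$, the main ingredient is the elementary pointwise inequality
\[
(a-b)(a_+ - b_+) \geq (a_+ - b_+)^2 \qquad \text{for all } a,b \in \R,
\]
which follows from writing $a = a_+ - a_-$, $b = b_+ - b_-$ and observing that $(a_- - b_-)(a_+ - b_+) = -(a_- b_+ + a_+ b_-) \leq 0$. Integrating this pointwise bound in the Gagliardo-type double integral and combining with \eqref{eq:3.4} yields
\[
0 \geq \la u-v, w_+ \ra_{\dHs(\RN)} = \frac{C_{N,s}}{2}\iint_{\RN \times \RN} \frac{(w(x)-w(y))(w_+(x)-w_+(y))}{|x-y|^{N+2s}}\, dx\, dy \geq \|w_+\|_{\dHs(\RN)}^2,
\]
so $\|w_+\|_{\dHs(\RN)} = 0$. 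Hence $w_+$ is constant, and since it vanishes outside $B_1$, $w_+ \equiv 0$ on $\RN$, i.e.\ $u \leq v$.

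For the ``in particular'' statement, apply the first part with $u \equiv 0$ and $v$ any supersolution of \eqref{eq:3.3}: outside $B_1$ one has $v \geq \mu \eta_\mu u_S \geq 0 = u$, and for a non-negative test $\varphi \in \cH^s_0(B_1)$, the supersolution property of $v$ together with $|x|^\ell v_+^p \varphi \geq 0$ gives $\la v, \varphi \ra_{\dHs(\RN)} \geq \int_{\RN} |x|^\ell v_+^p \varphi\, dx \geq 0 = \la 0, \varphi \ra_{\dHs(\RN)}$, so \eqref{eq:3.4} holds and $v \geq 0$. The main potential obstacle is the verification $(u-v)_+ \in \cH^s_0(B_1)$ when neither $u$ nor $v$ is globally in $H^s(\RN)$; compact support of the positive part together with a smooth cutoff and the standard truncation stability of the fractional Sobolev space handles it.
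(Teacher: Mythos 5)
Your proof is correct and follows essentially the same route as the paper: test the inequality with $w_+=(u-v)_+$, use the pointwise bound $(a-b)(a_+-b_+)\geq (a_+-b_+)^2$ to get $\|w_+\|_{\dHs(\RN)}^2\leq 0$, and take $u\equiv 0$ for the supersolution statement. The only difference is that you spell out the verification $w_+\in\cH^s_0(B_1)$ via a cutoff and the Lipschitz stability of the positive part in $H^s$, which the paper simply asserts "by the assumption."
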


\begin{remark}\label{Remark:3.2}
	By \cite[Lemma 2.1]{HIK-20} and the density argument, 
	$\la u , \varphi \ra_{\dot{H}^{s} (\RN)} \in \R$ for all $u \in H^s_{\rm loc} (\RN) \cap L^1(\RN, (1+|x|)^{-N-2s} d x )$
	and $\varphi \in \cH^s_0(B_1)$.
\end{remark}

\begin{proof}[Proof of Proposition {\rm\ref{Proposition:3.1}}]
	Set $w:=u-v$.
	Remark that $(a_+-b_+)^2 \leq (a_+-b_+) ( a-b ) \leq (a-b)^2 $ for all $a,b \in \R$. 
	Thus, $w_+ \in \cH^s_0(B_1)$ holds by the assumption and 
	\begin{equation}\label{eq:3.5}
		0 \geq \la w, w_+ \ra_{\dot{H}^s(\RN)} \geq \| w_+ \|_{\dot{H}^s(\RN)}^2.
	\end{equation}
	Hence, $w_+ \equiv 0$ in $\RN$ and $u \leq v$ in $\RN$ holds.

	Next, let $u \equiv 0$. If $v$ is any supersolution of \eqref{eq:3.3}, then 
for every $\varphi \in \cH^s_0(B_1)$ with $\varphi \geq 0$ in $B_1$, 
	\[
		\begin{aligned}
			 & \la v, \varphi \ra_{\dot{H}^s (\RN)} \geq \int_{ \RN } |x|^\ell v_+^p \varphi \, \rd x 
			\geq 0 = \la u, \varphi \ra_{\dot{H}^s (\RN)}, \quad 
			 v \geq \mu \eta_\mu u_S \geq 0 = u \quad \text{in} \ \RN \setminus B_1.
		\end{aligned}
	\]
	Since $u$ and $v$ satisfy \eqref{eq:3.4}, we have $ 0 = u(x) \leq v(x)$ in $\RN$. 
\end{proof}
	
	Next we recall the strong maximum principle for the fractional Laplacian.
	
	\begin{proposition}{\rm(\cite[Theorem~1.1]{JW-19} and \cite[Theorem 4.1]{MN19})}
\label{Proposition:3.2}
	Let $\Omega\subset \RN$ be a domain, $c \in L^\infty_{\rm loc} (\Omega)$ and 
	$u \in \Hsloc( \RN ) \cap L^2 ( \RN, (1+|x|)^{-N-2s} d x )$ satisfy 
	\[
		u \geq 0 \quad \text{in} \ \RN, \quad \la u , \varphi \ra_{\dHs (\RN)} \geq \int_{\Omega} c(x) u \varphi \, dx 
		\quad \text{for any $\varphi \in C^\infty_c(\Omega)$ with $\varphi \geq 0$}.
	\]
	Then either $u \equiv 0$ in $\Omega$ or else $\inf_{K} u > 0$ for each compact $K \subset \Omega$. 
	In particular, if $u \not \equiv 0$, $u \geq 0$ in $\RN$ and $\la u, \varphi \ra_{\dHs(\RN)} \geq 0$ 
	for every $\varphi \in C^\infty_c(\Omega)$ with $\varphi \geq 0$, 
	then $u > 0$ a.e. in $\Omega$. 
\end{proposition}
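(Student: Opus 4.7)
The plan is to deduce the strong maximum principle from a nonlocal weak Harnack inequality for nonnegative weak supersolutions of the fractional Laplacian with a bounded zeroth-order term. First I would localize: fix $x_0 \in \Omega$ and a radius $r>0$ so small that $\overline{B_{4r}(x_0)} \subset \Omega$, and set $M := \|c\|_{L^\infty(B_{4r}(x_0))} < \infty$ by the assumption $c \in L^\infty_{\rm loc}(\Omega)$. Since $u \geq 0$ in $\RN$ and $c+M \geq 0$ on $B_{4r}(x_0)$, the weak inequality for $u$ rewrites as
\[
\la u,\varphi \ra_{\dHs(\RN)} + M\int_{\Omega} u\varphi\,dx
\geq \int_{\Omega}\left(c(x)+M\right)u\varphi\,dx \geq 0
\]
for every nonnegative $\varphi \in C^\infty_c(B_{4r}(x_0))$. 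Thus $u$ is a nonnegative weak supersolution of $(-\Delta)^s v + Mv = 0$ on $B_{4r}(x_0)$.

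The central ingredient is then the nonlocal weak Harnack inequality for such supersolutions: there exist $q_0 > 0$ and $C>0$, depending only on $N,s,M,r$, such that
\[
\left(\frac{1}{|B_{2r}|}\int_{B_{2r}(x_0)} u^{q_0}\,dx\right)^{1/q_0}
\leq C\,\inf_{B_r(x_0)} u + C\int_{\RN\setminus B_{2r}(x_0)} \frac{u(y)}{(r+|y-x_0|)^{N+2s}}\,dy,
\]
with the tail term on the right nonnegative because $u\geq 0$ in $\RN$. This yields the crucial implication: \emph{if $u$ does not vanish a.e.~on $B_{2r}(x_0)$, then $\inf_{B_r(x_0)} u > 0$}.

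From here the dichotomy follows by a standard propagation argument. Define
\[
A := \{\, x \in \Omega : \inf_{B_\rho(x)} u > 0 \text{ for some } \rho > 0 \text{ with } \overline{B_{4\rho}(x)} \subset \Omega \,\},
\]
which is open by construction, and let $B := \Omega \setminus A$. If $x \in B$, then the weak Harnack implication forces $u \equiv 0$ a.e.~on every admissible ball $B_{2\rho}(x)$, so $u \equiv 0$ a.e.~in a neighborhood of $x$; hence $B$ is also open. Each connected component of $\Omega$ therefore lies entirely in $A$ or entirely in $B$. On components contained in $B$ one has $u \equiv 0$ a.e.; on components in $A$, any compact $K$ is covered by finitely many balls $B_{\rho_j}(x_j)$ with $\inf_{B_{\rho_j}(x_j)} u > 0$, producing the uniform lower bound $\inf_K u > 0$. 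The final assertion of the proposition is simply the special case $c \equiv 0$.

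The main obstacle is the nonlocal weak Harnack inequality itself, which requires a careful treatment of the nonlocal tail in the weighted space $L^1(\RN,(1+|x|)^{-N-2s}\,dx)$ in which $u$ is assumed to lie. This is a deep regularity result belonging to the fractional De Giorgi--Nash--Moser theory; under essentially the present hypotheses it is established in \cite{JW-19, MN19}, which is why Proposition~\ref{Proposition:3.2} is invoked here as a black box rather than reproved from scratch.
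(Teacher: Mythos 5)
The paper offers no proof of this proposition at all: it is quoted from \cite[Theorem~1.1]{JW-19} and \cite[Theorem 4.1]{MN19}, and your overall architecture (localize to absorb $c$ into a bounded zeroth-order coefficient, invoke a weak Harnack inequality, then run an open--closed argument on $\Omega$) is indeed how those references proceed. However, your key step is broken as written. The weak Harnack inequality you quote has the nonlocal tail of $u$ itself \emph{added} on the right-hand side next to $\inf_{B_r(x_0)}u$. Since $u\ge 0$ in $\RN$, that tail is nonnegative and in general strictly positive, so the displayed inequality is perfectly consistent with $\inf_{B_r(x_0)}u=0$ while $u\not\equiv 0$ on $B_{2r}(x_0)$; the ``crucial implication'' you then draw is a non sequitur. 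The form of the weak Harnack inequality that actually does the job (Di Castro--Kuusi--Palatucci, and the version used in \cite{JW-19}) carries on the right only the tail of the \emph{negative part} $u_-$ outside the ball. Because $u\ge 0$ in all of $\RN$ that term vanishes, the inequality collapses to
\[
\Bigl(\tfrac{1}{|B_{2r}|}\textstyle\int_{B_{2r}(x_0)}u^{q_0}\,dx\Bigr)^{1/q_0}\le C\,\inf_{B_r(x_0)}u,
\]
and your implication, hence the open--closed propagation and the covering argument for compact $K$, go through. You should restate the lemma in this sharp form; otherwise the proof does not close.

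A second, smaller gap: the final assertion is not ``simply the special case $c\equiv 0$.'' The dichotomy with $c\equiv 0$ only yields ``either $u\equiv 0$ in $\Omega$ or $u>0$ a.e.\ in $\Omega$,'' whereas the hypothesis $u\not\equiv 0$ refers to $u$ on all of $\RN$ (this is exactly how the proposition is applied in Proposition \ref{Proposition:3.5}, where $w_1$ is only known to be nontrivial outside $B_1$). To exclude the first alternative you need the nonlocal observation that if $u\equiv 0$ on $\Omega$, $u\ge 0$ on $\RN$ and $u>0$ on a set of positive measure in $\RN\setminus\Omega$, then for nonnegative $\varphi\in C^\infty_c(\Omega)$ one has
\[
\la u , \varphi \ra_{\dHs(\RN)} = -\,C_{N,s}\int_{\Omega}\int_{\RN\setminus\Omega}\frac{u(y)\,\varphi(x)}{|x-y|^{N+2s}}\,dy\,dx<0
\]
for suitable $\varphi$, contradicting the supersolution inequality. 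Adding this one-line computation completes the ``in particular'' part.
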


	Next, to construct a bounded supersolution of \eqref{eq:3.3}, 
we follow the scheme in \cite{BCMR-96} and show

\begin{proposition}\label{Proposition:3.3}
	For each $\mu \in (0,1)$, there exists $\Phi_\mu: [0,\infty) \to [0,\infty)$ satisfying the following{\rm :}
	\begin{enumerate}
		\item[{\rm (i)}] $\Phi_\mu \in W^{2,\infty}( (0,\infty) ) $;
		\item[{\rm (ii)}] $\Phi_\mu'(u)$ is nonincreasing on $[0,\infty)$ and $\Phi_\mu'(u) > 0$ in $[0,\infty)$;
		\item[{\rm (iii)}] $\Phi_\mu (u) =  u $ if $u \in [0,M_0]$, where $M_0 := u_S(1)$;
		\item[{\rm (iv)}] For each $u \in [0,\infty)$, $\Phi_\mu'(u) u^p \geq \mu^{p-1} \Phi_\mu^p (u) $.
	\end{enumerate}
\end{proposition}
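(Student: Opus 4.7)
The plan is to construct $\Phi_\mu$ piecewise: on $[0,M_0]$ set $\Phi_\mu(u):=u$ (so (iii) is built in), and on $(M_0,\infty)$ use a \emph{shifted} solution of the ODE obtained by equality in (iv), namely $y'(v)\,v^p=\mu^{p-1}y(v)^p$. Integrating this ODE gives solutions of the form $y(v)=[a+bv^{-(p-1)}]^{-1/(p-1)}$, and introducing a translation produces the ansatz
\[
\Phi_\mu(u) := \bigl[A + B(u-\delta)^{-(p-1)}\bigr]^{-1/(p-1)} \qquad (u\ge M_0),
\]
with three positive parameters $A,B,\delta$ to be fixed by the three conditions: $C^1$-matching at $M_0$ (two equations) and (iv) at $u=\infty$.

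A direct computation gives $\Phi_\mu'(u)=B(u-\delta)^{-p}\Phi_\mu(u)^p$, so (iv) on $(M_0,\infty)$ is equivalent to $B\bigl(u/(u-\delta)\bigr)^p\ge\mu^{p-1}$. Since $u/(u-\delta)$ is decreasing in $u$ with limit $1$, this forces $B\ge\mu^{p-1}$; I take $B:=\mu^{p-1}$. The matching $\Phi_\mu(M_0)=M_0$ and $\Phi_\mu'(M_0)=1$ then gives
\[
\delta := M_0\bigl(1-\mu^{(p-1)/p}\bigr)\in(0,M_0), \qquad A := M_0^{-(p-1)}\bigl(1-\mu^{(p-1)/p}\bigr) > 0.
\]
With these values, (iii) and the $C^1$-matching are immediate, (iv) on $[0,M_0]$ reduces to $1\ge\mu^{p-1}$ (true since $\mu<1$), and (iv) on $(M_0,\infty)$ holds by the analysis above.

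For (ii), logarithmic differentiation on $(M_0,\infty)$ and the identity $\Phi_\mu^{-(p-1)}=A+B(u-\delta)^{-(p-1)}$ yield
\[
\frac{\Phi_\mu''(u)}{\Phi_\mu'(u)} = -\frac{p}{u-\delta}+p\,\frac{\Phi_\mu'(u)}{\Phi_\mu(u)} = \frac{-p\,A\,(u-\delta)^{p-1}}{A(u-\delta)^p+B(u-\delta)} \le 0,
\]
so $\Phi_\mu'$ is nonincreasing on $(M_0,\infty)$; combined with $\Phi_\mu'\equiv 1$ on $(0,M_0)$ and the continuous matching value $1$ at $M_0$, this gives (ii), while $\Phi_\mu'>0$ is manifest from the formula. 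For (i), $\Phi_\mu$ is smooth on each side of $M_0$ with $\Phi_\mu(u)\nearrow A^{-1/(p-1)}<\infty$ as $u\to\infty$; $\Phi_\mu''$ jumps from $0$ to a finite negative value at $M_0$ and decays at infinity, hence $\Phi_\mu\in W^{2,\infty}((0,\infty))$.

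The principal delicacy is that three conditions---$C^1$-matching at $M_0$ (value and slope) and (iv) at $u=\infty$---must be solved simultaneously with $A>0$, so a pure un-shifted ODE solution (with only two free parameters) is insufficient; the shift $\delta$ is precisely what provides the extra degree of freedom, and the explicit choice $\delta=M_0(1-\mu^{(p-1)/p})$ makes everything compatible.
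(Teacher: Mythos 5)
Your construction is correct, and it is genuinely different from the paper's. The paper follows the scheme of Brezis--Cazenave--Martel--Ramiandrisoa: it first bends $\Phi_\mu'$ down smoothly via a cutoff $\varphi_0$ on $[M_0,M_0+1]$, locates the first point $\zeta_\mu$ where equality in (iv) is attained, and only then switches to the \emph{un-shifted} exact ODE solution $\Psi_\mu'(u)=\mu^{p-1}\Psi_\mu^p(u)u^{-p}$; the concavity of $\Psi_\mu$ is then obtained indirectly, by a continuation argument showing $\Psi_\mu(u)\le u$ and feeding this back into the ODE, and boundedness of $\Psi_\mu''$ requires separately verifying $\zeta_\mu^{p-1}>\mu^{p-1}\Psi_\mu^{p-1}(\zeta_\mu)$. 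You instead glue the identity directly to a \emph{shifted} ODE solution at $M_0$, using the translation parameter $\delta$ as the extra degree of freedom needed to reconcile the $C^1$-matching with the asymptotic constraint $B\ge\mu^{p-1}$; I checked your formulas: $\Phi_\mu'(u)=B(u-\delta)^{-p}\Phi_\mu(u)^p$, the matching values $\delta=M_0(1-\mu^{(p-1)/p})\in(0,M_0)$ and $A=M_0^{-(p-1)}(1-\mu^{(p-1)/p})>0$, the reduction of (iv) on $(M_0,\infty)$ to $B\bigl(u/(u-\delta)\bigr)^p\ge\mu^{p-1}$, and the sign computation
\[
\frac{\Phi_\mu''(u)}{\Phi_\mu'(u)}=\frac{-pA(u-\delta)^{p-1}}{A(u-\delta)^p+B(u-\delta)}<0
\]
are all right, and the jump of $\Phi_\mu''$ at $M_0$ is harmless for $W^{2,\infty}$ (the paper's function has the same feature at $\zeta_\mu$). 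What your approach buys is explicitness: all parameters are in closed form, concavity and boundedness of $\Phi_\mu$ (with limit $A^{-1/(p-1)}$) are immediate, and no intermediate smoothing region or implicitly defined $\zeta_\mu$ is needed. What the paper's approach buys is flexibility: it works verbatim for a general nonlinearity $g(u)$ in place of $u^p$, where no explicit shifted solution is available. Since your $\Phi_\mu$ also satisfies $\Phi_\mu(u)\le u$ and $0<\Phi_\mu'\le 1$ (the content of Remark \ref{Remark:3.3}, used later), nothing downstream breaks.
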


\begin{proof}
	Fix $\varphi_0 \in C^1( [0, M_0+1] )$ so that 
	$\varphi_0 (u) = 1$ on $[0,M_0]$, $\varphi_0$ is nonincreasing in $[M_0,M_0+1]$ and 
	$\varphi_0 (M_0+1) = 0$. Remark that due to $\mu \in (0,1)$ and $p > 1$, 
	\begin{equation*}
		\varphi_0 (u) u^p > 
		\mu^{p-1} u^p =   \mu^{p-1} \left( \int_{0}^{u} \varphi_0 (t) \, d t \right)^p 
		\quad \mathrm{for\ all}\ u \in (0,M_0].
	\end{equation*}
	We set 
	\[
		\zeta_\mu := \sup \Set{ u \in [M_0,M_0+1] |
		\varphi_0 (t) t^p > \mu^{p-1} \left( \int_{0}^{t} \varphi_0 (\tau ) \, d \tau \right)^p
		\quad \mathrm{for\ each} \ t \in [M_0,u] }.
	\]
	By $\varphi_0 (M_0+1) = 0$ and the monotonicity of $\varphi_0$, 
	we see that 
	\begin{equation}\label{eq:3.6}
		\begin{aligned}
			 & M_0 < \zeta_\mu < M_0 + 1, \quad 
			\varphi_0 (\zeta_\mu) \zeta_\mu^p 
			= \mu^{p-1} \left( \int_{0}^{\zeta_\mu} \varphi_0 (\tau) \, d \tau \right)^p, 
			\\
			 & \varphi_0 (u) u^p \geq \mu^{p-1} \left( \int_{0}^{u} \varphi_0 (t) \, d t \right)^p 
			\quad \mathrm{for \ any \ } u \in [0,\zeta_\mu], \quad 
			\varphi_0 (\zeta_\mu) < 1.
		\end{aligned}
	\end{equation}

	Now we define $\Phi_\mu(u)$ by 
	\begin{equation}\label{eq:3.7}
		\Phi_\mu (u) := \left\{\begin{aligned}
			 & \int_{0}^{u} \varphi_0 (t)\, d t &  & 
			\mathrm{if} \ 0 \leq u \leq \zeta_\mu,                                 \\
			 & \Psi_\mu (u)                       &  & \mathrm{if} \ \zeta_\mu < u,
		\end{aligned}\right.
	\end{equation}
	where $\Psi_\mu(u)$ is a unique solution of 
	\begin{equation}\label{eq:3.8}
		\Psi_\mu '(u) = \mu^{p-1} \Psi_\mu^p (u) u^{-p} \quad 
		\mathrm{in} \ (\zeta_\mu , \infty), \quad 
		\Psi_\mu (\zeta_\mu) = \int_{0}^{\zeta_\mu} \varphi_0 (\tau) \, d \tau.
	\end{equation}
	Rewriting \eqref{eq:3.8} as 
	\[
		\frac{d}{d u} \Psi_\mu^{1-p} (u) = \mu^{p-1} \frac{d}{d u} u^{1-p} \quad 
		\mathrm{in} \ (\zeta_\mu,\infty),
	\]
	we see that 
	\begin{equation}\label{eq:3.9}
		\Psi_\mu (u) = \left\{ \Psi_\mu^{1-p}(\zeta_\mu)
		+ \mu^{p-1} \left( u^{1-p} - \zeta_\mu^{1-p} \right) \right\}^{- \frac{1}{p-1}}.
	\end{equation}
	Moreover, it follows from \eqref{eq:3.6} and \eqref{eq:3.8} that 
	\begin{equation}\label{eq:3.10}
		\Psi_\mu'( \zeta_\mu ) = \mu^{p-1} \Psi_\mu^p (\zeta_\mu ) \zeta_\mu^{-p}
		= \varphi_0 (\zeta_\mu)<1.
	\end{equation}

	We shall prove $\Psi_\mu \in W^{2,\infty} ( (\zeta_\mu,\infty) )$. 
	To this end, thanks to \eqref{eq:3.9}, it suffices to show 
	\begin{equation}\label{eq:3.11}
		\zeta_\mu^{p-1} > \mu^{p-1} \Psi_\mu^{p-1} (\zeta_\mu) .
	\end{equation}
	Noting the monotonicity of $\varphi_0$ and $ \varphi_0 (t) \leq 1$ in $[0,M_0+1]$ with $\varphi_0 (\zeta_\mu) < 1$, 
	we get 
	\begin{equation}\label{eq:3.12}
		\Psi_\mu (\zeta_\mu ) = \int_{0}^{\zeta_\mu} \varphi_0 (t) \, d t < \zeta_\mu.
	\end{equation}
	Since $\mu \in (0,1)$, \eqref{eq:3.11} is verified. 
	Thus, $\Phi_\mu \in W^{2,\infty} ((0,\infty) )$, and (i) holds.

	For (ii), by the choice of $\varphi_0$ and \eqref{eq:3.7}, 
	it is enough to show $\Psi_\mu''(u) \leq 0$ on $(\zeta_\mu,\infty)$. 
	Noting \eqref{eq:3.10} and \eqref{eq:3.12}, 
	we have $\Psi_\mu (u) \leq u$ in $[\zeta_\mu,\zeta_\mu + L_0]$ for some $L_0>0$. 
	By \eqref{eq:3.8}, if the inequality $\Psi_\mu (u) \leq u$ in $[\zeta_\mu, \zeta_\mu + L]$ holds for some $L>0$, then 
	$\Psi_\mu'(u) \leq \mu^{p-1} < 1$ in $[\zeta_\mu, \zeta_\mu + L]$
	and $\Psi_\mu(u) \leq u$ in $[\zeta_\mu, \zeta_\mu+L_1]$ for some $L_1 > L$. Hence, repeating this argument and 
	noting $\Psi_\mu (u) \leq u$ in $[\zeta_\mu,\zeta_\mu+L_0]$, 
	we observe $\Psi_\mu (u) \leq u$ for all $u \in [\zeta_\mu,\infty)$. 
	This together with \eqref{eq:3.8} implies that for $\zeta_\mu < u$,
	\[
		\begin{aligned}
			\Psi_\mu''(u) 
			 & = p \mu^{p-1} \Psi_\mu^{p-1} (u) \Psi_\mu'(u) u^{-p}
			- p \mu^{p-1} \Psi_\mu^p (u) u^{-p-1}
			\\
			 & = p \mu^{2(p-1)} u^{-2p} \Psi_\mu^{2p-1}  (u) - p \mu^{p-1} \Psi_\mu^p (u) u^{-p-1}
			\\
			 & = p \mu^{p-1} u^{-2p} \Psi_\mu^p (u) 
			\left\{ \mu^{p-1} \Psi_\mu^{p-1}  (u) - u^{p-1} \right\} 
			\leq 0.
		\end{aligned}
	\]
	Hence, $\Phi_\mu''(u) \leq 0$ in $(\zeta_\mu,\infty)$.

	(iii) and (iv) follow from \eqref{eq:3.6}, \eqref{eq:3.8} and the definition of $\Phi_\mu$ and $\varphi_0$, 
	and we complete the proof. 
\end{proof}

\begin{remark}\label{Remark:3.3}
	From the proof of Proposition \ref{Proposition:3.3}, we observe that 
	$\Phi_\mu (u) \leq u$ and $0< \Phi_\mu'(u) \leq 1$ for any $u \in [0,\infty)$. 
\end{remark}

\begin{proposition}\label{Proposition:3.4}
	For $ 0< \mu \leq 1$, set 
	\[
		\ov{\cS}_\mu := \Set{ u  | \text{$u$ is a supersolution of \eqref{eq:3.3}} }.
	\]
	If $u \in \ov{\cS}_1$, then for any $\mu \in (0,1)$, 
	$ \Phi_\mu \left( \mu u \right) \in \ov{\cS}_\mu \cap L^\infty(\RN) $. 
\end{proposition}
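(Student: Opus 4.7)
The plan is to set $\Phi(t) := \Phi_\mu(\mu t)$, define $w := \Phi(u)$, and verify that $w$ satisfies each condition in the definition of a supersolution of \eqref{eq:3.3}. By Proposition~\ref{Proposition:3.3}, $\Phi$ satisfies $\Phi(0) = 0$, $\Phi'(t) = \mu \Phi_\mu'(\mu t) \in (0, \mu]$ (using Remark~\ref{Remark:3.3}), $\Phi'$ is nonincreasing so $\Phi$ is concave on $[0,\infty)$, and $\Phi$ is bounded, which follows from the explicit form \eqref{eq:3.9} together with \eqref{eq:3.11}, since $\Psi_\mu(u) \to \{\Psi_\mu^{1-p}(\zeta_\mu) - \mu^{p-1}\zeta_\mu^{1-p}\}^{-1/(p-1)} < \infty$ as $u \to \infty$. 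Since $u \geq 0$ by Proposition~\ref{Proposition:3.1}, $w = \Phi(u) \geq 0$ is well-defined.

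First I would dispose of the regularity and exterior conditions. Boundedness of $\Phi$ yields $w \in L^\infty(\RN) \subset L^1(\RN, (1+|x|)^{-N-2s}\, dx)$ and, since $\ell > -2s$ and $N > 2s$, $|x|^\ell w^p \in L^{2N/(N+2s)}(B_1)$. Lipschitzness of $\Phi$ and the pointwise bound $|\Phi(u(x)) - \Phi(u(y))| \leq \|\Phi'\|_\infty |u(x) - u(y)|$ in the Gagliardo seminorm give $w \in \Hsloc(\RN)$. For the boundary inequality: $u_S$ is radially decreasing, so $u_S \leq M_0 = u_S(1)$ on $\RN \setminus B_1$, whence $\mu u_S \leq \mu M_0 < M_0$ and Proposition~\ref{Proposition:3.3}(iii) gives $\Phi(u_S) = \mu u_S$ on $\RN \setminus B_1$. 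Since $u \in \ov{\cS}_1$ entails $u \geq \eta_1 u_S = u_S$ on $\RN \setminus B_1$ (noting $\eta_1 \equiv 1$), monotonicity of $\Phi$ yields $w = \Phi(u) \geq \Phi(u_S) = \mu u_S \geq \mu \eta_\mu u_S$ there.

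The core of the proof is the weak supersolution inequality. Fix $\varphi \in C^\infty_c(B_1)$ with $\varphi \geq 0$ and set $\psi := \Phi'(u)\varphi$. Lipschitzness and boundedness of $\Phi'$ together with $u \in \Hsloc(\RN)$ give $\Phi'(u) \in \Hsloc(\RN) \cap L^\infty(\RN)$, hence $\psi \in \cH^s_0(B_1)$ with $\psi \geq 0$. Extending the supersolution inequality for $u \in \ov{\cS}_1$ to $\cH^s_0(B_1)$ by density (Remark~\ref{Remark:3.1}(iv)) yields
\begin{equation*}
\la u, \Phi'(u)\varphi \ra_{\dHs(\RN)} \geq \int_{\RN} |x|^\ell u^p \Phi'(u) \varphi \, dx.
\end{equation*}
Proposition~\ref{Proposition:3.3}(iv) applied at $\mu u$ reads $\Phi_\mu'(\mu u)(\mu u)^p \geq \mu^{p-1} w^p$, equivalently $\Phi'(u) u^p = \mu \Phi_\mu'(\mu u) u^p \geq w^p$, so the right-hand side above is at least $\int_{\RN} |x|^\ell w^p \varphi \, dx$. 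It then suffices to establish
\begin{equation*}
\la w, \varphi \ra_{\dHs(\RN)} \geq \la u, \Phi'(u)\varphi \ra_{\dHs(\RN)}.
\end{equation*}

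For this last step I would expand both sides via the Gagliardo double-integral representation and regroup the integrand of the difference as
\begin{equation*}
\varphi(x)\bigl[w(x) - w(y) - \Phi'(u(x))(u(x) - u(y))\bigr] + \varphi(y)\bigl[w(y) - w(x) - \Phi'(u(y))(u(y) - u(x))\bigr].
\end{equation*}
By concavity of $\Phi$ one has $\Phi(a) - \Phi(b) \geq \Phi'(a)(a-b)$ for all $a, b \geq 0$; applying it with $(a,b) = (u(x), u(y))$ and then with $(a,b) = (u(y), u(x))$ shows both bracketed factors are nonnegative, and since $\varphi \geq 0$ the integrand is pointwise nonnegative. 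Chaining the three inequalities completes the proof. The main obstacle is precisely this algebraic regrouping and the correct use of concavity in both orientations to obtain the two tangent-line residuals with matching signs; a secondary technical point is verifying that $\Phi'(u)\varphi \in \cH^s_0(B_1)$, which relies on the Lipschitzness of $\Phi'$ and the compact support of $\varphi$, so that the supersolution inequality for $u$ can be legitimately tested against $\psi$ after a standard nonnegative-preserving density approximation.
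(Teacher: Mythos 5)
Your proposal is correct, and the decisive step is handled by a genuinely different route than the paper's. The paper proves $\la \Phi_\mu(\mu u),\psi\ra_{\dHs(\RN)} \geq \la \mu u, \Phi_\mu'(\mu u)\psi\ra_{\dHs(\RN)}$ by first regularizing $u$ (cut-off $\varphi_k$ plus mollification $u_{\e,k}$), invoking the pointwise Kato-type inequality $(-\Delta)^s\Phi_\mu(\mu u_{\e,k}) \geq \Phi_\mu'(\mu u_{\e,k})(-\Delta)^s(\mu u_{\e,k})$ of Lemma \ref{Lemma:B.1} for smooth compactly supported functions, and then passing to the limit twice (in $\e$ and in $k$) using weak convergence in $H^s$ and the estimate \eqref{eq:3.13}. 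You instead prove the same quadratic-form inequality in one stroke by expanding both pairings as Gagliardo double integrals, regrouping the difference of integrands into the two tangent-line residuals at $x$ and at $y$, and using concavity of $\Phi = \Phi_\mu(\mu\,\cdot)$ together with $\varphi \geq 0$ to see that the integrand is pointwise nonnegative; this is essentially the computation inside the proof of Lemma \ref{Lemma:B.1} transplanted into the bilinear form, and it eliminates the entire approximation machinery. The price is that you must justify that both double integrals converge absolutely (so the regrouping of a difference of integrals into an integral of a difference is legitimate); this does hold for $u \in \Hsloc(\RN)\cap L^1(\RN,(1+|x|)^{-N-2s}dx)$ against a test function compactly supported in $B_1$ — it is the content of the splitting behind \eqref{eq:3.13} and Remark \ref{Remark:3.2} — but it deserves an explicit sentence, since for such $u$ the pairing is not a priori an absolutely convergent integral over all of $\RN\times\RN$ without that splitting. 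The remaining ingredients (the verification that $\Phi'(u)\varphi \in \cH^s_0(B_1)$ with $\varphi\geq0$, the use of Proposition \ref{Proposition:3.3} (iv) to pass from $\Phi'(u)u^p$ to $\Phi(u)^p$, and the exterior condition, where your observation that $u_S \leq M_0$ on $\RN\setminus B_1$ lets monotonicity of $\Phi_\mu$ replace the paper's two-case discussion) all match the paper's proof in substance.
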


\begin{proof}
	Let $u \in \ov{\cS}_1$. 
	Then, since it follows from Proposition \ref{Proposition:3.1} that $u \geq 0$ in $\RN$,
	we see that $v(x) := \Phi_\mu ( \mu u(x) )$ is well-defined. 
	By $u \in H^s_{\rm loc} (\RN)$ and $\Phi_\mu \in W^{2,\infty} ((0,\infty))$ due to Proposition \ref{Proposition:3.3}, 
	we find $v \in H^s_{\rm loc} (\RN) \cap L^\infty(\RN)$ and $|x|^\ell v^p \in L^{\frac{2N}{N+2s}} (B_1)$.

	In what follows, we shall show $v \in \ov{\cS}_\mu$. 
	Set 
	\[
		\varphi_k (x) := \varphi_1 (k^{-1} x), \quad 
		\varphi_1 \in C^\infty_c(\RN), \quad 
		\varphi_1 \equiv 1 \quad \text{in} \ B_1, \quad 
		\varphi_1 \equiv 0 \quad \text{in} \ \RN \setminus B_2.
	\]
	For each $\psi \in C^\infty_c(B_1)$, thanks to $u \in  \Hsloc (\RN) \cap L^1(\RN, (1+|x|)^{-N-2s} d x) $, 
	by \cite[Lemma 2.1]{HIK-20} we have 
	\begin{equation}\label{eq:3.13}
		\begin{aligned}
			        & \left| \la u , \psi \ra_{\dot{H}^s (\RN)}
			- \la \varphi_{k} u , \psi \ra_{\dot{H}^s (\RN)}  \right| 
			\\
			\leq \  & 
			C
			\left( \|  u - \varphi_{k} u \|_{\dHs (B_{2})} \| \psi \|_{\dHs(B_2)} + 
			\|  u - \varphi_{k} u \|_{L^2(B_{2})} \| \psi \|_{L^2(B_2)}
			\right.
			\\
			 &\hspace{4cm} 
			 \left. 
			+ \|  u - \varphi_{k} u \|_{L^1(\RN , (1+|x|)^{-N-2s} d x )} \| \psi \|_{L^1(B_2)}   \right),
		\end{aligned}
	\end{equation}
	where $C>0$ depends only on $N$ and $s$. 
	Therefore, it is easily seen that 
	\begin{equation*}
		\la \varphi_k u , \psi \ra_{\dot{H}^s (\RN)} \to \la u , \psi \ra_{\dot{H}^s (\RN)}
		\quad \text{as} \ k \to \infty.
	\end{equation*}

	Next, let $(\rho_\e)_\e$ be a mollifier and put $u_{\e,k} := \rho_\e \ast ( \varphi_k u) $. 
	Then $\| u_{\e,k} - \varphi_k u \|_{H^s(\RN)} \to 0$ as $\e \to 0$. 
	Now we define $v_{\e,k}$ and $v_k$ by
	\[
		\begin{aligned}
			v_{\e,k} (x) := \Phi_\mu \left( \mu u_{\e,k} (x) \right), \quad 
			v_k (x) := \Phi_\mu \left( \mu \varphi_k(x) u(x) \right). \quad 
		\end{aligned}
	\]
	Since $\Phi_\mu \in W^{2,\infty} ((0,\infty))$ and $\varphi_k u \in H^s(\RN)$, we see that 
	\begin{equation}\label{eq:3.14}
		\begin{aligned}
			 & v_{\e,k} \in W^{2,\infty}(\RN) \cap L^\infty(\RN), \quad 
			\text{$\supp v_{\e,k}$ is compact}, \quad 
			v_k \in H^s(\RN) \cap L^\infty(\RN),
			\\
			&
			v_{\e,k} \rightharpoonup v_k \quad \text{weakly in } H^s(\RN) \quad \text{as $\e \to 0$.}
		\end{aligned}
	\end{equation}
	Moreover, by Proposition \ref{Proposition:3.3} (ii) and Lemma \ref{Lemma:B.1}, it may be verified that
	\[ 
		\left( - \Delta \right)^s v_{\e,k} (x)
		= \left( - \Delta \right)^s \Phi_\mu \left( \mu u_{\e,k} \right) (x)
		\geq
		\Phi_\mu' \left( \mu  u_{\e,k}(x) \right) \left(-\Delta\right)^s \left( \mu u_{\e,k} \right) (x)
		\quad \text{for each $x \in \RN$}.
	\]
Thus, for $\psi \in C^\infty_c(B_1)$ with $\psi \geq 0$, we obtain 
	\begin{equation}\label{eq:3.15}
		\la v_{\e,k} , \psi \ra_{\dot{H}^s (\RN)}
		\geq 
		\la \mu u_{\e,k} , \Phi_\mu'( \mu u_{\e,k} ) \psi \ra_{\dot{H}^s (\RN)} .
	\end{equation}
	Due to $\Phi_\mu'( t ) \in W^{1,\infty} ((0,\infty) )$, we observe that 
	$( \Phi_\mu'(\mu u_{\e,k}) \psi )_{\e} $ is bounded in $H^s(\RN)$ and 
	it is easily seen that 
	\[
		\Phi_\mu'( \mu u_{\e,k} ) \psi 
		\rightharpoonup \Phi_\mu'( \mu \varphi_k u ) \psi
		\quad \text{weakly in $H^s(\RN)$ as $\e \to 0$.}
	\]
	By letting $\e \to 0$ in \eqref{eq:3.15}, it follows from \eqref{eq:3.14} and $\| u_{\e,k} - \varphi_k u \|_{H^s(\RN)} \to 0$ that 
	\begin{equation}\label{eq:3.16}
		\la v_k , \psi \ra_{\dot{H}^s (\RN)} \geq 
		\la \mu \varphi_k u , \Phi_\mu'( \mu \varphi_k u ) \psi \ra_{\dot{H}^s (\RN)} .
	\end{equation}

	Next, noting for $k \geq 2$, $\varphi_k u \equiv u$ on $B_2$ and 
	$\Phi_\mu'( \mu \varphi_k u ) \psi \equiv \Phi_\mu'( \mu u ) \psi$ on $\RN$, 
	$|\varphi_k u| \leq |u|$ and 
	$u \in L^1( \RN, (1+|x|)^{-N-2s} d x)$, 
	as in \eqref{eq:3.13}, we may verify that, for any $\psi \in C^\infty_c(B_1)$ with $\psi \geq 0$, 
	\[
		\la v_k , \psi \ra_{\dot{H}^s (\RN)} \to \la v , \psi \ra_{\dot{H}^s (\RN)}, 
		\quad 
		\la \mu \varphi_k u , \Phi_\mu'(\mu \varphi_k u) \psi \ra_{\dot{H}^s (\RN)}
		\to \la \mu u , \Phi_\mu'( \mu u ) \psi \ra_{\dot{H}^s (\RN)}. 
	\]
	Sending $k \to \infty$ in \eqref{eq:3.16} and noting $u \in \ov{\cS}_1$ and 
	$\Phi_\mu'(\mu u) \psi \in \cH^s_0(B_1)$, we see from Proposition~\ref{Proposition:3.3}~(iv) that 
	\[
		\begin{aligned}
			\la v, \psi \ra_{\dot{H}^s (\RN)}
			\geq \la \mu u , \Phi_\mu'(\mu u) \psi \ra_{\dot{H}^s (\RN)}
			\geq \mu \int_{B_1} |x|^\ell u^p \Phi_\mu'(\mu u ) \psi \, d x 
			 & \geq \int_{B_1} |x|^\ell \Phi_\mu^p (\mu u) \psi \, d x
			\\
			 & = \int_{B_1} |x|^\ell v^p \psi \, d x.
		\end{aligned}
	\]

	Finally, since $u \in \ov{\cS}_1$, we have $u_S(x) \leq u(x)$ for all $x \in \RN \setminus B_1(0)$. 
	If $\mu u(x) \geq M_0 $ holds at $x \in \RN \setminus B_1$, 
	then $ \mu u_S(x) \leq M_0 \leq \mu u(x)$, 
	and  Proposition \ref{Proposition:3.3}~(ii) and (iii) yield 
	$v(x) = \Phi_\mu (\mu u(x) ) \geq \Phi_\mu (\mu u_S(x)) = \mu u_S(x)$. 
	On the other hand, if $\mu u(x) \leq M_0$ holds at $x \in \RN \setminus B_1$, then 
	$v(x) = \Phi_\mu (\mu u(x)) = \mu u(x) \geq \mu u_S(x)$. 
	Thus, $v \in \ov{S}_\mu$. 
\end{proof}

\begin{Remark}\label{Remark:3.4}
Since $u_S \in \ov{\cS}_1$, it follows that $\Phi_\mu ( \mu u_S ) \in \ov{\cS}_\mu \cap L^\infty(\RN)$
	for every $\mu \in (0,1)$. 
\end{Remark}

Now, we shall construct a positive solution of \eqref{eq:3.3} (and \eqref{eq:3.1}).

\begin{proposition}\label{Proposition:3.5}
	For each $\mu \in (0,1)$, there exists a positive minimal solution $u_\mu \in H^s(\RN) \cap L^\infty(\RN) $
	of \eqref{eq:3.3}. Furthermore, $u_\mu$ is radial and $u_\mu \leq u$ in $\RN$ holds for 
	any $u \in \ov{\cS}_\mu$. 
\end{proposition}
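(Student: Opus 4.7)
The plan is to obtain $u_\mu$ by a standard monotone iteration squeezed between a subsolution and the bounded supersolution $\overline{u}_\mu := \Phi_\mu(\mu u_S) \in \overline{\mathcal{S}}_\mu \cap L^\infty(\RN)$ provided by Remark~\ref{Remark:3.4}.

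First, I would construct a subsolution $\underline{u}_\mu \in H^s(\RN) \cap L^\infty(\RN)$ as the unique solution of the linear Dirichlet problem $(-\Delta)^s \underline{u}_\mu = 0$ in $B_1$ with exterior datum $\mu \eta_\mu u_S \in H^s(\RN) \cap L^\infty(\RN)$, obtained by minimizing $v \mapsto \tfrac{1}{2}\|v\|_{\dot H^s(\RN)}^2$ over the affine set $\mu \eta_\mu u_S + \mathcal{H}^s_0(B_1)$. Proposition~\ref{Proposition:3.1} applied with $u\equiv 0$ yields $\underline{u}_\mu \geq 0$, and an application with $v$ replaced by any $u \in \overline{\mathcal{S}}_\mu$ gives $\underline{u}_\mu \leq u$ in $\RN$; in particular $\underline{u}_\mu \leq \overline{u}_\mu$.

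Next, set $v_0:=\underline{u}_\mu$, and for $n\geq 0$ define $v_{n+1}\in H^s(\RN)$ as the unique minimizer of $v \mapsto \tfrac12\|v\|_{\dot H^s(\RN)}^2 - \int_{B_1}|x|^\ell v_n^p v\,dx$ over $\mu\eta_\mu u_S + \mathcal{H}^s_0(B_1)$. This is well-posed since, by induction, $0\leq v_n\leq\overline{u}_\mu\in L^\infty$, so $|x|^\ell v_n^p\in L^{\frac{2N}{N+2s}}(B_1)$ in view of Remark~\ref{Remark:1.1}~(2). Repeated application of Proposition~\ref{Proposition:3.1} then gives the chain of inequalities $v_n \leq v_{n+1} \leq \overline{u}_\mu$ and $v_n\leq u$ for every $u \in \overline{\mathcal{S}}_\mu$, again by induction on $n$. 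Since each linear problem has radially symmetric data and radially symmetric $B_1$, uniqueness of the minimizer forces each $v_n$ to be radial.

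Now I would pass to the limit. Pointwise monotone convergence gives $v_n \nearrow u_\mu$, and the uniform bound $0\leq v_n\leq\overline{u}_\mu\in L^\infty(\RN)$ together with dominated convergence yields $|x|^\ell v_n^p \to |x|^\ell u_\mu^p$ strongly in $L^{\frac{2N}{N+2s}}(B_1)$. Testing the equation for $v_{n+1}$ against $v_{n+1}-\mu\eta_\mu u_S\in\mathcal{H}^s_0(B_1)$ and using the Sobolev embedding $\mathcal{H}^s_0(B_1)\hookrightarrow L^{\frac{2N}{N-2s}}(B_1)$ produces a uniform $\dot H^s(\RN)$ bound; extracting a weakly convergent subsequence, the weak limit coincides with the pointwise limit $u_\mu\in H^s(\RN)\cap L^\infty(\RN)$, and passing to the limit in the weak formulation shows $u_\mu$ solves \eqref{eq:3.3}. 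Positivity of $u_\mu$ in $B_1$ follows from $u_\mu\geq\underline{u}_\mu$ combined with the strong maximum principle of Proposition~\ref{Proposition:3.2} applied to the nontrivial $s$-harmonic $\underline{u}_\mu$, and radiality is inherited from the $v_n$. Finally, minimality $u_\mu\leq u$ for every $u\in\overline{\mathcal{S}}_\mu$ follows by passing to the limit in $v_n\leq u$.

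The main technical obstacle is the passage to the limit in $H^s(\RN)$: although the $L^\infty$ bound is immediate from the supersolution $\overline{u}_\mu$, one still needs a uniform $\dot H^s(\RN)$ bound for the iterates so that the weak $H^s$-limit of $v_n$ really is $u_\mu$ and the weak formulation can be closed; this is where the splitting $v_{n+1}=\mu\eta_\mu u_S+w_{n+1}$ with $w_{n+1}\in\mathcal{H}^s_0(B_1)$ and Hardy/Sobolev-type estimates on $B_1$ must be used carefully, since the exterior datum $\mu\eta_\mu u_S$ is only in $H^s(\RN)$ and not compactly supported in $B_1$.
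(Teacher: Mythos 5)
Your proposal is correct and follows essentially the same route as the paper: a monotone iteration started from the $s$-harmonic function with exterior datum $\mu\eta_\mu u_S$, squeezed below the bounded supersolution $\Phi_\mu(\mu u_S)$ from Remark~\ref{Remark:3.4}, with the uniform $\dot H^s$ bound obtained by testing against the iterate minus the exterior datum and minimality/positivity obtained exactly as in the paper. The only cosmetic differences are that you phrase each linear step as an energy minimization, you deduce radiality from the rotational invariance of each iterate rather than from minimality of the limit, and you test with $v_{n+1}-\mu\eta_\mu u_S$ instead of the smooth truncation $\wt{u}_{S,\mu}$ (both work, since $\eta_\mu u_S\in H^s(\RN)$).
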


\begin{proof}
	Let $w_1 \in H^s(\RN)$ be a solution of 
	\begin{equation}\label{eq:3.17}
		(-\Delta)^s w_1 = 0 \quad \text{in} \ B_1 , \quad 
		w_1 = \mu \eta_\mu u_S \quad \text{in} \ \RN \setminus B_1
	\end{equation}
	in the sense that 
	\[
		\la w_1 , \varphi \ra_{\dot{H}^{s} (\RN)} = 0 \quad \text{for all $\varphi \in C^\infty_c(B_1)$}, \quad 
		w_1 = \mu \eta_\mu u_S \quad \text{in} \ \RN \setminus B_1.
	\]
	To see that \eqref{eq:3.17} has a solution, choose $\wt{u}_S \in C^\infty(\RN)$ satisfying 
	\[
		\wt{u}_S  = u_S  \quad \text{in} \ \RN \setminus B_1, \quad 
		\wt{u}_S \geq 0 \quad \text{in} \ B_1. 
	\]
	Define $\wt{u}_{S,\mu}$ by 
	\begin{equation}\label{eq:3.18}
		\wt{u}_{S,\mu} := \mu \eta_\mu \wt{u}_S \in C^\infty_c(\RN). 
	\end{equation}
	Then the equation 
	\[
		\left( -\Delta \right)^s \wt{w}_1 =  \left( -\Delta \right)^s ( \wt{u}_{S,\mu} ) \quad 
		\text{in} \ B_1, \quad \wt{w}_1 \in \cH^s_0(B_1)
	\]
	has a unique solution $\wt{w}_1$,
	where we use an equivalent norm $\| \cdot \|_{\dHs(\RN)}$ on $\cH^s_0(B_1)$ and $\wt{w}_1$
	satisfies 
	\[
		\la \wt{w}_1 , \varphi \ra_{\dot{H}^{s} (\RN)} = \la \wt{u}_{S,\mu} , \varphi \ra_{\dot{H}^s (\RN)} \quad 
		\text{for all $\varphi \in \cH^s_0(B_1)$}. 
	\]
	Hence, $w_1 := \wt{u}_{S,\mu} - \wt{w}_1 \in H^s(\RN)$ is a solution of \eqref{eq:3.17}:
	\[
		\la w_1 , \varphi \ra_{\dot{H}^{s} (\RN)} = 0 \quad \text{for all $\varphi \in \cH^s_0(B_1)$}, \quad 
		w_1 = \wt{u}_{S,\mu} = \mu \eta_{\mu} u_S \quad \text{in} \ \RN \setminus B_1. 
	\]
	
	Notice that for any $u \in \ov{\cS}_\mu$, we have 
	\[
		\la w_1 , \varphi \ra_{\dot{H}^s (\RN)} = 0 \leq \int_{B_1} |x|^\ell u^p \varphi  \, d x 
		\leq \la u , \varphi \ra_{\dot{H}^s (\RN)}
		\quad \text{for all $\varphi \in \cH^s_0(B_1)$ with $\varphi \geq 0$}.
	\]
	Hence, by Proposition \ref{Proposition:3.1}, 
	$0 \leq w_1 \leq u$ in $\RN$ for all $u \in \ov{\cS}_\mu$. 
	In particular, $w_1 \leq \Phi_\mu (\mu u_S)$ in $\RN$ 
	since $\Phi_\mu (\mu u_S) \in \ov{\cS}_\mu$ due to 
	Remark \ref{Remark:3.4}, hence, $w_1 \in L^\infty(\RN)$. 
	In addition, by \eqref{eq:3.17} and Proposition~\ref{Proposition:3.2}, we have $ w_1 > 0$ in $B_1$.

	Next, by $w_1 \in L^\infty(\RN)$, let $w_2 \in H^s(\RN)$ be a solution of 
	\[
		(-\Delta)^s w_2 = |x|^\ell w_1^p \geq 0 = (-\Delta)^s w_1 \quad \text{in} \ B_1, 
		\quad w_2 = \mu \eta_\mu u_S 
		\quad \text{in} \ \RN \setminus B_1.
	\]
	Since $|x|^\ell w_1^p \leq |x|^\ell u^p$ in $B_1$ for each $u \in \ov{\cS}_\mu$, 
	as in the above, we get 
	\[
		w_1 \leq w_2 \leq u \quad \text{in $\RN$ for any $u \in \ov{\cS}_\mu$}, \quad 
		w_2 \leq \Phi_\mu (\mu u_S), \quad w_2 \in L^\infty(\RN).
	\]
	We repeat this procedure and let $w_{n+1} \in H^s(\RN) $ be a solution of 
	\begin{equation}\label{eq:3.19}
		(-\Delta)^s w_{n+1} = |x|^\ell w_n^p \quad \text{in} \ B_1, \quad 
		w_n = \mu \eta_\mu u_S \quad \text{in} \ \RN\setminus B_1.
	\end{equation}
	Then $w_n \leq w_{n+1} \leq u$ in $\RN$ for each $u \in \ov{\cS}_\mu$, 
	$w_{n+1} \leq \Phi_\mu ( \mu u_S)$ in $\RN$ and $w_{n+1} \in L^\infty(\RN)$.

	Next, we show that $(w_n)_n$ is bounded in $H^s(\RN)$. 
	To see this, since $w_{n+1} - \wt{u}_{S,\mu} \in \cH^s_0(B_1)$, 
	it follows that 
	\begin{equation}\label{eq:3.20}
		\begin{aligned}
			\int_{B_1} |x|^\ell w_n^p ( w_{n+1} - \wt{u}_{S,\mu} ) \, d x 
			&= \la w_{n+1} , w_{n+1} - \wt{u}_{S,\mu} \ra_{\dot{H}^s(\RN)}
			\\
			&\geq \|  w_{n+1}  \|_{\dot{H}^s(\RN)}^2 - \| w_{n+1} \|_{\dot{H}^s(\RN)} 
			\| \wt{u}_{S,\mu} \|_{\dot{H}^s(\RN)}.
		\end{aligned}
	\end{equation}
	Since $ w_n \leq \Phi_\mu ( \mu u_S )$ in $\RN$ for each $n \geq 1$ and $\Phi_\mu (\mu u_S) \in L^\infty(\RN)$, 
	from \eqref{eq:3.20}, it follows that $( \| w_n \|_{\dot{H}^s(\RN)} )_n$ is bounded. 
	The boundedness of $( \| w_n \|_{L^2(\RN)} )_n$ follows from 
	$w_n \leq \Phi_\mu (\mu u_S)$ in $\RN$ and $w_n = \mu \eta_\mu u_S$ in $\RN \setminus B_1$.

	Owing to the monotonicity of $w_n$, we set $u_\mu(x) := \lim_{n \to \infty} w_n(x)$ and remark that 
	$w_{n} \rightharpoonup u_\mu$ weakly in $H^s(\RN)$.
	Then it is easily seen from \eqref{eq:3.19} that 
	\[
		\begin{aligned}
			 & u_\mu = \mu \eta_\mu u_S \quad \text{in} \ \RN \setminus B_1, \quad 
			0 < w_1 \leq  u_\mu \leq \min \left\{ u , \Phi_\mu (\mu u_S) \right\} 
			\quad \text{in} \ B_1  \ \text{for every $u \in \ov{\cS}_\mu$},
			\\
			 & \la u_\mu, \varphi \ra_{\dot{H}^s (\RN)} = \int_{B_1} |x|^\ell u_\mu^p \varphi \, d x 
			\quad \text{for any $\varphi \in \cH^s_0(B_1(0))$}.
		\end{aligned}
	\]
	Since $\ov{\cS}_\mu$ contains all nontrivial solutions of \eqref{eq:3.3} by definition, we observe that 
	$u_\mu \in H^s(\RN) \cap L^\infty(\RN)$ is a positive minimal solution of \eqref{eq:3.3}.

	Finally, noting that $u_\mu(R x)$ is also a solution of \eqref{eq:3.3} for all $R \in O(N)$
	due to the radial symmetry of $\eta_{\mu} u_S$, 
	we have $u_{\mu} (x) \leq u_{\mu} (Rx)$ for any $R \in O(N)$, 
	which implies $u_{\mu} (R^{-1} x) \leq u_{\mu} (x) \leq u_{\mu} (Rx)$ for any $R \in O(N)$. 
	Thus $u_{\mu} (x) = u_{\mu} (Rx)$ for every $R \in O(N)$ and 
	$u_\mu$ is radial. 
\end{proof}

	In the following, using $u_{\mu}$ in Proposition \ref{Proposition:3.5} and the stability of $u_{S}$, 
we shall construct stable solutions of \eqref{eq:1.1}. 
To this end, we show the separation property and the convergence result of $(u_{\mu})_{0<\mu<1}$. 

\begin{proposition}\label{Proposition:3.6}
	For each $\mu\in(0,1)$, let $u_\mu$ be a positive minimal solution of \eqref{eq:3.3} in Proposition~{\rm\ref{Proposition:3.5}}.
	Then the following hold:
	\begin{enumerate}
		\item[{\rm (i)}]
		      Let $0< \mu_1 < \mu_2 < 1$. Then 
		      $u_{\mu_1} \le u_{\mu_2}$ in $\RN$;
		\item[{\rm (ii)}]
		      As $\mu \nearrow 1$, $u_{\mu} \rightharpoonup u_S$ weakly in $H^s_{\rm loc} (\RN)$. 
	\end{enumerate}
\end{proposition}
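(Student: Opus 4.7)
For part (i), I plan to verify $u_{\mu_2} \in \ov{\cS}_{\mu_1}$ and then invoke the minimality property of Proposition~\ref{Proposition:3.5}. Since $\eta_0$ is nonincreasing on $[0,\infty)$ and $1-\mu_1 > 1-\mu_2 > 0$, we have $\eta_{\mu_1}(x) \leq \eta_{\mu_2}(x)$ for every $x$, and combined with $\mu_1 < \mu_2$ this gives $\mu_1 \eta_{\mu_1} u_S \leq \mu_2 \eta_{\mu_2} u_S$ in $\RN \setminus B_1$. Together with the fact that $u_{\mu_2}$ solves \eqref{eq:3.3} (with parameter $\mu_2$) as an equality in $B_1$, this shows $u_{\mu_2} \in \ov{\cS}_{\mu_1}$, hence $u_{\mu_1} \leq u_{\mu_2}$ in $\RN$ by minimality.

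For part (ii), first note $u_S \in \ov{\cS}_\mu$ for every $\mu \in (0,1)$ by Remark~\ref{Remark:3.1}, so minimality yields $u_\mu \leq u_S$ in $\RN$. Together with (i), the monotone pointwise limit $u^\ast(x) := \lim_{\mu \nearrow 1} u_\mu(x) \leq u_S(x)$ exists; since $\eta_\mu(x) \to 1$ for each $x$, we have $u^\ast = u_S$ in $\RN \setminus B_1$. Dominated convergence with majorants $u_S \in L^2_{\mathrm{loc}}(\RN)$ and $|x|^\ell u_S^p \in L^{2N/(N+2s)}(B_1)$ then yields $u_\mu \to u^\ast$ in $L^2_{\mathrm{loc}}(\RN)$ and $|x|^\ell u_\mu^p \to |x|^\ell (u^\ast)^p$ in $L^{2N/(N+2s)}(B_1)$.

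The central step is to establish a uniform bound on $\|u_\mu\|_{H^s(K)}$ for every compact $K$. Decomposing $u_\mu = v_\mu + \wt{u}_{S,\mu}$ with $\wt{u}_{S,\mu} = \mu \eta_\mu \wt{u}_S \in C^\infty_c(\RN)$ and $v_\mu \in \cH^s_0(B_1)$ as in the proof of Proposition~\ref{Proposition:3.5}, testing the equation for $u_\mu$ against $v_\mu$ gives
\[
\|v_\mu\|_{\dHs(\RN)}^2 = -\la \wt{u}_{S,\mu}, v_\mu \ra_{\dHs(\RN)} + \int_{B_1} |x|^\ell u_\mu^p v_\mu \, dx.
\]
The Sobolev embedding $\cH^s_0(B_1) \hookrightarrow L^{2N/(N-2s)}(B_1)$ and $|x|^\ell u_\mu^p \leq |x|^\ell u_S^p \in L^{2N/(N+2s)}(B_1)$ bound the last integral by $C\|v_\mu\|_{\dHs(\RN)}$. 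For the pairing I rewrite $\la \wt{u}_{S,\mu}, v_\mu \ra_{\dHs(\RN)} = \int_{B_1} v_\mu (-\Delta)^s \wt{u}_{S,\mu}\,dx$ (using $\supp v_\mu \subset \ov{B_1}$) and show that $(-\Delta)^s \wt{u}_{S,\mu}$ is uniformly bounded on $B_1$: for $\mu$ close to $1$, $\eta_\mu \equiv 1$ on $B_1$, and the splitting
\[
(-\Delta)^s(\eta_\mu \wt{u}_S)(x) = (-\Delta)^s \wt{u}_S(x) + C_{N,s}\int_{|y| \geq 1/(1-\mu)} \frac{\wt{u}_S(y)(1-\eta_\mu(y))}{|x-y|^{N+2s}}\,dy
\]
for $x \in B_1$ has bounded first term (since $\wt{u}_S$ is smooth with $\wt{u}_S(y) \sim A_0 |y|^{-\theta_0}$ at infinity), while the tail integral is $O((1-\mu)^{\theta_0+2s})$. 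Thus $\|v_\mu\|_{\dHs(\RN)} \leq C$ uniformly, so $(u_\mu)$ is bounded in $H^s(K)$ for every compact $K$, and any weak $H^s_{\mathrm{loc}}(\RN)$ subsequential limit must coincide with the pointwise limit $u^\ast$.

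It remains to prove $u^\ast = u_S$. Passing to the limit in $\la u_\mu, \varphi \ra_{\dHs(\RN)} = \int_{B_1} |x|^\ell u_\mu^p \varphi \, dx$ for $\varphi \in C^\infty_c(B_1)$, we see that $u^\ast$ solves $(-\Delta)^s u^\ast = |x|^\ell (u^\ast)^p$ in $B_1$ with $u^\ast = u_S$ outside $B_1$. Setting $w := u_S - u^\ast \geq 0$, we have $w \in \cH^s_0(B_1)$; testing $(-\Delta)^s w = |x|^\ell(u_S^p - (u^\ast)^p)$ with $w$ and applying the tangent-line inequality $u_S^p - (u^\ast)^p \leq p u_S^{p-1} w$ (strict wherever $w > 0$) yields
\[
\|w\|_{\dHs(\RN)}^2 \leq p A_0^{p-1} \int_{\RN} |x|^{-2s} w^2\,dx \leq \lambda(0) \int_{\RN} |x|^{-2s} w^2\,dx,
\]
where the last inequality is the hypothesis \eqref{eq:1.9}. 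Combining with the Hardy inequality \eqref{eq:2.10} forces equality throughout; in particular the tangent-line inequality becomes an equality almost everywhere, which is possible only if $w \equiv 0$. Hence $u^\ast = u_S$, and since every sequence $\mu_n \nearrow 1$ admits a further subsequence along which $u_{\mu_n} \rightharpoonup u_S$ weakly in $H^s_{\mathrm{loc}}(\RN)$, the whole family converges as claimed. The main obstacle is the uniform $H^s_{\mathrm{loc}}$ bound, which rests on the uniform-in-$\mu$ estimate on $(-\Delta)^s \wt{u}_{S,\mu}$ over $B_1$.
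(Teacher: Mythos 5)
Your proof is correct, and both halves take routes that differ from the paper's. For (i), you verify $u_{\mu_2}\in\ov{\cS}_{\mu_1}$ (using $\mu_1\eta_{\mu_1}\le\mu_2\eta_{\mu_2}$ and the fact that a solution is in particular a supersolution) and invoke the minimality clause of Proposition \ref{Proposition:3.5}; the paper instead tests $(u_{\mu_1}-u_{\mu_2})_+$ and derives a contradiction with the stability of $u_{\mu_1}$ (inherited from $u_{\mu_1}\le\mu u_S$ and the stability of $u_S$). Your argument is shorter and avoids stability altogether; the paper's is self-contained in the energy framework but needs the strict-inequality step with a $C^\infty_c$ approximation. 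For (ii), the key uniform $H^s_{\rm loc}$ bound is obtained in the paper by testing with $\varphi_k(u_\mu-\wt{u}_{S,\mu})$ and invoking the trilinear estimates of \cite[Lemma 2.1]{HIK-20}, whereas you exploit the global structure $u_\mu=v_\mu+\wt{u}_{S,\mu}$ with $v_\mu\in\cH^s_0(B_1)$ and reduce everything to a uniform-in-$\mu$ bound on $\|(-\Delta)^s\wt{u}_{S,\mu}\|_{L^\infty(B_1)}$, for which your tail estimate $O((1-\mu)^{\theta_0+2s})$ is correct (note $\eta_\mu\equiv1$ on $B_1$ for every $\mu\in(0,1)$, not just $\mu$ near $1$, and the convergence of the tail integral uses $\theta_0+2s>0$). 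This buys a bound in the global space $H^s(\RN)$ for $v_\mu$, which also simplifies the passage to the limit in the nonlocal bilinear form — a step the paper handles with the cut-off decomposition around \eqref{eq:3.26}; you should make explicit that $\la u_\mu,\varphi\ra_{\dHs(\RN)}\to\la u^\ast,\varphi\ra_{\dHs(\RN)}$ follows from $v_\mu\rightharpoonup v^\ast$ weakly in $H^s(\RN)$ together with the uniform convergence of $(-\Delta)^s\wt{u}_{S,\mu}$ on $\supp\varphi$, but this is routine given your setup. The final identification $u^\ast=u_S$ via the tangent-line inequality, \eqref{eq:1.9} and the Hardy inequality \eqref{eq:2.10} is the paper's argument with the stability of $u_S$ unwound explicitly; as in the paper, one needs the extension of \eqref{eq:2.10} from $C^\infty_c(\RN)$ to $\cH^s_0(B_1)$ by density, which is standard.
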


\begin{proof}
	We first prove the separation property.
	Let $0<\mu_1<\mu_2<1$.
	Then it follows from \eqref{eq:3.2} that $\mu_1 \eta_{\mu_1} (x) \le \mu_2 \eta_{\mu_2}(x)$ for all $x \in \RN$. 
	Hence, by \eqref{eq:3.3}, $u_{\mu_1} \le u_{\mu_2}$ in $\RN \setminus B_1$. 
	Therefore,  since $(u_{\mu_1} - u_{\mu_2} )_+ \in \cH^s_0(B_1)$, similarly to \eqref{eq:3.5},
	it holds that
	\[
		\begin{aligned}
			\| ( u_{\mu_1} - u_{\mu_2} )_+ \|_{\dot{H}^s(\RN)}^2 
			&\leq 
			\la u_{\mu_1} - u_{\mu_2} , ( u_{\mu_1} - u_{\mu_2} )_+ \ra_{\dHs(\RN)}
			\\
			&= \int_{B_1} |x|^\ell \left( u_{\mu_1}^p - u_{\mu_2}^p \right)
			\left( u_{\mu_1} - u_{\mu_2} \right)_+  d x.
		\end{aligned}
	\]
	If $(u_{\mu_1} - u_{\mu_2})_+ \not\equiv 0$, then 
	we have 
	\[
		\begin{aligned}
			\int_{B_1} |x|^\ell \left( u_{\mu_1}^p - u_{\mu_2}^p \right)
			\left( u_{\mu_1} - u_{\mu_2} \right)_+ d x 
			 & = \int_{B_1} |x|^\ell p \int_{0}^{1}
			\left( \theta u_{\mu_1} + (1-\theta) u_{\mu_2}  \right)^{p-1} d \theta 
			(u_{\mu_1} - u_{\mu_2})_+^2 \,d x
			\\
			 & < p \int_{B_1} |x|^\ell u_{\mu_1}^{p-1} \left( u_{\mu_1} - u_{\mu_2} \right)^2_+ \,d x.
		\end{aligned}
	\]
	Since $(u_{\mu_1} - u_{\mu_2})_+ \in \cH^s_0(B_1)$ and $C^\infty_c(B_1)$ is dense in $\cH^s_0(B_1)$, 
	we may find a $w \in C^\infty_c(B_1)$ so that 
	\begin{equation}\label{eq:3.21}
		\| w \|_{\dot{H}^s(\RN)}^2 < p \int_{B_1} |x|^\ell u_{\mu_1}^{p-1} w^2 \, d x. 
	\end{equation}
	On the other hand, since $u_\mu \leq \Phi_\mu (\mu u_S) \leq \mu u_S$ with $\mu \in (0,1)$ and $u_S$ is stable, it follows that 
	\[
		\| \varphi \|_{\dot{H}^s(\RN)}^2 - p \int_{\RN} |x|^\ell u_\mu^{p-1} \varphi^2 \, d x 
		\geq 0 \quad \text{for all $\varphi \in C^\infty_c(B_1)$},
	\]
	which contradicts \eqref{eq:3.21} and $ u_{\mu_1} \leq u_{\mu_2}$ in $\RN$.

	Next we prove the convergence property.
	To this end, we first show that $( u_\mu )_{1/2<\mu<1}$ is bounded in $H^s_{\rm loc}(\RN)$. 
	By \eqref{eq:3.2} and \eqref{eq:3.18} it is easily seen that
	\begin{equation}\label{eq:3.22}
		\wt{u}_{S,\mu}  \varphi \to \wt{u}_{S} \varphi \quad \text{in} \ C^2(\RN) 
		\quad \text{for any $\varphi \in C^\infty_c(\RN)$ as $\mu \nearrow 1$}.
	\end{equation}
	Now choose $\varphi_k \in C^\infty_c(\RN)$ with $0 \leq \varphi_k \leq 1$, 
	$\varphi_k(x) \equiv 1$ on $B_k$ and $\varphi_k \equiv 0$ on $\RN \setminus B_{2k}$. 
	Since $\varphi_k ( u_\mu - \wt{u}_{S,\mu} ) \in \cH^s_0(B_1) $ holds 
	with $u_\mu \leq \Phi_\mu (\eta u_S) \leq \mu u_S$ and $\wt{u}_S \ge 0$ in $\RN$, 
	we find that 
	\begin{equation}\label{eq:3.23}
		\la u_\mu , \varphi_k ( u_\mu - \wt{u}_{S,\mu} ) \ra_{\dot{H}^s (\RN)}
		= 
		\int_{B_1} |x|^\ell u_{\mu}^p \varphi_k 
		\left( u_{\mu} - \wt{u}_{S,\mu} \right) d x 
		\leq \int_{B_1} |x|^\ell u_S^{p+1} \, d x < \infty. 
	\end{equation}
	Since $k \geq 1$ and $(1-\varphi_k)  u_\mu = (1-\varphi_k) \wt{u}_{S,\mu}$, 
	we remark that 
	\begin{equation}\label{eq:3.24}
		\begin{aligned}
			\la u_\mu , \varphi_k (u_\mu - \wt{u}_{S,\mu}) \ra_{\dot{H}^s (\RN)}
			 & = \la \varphi_k u_\mu + (1-\varphi_k) u_\mu , 
			\varphi_k u_\mu - \varphi_k \wt{u}_{S,\mu} \ra_{\dot{H}^s (\RN)}
			\\
			 & = \left\| \varphi_k u_{\mu} \right\|_{\dot{H}^s(\RN)}^2 
			- \la \varphi_k u_{\mu} , \varphi_k \wt{u}_{S,\mu} \ra_{\dot{H}^s (\RN)}
			\\
			 & \quad 
			+ \la \left( 1 - \varphi_k \right) \wt{u}_{S,\mu} , \varphi_k u_\mu \ra_{\dot{H}^s (\RN)}
			- \la \left( 1 - \varphi_k \right) \wt{u}_{S,\mu} , \varphi_k \wt{u}_{S,\mu} \ra_{\dot{H}^s (\RN)}.
		\end{aligned}
	\end{equation}
	As in \eqref{eq:3.13}, it follows from \cite[Lemma 2.1]{HIK-20} that 
	\[
		\begin{aligned}
			        & \left| \la \left( 1 - \varphi_k \right) \wt{u}_{S,\mu} , \varphi_k u_\mu \ra \right| 
			\\
			\leq \  & C_k \left( \left\| (1-\varphi_k) \wt{u}_{S,\mu} \right\|_{\dHs(B_{4k})} 
			\left\| \varphi_k u_\mu \right\|_{\dHs (B_{4k})} 
			+ \left\| (1-\varphi_k) \wt{u}_{S,\mu} \right\|_{L^2(B_{4k})} \left\| \varphi_k u_\mu \right\|_{L^2(B_{4k})}
			\right. 
			\\
			        & \qquad \left.
			+ \left\| (1-\varphi_k) \wt{u}_{S,\mu} \right\|_{L^1(\RN, (1+|x|)^{-N-2s} d x ) } 
			\left\| \varphi_k u_\mu \right\|_{L^1(B_{4k})} \right).
		\end{aligned}
	\]
	Remark that a similar estimate holds for 
	$\la (1-\varphi_k) \wt{u}_{S,\mu} , \varphi_k \wt{u}_{S,\mu} \ra_{\dot{H}^s (\RN)}$, 
	$( (1 - \varphi_k ) \wt{u}_{S,\mu} )_{1/2 < \mu < 1}$ is bounded in 
	$H^s(B_{4k}) \cap L^1(\RN, (1+|x|)^{-N-2s} d x )$, 
	and $(\varphi_k \wt{u}_{S,\mu}  )_{1/2<\mu<1}$ is bounded in $H^s(\RN)$. 
	Since $\| \varphi_k u_\mu \|_{L^{2^\ast_s} (\RN)} \le C \| \varphi_k u_\mu \|_{\dHs(\RN)}$
	and $\supp \varphi_k \subset B_{4k}$, 
	it follows from \eqref{eq:3.22}, \eqref{eq:3.23}, \eqref{eq:3.24} and $u_\mu \leq u_S$ that for any $\mu \in (1/2,1)$, 
	\[
		\begin{aligned}
			\int_{B_1} |x|^\ell u_S^{p+1} \, d x 
			 & \geq 
			\| \varphi_{k} u_\mu \|_{\dHs (\RN)}^2 
			- \| \varphi_k u_\mu \|_{\dHs(\RN)} \| \varphi_k \wt{u}_{S,\mu} \|_{\dHs (\RN)} 
			\\
			 & \quad 
			- C_k \left( \| \varphi_k u_\mu \|_{\dHs (\RN)} + \| \varphi_k u_\mu \|_{L^2(B_{4k})} 
			+ \| \varphi_k u_\mu \|_{L^1(B_{4k})} + 1 \right)
			\\
			 & \geq \| \varphi_k u_\mu \|_{\dHs(\RN)}^2 
			- C_k \left( \| \varphi_k u_\mu \|_{\dHs (\RN)} + \| \varphi_k u_\mu \|_{L^{2^\ast_s} (\RN) } + 1 \right)
			\\
			 & \geq 
			\| \varphi_k u_\mu \|_{\dHs(\RN)}^2 - C_k \left( \| \varphi_k u_\mu \|_{\dHs (\RN)} + 1 \right) .
		\end{aligned}
	\]
	Thus, $( \| \varphi_k u_\mu \|_{\dot{H}^s(\RN)})_{1/2<\mu<1}$ is bounded for each $k$.

	Let $u_1 (x) := \lim_{\mu \nearrow 1} u_{\mu} (x) \leq u_S(x)$. 
	Then, by Proposition~\ref{Proposition:2.1} (ii), $u_1 \in L^2_{\rm loc} (\RN)$.
	Furthermore, from the boundedness of $(\|\varphi_k u_\mu\|_{\dot{H}^s(\RN)})_{1/2<\mu<1}$, 
	it follows that $\varphi_ku_1 \in H^s(\RN)$ and 
	\begin{equation}\label{eq:3.25}
	\varphi_ku_{\mu} \rightharpoonup \varphi_ku_1\quad \text{weakly in $H^s (\RN)$ as $\mu\nearrow 1$}
	\end{equation}
	for each $k$.
	Next, we shall show that for each $\varphi \in C^\infty_c(B_1)$, 
	\begin{equation}\label{eq:3.26}
		\int_{B_1} |x|^\ell u_1^p \varphi \, d x 
		= \lim_{\mu \nearrow 1} \la u_\mu , \varphi \ra_{\dot{H}^s(\RN)}
		= \la u_1 , \varphi \ra_{\dot{H}^s (\RN)}.
	\end{equation}
	To this end, notice that $u_\mu \leq u_S$ on $\RN$ and $(1-\varphi_k) (u_\mu - u_1) = 0$ on $B_2$ for $k \geq 2$. 
	In particular, $u_\mu \to u_1$ in $L^1( \RN, (1+|x|)^{-N-2s}  d x)$. 
	Thus, as in \eqref{eq:3.13}, we infer from \cite[Lemma 2.1]{HIK-20},
	\eqref{eq:3.25} and $(1-\varphi_k) u_\mu \equiv 0 \equiv (1- \varphi_k) u_1$ on $B_2$ 
	that for each $k \geq 2$, as $\mu \to 1$,
	\[
		\la \varphi_k u_\mu , \varphi \ra_{\dHs(\RN)} \to \la \varphi_k u_1 , \varphi \ra_{\dHs(\RN)}, \quad 
		\la (1-\varphi_{k}) u_\mu , \varphi \ra_{\dHs(\RN)} \to \la (1-\varphi_{k}) u_1 , \varphi \ra_{\dHs(\RN)}.
	\]
	Therefore, 
	\[
		\begin{aligned}
			\int_{B_1} |x|^\ell u_1^p \varphi \, d x 
			= \lim_{\mu \nearrow 1} \int_{B_1} |x|^\ell u_\mu^p \varphi \, d x 
			&= \lim_{\mu \nearrow 1} \la u_\mu, \varphi \ra_{\dHs(\RN)}
			\\
			& = 
			\lim_{\mu \nearrow 1} \left\{  \la \varphi_k u_\mu + (1-\varphi_k) u_\mu , \varphi \ra_{\dHs(\RN)} \right\}
			= 
			\la u_1 , \varphi \ra_{\dHs(\RN)},
		\end{aligned}
	\]
	and \eqref{eq:3.26} holds. 
	
	Since $|x|^\ell u_S^p \in L^{2N/(N+2s)} (B_1)$, $u_1$ is a solution of \eqref{eq:3.1} with $\mu = 1$ and $u_1 \leq u_S$. 
	Noting $u_S - u_1 \in \cH^s_0(B_1)$, we have 
	\[
		\| u_S - u_1 \|_{\dot{H}^s(\RN)}^2 
		= \la u_S- u_1 , u_S - u_1 \ra_{\dot{H}^{s} (\RN)}
		= \int_{B_1} |x|^\ell \left( u_S^p - u_1^p \right) \left( u_S -u_1 \right) d x.
	\]
	If $u_S - u_1 \not\equiv 0$, then 
	\[
		\begin{aligned}
			\| u_S - u_1 \|_{\dot{H}^s(\RN)}^2 
			 &=  \int_{B_1} |x|^\ell 
			p \int_{0}^{1} \left( \theta u_S + (1-\theta) u_1 \right)^{p-1} \, d \theta 
			\left(u_S-u_1\right)^2 d x
			\\
			 & < p \int_{B_1} |x|^\ell u_S^{p-1} \left( u_S-u_1 \right)^2 d x,
		\end{aligned}
	\]
	which contradicts \eqref{eq:2.7}.
	Thus, $u_S \equiv u_1$. 
\end{proof}

\begin{remark}\label{Remark:3.5}
	Since $u_\mu \in L^\infty(\RN)$, we may prove $ u_\mu \in C (B_1)$ (see \cite[Proposition 2.1]{HIK-20}).
\end{remark}

Now, we shall construct a stable solution of \eqref{eq:1.1}.

\begin{proposition}\label{Proposition:3.7}
	There exists a positive radial stable solution $u$ of 
	\begin{equation}\label{eq:3.27}
		\left( - \Delta \right)^s u = |x|^\ell u^p \quad \text{in} \ \RN, 
		\quad \| u \|_{L^\infty(\RN)} = 1, \quad 
		0< u \leq u_S \quad \text{in} \ \RN.
	\end{equation}
\end{proposition}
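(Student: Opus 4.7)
The strategy is to rescale the family $(u_\mu)_{\mu \in (0,1)}$ produced in Proposition~\ref{Proposition:3.5} so that its $L^\infty$ norm is normalized to one, then pass to the limit as $\mu \nearrow 1$. The relevant scale invariance is built into the choice $\theta_0 = (2s+\ell)/(p-1)$: the transformation $v(x) \mapsto \lambda^{\theta_0} v(\lambda x)$ sends solutions of \eqref{eq:1.1} to solutions and leaves the singular profile $u_S(x) = A_0 |x|^{-\theta_0}$ invariant.

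Set $\alpha_\mu := \|u_\mu\|_{L^\infty(\RN)}$, which is finite for each $\mu \in (0,1)$. Using Proposition~\ref{Proposition:3.6}, the monotone pointwise limit of $u_\mu$ as $\mu \nearrow 1$ coincides a.e.\ with $u_S$, so $u_S(0) = \infty$ forces $\alpha_\mu \to \infty$. Define
\[
	\lambda_\mu := \alpha_\mu^{-1/\theta_0}, \qquad U_\mu(x) := \alpha_\mu^{-1} u_\mu(\lambda_\mu x).
\]
Then $U_\mu$ is radial, $\|U_\mu\|_{L^\infty(\RN)} = 1$, and the bound $u_\mu \le u_S$ together with the scale invariance of $u_S$ yields $U_\mu \le u_S$ on $\RN$. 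A direct scaling computation shows that $U_\mu$ solves $(-\Delta)^s U_\mu = |x|^\ell U_\mu^p$ on $B_{R_\mu}$ with $R_\mu := 1/\lambda_\mu \to \infty$. Both sides of the stability inequality transform by the common factor $\lambda_\mu^{N-2s}$ under this rescaling, so the stability of $u_\mu$ on $C^\infty_c(B_1)$, derived inside the proof of Proposition~\ref{Proposition:3.6} from $u_\mu \le \mu u_S$ and the stability of $u_S$, rescales to stability of $U_\mu$ on $C^\infty_c(B_{R_\mu})$.

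Compactness is the next step. Since $\|U_\mu\|_{L^\infty(\RN)} \le 1$ and $|x|^\ell \in L^q_{\rm loc}(\RN)$ for some $q > N/(2s)$ by \eqref{eq:1.2} (this is where $\ell > -2s$ enters), the right-hand sides $|x|^\ell U_\mu^p$ are uniformly bounded in $L^q_{\rm loc}(\RN)$. Standard interior regularity for $(-\Delta)^s$ then yields uniform local Hölder estimates, and testing the equation against appropriate cutoffs gives uniform $H^s_{\rm loc}$ bounds. Passing to a subsequence, $U_{\mu_n} \to u$ locally uniformly on $\RN$ and weakly in $H^s_{\rm loc}(\RN)$; both the weak form of the equation and the stability inequality pass to the limit, producing a radial stable solution $u \in H^s_{\rm loc}(\RN) \cap L^\infty(\RN) \cap C(\RN)$ of \eqref{eq:1.1} on $\RN$ satisfying $u \le u_S$.

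It remains to verify $\|u\|_{L^\infty(\RN)} = 1$ and $u > 0$. The bound $U_\mu \le u_S$ forces any maximum point $x_\mu$ of $U_\mu$ to satisfy $1 = U_\mu(x_\mu) \le A_0 |x_\mu|^{-\theta_0}$, hence $|x_\mu| \le A_0^{1/\theta_0}$. Extracting $x_{\mu_n} \to x_0$ in this fixed compact set and invoking local uniform convergence yields $u(x_0) = 1$, while the pointwise bound $u \le 1$ gives $\|u\|_{L^\infty(\RN)} = 1$. For positivity, $u \ge 0$ passes from the sequence and $u \not\equiv 0$ because $u(x_0) = 1$; since $(-\Delta)^s u = |x|^\ell u^p \ge 0$ in the weak sense on $\RN$, Proposition~\ref{Proposition:3.2} with $c \equiv 0$ and $\Omega = \RN$ gives $\inf_K u > 0$ on every compact $K \subset \RN$, so $u > 0$ on $\RN$. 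The main technical hurdle is preserving the sup norm in the limit, and this is precisely what the uniform localization of the maximum points afforded by $U_\mu \le u_S$ secures.
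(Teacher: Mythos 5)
Your proposal is correct and follows essentially the same route as the paper: rescale $u_\mu$ by its sup norm using the natural scaling exponent $\theta_0$, use the bound by the scale-invariant singular solution $u_S$ both to confine the maximum points to a fixed compact set (so the normalization $\|u\|_{L^\infty}=1$ survives the limit) and to obtain stability, get compactness from uniform local regularity and $H^s_{\rm loc}$ bounds, and conclude positivity from the strong maximum principle (Proposition \ref{Proposition:3.2}). The only cosmetic difference is that the paper derives the $H^s_{\rm loc}$ bounds via the Caffarelli--Silvestre extension and obtains stability of the limit directly from $u \leq u_S$ and \eqref{eq:2.7} rather than by passing the rescaled stability inequality to the limit; both variants are sound.
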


\begin{proof}
	For each $\mu\in(0,1)$, let $u_\mu$ be as in Proposition \ref{Proposition:3.5}.
	Let $(\mu_j)_j\subset(0,1)$ with $\mu_j \nearrow 1$ as $j\to\infty$,
	and	set 
	\[
		\wt{u}_j(x) := \frac{1}{m_j} u_{\mu_j} \left( m_{j}^{-1/\theta_0} x \right), \quad 
		m_j := \| u_{\mu_j} \|_{L^\infty(\RN)},
	\]
	where $\theta_0$ is as in \eqref{eq:1.8}.
	By Propositions \ref{Proposition:3.5} and \ref{Proposition:3.6}, 
	Remark \ref{Remark:3.5} and $u_\mu \leq u_S$ in $B_1$, we have $m_j \to \infty$ as $j\to\infty$
	and 
	\begin{equation}\label{eq:3.28}
		\begin{aligned}
			 & \
			\la \wt{u}_j , \varphi \ra_{\dot{H}^s (\RN)}
			= \int_{\RN} |x|^\ell \wt{u}_j^p  \varphi \, d x
			\quad \text{for all $\varphi \in C^\infty_c ( B_{m_j^{1/\theta_{0}}} )$},
			\\
			 & \wt{u}_j(x) \leq \frac{1}{m_j} u_S \left( m_j^{-1/\theta_{0}} x \right) = u_S(x) \quad 
			\text{for each $x \in \RN \setminus \{0\}$}, \quad 
			\| \wt{u}_j \|_{L^\infty(\RN)} = 1.
		\end{aligned}
	\end{equation}

	Next, we claim that $(\varphi \wt{u}_j)_j$ is bounded in $H^s(\RN)$
	for any $\varphi \in C^\infty_c(\RN)$. 
	To this end, we set $\wt{U}_j (x,t) := (P_s(\cdot, t) \ast \wt{u}_j) (x)$ 
	where $P_s(x,t) := p_{N,s} t^{2s} (|x|^2+t^2)^{ - \frac{N+2s}{2} }$. 
	As in \cite[Lemmata 2.1 and 2.2]{HIK-20}, from $\wt{u}_j \in \Hsloc (\RN) \cap L^\infty(\RN) $, 
	we have  
	\[
		\begin{aligned}
			&\wt{U}_j \in \Hloc( \ov{\HS}, t^{1-2s} d X ), \quad 
			- \diver \left( t^{1-2s} \nabla \wt{U}_j \right) = 0 \quad \text{in} \ \HS, \quad 
			\\
			&
			\lim_{t \to 0} - \int_{ \RN } t^{1-2s} \partial_t \wt{U}_j (x,t) \psi(x,t) \, d x 
			= \kappa_s \la \wt{u}_j , \psi (\cdot , 0) \ra_{\dot{H}^s (\RN)} \quad 
			\text{for each $\psi \in C^\infty_c( \ov{\HS} )$}.
		\end{aligned}
	\]
	In particular, if $\psi(x,t) \in C^\infty_c (\ov{\HS})$ with 
	$\supp \psi(\cdot, 0) \subset B_{m_j^{1/\theta_{0}}} $, then we may choose 
	$\psi^2 (\cdot, 0) \wt{u}_j$ as a test function in \eqref{eq:3.28}. Thus,  
	\begin{equation}\label{eq:3.29}
		\int_{\HS} t^{1-2s} \nabla \wt{U}_j \cdot \nabla \left( \psi^2 \wt{U}_j \right) d X 
		= \kappa_s \la \wt{u}_j , \psi^2(\cdot, 0) \wt{u}_j \ra_{\dot{H}^s(\RN)}
		= \kappa_s \int_{B_{m_j^{1/\theta_{0}}}  } 
		|x|^\ell \wt{u}_j^{p+1} \psi^2(x,0) \, d x.
	\end{equation}
	Furthermore, it follows from $\| P_s(\cdot, t) \|_{L^1(\RN)}=1$ for all $t>0$ and \eqref{eq:3.28} that
	\[
		\| \wt{U}_j \|_{L^\infty(\HS)} \leq \sup_{ t > 0}
		\| P_s(\cdot, t) \|_{L^1(\RN)} \| \wt{u}_j \|_{L^\infty(\RN)} = 1.
	\]
	Since
	\begin{equation}\label{eq:3.30}
		\begin{aligned}
			 &\nabla \wt{U}_j \cdot \nabla \left( \psi^2 \wt{U}_j \right)
			 \\
			 \geq &  \left| \nabla \wt{U}_j \right|^2 \psi^2  
			- 2 \left| \wt{U}_j \psi \right| 
			\left| \nabla \wt{U}_j \right| \left| \nabla \psi \right| 
			\\
			 \geq & \frac{1}{2} \left| \nabla \wt{U}_j \right|^2  \psi^2 
			- 2 \wt{U}_j^2 |\nabla \psi|^2 
			\\
			 = & \frac{1}{4} \left|\nabla \wt{U}_j \right|^2 \psi^2 
			+ \frac{1}{4} \left\{  \left| \nabla \left( \psi \wt{U}_j \right) \right|^2 
			- 2 \psi \wt{U}_j \nabla \psi \cdot \nabla \wt{U}_j 
			- \wt{U}_j^2 |\nabla \psi|^2 \right\} 
			-2 \wt{U}_j^2 |\nabla \psi |^2 
			\\
			 \geq & \frac{1}{4} \left| \nabla \left(\psi \wt{U}_j \right) \right|^2 - 3 \wt{U}_j^2 |\nabla \psi|^2,
		\end{aligned}
	\end{equation}
	we see from \eqref{eq:3.29}  that $(\psi \wt{U}_j)_j$ is bounded in $H^1(\HS, t^{1-2s} d X )$. 
	By this boundedness, the trace theorem $H^1(\HS , t^{1-2s} dX) \subset H^s(\RN)$ 
	and $\wt{U}_j(x,0) = \wt{u}_j(x)$, 
	for any $\varphi \in C^\infty_c(\RN)$, $(\varphi \wt{u}_j)_j$ is bounded in $H^s(\RN)$.

	Now let $\wt{u}_j \rightharpoonup \wt{u}_0$ weakly in $H^s(B_R)$ as $j\to\infty$ for any $R>0$.
	By \eqref{eq:3.29}, $\| \wt{u}_j \|_{L^\infty(\RN)} = 1$, $|x|^\ell \in L^q_{\rm loc} (\RN)$
	for some $q < \frac{N}{2s}$ and \cite[Proposition 2.1]{HIK-20} (see \cite{JLX-14}), 
	we observe that $\wt{u}_j \to \wt{u}_0$ in $C^\beta_{\rm loc} (\RN)$
	for some $\beta \in (0,1)$. 
	Then $\| \wt{u}_0 \|_{L^\infty(\RN)} = 1$ holds due to $\wt{u}_j \leq u_S$ in $\RN\setminus\{0\}$. Moreover, 
	for $\varphi \in C^\infty_c(\RN)$, as in \eqref{eq:3.26}, it follows that 
	\[
		\la \wt{u}_0 , \varphi \ra_{\dot{H}^s(\RN)} = \lim_{j \to \infty} \la \wt{u}_j , \varphi \ra_{\dot{H}^s(\RN)}
		= \lim_{j \to \infty} \int_{\RN} |x|^\ell \wt{u}_j^p \varphi \, d x 
		= \int_{\RN} |x|^\ell \wt{u}_0^p \varphi \, d x.
	\]
	Hence, $\wt{u}_0$ is a solution of \eqref{eq:3.27}. 
	In addition, from 
	\[
		\la \wt{u}_0 , \varphi \ra_{\dot{H}^s(\RN)} = \int_{ \RN } |x|^\ell \wt{u}_0^p \varphi \, d x 
		\geq 0 = \int_{ \RN } 0 \varphi \, d x \quad 
		\text{for each $\varphi \in C_c^\infty(\RN)$ with $\varphi \geq 0$}
	\]
	and Proposition \ref{Proposition:3.2}, we observe that $\wt{u}_0 > 0$ in $B_R$ for each $R>0$, hence, 
	$\wt{u}_0 > 0$ in $\RN$. 
	The stability follows from $\wt{u}_0 \leq u_S$ in $\RN$ and \eqref{eq:2.7}.
\end{proof}

As a corollary of Proposition \ref{Proposition:3.7}, we have

\begin{corollary}\label{Corollary:3.1}
	Let $\wt{u}_0$ be a bounded radial stable solution of \eqref{eq:3.27}
	in Proposition {\rm\ref{Proposition:3.7}}. For $m>0$, set 
	\begin{equation}\label{eq:3.31}
		u_{m,0} (x) := m \wt{u}_{0} \left( m^{1/\theta_0} x \right),
	\end{equation}
	where $\theta_0$ is as in \eqref{eq:1.8}.
	Then $u_{m,0}$ satisfies 
	\begin{equation}\label{eq:3.32}
		(-\Delta)^s u_{m,0} = |x|^\ell u_{m,0}^p \quad \text{in} \ \RN, \quad 
		\| u_{m,0} \|_{L^\infty(\RN)} = m  , \quad 0< u_{m,0} < u_S \quad \text{in} \ \RN
	\end{equation}
	and $u_{m,0}$ is stable. 
\end{corollary}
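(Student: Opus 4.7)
The proof is a direct scaling argument that exploits the homogeneity of the equation built into the choice of the exponent $\theta_0 = (2s+\ell)/(p-1)$ in \eqref{eq:1.8}. The plan is to transport each of the four conclusions in \eqref{eq:3.32} and the stability from $\wt{u}_0$ to $u_{m,0}$ via the dilation $x \mapsto m^{1/\theta_0} x$.

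First I would record the scaling invariance of the problem. Setting $y = m^{1/\theta_0} x$, the definition \eqref{eq:3.31} gives $u_{m,0}(x) = m\,\wt{u}_0(y)$ and, since the fractional Laplacian is $2s$-homogeneous, one checks that $(-\Delta)^s u_{m,0}(x) = m^{1+2s/\theta_0}\,((-\Delta)^s\wt{u}_0)(y)$, while $|x|^\ell u_{m,0}^p(x) = m^{p-\ell/\theta_0}\,|y|^\ell\,\wt{u}_0^p(y)$. The identity $1+2s/\theta_0 = p-\ell/\theta_0$, which is exactly equivalent to $(p-1)\theta_0 = 2s+\ell$, shows that $u_{m,0}$ solves the first equation in \eqref{eq:3.32} in the weak sense of Definition~\ref{Definition:1.1}. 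The $L^\infty$ bound $\|u_{m,0}\|_{L^\infty(\RN)}=m$ is immediate from $\|\wt{u}_0\|_{L^\infty(\RN)}=1$ (Proposition \ref{Proposition:3.7}).

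Next I would verify the sandwich $0<u_{m,0}<u_S$. Here I use that $u_S(x)=A_0|x|^{-\theta_0}$ is itself invariant under the same scaling: $m\,u_S(m^{1/\theta_0}x) = m\cdot A_0\,m^{-1}|x|^{-\theta_0} = u_S(x)$ for $x\neq 0$. Combined with $0<\wt{u}_0(y)<u_S(y)$ from Proposition~\ref{Proposition:3.7}, this yields $0<u_{m,0}(x)<u_S(x)$ on $\RN\setminus\{0\}$, hence on $\RN$ after noting positivity at $0$.

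For stability, I would change variables in the stability inequality. For $\psi\in C^\infty_c(\RN)$, set $\tilde\psi(y):=\psi(m^{-1/\theta_0}y)$. A direct computation with $[\cdot]_{\dHs(\RN)}$ gives $\|\tilde\psi\|_{\dHs(\RN)}^2 = m^{(N-2s)/\theta_0}\|\psi\|_{\dHs(\RN)}^2$, while the substitution $y=m^{1/\theta_0}x$ yields
\[
\int_{\RN}|x|^\ell u_{m,0}^{p-1}\psi^2\,dx = m^{(p-1)-(N+\ell)/\theta_0}\int_{\RN}|y|^\ell\wt{u}_0^{p-1}\tilde\psi^2\,dy.
\]
Using $(p-1)\theta_0=2s+\ell$, the exponent on the left collapses to $-(N-2s)/\theta_0$. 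Applying the stability of $\wt{u}_0$ (Proposition~\ref{Proposition:3.7}) to $\tilde\psi$ and inserting the two scaling identities, the factors $m^{\pm(N-2s)/\theta_0}$ cancel and one obtains $p\int|x|^\ell u_{m,0}^{p-1}\psi^2\,dx\le\|\psi\|_{\dHs(\RN)}^2$, which is the stability of $u_{m,0}$ in the sense of Definition~\ref{Definition:1.3}. No step here is an actual obstacle; the only thing to be careful about is bookkeeping the three scaling exponents and checking they cancel, which they do precisely because of the definition of $\theta_0$.
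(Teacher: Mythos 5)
Your scaling computations for the equation, the $L^\infty$ norm, and the stability are all correct. Note, though, that your stability argument (rescaling the test function and tracking the powers of $m$ in the quadratic form) is a different route from the paper's: the paper instead observes that the scale invariance of $u_S$ gives $u_{m,0}(x) = m\,\wt{u}_0(m^{1/\theta_0}x) \leq m\,u_S(m^{1/\theta_0}x) = u_S(x)$ and then invokes the stability inequality \eqref{eq:2.7} for the singular solution, so that $p\int |x|^\ell u_{m,0}^{p-1}\varphi^2\,dx \leq p\int |x|^\ell u_S^{p-1}\varphi^2\,dx \leq \|\varphi\|_{\dHs(\RN)}^2$. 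Both arguments are valid; yours is self-contained in the scaling, while the paper's reuses the Hardy-type bound and avoids any bookkeeping of exponents.

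There is, however, a genuine gap in your treatment of the strict inequality $u_{m,0} < u_S$. You write that Proposition \ref{Proposition:3.7} gives $0 < \wt{u}_0(y) < u_S(y)$, but \eqref{eq:3.27} only asserts $0 < \wt{u}_0 \leq u_S$; the scaling therefore only yields $u_{m,0} \leq u_S$ in $\RN$, not the strict inequality claimed in \eqref{eq:3.32}. To upgrade $\leq$ to $<$ you need an additional argument: since $u_S - u_{m,0} \geq 0$ in $\RN$ and
\[
\la u_S - u_{m,0}, \varphi \ra_{\dHs(\RN)} = \int_{\RN} |x|^\ell \left( u_S^p - u_{m,0}^p \right) \varphi \, dx \geq 0
\]
for every $\varphi \in C^\infty_c(\RN)$ with $\varphi \geq 0$, the strong maximum principle (Proposition \ref{Proposition:3.2}) gives either $u_S \equiv u_{m,0}$ or $u_S - u_{m,0} > 0$ a.e.; the first alternative is excluded because $u_{m,0}$ is bounded while $u_S$ is not. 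This is exactly how the paper closes the argument, and without it your proof of the strict separation from $u_S$ is incomplete.
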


\begin{proof}
	It is easy to check that $u_{m,0}$ satisfies $(-\Delta)^s u_{m,0} = |x|^\ell u_{m,0}^p$ in $\RN$ and 
	$\| u_{m,0} \|_{L^\infty(\RN)} = m$. In addition, by
	$u_{m,0} (x) = m \wt{u}_0( m^{1/\theta_0}x ) \leq m u_S ( m^{1/\theta_0}x ) = u_S (x)$ and 
	\eqref{eq:2.7}, we infer that $u_{m,0}$ is stable. 
	Finally, from $u_{m,0} \leq u_S$ in $\RN$ and $\la u_S - u_{m,0} , \varphi \ra_{\dHs (\RN)} \geq 0$ 
	for each $\varphi \in C^\infty_c(\RN)$ with $\varphi \geq 0$, 
	Proposition \ref{Proposition:3.2} yields $u_{m,0} < u_S$ in $\RN$. 
\end{proof}

	\begin{remark}\label{Remark:3.6}
	By the regularity result it holds that $\wt{u}_0 \in C(\RN)$.
	Furthermore, by \eqref{eq:3.31} and \eqref{eq:3.32} we see that
	$\|u_{m,0} \|_{L^\infty(\RN)} \wt{u}_0(0) =  m \wt{u}_0(0)  = u_{m,0} (0)$.
	These together with $\wt{u}_0 > 0$ in $\RN$ imply that 
	$u_{m_1,0} (0) < u_{m_2,0} (0)$ is equivalent to $ m_1 < m_2$ and 
	instead of $(u_{m,0})_{m>0}$, we may write $(u_\alpha)_{\alpha>0}$ where $\alpha = u_\alpha(0) = m \wt{u}_0(0)$. 
	\end{remark}

%%%%%%%%%%%%%%%%%%%%%%%%%%%%%%%%%%%%%%%%%%%%%%%%%%%%%%%%%
%%%%%%%%%%%%%%%%%%%%%%%%%%%%%%%%%%%%%%%%%%%%%%%%%%%%%%%%%
%%%%%%%%%%%%%%%%%%%%%%%%%%%%%%%%%%%%%%%%%%%%%%%%%%%%%%%%%

\section{Separation property and proof of Theorem \ref{Theorem:1.1} (ii)}\label{section:4}

%%%%%%%%%%%%%%%%%%%%%%%%%%%%%%%%%%%%%%%%%%%%%%%%%%%%%%%%%
%%%%%%%%%%%%%%%%%%%%%%%%%%%%%%%%%%%%%%%%%%%%%%%%%%%%%%%%%
%%%%%%%%%%%%%%%%%%%%%%%%%%%%%%%%%%%%%%%%%%%%%%%%%%%%%%%%%

	In this section, we aim to prove the comparison of positive radial stable solutions of \eqref{eq:1.1}
and study the properties of $(u_{m,0})_{m>0}$ in Corollary \ref{Corollary:3.1}.

%%%%%%%%%%%%%%%%%%%%%%%%%%%%%%%%%%%%%%%%%%%%%%

\subsection{Separation property}

%%%%%%%%%%%%%%%%%%%%%%%%%%%%%%%%%%%%%%%%%%%%%%

The aim of this subsection is to prove the following comparison result which is an extension of \cite[Proposition 7.1]{H}:

\begin{proposition}\label{Proposition:4.1}
	Let $u_1,u_2 \in \Hsloc (\RN) \cap L^1(\RN, (1+|x|)^{-N-2s} d x )$
	be positive radial stable solutions of 
	\begin{equation}\label{eq:4.1}
		(-\Delta)^s u = |x|^\ell u^p \quad \text{in} \ \RN.
	\end{equation}
	In addition, suppose that 
	\begin{equation}\label{eq:4.2}
		\begin{aligned}
			 & u_1 \in L^\infty(\RN), 
			\quad u_2 \in L^\infty_{\rm loc} \left( \RN \setminus \set{0} \right), \quad 
			u_2 \leq C u_S \quad \text{in $B_1$},
			\\
			&u_1(x),u_2 (x) \to 0 \ \text{as} \ |x| \to \infty,
			\quad  
			u_1(0) < \lim_{|x| \to 0} u_2(x),
		\end{aligned}
	\end{equation}
for some $C>0$. Then $u_1 < u_2$ in $\RN$. 
\end{proposition}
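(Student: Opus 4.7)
The plan is to argue by contradiction: assume the set $\Omega^+:=\{x\in\RN:u_1(x)>u_2(x)\}$ is non-empty and use $w:=(u_1-u_2)_+$ as a test function in both the weak formulation of \eqref{eq:4.1} (subtracted) and the stability inequality for $u_1$. The core observation is that on $\Omega^+$ the mean value theorem yields $u_1^p-u_2^p=p\xi^{p-1}(u_1-u_2)$ with $u_2<\xi<u_1$, so since $p>1$ we have $\xi^{p-1}<u_1^{p-1}$ strictly on $\Omega^+$; this strict gap clashes with the equality between the Gagliardo and weighted $L^2$ quantities that the testing procedure would force.

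Concretely, I would carry out the following steps. First, by the interior regularity for \eqref{eq:1.1} (see \cite[Proposition 2.1]{HIK-20}) together with \eqref{eq:4.2}, both $u_i$ are continuous on $\RN\setminus\{0\}$, $u_1$ is continuous at $0$, and $u_1(0)<\lim_{|x|\to 0}u_2(x)$ gives $\rho>0$ with $u_1<u_2$ on $B_\rho\setminus\{0\}$, so $w$ vanishes in a punctured neighbourhood of the origin. Second, the decay $u_i(x)\to 0$ at infinity together with $0\le w\le u_1\in L^\infty$ and the local $H^s$-regularity of $u_1-u_2$ should yield $w\in\dHs(\RN)$, approximable in $\dHs(\RN)$ by $\varphi_n\in C^\infty_c(\RN)$ through cutoff and mollification. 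Third, the pointwise inequality $(a-b)(a_+-b_+)\ge(a_+-b_+)^2$ applied to the Gagliardo numerators gives
\[
\|w\|_{\dHs(\RN)}^2\le \la u_1-u_2,w\ra_{\dHs(\RN)} = \int_{\RN}|x|^\ell(u_1^p-u_2^p)w\,dx,
\]
and if $w\not\equiv 0$ strict convexity yields
\[
\int_{\RN}|x|^\ell(u_1^p-u_2^p)w\,dx < p\int_{\RN}|x|^\ell u_1^{p-1}w^2\,dx.
\]
Finally, applying the stability of $u_1$ to $\varphi_n$ and passing to the limit bounds the right-hand side by $\|w\|_{\dHs(\RN)}^2$, producing the strict chain $\|w\|_{\dHs(\RN)}^2<\|w\|_{\dHs(\RN)}^2$, a contradiction. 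Thus $u_1\le u_2$ in $\RN$; the strict inequality then follows from Proposition \ref{Proposition:3.2} applied to $v:=u_2-u_1\ge 0$, which satisfies $(-\Delta)^s v\ge c(x)v$ with $c\in L^\infty_{\rm loc}(\RN\setminus\{0\})$.

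The main obstacle I expect is the second step: establishing $w\in\dHs(\RN)$ globally and justifying that the approximation $\varphi_n\to w$ can be made to preserve both $\|\varphi_n\|_{\dHs(\RN)}^2$ and $\int|x|^\ell u_1^{p-1}\varphi_n^2\,dx$ in the limit. When $\ell<0$ and $u_2$ may blow up at $0$, the vanishing of $w$ near the origin (from the first step) is essential to absorb the weight singularity; at infinity, the decay $u_1,u_2\to 0$ must be quantified enough to control the Gagliardo tail of $w$ and the weighted $L^2$-integral $\int|x|^\ell u_1^{p-1}w^2\,dx$. The stability of $u_1$ itself provides the useful a priori bound $\int|x|^\ell u_1^{p-1}\varphi^2\,dx\le\|\varphi\|_{\dHs(\RN)}^2/p$ for $\varphi\in C^\infty_c(\RN)$, so the two estimates feed into one another; arranging the cutoff/mollification so that all these inequalities persist in the limit is what makes the argument notably lengthier than the $s=1/2$ case of \cite{H}, where the analyticity of the harmonic extension in $\overline{\HS}$ could be invoked to bypass these weighted estimates.
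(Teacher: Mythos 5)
Your proposal is essentially the paper's proof of Lemma \ref{Lemma:4.1}, not of Proposition \ref{Proposition:4.1}, and it has a genuine gap exactly at the point you flag as ``the main obstacle.'' Testing with $w=(u_1-u_2)_+$ and contradicting the stability of $u_1$ works only when $[u_1-u_2>0]$ is \emph{bounded}, because then $w\in H^s(\RN)$ has compact support and is admissible both in the weak formulation and in the stability inequality (Remark \ref{Remark:3.1}~(iv), Remark \ref{Remark:4.1}~(ii)). The paper uses precisely your computation to show that the boundedness of either $[u_1-u_2>0]$ or $[u_2-u_1>0]$ is impossible; the conclusion of Lemma \ref{Lemma:4.1} is therefore that, under \eqref{eq:4.4}, \emph{both} sets are unbounded with infinitely many components. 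In that remaining case your $w$ does not have compact support, and the hypotheses give only the qualitative decay $u_i(x)\to 0$ with no rate, plus $u_i\in L^1(\RN,(1+|x|)^{-N-2s}dx)$. There is no way to deduce $w\in\dHs(\RN)$ (or even $w\in L^2(\RN)$) from this, nor to control the Gagliardo tail terms and the error in $\int|x|^\ell u_1^{p-1}\varphi_n^2\,dx$ when cutting off at infinity; the mutual bootstrapping of the stability bound and the energy bound that you suggest does not close without a quantitative decay estimate, which is not available at this stage.

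The paper's actual argument is devoted entirely to circumventing this: it passes to the Caffarelli--Silvestre extension $V=U_2-U_1$ on $\HS$, shows every component of $[V>0]$ and $[V<0]$ is unbounded (Lemma \ref{Lemma:4.3}), and then uses a topological/degree argument (Lemma \ref{Lemma:4.4} and Appendix \ref{section:A}) to produce a component $\cO_{k,-}$ of $[V<0]$ whose boundary trace $V_{\cO_{k,-}}(\cdot,0)$ \emph{does} have compact support even though $\cO_{k,-}$ itself is unbounded. Comparison with the fundamental solution $\Gamma(x,t)=(|x|^2+t^2)^{-(N-2s)/2}$ then yields the quantitative decay \eqref{eq:4.16}, hence $\nabla V_{\cO_{k,-}}\in L^2(\HS,t^{1-2s}dX)$ (Lemma \ref{Lemma:4.6}), and the trace inequality \eqref{eq:4.20} converts the extension energy identity into the stability contradiction for $u_1$ with a compactly supported test function. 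None of this machinery is dispensable given only the hypotheses \eqref{eq:4.2}, so your outline as written does not yield the proposition.
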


\begin{remark}\label{Remark:4.1}
	\begin{enumerate}
		\item[(i)]
		      In Proposition \ref{Proposition:4.1}, the case $u_2(x) \to \infty$ as $|x| \to 0$ may occur, however, 
		      by $u_2 \leq C u_S$ in $B_1$ and \eqref{eq:4.2}, 
		      we have $|x|^\ell u_2^p \in L^{ \frac{2N}{N+2s} }_{\rm loc} (\RN)$. 
		\item[(ii)] 
			  From $u_2 \leq C u_S$ in $B_1$, $u_2 \in L^\infty_{\rm loc} (\RN \setminus \set{0})$ and 
			  the Hardy type inequality \eqref{eq:2.8}, we see that the functional 
			  	\[
			  		\cH^s_0(B_R) \ni w \mapsto \int_{  \RN} |x|^\ell u_2^{p-1} w^2 \, dx \in \R
			  	\]
			  is bounded for each $R>0$, where 
			  $\cH^s_0(B_R) := \Set{ w \in H^s(\RN) | w \equiv 0 \ \text{on} \ \RN \setminus B_R }$. 
			  Since we assume that $u_2$ is stable, it follows that 
			  	\[
			  		p \int_{  \RN} |x|^\ell u_2^{p-1} w^2 \, dx \leq \| w \|_{\dHs(\RN)}^2 
			  	\]
			  for all $w \in H^s(\RN)$, which have compact support in $\RN$. 
		\item[(iii)]
		      From \eqref{eq:4.2} and the fact that $u_1$ and $u_2$ are solutions of \eqref{eq:4.1}, 
		      we have $u_1 \in C(\RN)$ and $u_2 \in C( \RN \setminus \set{0} )$ as in Remark \ref{Remark:3.5}. 
		      If $\lim_{|x| \to 0} u_2(x) < \infty$, then $u_2 \in C(\RN)$. 
		\item[(iv)] 
			  Let $u \in \Hsloc (\RN) \cap L^1( \RN, (1+|x|)^{-N-2s} d x ) $ 
			  be a positive solution of \eqref{eq:4.1}. 
			  Since we require $u$ to satisfy $|x|^\ell u^p \in L^{ \frac{2N}{N+2s} } (B_1) $, 
			  \cite[Lemma 2.2]{HIK-20} still holds 
			  for $U(x,t) := (P_s(\cdot, t) \ast u ) (x)$, 
			  that is, 
			  $U \in \Hloc( \ov{\HS} , t^{1-2s} d X )$ satisfies 
			  	\[
			  		\int_{\HS} t^{1-2s} \nabla U \cdot \nabla  \psi \, d X
			  		= \kappa_s \la u , \psi (\cdot, 0) \ra_{\dHs(\RN)} 
			  		= \kappa_s \int_{  \RN} |x|^\ell u^p \psi (x,0) \, dx
			  	\]
			  for every $\psi \in C^1_c( \ov{\HS} )$. 
	\end{enumerate}
\end{remark}

We first remark that if $u_{1} \leq u_{2}$ in $\RN$ holds, then 
for each $\varphi \in C^\infty_c(\RN)$ with $\varphi \geq 0$, 
\[
	\la u_2 - u_1 , \varphi \ra_{\dot{H}^s (\RN)}
	= \int_{ \RN } |x|^\ell \left( u_2^p - u_1^p \right) \varphi \, d x 
	\geq 0 = \int_{ \RN } 0 \varphi \, d x. 
\]
Since $u_i \in H^s_{\rm loc}(\RN)\cap L^1(\RN , (1+|x|)^{-N-2s} dx )$ by \eqref{eq:4.2}, 
Proposition \ref{Proposition:3.2} implies that $u_2 - u_1 > 0$ holds in $\RN$ if $u_2 \in L^\infty(B_1)$ and 
in $\RN \setminus \set{0}$ if $u_2(x) \to \infty$ as $|x| \to \infty$. 
By \eqref{eq:4.2}, in both cases, we have $u_1 < u_2$ in $\RN$ and Proposition \ref{Proposition:4.1} holds. 
Therefore, in what follows, it suffices to prove 
\begin{equation}\label{eq:4.3}
	u_1 \leq u_2  \quad \text{in} \ \RN.
\end{equation}
The argument below is inspired by \cite[Section 7]{H}.

To show \eqref{eq:4.3}, we set 
\[
	\begin{aligned}
		 & \left[ u_{1} - u_{2} > 0 \right] := \Set{ x \in \RN |
			u_{1} (x) - u_{2} (x) > 0 }.
	\end{aligned}
\]
We will prove Proposition \ref{Proposition:4.1} indirectly and hereafter, suppose 
\begin{equation}\label{eq:4.4}
	[u_1 - u_2 > 0] \neq \emptyset.
\end{equation}
We aim to derive a contradiction.

\begin{lemma}\label{Lemma:4.1}
	Under \eqref{eq:4.4}, both of $[ u_{1} - u_{2} > 0]$ and 
	$[ u_{2} - u_{1} > 0]$ are unbounded and 
	have infinitely many components. 
\end{lemma}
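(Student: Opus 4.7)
The plan is to establish unboundedness of each of $[u_1-u_2>0]$ and $[u_2-u_1>0]$ by a stability-based energy estimate applied to a compactly supported truncation, and then to deduce that each set has infinitely many components from the radial structure together with the intermediate value theorem.

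For unboundedness of $[u_1-u_2>0]$, I would argue by contradiction. If this set were contained in some $B_R$, then since $u_1(0)<\lim_{|x|\to 0}u_2(x)$ and both $u_1,u_2$ are continuous on $\RN\setminus\{0\}$ (Remark~\ref{Remark:4.1}~(iii)), one would have $u_2>u_1$ on a small ball $B_\varepsilon$, so $w_+:=(u_1-u_2)_+$ would be compactly supported and vanish near the origin. The memberships $u_1\in L^\infty\cap H^s_{\rm loc}(\RN)$ and $u_2\in H^s_{\rm loc}(\RN)$, together with the contractivity of the positive-part map on $H^s$, would give $w_+\in H^s(\RN)$. Subtracting the two weak formulations of \eqref{eq:4.1} and testing against $w_+$ (admissible by density), one obtains, using $(a-b)(a_+-b_+)\ge(a_+-b_+)^2$ as in \eqref{eq:3.5},
\[
\|w_+\|_{\dot H^s(\RN)}^2 \le \la u_1-u_2,w_+\ra_{\dot H^s(\RN)} = \int_{\RN} |x|^\ell (u_1^p-u_2^p)w_+\,dx.
\]
On $[u_1>u_2]$, strict convexity of $t\mapsto t^p$ ($p>1$) gives $u_1^p-u_2^p<p\,u_1^{p-1}(u_1-u_2)$ pointwise, so
\[
\int_{\RN} |x|^\ell (u_1^p-u_2^p)w_+\,dx < p\int_{\RN} |x|^\ell u_1^{p-1} w_+^2\,dx \le \|w_+\|_{\dot H^s(\RN)}^2,
\]
where the last step is the stability of $u_1$; chaining yields the strict inequality $\|w_+\|^2<\|w_+\|^2$, a contradiction.

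The symmetric argument for $[u_2-u_1>0]$ would use $v_+:=(u_2-u_1)_+$, which is automatically non-trivial since it contains a neighborhood of the origin. Here the technical point is that $u_2$ may be singular at $0$, but the pointwise bound $u_2\le Cu_S$ combined with $u_S\in H^s_{\rm loc}(\RN)$ (Proposition~\ref{Proposition:2.1}~(ii)) and $u_1\in L^\infty$ place $v_+\in H^s(\RN)$ once its support is compact. Remark~\ref{Remark:4.1}~(ii) then licenses the stability inequality of $u_2$ on such $v_+$, because the pointwise estimate $|x|^\ell u_2^{p-1}\le A_0^{p-1}|x|^{-2s}$ near the origin, together with the Hardy inequality \eqref{eq:2.10}, ensures $\int_{\RN} |x|^\ell u_2^{p-1} v_+^2\,dx<\infty$. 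The identical chain of strict inequalities yields the contradiction.

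Finally, radial symmetry makes $w(r):=u_1(r)-u_2(r)$ continuous on $(0,\infty)$, and the two unboundedness statements furnish sequences $r_n,r_n'\to\infty$ with $w(r_n)>0$ and $w(r_n')<0$; interleaving them and invoking the intermediate value theorem produces infinitely many zeros of $w$ accumulating at $\infty$, so each of $[u_1>u_2]$ and $[u_2>u_1]$ decomposes into infinitely many radial annular components. The main obstacle I anticipate is precisely the admissibility of $v_+$ as a test function for the stability of $u_2$ in the presence of the singularity of $u_2$ at the origin; this is exactly what the bound $u_2\le Cu_S$, the local $H^s$-regularity of $u_S$, and the Hardy inequality are designed to handle.
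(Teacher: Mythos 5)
Your proposal is correct and follows essentially the same route as the paper: unboundedness of each set is obtained by testing the equation for $u_1-u_2$ with the compactly supported positive part and contradicting the stability of $u_1$ (resp.\ $u_2$, with the admissibility of the test function secured exactly as in Remark~\ref{Remark:4.1}~(ii)). Your final step, deriving infinitely many components by interleaving sign changes of $u_1(r)-u_2(r)$ via the intermediate value theorem, is a harmless reformulation of the paper's argument that finitely many radial components would force one of them to contain a neighborhood of infinity and hence make the other set bounded.
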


\begin{proof}
	Notice that 
	both of $[ u_{1} - u_{2} > 0]$ and $[ u_{2} - u_{1} > 0]$ are nonempty open sets in $\RN$
	due to Remark \ref{Remark:4.1}~(iii), \eqref{eq:4.2} and \eqref{eq:4.4}. 
	If $[ u_{1} - u_{2} > 0]$ is bounded, then 
	\[
		v(x) := \left( u_{1} (x) - u_{2}(x) \right)_+ \in H^s(\RN), \quad 
		\text{$\supp v$ is compact}, \quad v \not \equiv 0.
	\]
	Thus, due to Remark \ref{Remark:3.1}~(iv),
	we can take $v$ as a test function, and 
	\[
		\la u_{1} - u_{2} ,v \ra_{\dot{H}^s (\RN)}
		= \int_{ \RN } |x|^\ell (u_{1}^p - u_{2}^p) v\, d x .
	\]
	Since $(a_+-b_+)^2 \leq (a-b)(a_+-b_+)$ holds, we have 
	\[
		\| v \|_{\dot{H}^s(\RN)}^2 \leq \la u_{1} - u_{2} ,v \ra_{\dot{H}^s (\RN)} .
	\]
	Moreover, from $v \not \equiv 0$ and 
	\[
		\begin{aligned}
			\int_{ \RN } |x|^\ell (u_{1}^p - u_{2}^p) v \, d x 
			 & = \int_{ \RN } |x|^\ell p \int_{0}^{1} \left( \theta u_{1} + (1-\theta) u_{2} \right)^{p-1} \, d \theta 
			v^2 \, d x
			\\
			 & < p \int_{ \RN } |x|^\ell u_{1}^{p-1} v^2 \, d x,
		\end{aligned}
	\]
	it follows that 
	\[
		\| v \|_{\dot{H}^s(\RN)}^2 < p \int_{ \RN } |x|^\ell u_{1}^{p-1} v^2 \, d x,
	\]
	and this contradicts the stability of $u_{1}$.

	On the other hand, when $[ u_{2} - u_{1} > 0]$ is bounded, we exploit 
	\[
		w(x) := \left( u_{2}(x) - u_{1}(x) \right)_+ 
	\]
	and test this function to $(-\Delta)^s (u_{2} - u_{1}) = |x|^\ell (u_{2}^p - u_{1}^p)$. 
	Then we have a contradiction.

	Finally, suppose that either $ [ u_{1} - u_{2} > 0]$ or $[ u_2 - u_{1} > 0]$
	has only finitely many components. 
	Here we first assume that $[ u_{1} - u_{2} > 0]$ admits only finitely many components 
	$I_1,\ldots, I_k$. We know that $[ u_{1} - u_{2} > 0]$ is unbounded and 
	each $I_j$ is either ball or annulus due to the radial symmetry of $u_{1}$ and $u_2$. 
	Therefore, one of $I_1,\ldots, I_k$ should be unbounded, say $I_1$. 
	Since $I_1$ is unbounded and either ball or annulus, we may find an $r_0>0$ such that 
	$\RN \setminus B_{r_0} \subset I_1$. 
	This means $ u_{1} (x) - u_{2} (x) > 0$
	for all $|x| \geq r_0$ and 
	$[ u_{2} - u_{1} > 0]$ becomes bounded. 
	However, this is a contradiction. 
	In a similar way, we may prove that the case $[ u_{2} - u_{1} > 0]$
	has only finitely many components never occurs. 
\end{proof}

\begin{remark}\label{Remark:4.2}
	Let $I_{k,+}$ (resp. $I_{k,-}$) be a component of $[ u_{2} - u_{1} > 0 ]$
	(resp. $[ u_{1} - u_{2} > 0 ]$ ).  Thanks to \eqref{eq:4.2}, we may suppose $0 \in I_{1,+}$. 
	Furthermore, from the above argument, we may verify that each $I_{k,\pm}$ is bounded. 
\end{remark}

Let $u_i$ be as in Proposition~\ref{Proposition:4.1}.
For $u_i$, set 
\begin{equation}\label{eq:4.5}
	U_{i} (X) := \left( P_s (\cdot,t) \ast u_{i} \right) (x). 
\end{equation}
Then $U_i(X) \to 0$ as $|X| \to \infty$ thanks to \eqref{eq:4.2} and Lemma \ref{Lemma:B.2}. 
Moreover, noting Remark \ref{Remark:4.1}, we may prove 
\begin{equation}\label{eq:4.6}
	\begin{aligned}
		 & U_{1} \in H^1_{\rm loc} \left( \ov{\HS} , t^{1-2s} d X \right)
		\cap C \left( \ov{\HS} \right) \cap C^\infty \left( \HS \right) ,
		\\
		 & U_2 \in H^1_{\rm loc} \left( \ov{\HS} , t^{1-2s} d X  \right)
		\cap C \left( \ov{\HS} \setminus \set{0} \right) \cap C^\infty \left( \HS \right)
	\end{aligned}
\end{equation}
and for every $\varphi \in C^1_c( \ov{\HS} )$,
\begin{equation}\label{eq:4.7}
	\int_{\HS} t^{1-2s} \nabla U_i \cdot \nabla \varphi \, d X 
	= \kappa_s \la u_{i} , \varphi (\cdot , 0) \ra_{\dot{H}^s (\RN)}
	= \kappa_s \int_{\RN} |x|^\ell u_{i}^p \varphi (x,0) \, d x.
\end{equation}
For $U_i$, we also use the following notation:  
\begin{equation}\label{eq:4.8}
	\begin{aligned}
		V(X) &:= U_2(X) - U_1(X) \in \Hloc \left( \ov{\HS} , t^{1-2s} d X  \right) 
		\cap C \left( \ov{\HS} \setminus \set{0} \right) \cap C^\infty \left( \HS \right), 
		\\
		\left[ V > a \right] &:= \Set{ X \in \HS | V(X) > a }, 
		\quad 
		\left[ V < a \right] := \Set{ X \in \HS | V(X) < a }. 
	\end{aligned}
\end{equation}
	Then, by \eqref{eq:4.2}, \eqref{eq:4.4}, \eqref{eq:4.5}, \eqref{eq:4.6} and \eqref{eq:4.8} 
	we see that $[V > 0] \neq \emptyset$ and $[V < 0] \neq \emptyset$. 
	When $u_2 \in L^\infty(\RN)$, $V \in C( \ov{\HS} )$ holds due to Remark \ref{Remark:4.1}. 
	In addition, by virtue of Lemma \ref{Lemma:B.2}, 
	\begin{equation}\label{eq:4.9}
		\lim_{|X| \to 0} V(X) = \infty \quad \text{when $u_2(x) \to \infty$ as $|x| \to 0$}.
	\end{equation}
Moreover, for each component $\cO_+ \subset \HS$ of $[V > 0]$ and $\cO_- \subset \HS$ of $[V<0]$, define 
\begin{equation}\label{eq:4.10}
	V_{\cO_+ }(X) := V(X) \chi_{\ov{\cO_+}} (X), \quad 
	V_{\cO_- }(X) := V(X) \chi_{\ov{\cO_-}} (X),
\end{equation}
where $\ov{\cO_\pm} \subset \ov{\HS}$ is the closure of $\cO_\pm$ in $\R^{N+1}$ 
and $\chi_A$ is the characteristic function of $A$. 

\begin{lemma}\label{Lemma:4.2}
For each any component $\cO_\pm \subset\HS$ of $[V>0]$ and $[V<0]$,
let $V_{\cO_\pm}$ be as in \eqref{eq:4.10}. 
Then $V_{\cO_\pm} \in C( \ov{\HS} \setminus \set{0} ) \cap H^1_{\rm loc} (\ov{\HS} , t^{1-2s} d X )$, 
and $V_{\cO_\pm}$ satisfy
	\begin{equation}\label{eq:4.11}
		\begin{aligned}
			& \int_{\HS} t^{1-2s} \left| \nabla V_{\cO_\pm} \right|^2 \psi  \, d X 
			\\
			= \  & -\int_{\HS} t^{1-2s} \nabla \left( \frac{V^2_{\cO_\pm}}{2} \right) \cdot \nabla \psi \, d X 
			+ \kappa_s \int_{ \RN } |x|^\ell \left( u_{2}^p - u_{1}^p \right) V_{\cO_\pm}(x,0) \psi(x,0) \, d x
		\end{aligned}
	\end{equation}
	for each $\psi \in C^\infty_c( \ov{\HS} )$. 
	Furthermore, if $u_2 \in L^\infty (\RN)$, then $V_{\cO_\pm} \in C( \ov{\HS} )$.  
\end{lemma}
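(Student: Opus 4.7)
The plan is to treat $\cO_+$ only; the argument for $\cO_-$ is identical upon replacing $V$ by $-V$. I would proceed in three steps.

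First I would establish the continuity of $V_{\cO_+}$ on $\ov{\HS}\setminus\{0\}$ (and on $\ov{\HS}$ when $u_2\in L^\infty(\RN)$, using Lemma~\ref{Lemma:B.2} at the origin). Since $V$ is continuous on $\ov{\HS}\setminus\{0\}$ by \eqref{eq:4.6}, the component $\cO_+$ is open in $\HS$ and $V_{\cO_+}=V$ there. The delicate points are topological boundary points $X_0\in\ov{\cO_+}\setminus\cO_+$. Approaching $X_0$ from within $\cO_+$ gives $V(X_0)\ge 0$. On the other hand, if $V(X_0)>0$, then by continuity of $V$ an open half-ball $B$ around $X_0$ in $\ov{\HS}$ would satisfy $V>0$ on $B\cap\HS$; since $B\cap\HS$ is connected and meets $\cO_+$, it must lie in $\cO_+$, so $X_0$ is an interior point of $\ov{\cO_+}$ in $\ov{\HS}$, contradicting the choice of $X_0$. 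Hence $V(X_0)=0$, and the zero extension is continuous.

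Next I would show $V_{\cO_+}\in H^1_{\rm loc}(\ov{\HS},t^{1-2s}\,dX)$ with weak gradient $\nabla V\cdot\chi_{\ov{\cO_+}}$, via the truncation $V^\e:=(V-\e)_+\chi_{\ov{\cO_+}}$ for $\e>0$. The structural observation is that, because $\cO_+$ is a connected component of the open set $[V>0]$ and $[V\ge\e]\subset[V>0]$, the two sets $[V\ge\e]\cap\ov{\cO_+}$ and $[V\ge\e]\setminus\ov{\cO_+}$ are each closed in $\ov{\HS}\setminus\{0\}$ and therefore at strictly positive distance on every compact $K\subset\ov{\HS}\setminus\{0\}$. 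One can then build a smooth cutoff $\eta$ on a neighbourhood of $K$ equal to $1$ near the first set and $0$ near the second, so that $V^\e=(V-\e)_+\eta$ on $K$; this is a product of an $H^1$ function and a smooth function, hence lies in $H^1(K,t^{1-2s}\,dX)$. Letting $\e\searrow 0$, dominated convergence (with $|V|$ and $|\nabla V|\cdot\chi_{\cO_+}\in L^2_{\rm loc}(\ov{\HS},t^{1-2s}\,dX)$ as dominants) gives $V^\e\to V_{\cO_+}$ in $H^1_{\rm loc}(\ov{\HS},t^{1-2s}\,dX)$ with the stated weak gradient.

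Finally, for \eqref{eq:4.11}, subtracting the two instances of \eqref{eq:4.7} yields
\[
\int_{\HS}t^{1-2s}\nabla V\cdot\nabla\varphi\,dX=\kappa_s\int_{\RN}|x|^\ell(u_2^p-u_1^p)\varphi(x,0)\,dx
\]
for every $\varphi\in C^1_c(\ov{\HS})$, and by a standard mollification argument (as in \cite[Lemma~2.2]{HIK-20}) together with $|x|^\ell(u_2^p-u_1^p)\in L^{2N/(N+2s)}_{\rm loc}(\RN)$ — which is guaranteed by $u_2\le Cu_S$ in $B_1$ — this identity extends to every compactly supported $\varphi\in H^1(\ov{\HS},t^{1-2s}\,dX)$. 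I would then insert $\varphi=V_{\cO_+}\psi$ for $\psi\in C^\infty_c(\ov{\HS})$, which is admissible by Step~2, and expand via the product rule $\nabla V\cdot\nabla(V_{\cO_+}\psi)=\psi\,\nabla V\cdot\nabla V_{\cO_+}+V_{\cO_+}\,\nabla V\cdot\nabla\psi$. Using $\nabla V_{\cO_+}=\nabla V\cdot\chi_{\ov{\cO_+}}$ from Step~2, one has $\nabla V\cdot\nabla V_{\cO_+}=|\nabla V_{\cO_+}|^2$ a.e., and $V_{\cO_+}\,\nabla V=V_{\cO_+}\,\nabla V_{\cO_+}=\nabla(V_{\cO_+}^2/2)$; rearranging produces exactly \eqref{eq:4.11}. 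The main obstacle will be Step~2: a connected component of $[V>0]$ can have highly irregular boundary, so the zero extension of $V|_{\cO_+}$ is not automatically in $H^1$. The truncation $V^\e$ is what circumvents this, by pushing the support to positive distance from $\partial\cO_+\cap\HS$, where $V$ vanishes by the very definition of a connected component.
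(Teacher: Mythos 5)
Your proposal is correct and follows essentially the same route as the paper: continuity via the observation that $V$ vanishes on $\partial\cO_\pm\cap\HS$, membership in $H^1_{\rm loc}(\ov{\HS},t^{1-2s}\,dX)$ via a truncation that pushes the support away from the nodal boundary followed by dominated convergence, and \eqref{eq:4.11} by testing the equation for $V$ with $V_{\cO_\pm}\psi$ (the paper tests with the truncated approximants $W_n=\tfrac1n\zeta_0(nV_{\cO_\pm})\psi$ and passes to the limit, whereas you first extend the weak formulation to compactly supported $H^1$ test functions by density; the two are interchangeable). The only cosmetic difference is your Lipschitz truncation $(V-\e)_+$ and explicit cutoff $\eta$ in place of the paper's smooth truncation $\zeta_0$, which changes nothing of substance.
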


\begin{proof}
	We only treat $V_{\cO_+}$ and a similar argument works for $V_{\cO_-}$. 
	We first show $V_{\cO_+} \in C ( \ov{\HS} \setminus \set{0}  )$. 
	Since $V = 0$ on $\partial \cO_+ \cap \HS$ and $V \in C( \ov{\HS} \setminus \set{0} )$, 
	it suffices to show the continuity on $(\RN \setminus \set{0}) \times \{0\}$. 
	Let $x \in \RN \setminus \set{0}$. 
	If $V(x,0) = 0$, then the continuity of $V_{\cO_+}$ at $(x,0)$ follows from 
	the continuity of $V(x,t)$. 
	If $V(x,0) \neq 0$, then since $V \in C(\ov{\HS} \setminus \set{0} )$, for some $r_0>0$, 
	\[
		V(y,t) \neq 0 \quad \text{for every } (y,t) \in \ov{B_{r_0}^+(x,0)}
		:= \Set{ (x,t) \in \ov{\HS} | \ |x-y|+t \leq r_0 }. 
	\]
	Thus, if $V(x,0) < 0$, then $V_{\cO_+} (y,t) = 0$ for every $(y,t) \in \ov{B_{r_0}^+(x,0)}$
	and $V_{\cO_+}$ is continuous at $(x,0)$. 
	If $V(x,0) > 0$, then $\ov{B_{r_0}^+(x,0)} \cap \HS \subset \wt{\cO}$ where 
	$\wt{\cO}$ is some component of $[V>0]$. When $\cO_+ \neq \wt{\cO}$, 
	$V_{\cO_+} (y,t) = 0$ for all $(y,t) \in \ov{B_{r_0}^+(x,0)}$ and 
	$V_{\cO_+}$ is continuous at $(x,0)$. 
	On the other hand, when $\cO_+ = \wt{\cO}$, we have 
	$V_{\cO_+} (y,t) = V(y,t)$ for all $(y,t) \in \ov{B_{r_0}^+(x,0)}$. 
	Hence, $V_{\cO_+}$ is continuous on $\ov{\HS} \setminus \{0\} $. 
	In addition, when $u_2 \in L^\infty(\RN)$, we have $V \in C( \ov{\HS}  )$ and 
	from the above argument, it is easily seen that $V_{\cO_+} \in C( \ov{\HS} )$.

	Next, we prove $V_{\cO_+} \in H^1_{\rm loc} (\ov{\HS}, t^{1-2s} d X )$. 
	We argue as in \cite[Theorem 9.17]{Br11}. We choose $\zeta_0 \in C^\infty(\R)$ such that 
	\begin{equation}\label{eq:4.12}
		\zeta_0 (\tau) = 0 \quad \text{if} \ \tau \leq 1, \quad 
		0 \leq \zeta_0 (\tau) \leq \tau \quad \text{for every $\tau \in [0,\infty)$}, 
		\quad 
		\zeta_0 (\tau) = \tau \quad \text{if} \ \tau \geq 2,
	\end{equation}
	and define 
	\[
		W_n(X) := \frac{1}{n} \zeta_0\left( n V_{\cO_+}(X) \right) \psi (X)
		\quad \text{where $\psi \in C^\infty_c ( \ov{\HS} )$.}
	\]
	Remark that 
	\[
		\begin{aligned}
			&\text{$ \supp W_n$ is compact in $\ov{\HS}$}, \quad 
			V_{\cO_+} = 0 \quad \text{on}  \ \partial \cO_+ \cap \HS, 
			\\
			&V(x,0) = u_2(x) - u_1(x) >0 \quad 
			\text{if $V_{\cO_+} (x,0) > 0$}.
		\end{aligned}
	\]
	Thus, by \eqref{eq:4.2} and \eqref{eq:4.9}, we may find a $\delta_0>0$ such that 
	$B^+_{\delta_0} (0,0) \cap \partial \cO_+ = \emptyset$. 
	By $V_{\cO_+} \in C(\ov{\HS} \setminus \set{0} )$ and $\psi \in C^\infty_c ( \ov{\HS} )$, 
	for each $n \geq 1$, 
	we may find an $\e_{n}>0$ such that 
	\[
		W_n(X) = 0 \quad \text{if $X \in \HS$ and $ \dist ( X , \partial \cO_+ \cap \HS ) < \e_{n}$}.
	\]
	From this and the fact $V_{\cO_+} \equiv V$ in $[ V_{\cO_+} > 0 ] \cap \HS $, it follows that 
	\[
		W_n \in C^\infty(\HS) \cap H^1( \HS, t^{1-2s} d X ). 		
	\]
	By $V_{\cO_+} (X) > 0$ for every $X \in \cO_+$ and $V_{\cO_+} = 0$ on $\partial \cO_+ \cap \HS$, 
	as $n \to \infty$, we have 
	\[
		\begin{aligned}
			 & \zeta_0' \left( n V_{\cO_+} (X) \right) \to 1 \quad 
			\text{for all } X \in \cO_+, \quad 
			\zeta_0' \left( n V_{\cO_+} (X) \right) = 0 \quad 
			\text{for all $X \in \HS \setminus \cO_+$},
			\\
			 & \frac{1}{n} \zeta_0 \left( n V_{\cO_+} (X)\right) \to V_{\cO_+}(X)
			\quad \text{for each } X \in \ov{\cO_+} \cap \HS.
		\end{aligned}
	\]
	Since 
	\[
		\begin{aligned}
			&\nabla W_n = \psi \zeta_0' \left( n V_{\cO_+} \right) \nabla V  
			+ \frac{1}{n} \zeta_0 ( n V_{\cO_+} ) \nabla \psi,
			\quad 
			\| \zeta_0' \|_{L^\infty(\R)} < \infty, 
			\\
			&
			\left| \frac{1}{n} \zeta_0 \left( n V_{\cO_+} (X) \right) \right| 
			\leq \left| V_{\cO_+} (X) \right|\quad \text{for all $X\in\HS$},
		\end{aligned}
	\]
	the dominated convergence theorem implies 
	\[
		\int_{\HS} t^{1-2s} \left| W_n - V_{\cO_+} \psi \right|^2 d X + 
		\int_{\HS} t^{1-2s} 
		\left| \nabla W_n - \psi \chi_{\cO_+} \nabla V  - V_{\cO_+} \nabla \psi \right|^2 d X \to 0.
	\]
	Thus, for every $\psi \in C^\infty_c ( \ov{\HS} )$, 
	\[
		\begin{aligned}
			&W_n \to V_{\cO_+} \psi \quad \text{in} \ H^1( \HS , t^{1-2s} d X ), \quad 
			V_{\cO_+} \psi \in H^1 ( \HS , t^{1-2s} d X ), 
			\\
			&
			\nabla \left( V_{\cO_+} \psi \right) = \psi \chi_{\cO_+} \nabla V + V_{\cO_+} \nabla \psi .
		\end{aligned}
	\]
	Hence, 
	$V_{\cO_+} \in H^1_{\rm loc} (  \ov{\HS} ,t^{1-2s} d X )$.

	Now we prove \eqref{eq:4.11}. Since $C^\infty_c( \ov{\HS} )$ is dense in $H^1( \HS , t^{1-2s} d X )$, 
in view of Remark \ref{Remark:4.1} and $W_n(x,0) \in H^s(\RN)$ 
thanks to the trace theorem, \eqref{eq:4.7} yields 
	\[
		\int_{\HS} t^{1-2s} \nabla V \cdot \nabla W_n \, d X 
		= \kappa_s \int_{ \RN } |x|^\ell ( u_{2}^p - u_{1}^p ) W_n (x,0) \, d x.
	\]
	Observe that 
	\[
		\begin{aligned}
			&\int_{\HS} t^{1-2s} \nabla V \cdot \nabla W_n \, d X 
			\\
			= \ & 
			 \int_{\cO_+} t^{1-2s} \nabla V \cdot \nabla 
			\left( \frac{1}{n} \zeta_0 \left( n V_{\cO_+} \right) \psi  \right) d X
			\\
			 = \ & 
			 \int_{\cO_+} t^{1-2s} \left| \nabla V_{\cO_+} \right|^2 
			\zeta_0' \left( n V_{\cO_+} \right) \psi \, d X 
			+ \int_{\cO_+} t^{1-2s} \frac{1}{n} \zeta_0 \left( n V_{\cO_+} \right) \nabla V_{\cO_+} \cdot \nabla \psi \, d X.
		\end{aligned}
	\]
	The dominated convergence theorem yields 
	\[
		\begin{aligned}
			\lim_{n \to \infty}\int_{\HS} t^{1-2s} \nabla V \cdot \nabla W_n \, d X 
			 & = \int_{\cO_+} t^{1-2s} \left| \nabla V_{\cO_+} \right|^2 \psi \, d X 
			+ \int_{\cO_+} t^{1-2s} V_{\cO_+} \nabla V_{\cO_+} \cdot \nabla \psi \, d X
			\\
			 & = \int_{\cO_+} t^{1-2s} \left| \nabla V_{\cO_+} \right|^2 \psi \, d X 
			+ \int_{\cO_+} t^{1-2s} \nabla \left( \frac{V_{\cO_+}^2}{2} \right) \cdot \nabla \psi \, d X.
		\end{aligned}
	\]
	On the other hand, since $W_n \to V_{\cO_+} \psi$ strongly in $H^1(\HS, t^{1-2s} d X)$, 
	by the trace theorem and $V_{\cO_+} \in C( \ov{\HS} \setminus \set{0}  )$, 
	we see  $W_n(x,0) \to \left( V_{\cO_+} \psi \right) (x,0)$ 
	strongly in $H^s(\RN)$. 
	Therefore, thanks to $|x|^{\ell} u_i^p \in L^{ \frac{2N}{N+2s} } (B_1)$ and 
	$u_i \in L^\infty_{\rm loc} (\RN \setminus \set{0} )$, we obtain 
	\[
		\lim_{n \to \infty} \kappa_s \int_{ \RN } |x|^\ell \left( u_{2}^p - u_{1}^p \right) W_n (x,0) \, d x
		= \kappa_s \int_{ \RN } |x|^\ell \left( u_{2}^p - u_{1}^p \right) V_{\cO_+}(x,0) \psi(x,0) \, d x.
	\]
	Thus, \eqref{eq:4.11} holds. 
\end{proof}

\begin{lemma}\label{Lemma:4.3}
	All components $\cO_{\pm} \subset\HS$ of $[ V >0 ]$ and $[V<0]$ are unbounded.  
\end{lemma}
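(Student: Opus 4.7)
The plan is to argue by contradiction: suppose that some component $\cO \subset \HS$ of $[V > 0]$ is bounded (the case of $[V < 0]$ being symmetric after interchanging the roles of $u_1$ and $u_2$). For notational brevity write $V_\cO := V_{\cO_+}$ as in \eqref{eq:4.10}. The goal is to combine the identity provided by \eqref{eq:4.11} with (i) the strict convexity of $t \mapsto t^p$, (ii) the Caffarelli--Silvestre variational characterization of the $\dHs(\RN)$-seminorm, and (iii) the stability of $u_2$ (Remark~\ref{Remark:4.1}~(ii)), so as to produce a chain of inequalities of the form $A < A$.

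I would first dispose of the easier sub-case $0 \notin \ov{\cO}$. Pick $\psi \in C^\infty_c(\ov{\HS})$ identically $1$ on a neighborhood of $\ov{\cO}$ and supported away from the origin. Since $\nabla \psi \equiv 0$ on $\supp V_\cO$, the first term on the right-hand side of \eqref{eq:4.11} drops out, leaving
\[
\int_{\HS} t^{1-2s} |\nabla V_\cO|^2 \, dX = \kappa_s \int_{\RN} |x|^\ell \left( u_2^p - u_1^p \right) V_\cO(x,0)\, dx.
\]
On $\{V_\cO(\cdot,0) > 0\}$ one has $u_2 > u_1 \geq 0$, so strict convexity of $t \mapsto t^p$ gives $u_2^p - u_1^p < p u_2^{p-1}(u_2 - u_1)$ strictly. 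Since $V_\cO$ is an admissible extension of its trace $V_\cO(\cdot,0)$, which is compactly supported in $\RN \setminus \{0\}$ and lies in $H^s(\RN)$, the extension inequality yields $\kappa_s \|V_\cO(\cdot,0)\|_{\dHs(\RN)}^2 \leq \int_{\HS} t^{1-2s}|\nabla V_\cO|^2\, dX$; finally the stability of $u_2$ applied to $V_\cO(\cdot,0)$ (valid by Remark~\ref{Remark:4.1}~(ii) and a density argument) gives $p \int_{\RN} |x|^\ell u_2^{p-1} V_\cO(\cdot,0)^2 \, dx \leq \|V_\cO(\cdot,0)\|_{\dHs(\RN)}^2$. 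Chaining these three with the identity above forces $A < A$, a contradiction.

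The hard sub-case is $0 \in \ov{\cO}$, where $u_2$, and hence $V_\cO$, may blow up at the origin and the trace $V_\cO(\cdot,0)$ need not lie in $\dHs(\RN)$. My approach is to introduce a radial cutoff $\eta_\delta \in C^\infty(\ov{\HS})$ with $\eta_\delta \equiv 0$ on $B_\delta^+(0)$ and $\eta_\delta \equiv 1$ outside $B_{2\delta}^+(0)$, satisfying $|\nabla \eta_\delta| \lesssim \delta^{-1}$, and to test \eqref{eq:4.11} with $\psi = \chi \eta_\delta^2$ for some $\chi \in C^\infty_c(\ov{\HS})$ equal to $1$ near $\ov{\cO}$. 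Exploiting the hypothesis $u_2 \leq C u_S$ in $B_1$ together with the Poisson-kernel estimate from Lemma~\ref{Lemma:B.2} yields $V_\cO(X) \leq U_2(X) \lesssim |X|^{-\theta_0}$ near $0$; since $p > p_S(N,\ell)$ gives $2\theta_0 < N - 2s$, the annular region $\{\delta \leq |X| \leq 2\delta\}$ carries weighted volume $\lesssim \delta^{N+2-2s}$, and a Cauchy--Schwarz argument shows the extra gradient term $\int_{\HS} t^{1-2s} \nabla(V_\cO^2/2) \cdot \nabla \psi \, dX$ vanishes as $\delta \searrow 0$. In parallel, on the stability side I would test with $V_\cO(\cdot,0) \eta_\delta(\cdot,0)$, which is compactly supported in $\RN \setminus \{0\}$, and use the same bounds to pass to the limit.

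The main obstacle is precisely this limit procedure near the origin: one must propagate through the limit both the extension inequality and the stability inequality, and crucially preserve the strictness of the convexity gap. To ensure strictness survives, I would observe that on any compact $K \subset \cO \cap \{t=0\}$ bounded away from $0$, the integral $\int_K |x|^\ell [p u_2^{p-1}(u_2-u_1) - (u_2^p-u_1^p)]\, dx$ is strictly positive, so the strict gap persists under the monotone passage $\delta \searrow 0$. This is the technical core of the argument and the point where the a priori control $u_2 \leq C u_S$ in the hypotheses of Proposition~\ref{Proposition:4.1} is used in an essential way.
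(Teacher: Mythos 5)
Your strategy is essentially the one the paper uses: take $\psi\equiv 1$ in \eqref{eq:4.11} (legitimate since $\supp V_{\cO_+}$ is compact when $\cO_+$ is bounded), compare $\int_{\HS}t^{1-2s}|\nabla V_{\cO_+}|^2\,dX$ with $\kappa_s\|V_{\cO_+}(\cdot,0)\|_{\dHs(\RN)}^2$ via the minimality of the Poisson extension $P_s(\cdot,t)\ast V_{\cO_+}(\cdot,0)$, insert the strict convexity bound $u_2^p-u_1^p<p\,u_2^{p-1}(u_2-u_1)$ on $[u_2>u_1]$, and contradict the stability of $u_2$ (respectively $u_1$ for components of $[V<0]$). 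Two remarks. First, your chain ``$A<A$'' silently assumes $V_{\cO_+}(\cdot,0)\not\equiv 0$: if the trace vanishes identically, every term in the chain is zero and no strict inequality is produced; the paper isolates this as a separate case and concludes directly from $\int_{\HS}t^{1-2s}|\nabla V_{\cO_+}|^2\,dX=0$ that $V_{\cO_+}\equiv 0$, contradicting $V_{\cO_+}>0$ in $\cO_+$. You should add this case split. Second, your entire ``hard sub-case'' with the cutoffs $\eta_\delta$ near the origin is unnecessary: Lemma \ref{Lemma:4.2} already gives $V_{\cO_\pm}\in H^1_{\rm loc}(\ov{\HS},t^{1-2s}\,dX)$ up to and including the origin (using $u_2\le Cu_S$), so for a bounded component $V_{\cO_+}\in H^1(\HS,t^{1-2s}\,dX)$ and its trace lies in $H^s(\RN)$ with compact support by the trace theorem; and Remark \ref{Remark:4.1}~(ii) already extends the stability inequality of $u_2$ to all compactly supported $H^s$ functions via the Hardy inequality, even when $u_2$ blows up like $u_S$ at the origin. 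So the uniform argument covers $0\in\ov{\cO_+}$ with no extra limiting procedure, and the delicate ``preservation of strictness through $\delta\searrow 0$'' you flag as the technical core simply does not arise.
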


\begin{proof}
	As in Lemma \ref{Lemma:4.2}, we only deal with $\cO_+$ since the case $\cO_-$ can be shown similarly. 
	We argue by contradiction and suppose that $\cO_+$ is bounded. 
	Then $V_{\cO_+}$ (resp. $V_{\cO_{+}} (\cdot, 0) $ ) in \eqref{eq:4.10} has compact support in $\ov{\HS}$ 
	(resp. $\RN$), and $V_{\cO_+} \in H^1(\HS, t^{1-2s} d X)$ holds in view of Lemma \ref{Lemma:4.2}. 
	In addition, we may take $\psi \equiv 1$ in \eqref{eq:4.11} due to the compactness of $\supp V_{\cO_{+}}$ 
	and $\supp V_{\cO_{+}} (\cdot, 0)$.

	Now we distinguish two cases: (i) $V_{\cO_+} (\cdot ,0) \equiv 0$, (ii) $V_{\cO_+} (\cdot ,0) \not\equiv 0$. 
	When (i) happens, then putting $\psi \equiv 1$ in \eqref{eq:4.11}, we obtain 
	\[
		\int_{\HS} t^{1-2s} | \nabla V_{\cO_+} |^2 \, d X 
		= \kappa_s \int_{ \RN } |x|^\ell \left( u_{2}^p - u_{1}^p \right) V_{\cO_+}(x,0) \, d x 
		= 0.
	\]
	Thus, $V_{\cO_+} \equiv 0$ in $\HS$, however, this contradicts $V_{\cO_+} > 0$ in $\cO_+$.

	Next, we consider case (ii). 
	In this case, let $x_0 \in \RN$ satisfy $V_{\cO_+} (x_0,0) > 0$. Then 
	$0< V_{\cO_+} (x_0,0) = V(x_0,0) = u_{2}(x_0) - u_{1} (x_0)$ and 
	$B^+_{r}(x_0,0) \subset \cO_+$ holds for some $r>0$ and 
	$V_{\cO_+} (x,0) > 0$ if $|x-x_0|<r$. 
	Let $\psi \equiv 1$ in \eqref{eq:4.11} to get 
	\[
		\int_{\HS} t^{1-2s} \left| \nabla V_{\cO_+} \right|^2 d X 
		= \kappa_s \int_{ \RN } |x|^\ell \left( u_{2}^p - u_{1}^p \right) V_{\cO_+}(x,0) \, d x.
	\]
	Moreover, by the trace theorem, $V_{\cO_+} (x,0) \in H^s(\RN)$ and 
	put $W_{\cO_+}(x,t) := P_s(\cdot, t) \ast V_{\cO_+} (\cdot, 0)$. 
	By \cite[(2.8)]{HIK-20} and the density of 
	$C^\infty_c( \RN )$ in $H^s (\RN) $ and $C^\infty_c(\ov{\HS})$ in $H^1(\HS , t^{1-2s} d X )$, respectively, we have 
	\[
		\kappa_s \| V_{\cO_+} (\cdot, 0) \|_{\dot{H}^s(\RN)}^2 
		= \int_{\HS} t^{1-2s} \left| \nabla W_{\cO_+} \right|^2  d X 
		\leq \int_{\HS} t^{1-2s}  \left| \nabla V_{\cO_+} \right|^2 d X ,
	\]
	which gives 
	\[
		\begin{aligned}
			\| V_{\cO_+} (\cdot, 0) \|_{\dot{H}^s(\RN)}^2  
			 & \leq 
			\int_{ \RN } |x|^\ell \left( u_2^p - u_{1}^p \right) V_{\cO_+}(x,0) \, d x
			\\
			 & = \int_{ \RN } |x|^\ell p \int_{0}^{1} \left( \theta u_2 + (1-\theta) u_1 \right)^{p-1} \, d \theta 
			(u_2-u_1) V_{\cO_+} (x,0) \, d x.
		\end{aligned}
	\]
	From the fact that $V_{\cO_+}(x,0) > 0$ yields $ V_{\cO_+} (x) = u_{2} (x) - u_{1}(x) > 0$, 
	we infer that 
	\begin{equation}\label{eq:4.13}
		\| V_{\cO_+} (\cdot, 0) \|_{\dot{H}^s(\RN)}^2 < p \int_{ \RN } |x|^\ell u_{2}^{p-1}
		V_{\cO_+}^2 (x,0) \, d x.
	\end{equation}
	Since $V_{\cO_+} (\cdot , 0) \in H^s(\RN)$ and $\supp V_{\cO_{+}} (\cdot, 0)$ is compact in $\RN$, 
	\eqref{eq:4.13} contradicts the stability of $u_{2}$ (see Remark \ref{Remark:4.2}). 
	Hence, we conclude that $\cO_+$ is unbounded. 
\end{proof}

\begin{lemma}\label{Lemma:4.4}
	For $k \in \N$, let $I_{k,-}$ be a component of $[u_1-u_2]>0$ in Remark~ {\rm\ref{Remark:4.2}} and
	let $\cO_{k,-} \subset \HS$ be a component of $[ V < 0 ]$ satisfying
	$B_{r_x}^+( x,0 ) \cap \HS \subset \cO_{k,-}$ holds for each $x \in I_{k,-}$ and some $r_x>0$. 
	For each $\cO_{k,-}$, define $V_{\cO_{k,-}}$ same as in \eqref{eq:4.10}. Then 
	$\supp V_{\cO_{k,-}} (\cdot, 0)$ is compact in $\RN$. 
\end{lemma}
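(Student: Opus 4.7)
The plan is to identify explicitly the set where $V_{\cO_{k,-}}(\cdot,0)$ is nonzero and then show it is a finite union of the bounded components $I_{j,-}$.

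First, I would characterize the set $\{x \in \RN : V_{\cO_{k,-}}(x,0) \neq 0\}$. By the definition $V_{\cO_{k,-}}(X) = V(X)\chi_{\overline{\cO_{k,-}}}(X)$ and the continuity of $V$ on $\overline{\HS}\setminus\{0\}$, nonvanishing at $(x,0)$ forces both $V(x,0) \neq 0$ and $(x,0) \in \overline{\cO_{k,-}}$. Since $\cO_{k,-} \subset \{V<0\}$, continuity yields $V(x,0) \leq 0$, so $V(x,0) < 0$, i.e.\ $x \in I_{j,-}$ for some $j$. The defining property $B_{r_x}^+(x,0)\cap\HS \subset \cO_{j,-}$ for $x \in I_{j,-}$ together with the fact that distinct components of $\{V<0\}$ are disjoint open sets implies that, for $x \in I_{j,-}$, one has $(x,0) \in \overline{\cO_{k,-}}$ if and only if $\cO_{j,-} = \cO_{k,-}$. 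Consequently
\[
\{x \in \RN : V_{\cO_{k,-}}(x,0) \neq 0\} = \bigsqcup_{j \in [k]} I_{j,-}, \qquad [k] := \{\,j \in \N : \cO_{j,-} = \cO_{k,-}\,\}.
\]
Because each $I_{j,-}$ is bounded by Remark~\ref{Remark:4.2}, it suffices to prove that the index set $[k]$ is finite.

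Next, I would exploit the radial symmetry of $u_1,u_2$: the function $V$ is radial in $x$, hence $\cO_{k,-}$ is rotationally invariant and projects to a connected open set $\widehat\cO_{k,-}$ in the two-dimensional half-plane $\Pi := \{(r,t) : r \geq 0,\ t > 0\}$. Each $I_{j,-}$ (resp.\ $I_{l,+}$) projects to a bounded open interval on the positive $r$-axis, and by Lemma~\ref{Lemma:4.1} these intervals alternate in sign with infinitely many of each kind. Above every $I_{l,+}^{\mathrm{proj}}$ sits a component $\widehat\cO_{l,+} \subset \Pi$ of $\{V>0\}$, which is unbounded in $\Pi$ by Lemma~\ref{Lemma:4.3}.

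Assuming $[k]$ contains two distinct indices $j_1, j_2$, a continuous path $\gamma \subset \widehat\cO_{k,-}$ joins points above $I_{j_1,-}^{\mathrm{proj}}$ and $I_{j_2,-}^{\mathrm{proj}}$; it must bypass every $\widehat\cO_{l,+}$ lying strictly between them. Combining the decay $V(X) \to 0$ as $|X|\to\infty$ (Lemma~\ref{Lemma:B.2}) with the maximum principle for $-\mathrm{div}(t^{1-2s}\nabla V)=0$ in $\HS$, which forbids interior extrema of $V$, I would show that $\widehat\cO_{l,+}$ must extend to infinity in a way that cannot be circumvented by a path staying in $\{V<0\}$, yielding a contradiction. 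This would force $[k]$ to be finite and hence $\supp V_{\cO_{k,-}}(\cdot,0)$ to be a bounded, closed subset of $\RN$.

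The main obstacle will be making the last planar argument rigorous: a priori the unbounded component $\widehat\cO_{l,+}$ can extend in $r$, in $t$, or in both, so one needs a careful case analysis and precise application of the maximum principle for the degenerate operator. An alternative, more analytic, route would be to assume $[k]$ infinite and derive a direct contradiction with the stability of $u_2$ by truncating $V_{\cO_{k,-}}(\cdot,0)$ with a sequence of cutoffs and exploiting the decay of $V$ at infinity, in close analogy with the proof of Lemma~\ref{Lemma:4.3}.
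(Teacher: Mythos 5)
Your reduction of the statement to the structure of the trace set $\{V_{\cO_{k,-}}(\cdot,0)\neq 0\}$ is correct, and your central mechanism --- a path inside $\cO_{k,-}$ joining neighbourhoods of two separated boundary arcs traps an intermediate component of $[V>0]$, contradicting Lemma \ref{Lemma:4.3} --- is exactly the paper's. However, there are two genuine gaps. First, the claim that the intervals ``alternate in sign'' is not what Lemma \ref{Lemma:4.1} provides: between two components of $[u_1-u_2>0]$ there need not be any component of $[u_2-u_1>0]$, only a set where $u_1=u_2$. In that degenerate case your argument produces no trapped positive component and hence no contradiction, so you cannot conclude that $[k]$ is a singleton or even finite. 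The paper avoids this by arguing directly on boundedness of the support: assuming it unbounded, one extracts points $(x_n,0)$ with $|x_n|\to\infty$ whose half-ball neighbourhoods lie in $\cO_{k,-}$; for each consecutive pair, either some $y_0$ in between satisfies $V(y_0,0)>0$ (then the trapping argument and Lemma \ref{Lemma:4.3} give a contradiction), or no such $y_0$ exists for any pair, in which case $[u_2-u_1>0]$ is bounded and Lemma \ref{Lemma:4.1} is contradicted. Your reformulation ``$[k]$ finite'' discards the second horn of this dichotomy; note also that finiteness of $[k]$ is sufficient but not necessary for bounded support, so you have replaced the goal by a possibly stronger and harder one.

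Second, the trapping step itself --- which you rightly flag as the main obstacle --- is not an analytic statement, and the maximum principle will not deliver it. The obstruction to the positive component escaping to infinity is purely topological: in the $(|x|,t)$ half-plane, the path $\gamma_0\subset\cO_{k,-}$ joining $(x_{n_0},0)$ to $(x_{n_0+1},0)$, together with the boundary segment between them, separates a neighbourhood of $(y_0,0)$ from infinity, so any unbounded component of $[V>0]$ meeting that neighbourhood would have to cross $\gamma_0$, which is impossible since $V<0$ along $\gamma_0$. The paper makes this precise with a degree-theoretic intersection argument (Appendix \ref{section:A}, Lemma \ref{Lemma:A.1}); no PDE input (decay of $V$, maximum principle for the degenerate operator) is used or needed there, and it is not clear how your proposed analytic route would close this step.
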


\begin{Remark}\label{Remark:4.3}
For each $I_{k,\pm}$, we can choose the components $\cO_{k,\pm} \subset \HS$ of 
$[ V > 0 ]$ and $[ V < 0 ]$ in same way as above. 
In fact, if $I_{k,\pm} = A_{a_{k,\pm} , b_{k,\pm}} = \set{ x \in \RN | a_{k,\pm} < |x| < b_{k,\pm} }$, 
then we may choose a small $\e_k > 0$ so that 
\[
	\begin{aligned}
		 &V(x,t) = U_2 (x,t) - U_1(x,t) > 0  &  & \text{if} \ 
		a_{k,+} + \e_k \leq |x| \leq b_{k,+} - \e_k, \ 
		0 \leq t \leq \e_k,
		\\
		 &V(x,t) = U_2 (x,t) - U_1(x,t) <  0 &  & \text{if} \ 
		a_{k,-} + \e_k \leq |x| \leq b_{k,-} - \e_k, \ 
		0 \leq t \leq \e_k. 
	\end{aligned}
\]
\end{Remark}

\begin{proof}[Proof of Lemma {\rm\ref{Lemma:4.4}}]
	Suppose that $\mathrm{supp}\, V_{\cO_{k,-}} (\cdot, 0) $ is unbounded. 
	By the assumption and $V_{\cO_{k,-}} \in C(\ov{\HS} \setminus \set{0})$ thanks to Lemma \ref{Lemma:4.2}, 
	we may take $(x_n)_n$ and $(r_n)_n$ so that 
	\[
		|x_n| < |x_{n+1}|, \quad |x_n| \to \infty \quad\text{as $n\to\infty$},
		\quad 
		V_{\cO_{k,-}} < 0 \quad \text{in}\ \ov{B_{r_n}^+(x_n,0)}.
	\]
	Hence, $B_{r_n}^+(x_n,0) \cap \HS \subset \cO_{k,-}$ for each $n \geq 1$.

	Next, we claim $V (x,0) \leq 0$ for all $|x| \geq |x_1|$. 
	If this is true, then the component $[V(x,0)>0] = [ u_{2} - u_{1} > 0 ]$ is bounded 
	and this contradicts Lemma \ref{Lemma:4.1}, and we conclude that $V_{\cO_{k,-}} (\cdot, 0)$ has compact support.

	We show $V(x,0) \leq 0$ for all $|x| \geq |x_1|$. 
	Suppose that there exists a $y_0 \in \RN$ such that 
	$ |y_0| > |x_1|$ and $V (y_0,0) > 0$. Select $s_0>0$ and $n_0 \in \N$ so that 
	$|x_{n_0}| < |y_0| < |x_{n_0+1}|$ and $V > 0$ in $\ov{B_{s_0}^+(y_0,0)}$. 
	Since $B_{r_{n_0}}^+(x_{n_0},0) \cap \HS \subset \cO_{k,-}$, 
	$B_{r_{n_0+1}}^+(x_{n_0+1},0) \cap \HS \subset \cO_{k,-}$ and 
	$\cO_{k,-}$ is open and connected in $\HS$, there exists a $\gamma_0(\tau) \in C( [0,1] , \ov{\HS} )$ such that 
	\[
		\gamma_0(0) = (x_{n_0},0), \quad 
		\gamma_0 (1) = (x_{n_0+1},0), \quad 
		\gamma_0 (\tau) \in \cO_{k,-} \quad \text{for each $\tau \in (0,1)$}.
	\]
	Let $\cO_{+} \subset \HS$ be a component of $[V>0]$ satisfying 
	$B_{s_0}^+(y_0,0) \cap \HS \subset \cO_+$. 
	Due to the existence of $\gamma_0$ and the radial symmetry of $V$, $\cO_{+}$ becomes bounded 
	(see Appendix \ref{section:A} for the details). 
	However, this contradicts Lemma \ref{Lemma:4.3}. 
	Thus, $V(x,0) \leq 0$ for all $|x| \geq |x_1|$ and 
	we complete the proof. 
\end{proof}

\begin{lemma}\label{Lemma:4.5}
	For each component $\cO_- \subset\HS$ of $[V<0]$, let $V_{\cO_-}$ be as in \eqref{eq:4.10}.
	If $\psi \in C^2_c( \ov{\HS} )$ satisfies 
	\begin{equation}\label{eq:4.14}
		t^{1-2s} \partial_t \psi (x,t) \to 0 \quad \text{as $t\to 0$} ,
	\end{equation}
	then 
	\begin{equation}\label{eq:4.15}
		\begin{aligned}
			     & \int_{\HS} t^{1-2s} \left| \nabla V_{\cO_-} \right|^2 \psi \, d X 
			\\
			= \  & \frac{1}{2} \int_{\cO_-} V_{\cO_-}^2 \diver \left( t^{1-2s} \nabla \psi \right) d X 
			+ \kappa_s \int_{ \RN } |x|^\ell \left( u_{2}^p - u_{1}^p \right) V_{\cO_{-}}(x,0) \psi(x,0) \, d x.
		\end{aligned}
	\end{equation}
\end{lemma}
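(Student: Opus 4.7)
The plan is to derive \eqref{eq:4.15} from \eqref{eq:4.11} by integrating by parts in the first term on the right-hand side of \eqref{eq:4.11}. Since the surface integrals involving $(u_2^p-u_1^p)$ are identical in both identities, my task reduces to establishing
\[
    -\int_{\HS} t^{1-2s}\nabla\!\left(\tfrac{V_{\cO_-}^2}{2}\right)\!\cdot\nabla\psi\,dX
    = \tfrac{1}{2}\int_{\cO_-} V_{\cO_-}^2\,\diver(t^{1-2s}\nabla\psi)\,dX.
\]
A useful preliminary observation is that $V_{\cO_-}\in L^\infty(\HS)$: on $\cO_-$ one has $0<-V=U_1-U_2\le U_1\le \|u_1\|_{L^\infty(\RN)}$, since the Poisson-type extension $U_1$ is dominated pointwise by the $L^\infty$ norm of its trace, while $V_{\cO_-}\equiv 0$ elsewhere.

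My strategy is to work on the slab $\HS_\delta:=\{t>\delta\}$, where $t^{1-2s}$ is smooth and uniformly bounded above and below on compact sets, and then to let $\delta\to 0$. On $\HS_\delta$, Lemma \ref{Lemma:4.2} together with the $L^\infty$ bound shows that $V_{\cO_-}^2/2$ lies in $H^1_{\rm loc}$ with weak gradient $V_{\cO_-}\nabla V_{\cO_-}$, and $t^{1-2s}\nabla\psi$ is a $C^1_c(\ov{\HS_\delta})^{N+1}$ vector field. Standard integration by parts, using outward unit normal $-e_{N+1}$ along $\{t=\delta\}$, yields
\[
    \int_{\HS_\delta} t^{1-2s}\nabla\!\left(\tfrac{V_{\cO_-}^2}{2}\right)\!\cdot\nabla\psi\,dX
    + \int_{\HS_\delta}\tfrac{V_{\cO_-}^2}{2}\,\diver(t^{1-2s}\nabla\psi)\,dX
    = -\delta^{1-2s}\int_{\RN}\tfrac{V_{\cO_-}^2(x,\delta)}{2}\,\partial_t\psi(x,\delta)\,dx.
\]

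Next I would justify passage to the limit $\delta\to 0$. For the two volume integrals, the dominated convergence theorem applies once each integrand is dominated by an $L^1(\HS)$ function: the first by Cauchy--Schwarz together with $V_{\cO_-}\in L^\infty$ and the weighted $H^1_{\rm loc}$ regularity, and the second by expanding
\[
\diver(t^{1-2s}\nabla\psi)=t^{1-2s}\Delta_x\psi+(1-2s)t^{-2s}\partial_t\psi+t^{1-2s}\partial_t^2\psi
\]
and invoking \eqref{eq:4.14} to control the singular middle term on $\supp\psi$ (for $s<1/2$ the factor $t^{-2s}$ is already locally integrable, while for $s\ge1/2$ the condition forces $\partial_t\psi(\cdot,0)\equiv 0$ and hence $|t^{-2s}\partial_t\psi|\le Ct^{1-2s}$).

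The main obstacle, and the only step where \eqref{eq:4.14} is used in an essential way, is to show that the boundary integral on the right-hand side vanishes in the limit. Using $V_{\cO_-}\in L^\infty$ and the fact that $\supp\psi(\cdot,\delta)$ lies in a fixed compact set, it suffices to prove $\sup_{x\in\RN}|\delta^{1-2s}\partial_t\psi(x,\delta)|\to 0$ as $\delta\to 0$. For $s<1/2$ this is immediate from $\delta^{1-2s}\to 0$ and the boundedness of $\partial_t\psi$. For $s\ge 1/2$, \eqref{eq:4.14} together with the continuity of $\partial_t\psi$ forces $\partial_t\psi(x,0)\equiv 0$ on $\supp\psi\cap\{t=0\}$, and a Taylor expansion using $\psi\in C^2_c(\ov{\HS})$ gives $|\partial_t\psi(x,\delta)|\le C\delta$ uniformly on $\supp\psi$, whence $|\delta^{1-2s}\partial_t\psi(x,\delta)|\le C\delta^{2-2s}\to 0$ because $s<1$. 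Substituting the resulting integration-by-parts identity into \eqref{eq:4.11} then produces \eqref{eq:4.15}.
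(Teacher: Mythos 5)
Your proposal is correct, and it reaches \eqref{eq:4.15} by the same overall reduction as the paper (namely, converting the term $-\tfrac12\int t^{1-2s}\nabla(V_{\cO_-}^2)\cdot\nabla\psi$ from \eqref{eq:4.11} into $\tfrac12\int_{\cO_-}V_{\cO_-}^2\diver(t^{1-2s}\nabla\psi)$), but the justification of that integration by parts is genuinely different. The paper regularizes the \emph{function}: it replaces $V_{\cO_-}^2$ by $\{\tfrac1n\zeta_0(nV_{\cO_-})\}^2$ with the cut-off $\zeta_0$ of \eqref{eq:4.12}, so that the approximants are smooth up to $\partial\cO_-\cap\HS$ and vanish there, integrates by parts on $[-R,R]^N\times[0,R]$, and passes to the limit by dominated convergence; the role of \eqref{eq:4.14} in killing the contribution from $\{t=0\}$ is left largely implicit. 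You instead regularize the \emph{domain}: using the weak product rule available from Lemma \ref{Lemma:4.2} together with the observation $\|V_{\cO_-}\|_{L^\infty}\le\|u_1\|_{L^\infty}$ (which is valid, since $0\le -V\le U_1$ on $\cO_-$), you integrate by parts on the slab $\{t>\delta\}$ and then send $\delta\to0$, controlling the boundary term $\delta^{1-2s}\partial_t\psi(\cdot,\delta)$ and the singular piece $(1-2s)t^{-2s}\partial_t\psi$ of $\diver(t^{1-2s}\nabla\psi)$ explicitly, with the case split $s<1/2$ versus $s\ge1/2$ handled via \eqref{eq:4.14} and a Taylor expansion. Your route avoids the $\zeta_0$-approximation entirely and makes the role of hypothesis \eqref{eq:4.14} completely explicit at both places where it matters, at the modest cost of having to invoke the $H^1$ product rule and trace identification on the slab; the paper's route keeps everything at the level of smooth integrands but is terser about why the $t=0$ boundary contribution and the integrability of $\diver(t^{1-2s}\nabla\psi)$ are unproblematic.
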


\begin{proof}
	From \eqref{eq:4.11}, it suffices to prove 
	\[
		-\frac{1}{2}\int_{\HS} t^{1-2s}\nabla \left( V_{\cO_-}^2 \right) \cdot \nabla \psi \, d X
		=
		\frac{1}{2}\int_{\cO_-} V_{\cO_{-}}^2 \diver \left( t^{1-2s} \nabla \psi \right)
		d X.
	\]
	To this end, we use $\zeta_0$ in \eqref{eq:4.12}. 
	We note that for each $X \in \cO_-$, as $n \to \infty$, 
	\[
		\begin{aligned}
			\nabla \left[ \left( \frac{1}{n} \zeta_0 \left( n V_{\cO_-}(X) \right) \right)^2 \right] \cdot \nabla \psi (X)
			 & = 2 \frac{1}{n} \zeta_0 \left( n V_{\cO_-} (X) \right) \zeta_0' \left( nV_{\cO_-}(X) \right) 
			\nabla V_{\cO_-} (X) \cdot \nabla \psi(X)
			\\
			 & \to 2 V_{\cO_-}(X) \nabla V_{\cO_-}(X) \cdot \nabla \psi(X).
		\end{aligned}
	\]
	Let $R>0$ satisfy $ \supp \psi \subset (-R,R)^{N} \times [0,R)$. 
	Since $V_{\cO_{-}} \in C(\ov{\HS})$ due to \eqref{eq:4.9} and Lemma \ref{Lemma:4.2}, 
	by \eqref{eq:4.14}, $ \zeta_0 (n V_{\cO_-}) \in C^\infty(\HS)$ and the dominated convergence theorem, 
	\[
		\begin{aligned}
			-\frac{1}{2}\int_{\HS} t^{1-2s}\nabla \left( V_{\cO_-}^2 \right) \cdot \nabla \psi \, d X
			 & = - \frac{1}{2} \lim_{n \to \infty}
			\int_{\cO_-} t^{1-2s}\nabla \left\{ \frac{1}{n} \zeta_0 \left( n V_{\cO_-}  \right) \right\}^2 \cdot 
			\nabla \psi \, d X
			\\
			 & =- \frac{1}{2} \lim_{n \to \infty}
			\int_{[-R,R]^{N} \times [0,R] } t^{1-2s}\nabla \left\{ \frac{1}{n} \zeta_0 \left( n V_{\cO_-} \right) \right\}^2 
			\cdot \nabla \psi \, d X
			\\
			 & = \frac{1}{2} \lim_{n \to \infty} \int_{[-R,R]^{N} \times [0,R] }
			\left\{ \frac{1}{n} \zeta_0 \left( n V_{\cO_-} \right) \right\}^2 
			\diver \left( t^{1-2s} \nabla \psi \right)  d X 
			\\
			 & = \frac{1}{2} \int_{\cO_-} V_{\cO_-}^2 \diver \left( t^{1-2s} \nabla \psi \right) d X.
		\end{aligned}
	\]
	Therefore, under \eqref{eq:4.14}, we get \eqref{eq:4.15} through \eqref{eq:4.11}. 
\end{proof}

\begin{lemma}\label{Lemma:4.6}
	For $k\in\mathbb N$, let $\cO_{k,-}$ and $V_{\cO_{k,-}}$ be as in Lemma~{\rm\ref{Lemma:4.4}}.
	Then 
	\begin{equation}\label{eq:4.16}
		\left| V_{\cO_{k,-}} (x,t) \right| \leq C_0 \left( |x|^2 + t^2 \right)^{- \frac{N-2s}{2} } 
		\quad \text{for all } (x,t) \in \HS.
	\end{equation}
	Furthermore, $\nabla V_{\cO_{k,-}} \in L^2(\HS , t^{1-2s} d X)$.
\end{lemma}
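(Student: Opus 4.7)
\textbf{Plan for Lemma \ref{Lemma:4.6}.}
My strategy is to first establish the pointwise decay \eqref{eq:4.16} by comparison with the $s$-harmonic extension of the boundary trace $|V_{\cO_{k,-}}(\cdot,0)|$, and then to extract the weighted gradient bound by plugging a cutoff depending on $|X|$ into the identity \eqref{eq:4.15}. At the outset, by the hypothesis $u_1(0)<\lim_{|x|\to 0} u_2(x)$ in \eqref{eq:4.2} combined with \eqref{eq:4.9}, the function $V$ is strictly positive in some $\ov{\HS}$-neighborhood of the origin, so $\ov{\cO_{k,-}}$ stays at positive distance from $(0,0)$. Together with Lemma \ref{Lemma:4.4} this forces $g(x):=|V_{\cO_{k,-}}(x,0)|$ to have compact support contained in $\RN\setminus\{0\}$; since $u_1,u_2\in L^\infty_{\rm loc}(\RN\setminus\{0\})$, we then have $g\in L^\infty(\RN)$, and a similar argument shows $V_{\cO_{k,-}}\in L^\infty(\ov{\HS})$.

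For the decay, set $\wt V(X):=(P_s(\cdot,t)\ast g)(x)$, so that $\wt V\ge 0$, $\wt V\in C(\ov{\HS})$ with $\wt V(\cdot,0)=g$, and $\diver(t^{1-2s}\nabla \wt V)=0$ in $\HS$. Using $\supp g\subset B_{R_1}$ and the explicit form of $P_s$, a routine case split (by whether $|x|>2R_1$ or $|x|\le R_1$ with $|X|$ large) yields
\begin{equation*}
\wt V(X)\le \frac{C\, t^{2s}\|g\|_{L^1(\RN)}}{(|x|^2+t^2)^{(N+2s)/2}}\le C'|X|^{-N} \quad\text{for } |X|\ge 2R_1,
\end{equation*}
which together with the trivial bound $\wt V\le \|g\|_{L^\infty}$ on $\{|X|\le 2R_1\}$ gives $\wt V(X)\le C_0 (|x|^2+t^2)^{-(N-2s)/2}$ throughout $\HS$ after enlarging $C_0$. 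Now let $W:=-V_{\cO_{k,-}}\ge 0$: then $W$ and $\wt V$ both solve $\diver(t^{1-2s}\nabla\,\cdot)=0$ in $\cO_{k,-}$, both vanish as $|X|\to\infty$ (the former via Lemma~\ref{Lemma:B.2} and the decay of $u_1,u_2$, the latter by the bound just obtained), $W\equiv 0$ on $\partial\cO_{k,-}\cap\HS$, and $W(x,0)=g(x)=\wt V(x,0)$ on $\partial\cO_{k,-}\cap(\RN\times\{0\})$. Applying the weak comparison principle for $-\diver(t^{1-2s}\nabla\,\cdot)$, via testing with $(W-\wt V)_+\eta_R$ for a cutoff $\eta_R$ in $|X|$ and sending $R\to\infty$ using the vanishing at infinity, we conclude $W\le \wt V$ in $\cO_{k,-}$, which is \eqref{eq:4.16}.

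For the weighted gradient bound, I choose $\psi_R(X)=\varphi(R^{-1}|X|)\in C^2_c(\ov{\HS})$ with $\varphi\in C^2([0,\infty))$, $\varphi\equiv 1$ on $[0,1]$, and $\varphi\equiv 0$ on $[2,\infty)$. Since $\partial_t\psi_R=\varphi'(R^{-1}|X|)\, t/(R|X|)$, condition \eqref{eq:4.14} holds, and an elementary computation yields $|\diver(t^{1-2s}\nabla\psi_R)|\le C t^{1-2s}/R^2$ on $\{R<|X|<2R\}$. The boundary integral in \eqref{eq:4.15} is uniformly bounded in $R$ since $V_{\cO_{k,-}}(\cdot,0)$ is compactly supported in $\RN\setminus\{0\}$ and $|x|^\ell(u_1^p+u_2^p)$ is integrable there. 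By \eqref{eq:4.16}, the remaining term on the right-hand side of \eqref{eq:4.15} is bounded by
\begin{equation*}
\frac{C}{R^2}\int_{R<|X|<2R} t^{1-2s}|X|^{-2(N-2s)}\,d X = O(R^{-(N-2s)})\to 0 \quad\text{as } R\to\infty,
\end{equation*}
where the integral is computed in spherical coordinates on $\HS$ and converges because $N>2s$. Monotone convergence on the left-hand side of \eqref{eq:4.15} then delivers $\nabla V_{\cO_{k,-}}\in L^2(\HS,t^{1-2s}\,d X)$.

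The main obstacle is justifying the weak comparison principle on the unbounded domain $\cO_{k,-}$ with mixed boundary data on $\{t=0\}$ and at infinity, and producing the decay rate $(N-2s)$ rather than the $N$ suggested by the raw Poisson kernel asymptotics; the latter is resolved by interpolating the near-origin $L^\infty$ bound with the far-field $|X|^{-N}$ estimate.
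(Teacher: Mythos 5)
Your overall strategy is sound and runs parallel to the paper's: the paper also proves \eqref{eq:4.16} by comparison with a barrier that solves $\diver(t^{1-2s}\nabla\,\cdot)=0$ and dominates $-V_{\cO_{k,-}}$ on $\partial\cO_{k,-}\cup(\RN\times\{0\})$, and also derives the gradient bound by feeding a cutoff into \eqref{eq:4.15}. Two of your choices differ harmlessly and even simplify matters: the paper's barrier is the explicit function $\Gamma(x,t)=(|x|^2+t^2)^{-(N-2s)/2}$ (which is degenerate-harmonic in $\HS$ and trivially dominates the compactly supported, bounded trace $-V_{\cO_{k,-}}(\cdot,0)$, with no Poisson-kernel asymptotics needed), and the paper uses a product cutoff $\psi_0(R^{-1}|x|)\psi_0(R^{-1}t)$, which forces three separate estimates \eqref{eq:4.17}--\eqref{eq:4.18}, whereas your radial cutoff $\varphi(R^{-1}|X|)$ satisfies \eqref{eq:4.14} and gives $|\diver(t^{1-2s}\nabla\psi_R)|\le Ct^{1-2s}R^{-2}$ in one line, yielding the same $O(R^{-N+2s})$ bound. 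That second half of your argument is correct.

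The genuine gap is in your justification of the comparison $W\le\wt V$ on the unbounded domain $\cO_{k,-}$. Testing with $(W-\wt V)_+\eta_R$ produces, after the usual Caccioppoli manipulation, an error term of the form
\begin{equation*}
\frac{C}{R^2}\int_{R<|X|<2R}t^{1-2s}\,(W-\wt V)_+^2\,dX
\;\le\; C\Bigl(\sup_{|X|\sim R}(W-\wt V)_+\Bigr)^2 R^{N-2s},
\end{equation*}
and since $N>2s$ this does \emph{not} tend to zero merely because $(W-\wt V)_+\to0$ at infinity: $W=-V_{\cO_{k,-}}$ inherits only the qualitative decay $U_i(X)\to0$ from Lemma \ref{Lemma:B.2}, with no rate, so the supremum can decay arbitrarily slowly and the $R^{N-2s}$ factor is not absorbed. (You cannot invoke \eqref{eq:4.16} to supply the rate, since that is what you are proving.) The repair is to abandon the energy method here and argue pointwise, as the paper does: $\wt V-W$ (in the paper, $C_0\Gamma+V_{\cO_{k,-}}$) solves the equation in $\cO_{k,-}$, is nonnegative on $\partial\cO_{k,-}\cap\HS$ and on the trace portion of $\{t=0\}$, and tends to $0$ as $|X|\to\infty$; were it negative anywhere, it would attain a negative interior global minimum (the infimum over $\ov{\cO_{k,-}}$ is achieved in a bounded set precisely because of the decay at infinity and the sign on the boundary), contradicting the strong maximum principle for $\diver(t^{1-2s}\nabla\,\cdot)$. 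With that substitution your proof of \eqref{eq:4.16}, and hence of the lemma, goes through.
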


\begin{proof}
	We first prove \eqref{eq:4.16}.
	Following \cite{CS-07}, we set 
	\[
		\Gamma (x,t) := \left( |x|^2 + t^2 \right)^{-\frac{N-2s}{2}}.
	\]
	Then $-\diver ( t^{1-2s} \nabla \Gamma ) = 0$ in $\HS$. 
	Since $V_{\cO_{k,-}}(\cdot,0)$ has compact support in $\RN$ by Lemma \ref{Lemma:4.4}
	and $V_{\cO_{k,-}} (X) = 0$ for $|X| \ll 1$ due to \eqref{eq:4.9}, 
	we may find a $C_0>0$ such that 
	\[
		0 \leq -V_{\cO_{k,-}} (x,0) < C_0 \Gamma(x,0) \quad \text{for all $x \in \RN$}.
	\]
	Notice also that $-\diver (t^{1-2s} \nabla V ) = 0$ in $\HS$ 
	due to \eqref{eq:4.5} and \eqref{eq:4.8}.  
	This together with $V_{\cO_{k,-}} = V$ in $\cO_{k,-}$ yields 
	\[
		- \Delta_{t,x} \left( C_0 \Gamma + V_{\cO_{k,-}} \right) - \frac{1-2s}{t}
		\partial_t \left( C_0 \Gamma + V_{\cO_{k,-}} \right) = 0 \quad 
		\text{in} \ \cO_{k,-}, \quad V_{\cO_{k,-}} = 0 \quad \text{on} \ 
		\partial \cO_{k,-} \cap \HS.
	\]
	Since $U_i(X) \to 0$ as $|X| \to \infty$, we have 
	$V_{\cO_{k,-}}(X) \to 0$ as $|X| \to \infty$ and 
	the strong maximum principle asserts that 
	$C_0 \Gamma + V_{\cO_{k,-}} \ge 0$ in $\cO_{k,-}$. 
	Otherwise, since $C_0 \Gamma + V_{\cO_{k,-}} \geq 0$ on $\partial \cO_{k,-} \cup \RN \times \{0\}$, 
	we may find a negative global minimum $X_0 \in \cO_{k,-}$ of $C_0 \Gamma + V_{\cO_{k,-}}$. 
	Then the strong maximum principle yields 
	$C_0 \Gamma + V_{\cO_{k,-}} \equiv (C_0\Gamma + V_{\cO_{k,-}}) (X_0) < 0$
	in $B_{r}^+(X_0)$ as long as $B_r^+(X_0) \subset \cO_{k,-}$. 
	Thus, enlarging $r>0$ and noting $V_{\cO_{k,-}} (X) = 0$ for $|X| \ll 1$, 
	we have $ \ov{B_r^+(X_0)} \cap \partial \cO_{k,-} \neq \emptyset$ or 
	$\ov{B_r^+(X_0)} \cap \RN \times \{0\} \neq \emptyset $
	and this leads a contradiction. 
	Therefore, $ C_0 \Gamma + V_{\cO_{k,-}} \geq 0$ in $\cO_{k,-}$. 
	By $V_{\cO_{k,-}} \equiv 0$ in $\HS \setminus \cO_{k,-}$, 
	\eqref{eq:4.16} holds.

	Next we prove $\nabla V_{\cO_{k,-}} \in L^2(\HS , t^{1-2s} d X)$.
	In \eqref{eq:4.15}, we compute the term:
	\[
		\int_{\cO_{k,-}} V_{\cO_{k,-}}^2 
		\diver \left( t^{1-2s} \nabla \psi  \right) d X.
	\]
	For $R>0$, we consider 
	\[
		\psi(x,t) = \psi_R(x,t) := \psi_0( R^{-1}|x|) \psi_0 ( R^{-1}t),
	\]
	where $\psi_0 \in C^\infty_c(\R)$ with $\psi_0 (\tau) = 1$ for $|\tau| \leq 1$
	and $\psi_0 (\tau) = 0$ for $|\tau| \geq 2$. 
	Then \eqref{eq:4.14} is satisfied and 
	\[
		\begin{aligned}
			     & \diver \left( t^{1-2s} \nabla \psi \right)
			\\
			= \  & t^{1-2s} \Delta_{t,x} \psi 
			+ (1-2s) t^{-2s} \partial_t \psi
			\\
			= \  & R^{-2} t^{1-2s} \psi_0''(R^{-1} t) \psi_0(R^{-1} |x|)
			+ t^{1-2s}  \psi_0(R^{-1} t)
			\left[ R^{-2}\psi_0''(R^{-1} |x|) + R^{-1}\frac{N-1}{|x|} \psi_0'( R^{-1} |x| ) \right]
			\\
			     & \qquad + (1-2s) t^{-2s} R^{-1} \psi_0 ( R^{-1} |x| ) \psi_0'( R^{-1} t ).
		\end{aligned}
	\]
	By \eqref{eq:4.16}, we observe that 
	\begin{equation}\label{eq:4.17}
		\begin{aligned}
			& \int_{\cO_{k,-}} V_{\cO_{k,-}}^2 \left| R^{-2} t^{1-2s} \psi_0''(R^{-1} t) 
			\psi_0 (R^{-1}|x| ) \right| d X
			\\
			\leq \  & C R^{-2}
			\int_{R}^{2R} d t \int_{  |x| \leq 2R}
			\frac{t^{1-2s}}{(|x|^2 + t^2)^{N-2s} } \, d x 
			\\
			= \     & C R^{-2}
			\int_R^{2R} t^{1-2s} d t 
			\int_{  |y| \leq 2R/t} t^{-2N+4s} \left( 1 + |y|^2 \right)^{ -(N-2s) } t^N \, d y 
			\\
			\leq \  & C R^{-2}
			\int_{R}^{2R} t^{ -N + 2s + 1 } d t \int_{  |y| \leq 2} (1+|y|^2)^{-(N-2s)} \, d y
			\\
			\leq \  & CR^{-N+2s}.
		\end{aligned}
	\end{equation}
	In a similar way, 
	\begin{equation*}
		\begin{aligned}
			        & \int_{\cO_{k,-}} V_{\cO_{k,-}}^2 \left| t^{-2s} R^{-1} \psi_0 (R^{-1} |x|) \psi_0'(R^{-1} t) \right| d X 
			\\
			\leq \  & C R^{-1} \int_R^{2R}  d t 
			\int_{  |x| \leq 2R} \frac{t^{-2s}}{(|x|^2 + t^2)^{N-2s}} \, d x 
			\leq C R^{-1} \int_R^{2R} t^{ -N+2s } \, d t \leq C R^{-N+2s}
		\end{aligned}
	\end{equation*}
	and 
	\begin{equation}\label{eq:4.18}
		\begin{aligned}
			        & \int_{\cO_{k,-}} V_{\cO_{k,-}}^2 t^{1-2s} \psi_0(R^{-1} t)
			\left| R^{-2}\psi_0''(R^{-1} |x|) + R^{-1}\frac{N-1}{|x|} \psi_0'( R^{-1} |x| ) \right| d X
			\\
			\leq \  & C
			\int_0^{2R} d t \int_{ R \leq |x| \leq 2R}
			\left( |x|^2 + t^2 \right)^{-(N-2s)} t^{1-2s}R^{-2} \, d x
			\\
			\leq \  & C \int_{0}^{2R}
			\left(  R^2 + t^2\right)^{-N+2s} t^{1-2s} R^{-2+N} \, d t
			\\
			= \     & C\int_0^{2} R^{-2N+4s} \left( 1 + \tau^2 \right)^{-N+2s}  (\tau R)^{1-2s} R^{N-2} R \, d \tau
			= C R^{-N+2s}.
		\end{aligned}
	\end{equation}
	Thus, 
	\[
		\limsup_{R \to \infty} \int_{\cO_{k,-}} V_{\cO_{k,-}}^2 
		\left| \diver \left( t^{1-2s} \nabla \psi_R \right) \right| d X = 0.
	\]
	Noting that $V_{\cO_{k,-}}(x,0)$ has compact support by Lemma \ref{Lemma:4.4}, we also have 
	\[
		\limsup_{R \to \infty} \int_{ \RN } |x|^\ell (u_{2}^p - u_{1}^p) V_{\cO_{k,-}} (x,0) 
		\psi_R (x,0) \, d x 
		= \int_{ \RN } |x|^\ell \left( u_2^p - u_1^p \right) V_{\cO_{k,-}} (x,0) \, d x
		< \infty.
	\]
	Hence, \eqref{eq:4.15}, $u_1 \in L^\infty(\RN)$ and the fact that $V_{\cO_{k,-}} (x,0) < 0$ implies $u_2(x) - u_1(x) < 0$ give
	\begin{equation}\label{eq:4.19}
		\begin{aligned}
			\int_{\HS} t^{1-2s} \left| \nabla V_{\cO_{k,-}} \right|^2 d X 
			&= \kappa_s \int_{ \RN } |x|^\ell (u_{2}^p - u_{1}^p) V_{\cO_{k,-}} (x,0) \, d x 
			\\
			&= \kappa_s \int_{ \RN } |x|^\ell (u_{1}^p - u_{2}^p) \left( - V_{\cO_{k,-}} (x,0) \right) d x 
			\\
			&< \kappa_s p \int_{ \RN } |x|^\ell u_{1}^{p-1} V_{\cO_{k,-}}^2 (x,0) \, d x < \infty.
		\end{aligned}
	\end{equation}
	Therefore, $\nabla V_{\cO_{k,-}} \in L^2( \HS , t^{1-2s} d X )$. 
\end{proof}

Now, we shall complete the proof of Proposition \ref{Proposition:4.1}.
\begin{proof}[Proof of Proposition {\rm\ref{Proposition:4.1}}]
	Recalling Lemmata \ref{Lemma:4.2} and \ref{Lemma:4.4}, we have 
	$V_{\cO_{k,-}} (\cdot, 0)\in H^s(\RN)$ and 
	\[
		\kappa_s \| V_{\cO_{k,-}} (\cdot,0) \|_{\dot{H}^s(\RN)}^2 
		= \int_{ \HS } t^{1-2s} \left| \nabla ( P_s (\cdot,t) \ast V_{\cO_{k,-}} (\cdot, 0)  ) (x) \right|^2 d X. 
	\]
	Notice that the last assertion follows from \cite[(2.7)]{HIK-20}. 
	
	In the following, we claim 
	\begin{equation}\label{eq:4.20}
		\int_{\HS} t^{1-2s} \left| \nabla ( P_s (\cdot,t) \ast V_{\cO_{k,-}} (\cdot, 0)  ) \right|^2 d X 
		\leq \int_{\HS} t^{1-2s} | \nabla V_{\cO_{k,-}}|^2 \, d X. 
	\end{equation}
	If this is true, then we get a contradiction for the stability of $u_1$ from \eqref{eq:4.19}, \eqref{eq:4.20} 
	and the compactness of $\supp V_{\cO_{k,-}} (\cdot, 0)$:
	\[
		\| V_{\cO_{k,-}} (\cdot,0) \|_{\dot{H}^s(\RN)}^2  < 
		p \int_{ \RN } |x|^\ell u_{1}^{p-1} V_{\cO_{k,-}}^2 (x,0) \, d x.
	\]
	Hence, \eqref{eq:4.4} does not hold and we complete the proof of Proposition \ref{Proposition:4.1}.

	We show \eqref{eq:4.20}. 
	Set $W(x,t) := ( P_s (\cdot,t) \ast V_{\cO_{k,-}} (\cdot, 0)  ) (x) $. 
	Notice that 
	\[
		- \diver (t^{1-2s} \nabla W) = 0 \quad \text{in} \ \HS, \quad 
		W (x,0) = V_{\cO_{k,-}} (x,0) \leq 0, \quad 
		\nabla W \in L^2( \HS , t^{1-2s} d X ).
	\]
	By Lemma \ref{Lemma:B.2} and the fact that $V_{\cO_{k,-}} (x,0)$ has compact support,  
	$W(X) \to 0$ as $|X| \to \infty$. 
	Since $V_{\cO_{k,-}} (x,0) = 0$ for $|x| \ll 1$ by \eqref{eq:4.2} and 
	$V_{\cO_{k,-}} (\cdot, 0)  \in C(\RN)$, 
	as in Lemma \ref{Lemma:4.6}, one may check that 
	\begin{equation}\label{eq:4.21}
		0 \leq -W(X) \leq C_0( |x|^2 + t^2 )^{ - \frac{N-2s}{2} } \quad \text{for all $X \in \HS$}. 
	\end{equation}
	Moreover, for $\psi_R(x,t) = \psi_0(R^{-1}t) \psi_0(R^{-1} |x|)$ with $R \gg 1$ 
	and $\psi_0$ as in the proof of Lemma \ref{Lemma:4.6}, 
	we obtain $\psi_R(x,0) V_{\cO_{k,-}} (x,0) = V_{\cO_{k,-}} (x,0)$. 
	By $\psi_R ( W - V_{\cO_{k,-}} ) \in H^1(\HS, t^{1-2s} d X)$ with 
	$\psi_R (x,0) ( W(x,0) - V_{\cO_{k,-}} (x,0) ) \equiv 0$, 
	we infer from \cite[Lemma 2.2]{HIK-20} that 
	\begin{equation}\label{eq:4.22}
		\int_{\HS} t^{1-2s} \nabla W \cdot \nabla \left( \psi_R \left( W - V_{\cO_{k,-}} \right) \right) d X 
		=\kappa_s \la V_{\cO_{k,-}} , \psi_R (\cdot, 0) \left( W(\cdot, 0) - V_{\cO_{k,-}} (\cdot, 0) \right)  \ra_{\dHs (\RN)} 
		= 0.
	\end{equation}
	Thus, if 
	\begin{equation}\label{eq:4.23}
		\int_{\HS} t^{1-2s} \nabla W \cdot \nabla \psi_R \left( W - V_{\cO_{k,-}} \right) d X 
		\to 0 \quad \text{as $R \to \infty$},
	\end{equation}
	then the facts $\nabla V_{\cO_{k,-}} , \nabla W \in L^2(\HS, t^{1-2s} d X)$, \eqref{eq:4.22} and \eqref{eq:4.23} imply  
	\[
		\begin{aligned}
			\int_{\HS} t^{1-2s} \left| \nabla W \right|^2 d X 
			 & = \int_{\HS} t^{1-2s} \nabla W \cdot \nabla V_{\cO_{k,-}} \, d X 
			\\
			 & \leq \left[ \int_{ \HS} t^{1-2s} \left| \nabla W \right|^2 d X \right]^{1/2}
			\left[ \int_{ \HS} t^{1-2s} \left|\nabla V_{\cO_{k,-}} \right|^2 d X \right]^{1/2} < \infty
		\end{aligned}
	\]
	and \eqref{eq:4.20} holds.

	To prove \eqref{eq:4.23}, we see that 
	\[
		\begin{aligned}
			        & \int_{ \HS} t^{1-2s} \left| \nabla W \cdot (\nabla \psi_R) \left( W - V_{\cO_{k,-}} \right)  \right| d X 
			\\
			\leq \  & \left[ \int_{ \HS} t^{1-2s}  \left|\nabla W \right|^2 d X \right]^{\frac{1}{2}}
			\left[ \int_{\HS} t^{1-2s} \left| \nabla \psi_R \right|^2
				2 \left( V_{\cO_{k,-}}^2 + W^2 \right) d X  \right]^{\frac{1}{2}}.
		\end{aligned}
	\]
	By Lemma \ref{Lemma:4.6} and \eqref{eq:4.21}, as in \eqref{eq:4.17} and \eqref{eq:4.18}, we may prove 
	\[
		\begin{aligned}
			        & \int_{\HS} t^{1-2s} | \nabla \psi_R |^2 
			\left( V_{\cO_{k,-}}^2 + W^2 \right)  d X 
			\\
			\leq \  & 
			C R^{-2} \int_R^{2R} d t \int_{  |x| \leq 2R} \frac{t^{1-2s}}{(|x|^2+t^2)^{N-2s}} \, d x 
			+ C R^{-2} \int_0^{2R} d t \int_{R \leq |x| \leq 2R}
			\frac{t^{1-2s}}{(|x|^2 + t^2)^{ N-2s }} \, d x
			\\
			\leq \  & C R^{-N+2s} \to 0 \quad (R \to \infty).
		\end{aligned}
	\]
	Thus, \eqref{eq:4.23} holds and we complete the proof of Proposition \ref{Proposition:4.1}. 
\end{proof}

%%%%%%%%%%%%%%%%%%%%%%%%%%%%%%%%%%%%%%%%%%%%%%%%%%%%%%%%
%%%%%%%%%%%%%%%%%%%%%%%%%%%%%%%%%%%%%%%%%%%%%%%%%%%%%%%%
%%%%%%%%%%%%%%%%%%%%%%%%%%%%%%%%%%%%%%%%%%%%%%%%%%%%%%%%

\subsection{Some properties of stable solutions}\label{section:4.2}

%%%%%%%%%%%%%%%%%%%%%%%%%%%%%%%%%%%%%%%%%%%%%%%%%%%%%%%%
%%%%%%%%%%%%%%%%%%%%%%%%%%%%%%%%%%%%%%%%%%%%%%%%%%%%%%%%
%%%%%%%%%%%%%%%%%%%%%%%%%%%%%%%%%%%%%%%%%%%%%%%%%%%%%%%%

	In this subsection, we investigate some properties of positive radial stable solutions. 
We first obtain the separation property of $(u_{m,0})_{m>0}$ 
as a corollary of Corollary \ref{Corollary:3.1} and Proposition \ref{Proposition:4.1}:

\begin{corollary}\label{Corollary:4.1}
	For $(u_{m,0})_{m>0}$ in Corollary {\rm\ref{Corollary:3.1}}, if $m_1 < m_2$, then 
	$0< u_{m_1,0} < u_{m_2,0}$ in $\RN$. 
\end{corollary}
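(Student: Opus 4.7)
The plan is to deduce Corollary \ref{Corollary:4.1} as a direct application of Proposition \ref{Proposition:4.1}, with $u_1 := u_{m_1,0}$ and $u_2 := u_{m_2,0}$. First I would check every hypothesis of Proposition \ref{Proposition:4.1}. By Corollary \ref{Corollary:3.1}, both $u_{m_1,0}$ and $u_{m_2,0}$ are positive radial stable solutions of \eqref{eq:4.1}; both lie in $L^\infty(\RN) \subset L^\infty_{\mathrm{loc}}(\RN \setminus \{0\}) \cap L^1(\RN, (1+|x|)^{-N-2s}\, dx)$; and the bound $u_{m_i,0} < u_S$ in $\RN$ furnishes $u_2 \le u_S$ in $B_1$ (take $C=1$) and also the decay $u_{m_i,0}(x) \to 0$ as $|x| \to \infty$, since $u_S(x) = A_0|x|^{-\theta_0}$ with $\theta_0 = (2s+\ell)/(p-1) > 0$ by \eqref{eq:1.2}.

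The only remaining hypothesis is the strict comparison of values at the origin, $u_1(0) < \lim_{|x|\to 0} u_2(x)$. Since each $u_{m,0}$ is bounded and satisfies the interior regularity of Remark \ref{Remark:3.5} (applied after the rescaling \eqref{eq:3.31}), both $u_{m_1,0}$ and $u_{m_2,0}$ are continuous at $0$, and the inequality to be verified is just $u_{m_1,0}(0) < u_{m_2,0}(0)$. Using \eqref{eq:3.31} one has $u_{m,0}(0) = m\, \wt u_0(0)$, and Proposition \ref{Proposition:3.7} gives $\wt u_0 > 0$ in $\RN$, hence $\wt u_0(0) > 0$. Thus $m_1 < m_2$ implies $u_{m_1,0}(0) = m_1\wt u_0(0) < m_2\wt u_0(0) = u_{m_2,0}(0)$.

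With every hypothesis verified, Proposition \ref{Proposition:4.1} yields $u_{m_1,0} < u_{m_2,0}$ in $\RN$, and combining this with the positivity $u_{m_i,0} > 0$ from Corollary \ref{Corollary:3.1} produces the full chain $0 < u_{m_1,0} < u_{m_2,0}$ in $\RN$. The main step is not really an obstacle but a bookkeeping exercise; all of the analytic work (the separation property for positive radial stable solutions) has already been absorbed into Proposition \ref{Proposition:4.1}, so Corollary \ref{Corollary:4.1} reduces to checking that $u_{m_1,0}$ and $u_{m_2,0}$ fit its setting, which Corollary \ref{Corollary:3.1} together with Remark \ref{Remark:3.6} and Proposition \ref{Proposition:3.7} makes immediate.
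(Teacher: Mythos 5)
Your proposal is correct and coincides with the paper's own proof: the authors likewise verify $\wt u_0 \in C(\RN)$, $\wt u_0(0)>0$, and $u_{m,0}(0)=m\,\wt u_0(0)$, and then apply Proposition \ref{Proposition:4.1} to the pair $(u_{m_1,0},u_{m_2,0})$. Your write-up simply spells out the remaining hypothesis checks (boundedness, decay at infinity, the bound by $u_S$) in more detail than the paper does.
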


	\begin{proof}
Notice that $\wt{u}_0 \in C(\RN)$, $\wt{u}_0(0) > 0$, $m \wt{u}_0(0) = u_{m,0}(0)$ and 
$m = \| u_{m,0} \|_{L^\infty (\RN) }$. 
Applying Proposition \ref{Proposition:4.1} for $u_{m_1,0}$ and $u_{m_2,0}$ with $m_1 < m_2$, 
we have $u_{m_1,0} < u_{m_2,0}$ in $\RN$. 
	\end{proof}

	Remark that by Corollary \ref{Corollary:4.1}, 
	the limit $ \lim_{m \to \infty} u_{m,0}(x) =:u_{\infty,0} (x) $ exists for any $x \in \RN$.

\begin{proposition}\label{Proposition:4.2}
	The sequence $(\varphi  u_{m,0})_m$ is bounded in $H^s(\RN)$ for each $\varphi \in C^\infty_c(\RN)$. 
	Furthermore, $u_{\infty,0} = u_S$ and 
	$u_{m,0} \to u_S$ strongly in $H^s_{\rm loc} (\RN)$ as $m \to \infty$. 
\end{proposition}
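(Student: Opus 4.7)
The plan is to prove the three assertions in turn: $H^s$-boundedness of $(\varphi u_{m,0})_m$, the identification $u_{\infty,0} = u_S$, and strong convergence. The central tool is the fractional Leibniz identity
\begin{equation}\label{eq:plan-leibniz}
\|\varphi v\|_{\dHs(\RN)}^2 = \la v , \varphi^2 v \ra_{\dHs(\RN)} + \frac{C_{N,s}}{2} \int_{\RN \times \RN} \frac{v(x) v(y) (\varphi(x) - \varphi(y))^2}{|x-y|^{N+2s}} \, dx dy,
\end{equation}
which follows from the algebraic identity $(ac - bd)^2 = (a-b)(ac^2 - bd^2) + ab(c-d)^2$. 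Combined with Corollary~\ref{Corollary:4.1} (giving monotonicity $0 < u_{m,0} \leq u_S$), this reduces all estimates to quantities involving $u_S$, which are finite thanks to Proposition~\ref{Proposition:2.1}~(ii).

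For the boundedness, fix $\varphi \in C^\infty_c(\RN)$ with $\supp \varphi \subset B_R$. Since $u_{m,0} \in H^s_{\rm loc}(\RN) \cap L^\infty(\RN)$ solves \eqref{eq:1.1} and $\varphi^2 u_{m,0} \in \cH^s_0(B_R)$, testing in Definition~\ref{Definition:1.1} gives
\[
\la u_{m,0}, \varphi^2 u_{m,0} \ra_{\dHs(\RN)} = \int_{\RN} |x|^\ell u_{m,0}^{p+1} \varphi^2 \, dx \leq \int_{\RN} |x|^\ell u_S^{p+1} \varphi^2 \, dx < \infty,
\]
where the last integral is finite because $p > p_S(N,\ell)$. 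Applying \eqref{eq:plan-leibniz} to $u_S$ itself (legitimate since $\varphi u_S \in H^s(\RN)$ by Proposition~\ref{Proposition:2.1}~(ii)) shows that the commutator double integral with integrand $u_S(x)u_S(y)(\varphi(x)-\varphi(y))^2|x-y|^{-N-2s}$ is finite; the analogous integral for $u_{m,0}$ is then pointwise dominated by it. Plugging both estimates into \eqref{eq:plan-leibniz} for $v = u_{m,0}$ yields a uniform bound on $\|\varphi u_{m,0}\|_{\dHs(\RN)}$, and the $L^2$ bound is immediate from $u_{m,0} \leq u_S$.

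For the identification $u_{\infty,0} = u_S$, I exploit the explicit scaling in \eqref{eq:3.31}. A direct computation shows $u_{m,0}(\lambda x) = \lambda^{-\theta_0} u_{m\lambda^{\theta_0},0}(x)$ for every $\lambda > 0$ and $x \in \RN$, and letting $m \to \infty$ (so $m\lambda^{\theta_0} \to \infty$ as well) gives $u_{\infty,0}(\lambda x) = \lambda^{-\theta_0} u_{\infty,0}(x)$ for all $x \neq 0$. Together with the radial symmetry inherited from each $u_{m,0}$, this forces $u_{\infty,0}(x) = C|x|^{-\theta_0}$ for some $C \in (0, A_0]$, the upper bound coming from $u_{\infty,0} \leq u_S$. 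Next, passing to the limit in $\la u_{m,0}, \varphi\ra_{\dHs(\RN)} = \int_{\RN} u_{m,0} (-\Delta)^s \varphi \, dx$ (justified by \cite[Lemma~2.1]{HIK-20}) via dominated convergence on both sides, using $u_{m,0} \leq u_S \in L^1(\RN, (1+|x|)^{-N-2s}dx)$ on the left and $u_{m,0}^p \leq u_S^p$ with $|x|^\ell u_S^p \varphi \in L^1(\RN)$ on the right, shows that $u_{\infty,0}$ solves \eqref{eq:1.1} in the sense of Definition~\ref{Definition:1.1}. Substituting $C|x|^{-\theta_0}$ into the equation and running the Fourier transform computation of Proposition~\ref{Proposition:2.1}~(i) forces $C^{p-1} = \lambda((N-2s)/2 - \theta_0) = A_0^{p-1}$, so $C = A_0$.

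For the strong convergence, the $H^s$-boundedness together with uniqueness of the pointwise limit identifies $\varphi u_S$ as the only weak $H^s(\RN)$-subsequential limit of $\varphi u_{m,0}$, hence $\varphi u_{m,0} \rightharpoonup \varphi u_S$ weakly, and $L^2$-convergence follows by dominated convergence. To upgrade to norm convergence, I apply \eqref{eq:plan-leibniz} to both $u_{m,0}$ and $u_S$: both the weighted $L^{p+1}$ term and the commutator double integral are monotonically increasing in $m$ by Corollary~\ref{Corollary:4.1}, so monotone convergence yields $\|\varphi u_{m,0}\|_{\dHs(\RN)}^2 \to \|\varphi u_S\|_{\dHs(\RN)}^2$. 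Combined with weak convergence this gives strong $H^s$-convergence of $\varphi u_{m,0}$, whence $u_{m,0} \to u_S$ in $H^s_{\rm loc}(\RN)$. The main obstacle is the identification step: without exploiting the explicit scaling of the construction, one would be forced to rule out the possibility of a strictly smaller singular radial stable limit, a uniqueness question explicitly listed as open in the introduction; it is precisely the homogeneity inherited from $u_{m,0}(x) = m\wt{u}_0(m^{1/\theta_0}x)$ that bypasses this difficulty.
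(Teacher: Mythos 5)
Your proof is correct, and for two of the three assertions it takes a genuinely different route from the paper. For the identification $u_{\infty,0}=u_S$ you argue exactly as the paper does: the scaling identity $u_{m,0}(\lambda x)=\lambda^{-\theta_0}u_{m\lambda^{\theta_0},0}(x)$ forces homogeneity of the limit, hence $u_{\infty,0}=C|x|^{-\theta_0}$, and the equation pins down $C=A_0$. For the $H^s$-boundedness, however, the paper passes to the Caffarelli--Silvestre extension $U_m=P_s(\cdot,t)\ast u_{m,0}$, tests with $\psi^2U_m$, runs the estimate \eqref{eq:3.30} and comes back down via the trace theorem, whereas you stay entirely in $\RN$ and use the nonlocal Leibniz identity built on $(ac-bd)^2=(a-b)(ac^2-bd^2)+ab(c-d)^2$; this is cleaner, and it pays off again in the last step. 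For the strong convergence the paper's argument is the long decomposition \eqref{eq:4.28}--\eqref{eq:4.35}, which needs the $C^{1,\beta}_{\rm loc}$ estimates away from the origin to control the cross terms; your observation that both terms on the right of the Leibniz identity are monotone in $m$ (positivity of $u_{m,0}$ and Corollary \ref{Corollary:4.1}) reduces everything to monotone convergence plus the elementary fact that weak convergence together with convergence of norms gives strong convergence in a Hilbert space. Two points you should make explicit to be fully rigorous within the paper's framework: (i) testing the equation with $\varphi^2u_{m,0}$ (rather than a $C^\infty_c$ function) requires the density argument of Remarks \ref{Remark:3.1}~(iv) and \ref{Remark:4.1}~(ii), which the paper itself invokes when it tests with $\varphi_Rv_m$ in \eqref{eq:4.26}; and (ii) deducing finiteness of the commutator integral for $u_S$ from the identity requires knowing that the pairing $\la u_S,\varphi^2u_S\ra_{\dHs}$ is \emph{absolutely} convergent, which is exactly what \cite[Lemma 2.1]{HIK-20} (reproduced as \eqref{eq:3.13}) provides for $u_S\in\Hsloc(\RN)\cap L^1(\RN,(1+|x|)^{-N-2s}\,dx)$ against a compactly supported $H^s$ function. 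With those two references supplied, the argument is complete and noticeably shorter than the one in the text.
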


\begin{proof}
	Set $U_m(x,t) := (P_s (\cdot, t) \ast u_{m,0} ) (x)$ and let $\psi \in C^\infty_c(\ov{\HS})$. 
	By the trace theorem, it suffices to show that $( \psi U_m)_m$ is bounded in $H^1(\HS,t^{1-2s} d X)$. 
	From $0 \leq u_{m,0} \leq u_S$, we remark that 
	\[
		0 \leq U_m (X) \leq U_S(X) = (P_s(\cdot, t ) \ast u_S ) (x).
	\]
	By $U_S \in H^1_{\rm loc} ( \ov{\HS}, t^{1-2s} d X)$ due to \cite[Lemma 2.1]{HIK-20}, 
	we observe that
	$(U_m)_m$ is bounded in $L^2_{\rm loc} ( \HS,t^{1-2s} d X )$.

	Next, since $U_m$ satisfies 
	\[
		\int_{ \HS} t^{1-2s} \nabla U_m \cdot \nabla \varphi \, d X 
		= \kappa_s \la u_{m,0} , \varphi (\cdot, 0) \ra_{\dot{H}^s (\RN)}
		= \kappa_s \int_{ \RN } |x|^\ell u_{m,0}^p \varphi(x,0) \, d x
	\]
	for all $\varphi \in H^1(\HS,t^{1-2s} d X )$ with compact support in $\ov{\HS}$, 
	we may test $ \psi^2 U_m$ and get 
	\[
		\int_{ \HS} t^{1-2s} \nabla U_m \cdot \nabla \left( \psi^2 U_m \right) d X
		= 
		\kappa_s \int_{ \RN } |x|^\ell u_{m,0}^{p+1} \psi^2(x,0) \, d x 
		\leq 
		\kappa_s \int_{ \RN } |x|^\ell u_S^{p+1} \psi^2(x,0) \, d x.
	\]
	Since $|x|^\ell u_S^{p+1} \in L^1_{\rm loc} (\RN)$ due to $p>p_S(N,\ell)$, 
	applying the same argument as in \eqref{eq:3.30},
	we may verify that 
	$ ( \nabla (\psi U_m ) )_m$ is bounded in $L^2 (\HS, t^{1-2s} d X )$.

	Since $ u_{m,0}(x) \nearrow u_{\infty,0}(x)$ as $m \to \infty$, 
	it holds that $u_{m,0} \rightharpoonup u_{\infty,0}$ weakly in $H^s_{\rm loc} (\RN)$ as $m \to \infty$.
	By $ u_{m_1,0}(x) \leq u_{m_2,0} (x)$ for $m_1 < m_2$ and $u_{m,0}(x) \leq u_S(x)$, one sees that 
	\[
		u_{\infty,0}(x) \leq u_S(x), \quad 
		\int_{ \RN } |x|^\ell u_{m,0}^p(x) \varphi(x) \, d x 
		\to \int_{ \RN } |x|^\ell u_{\infty,0}^p(x) \varphi (x) \, d x 
		\quad \text{for all $\varphi \in C^\infty_c(\RN)$}.
	\]
	Furthermore, from $ u_{m,0} (x) \leq u_{\infty,0}(x) \leq u_S(x)$
	and $u_{m,0} \to u_{\infty,0}$ weakly in $H^s_{\rm loc} (\RN)$, 
	as in \eqref{eq:3.26}, 
	it follows that $0< u_{\infty,0}$ and 
	\[
		\la u_{m,0} , \varphi \ra_{\dot{H}^s (\RN)} \to \la u_{\infty,0} , \varphi \ra_{\dot{H}^s (\RN)}
		\quad \text{for each $\varphi \in C^\infty_c(\RN)$}.
	\]
	Hence, $(-\Delta)^s u_{\infty,0} = |x|^\ell u_{\infty,0}^p$ in $\RN$.

	From the definition of $u_{m,0}$ in Corollary \ref{Corollary:3.1}, it follows that for every $m > 0$, 
	\[
		\begin{aligned}
			m u_{\infty,0} ( m^{1/\theta_0} x ) 
			= \lim_{k \to \infty} m u_{k,0} ( m^{1/\theta_0} x ) 
			&= \lim_{k \to \infty} m k \wt{u}_0 ( m^{ 1 / \theta_{0} } k^{1/\theta_{0}} x ) 
			\\
			&= \lim_{ t \to \infty } t \wt{u}_0 ( t^{1 / \theta_0} x ) 
			= \lim_{ t \to \infty } u_{t,0} (x)
			= u_{\infty,0}(x).
		\end{aligned}
	\]
	For any $|x|>0$, choose $m= |x|^{-\theta_0}$. Then, since $u_{\infty,0}$ is radially symmetric, 
	we obtain 
	\[
		u_{\infty, 0}(x) = |x|^{-\theta_0} u_{\infty, 0}( (|x|^{-\theta_0})^{1/\theta_0} x ) 
		= |x|^{ -\theta_{0} } u_{\infty,0}(1) 
		= C u_S(x).
	\]
	Recalling that $u_{\infty,0}$ satisfies $(-\Delta)^s u_{\infty,0} = |x|^\ell u_{\infty,0}^p$ in $\RN$, 
	we have $C=1$ and $u_{\infty,0} = u_S$.

	Since $u_S \in L^2_{\rm loc} (\RN)$ and 
$u_{m,0}(x) \nearrow u_{\infty,0} (x) = u_S(x)$ for $x \in \RN$ as $m \to \infty$, 
we have $u_{m,0} \to u_S$ strongly in $L^2_{\rm loc} (\RN)$. 
What remains to prove is 
	\begin{equation}\label{eq:4.24}
		\lim_{m \to \infty} 
		\int_{B_R \times B_R} \frac{ \left( v_m (x) - v_m (y) \right)^2 }{|x-y|^{N+2s}} \, dx dy 
		= 0 \quad \text{for each $R>0$},
	\end{equation}
where $v_m (x) := u_S(x) - u_{m,0} (x)$.

	Let $R > 0$ be given and choose a radial cut-off function $\varphi_R \in C^\infty_c(\RN)$ so that 
	\begin{equation}\label{eq:4.25}
		\varphi_R \equiv 1 \quad \text{on} \ B_{R}, \quad 
		\varphi_R \equiv 0 \quad \text{on} \  B_{2R}^c.
	\end{equation}
Since $\varphi_R v_m \in H^s(\RN)$ with compact support, we obtain 
	\begin{equation}\label{eq:4.26}
		\la v_m , \varphi_R v_m \ra_{\dHs (\RN)} 
		=
		\int_{\RN} |x|^\ell \left( u_S^p - u_{m,0}^p \right) \varphi_R v_m \, d x.
	\end{equation}
By $v_m (x) \to 0$ for each $x \in \RN \setminus \{0\}$ and the dominated convergence theorem, 
	\begin{equation}\label{eq:4.27}
		\lim_{m \to \infty} \int_{\RN} |x|^\ell \left( u_S^p - u_{m,0}^p \right) \varphi_R v_{m,0} \, d x = 0.
	\end{equation}

	To compute the left-hand side of \eqref{eq:4.26}, we decompose $\RN$ into 
$\RN = B_R \cup A_{R,2R} \cup B_{2R}^c$, 
where $A_{R,2R} := \Set{ x \in \RN | R \leq |x| < 2R }$. 
It follows from \eqref{eq:4.25} that 
	\begin{equation}\label{eq:4.28}
		\begin{aligned}
			&\la v_m, \varphi_R v_m \ra_{\dHs(\RN)} 
			\\
			= \ &
			\int_{B_R \times B_R} \frac{ \left( v_m (x) - v_m (y) \right)^2 }{|x-y|^{N+2s}} \, dx dy 
			\\
			& \quad 
			+ \int_{ A_{R,2R} \times A_{R,2R}} 
			\frac{ \left(  v_m (x) - v_m (y) \right) \left( (\varphi_R v_m) (x) - (\varphi_R v_m) (y) \right)}
			{|x-y|^{N+2s}} \, dx dy
			\\
			&\quad + 
			2 \left( \int_{ B_R \times A_{R,2R} } + \int_{B_R \times B_{2R}^c} 
			+ \int_{ A_{R,2R} \times B_{2R}^c } \right) 
			\frac{ \left(  v_m (x) - v_m (y) \right) \left( (\varphi_R v_m) (x) - (\varphi_R v_m) (y) \right)}
			{|x-y|^{N+2s}} \, dx dy
			\\
			=: \ & 	\int_{B_R \times B_R} \frac{ \left( v_m (x) - v_m (y) \right)^2 }{|x-y|^{N+2s}} \, dx dy 
			+  I_{m,1} +2 \left( I_{m,2} + I_{m, 3} + I_{m,4} \right).
		\end{aligned}
	\end{equation}

	We shall prove $I_{m,j} \to 0$ as $m  \to \infty$ for $j=1,2,3,4$. 
To this end, by \cite[Proposition 3.2 and Lemma 3.3]{FF-14} (\cite[Proposition 2.1]{HIK-20}), we remark that 
$(v_{m})_{m \geq 1}$ is bounded in $C^{1,\beta}_{\rm loc} (B_{R/2}^c)$ for some $\beta > 0$. 
From $v_m(x) \to 0$ for any $x \in \RN \setminus \set{0}$, we obtain 
	\begin{equation}\label{eq:4.29}
		v_m \to 0 \quad \text{in} \ C^1_{\rm loc} \left( B_{R/2}^c \right).
	\end{equation}
Since $v_m$ and $\varphi_R$ are radially symmetric, for each $x \in A_{R/2,2R}$ and $y \in A_{R/2,2R}$, we have 
	\begin{equation}\label{eq:4.30}
		\begin{aligned}
			\left| v_m (x) - v_m (y) \right| 
			= \left| v_m \left( |x| \bm{e}_1 \right) - v_m \left( |y| \bm{e}_1 \right) \right| 
			&\leq \left\| v_m \right\|_{C^1( A_{R/2,2R} )} \left| \left|x\right| - \left| y \right| \right| 
			\\
			&\leq \left\| v_m \right\|_{C^1( A_{R/2,2R} )} \left| x - y \right|
		\end{aligned}
	\end{equation}
and 
	\begin{equation}\label{eq:4.31}
		\left| \varphi_R (x) v_m (x) - \varphi_R(y) v_m (y) \right| 
		\leq \| \varphi_R v_m \|_{C^1 (A_{R/2,2R}) } \left| x - y \right|.
	\end{equation}
Using \eqref{eq:4.30} and \eqref{eq:4.31}, we obtain 
	\begin{equation}\label{eq:4.32}
		\left| I_{m,1} \right| 
		\leq 
		\left\| v_m \right\|_{C^1( A_{R/2,2R} )} \left\| \varphi_R v_m \right\|_{C^1( A_{R/2,2R} )} 
		\int_{ A_{R,2R} \times A_{R,2R}} |x-y|^{-N+2-2s} \, \rd x \rd y = o(1).
	\end{equation}

	Next, by $v_m (x) \to 0$ and $|v_m (x)| \leq u_S(x)$ for each $x \in \RN \setminus \set{0}$, 
\eqref{eq:4.29}, \eqref{eq:4.30} and the dominated convergence theorem, 
we obtain 
	\begin{equation}\label{eq:4.33}
		\begin{aligned}
		\left| I_{m,2} \right| 
		& \leq 
		\int_{ B_R \times A_{R,2R} } 
		\frac{\left| v_m (x) - v_m (y) \right| \left| (\varphi_R  v_m) (x) - (\varphi_R  v_m) (y) \right| }
		{ |x-y|^{N+2s} } \, dx dy
		\\
		& = 
		\left( \int_{ B_{R/2} \times A_{R,2R} } + \int_{ A_{R/2, R} \times A_{R,2R} } \right)
		\frac{\left| v_m (x) - v_m (y) \right| \left| (\varphi_R  v_m) (x) - (\varphi_R  v_m) (y) \right| }
		{ |x-y|^{N+2s} } \, dx dy
		\\
		& \leq 
		\left( \frac{R}{2} \right)^{N+2s} 
		\int_{ B_{R/2} \times A_{R,2R} } \left| v_m (x) - v_m (y) \right| 
		\left| (\varphi_R  v_m) (x) - (\varphi_R  v_m) (y) \right| dxdy
		\\
		&\qquad 
		 + \| v_m \|_{C^1 (A_{R/2, 2R } ) } \| \varphi_R v_m \|_{C^1 ( A_{R/2,2R} ) } 
		\int_{ A_{R/2, R} \times A_{R,2R} } |x-y|^{2-N+2s} \, dx dy 
		\\
		& = o(1).
		\end{aligned}
	\end{equation}

	For $I_{m,3}$, the fact $\| v_m \|_{L^\infty (B_{2R}^c) } \leq \| u_S \|_{L^\infty (B_{2R}^c)} \leq C_R$ and 
	the dominated convergence theorem yield 
	\begin{equation}\label{eq:4.34}
		\begin{aligned}
			\left| I_{m,3} \right| 
			&\leq \int_{B_R \times B_{2R}^c} \frac{ \left| v_m (x) - v_m (y) \right| v_m (x) }
			{|x-y|^{N+2s}} \, dx dy 
			\\
			&\leq \int_{B_R} \left[ \left\{ v_m (x) + \|v_m \|_{L^\infty (B_{2R}^c) }  \right\} v_m (x) 
			\int_{B_{2R}^c} |x-y|^{-N-2s} \, dy
			\right] dx
			\\
			& \leq C_R \int_{B_R} \left\{ v_m (x) + 1  \right\} v_m (x) \,dx 
			= o(1).
		\end{aligned}
	\end{equation}

	For $I_{m,4}$, we have 
	\[
		\begin{aligned}
			\left| I_{m,4} \right| 
			& \leq \int_{ A_{R,2R} \times B_{2R}^c } 
			\frac{ \left| v_{m} (x) - v_{m} (y) \right| \left| (\varphi_R  v_m) (x) - (\varphi_R  v_m) (y) \right|}
			{|x-y|^{N+2s}} 
			\, dx dy 
			\\
			&= \left( \int_{A_{R,2R} \times A_{2R,3R}  }  + \int_{A_{R,2R} \times B_{3R}^c } \right) 
			\frac{ \left| v_{m} (x) - v_{m} (y) \right| \left| (\varphi_R v_m) (x) - (\varphi_R  v_m) (y) \right|  }
			{|x-y|^{N+2s}}
			\, dx dy.
		\end{aligned}
	\]
We remark that \eqref{eq:4.30} and \eqref{eq:4.32} hold for $x \in A_{R,2R}$ and $y \in A_{2R,3R}$ 
with $\| v_m \|_{C^1( A_{R,3R} ) }$ and $\| \varphi_R v_m \|_{C^1(A_{R,3R})}$, hence, 
	\[
		\int_{A_{R,2R} \times A_{2R,3R}  } 
		\frac{ \left| v_{m} (x) - v_{m} (y) \right| \left| (\varphi_R v_m )(x) - (\varphi_R  v_m) (y) \right|  }
		{|x-y|^{N+2s}} \, dx dy 
		= o(1).
	\]
On the other hand, as in \eqref{eq:4.34}, we may obtain 
	\[
		\begin{aligned}
			& \int_{A_{R,2R} \times B_{3R}^c } 
			\frac{ \left| v_{m} (x) - v_{m} (y) \right| \left| (\varphi_R v_m) (x) - (\varphi_R v_m) (y) \right|  }
			{|x-y|^{N+2s}} \, dx dy 
			\\
			= \ & 
			\int_{ A_{R,2R} \times B_{3R}^c}
			\frac{ \left| v_{m} (x) - v_{m} (y) \right| \varphi_R (x) v_m(x)   }
			{|x-y|^{N+2s}} \, dx dy 
			= o(1).
		\end{aligned}
	\]
Thus, 
	\begin{equation}\label{eq:4.35}
		I_{m,4} = o(1).
	\end{equation}

	From \eqref{eq:4.26}--\eqref{eq:4.28} and \eqref{eq:4.32}--\eqref{eq:4.35}, it follows that 
	\[
		\int_{B_R \times B_R} \frac{ \left( v_m (x) - v_m (y) \right)^2 }{|x-y|^{N+2s}} \, dx dy = o(1)
	\]
and \eqref{eq:4.24} holds. Thus, we complete the proof. 
\end{proof}

Finally, as a corollary of Proposition \ref{Proposition:4.1}, we obtain the following:
\begin{Corollary}\label{Corollary:4.2}
	\begin{enumerate}
	\item[{\rm (i)}]	
		Let $u$ be a positive radial stable solution of \eqref{eq:1.1}
		and $u(x) \to 0$ as $|x| \to \infty$. 
			\begin{enumerate}
			\item[{\rm (a)}] 
				If $u \in L^\infty (\RN)$, then $u (x) = u_{\wt{m},0}(x)$ holds for $\wt{m} = \| u \|_{L^\infty (\RN)}$. 
			\item[{\rm (b)}] 
				If $u \in L^\infty_{\rm loc} (\RN \setminus \set{0} )$, $u \leq C u_S$ in $B_1$ for some $C>0$ 
				and $u(x) \to \infty$ as $|x| \to 0$, 
				then $u_S \leq u$ in $\RN$. 
			\end{enumerate}
	\item[{\rm (ii)}] 
	If $u(x) \in H^s_{\rm loc} (\RN) \cap L^\infty_{\rm loc} (\RN \setminus \set{0} ) $ 
	is a positive radial solution of \eqref{eq:1.1} with $u \leq u_S$ in $\RN$, 
	then either $u = u_{\wt{m},0}$ with $\wt{m} =  \| u \|_{L^\infty (\RN)} $ or else $u=u_S$. 
	\end{enumerate}
\end{Corollary}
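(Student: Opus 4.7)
The strategy is to deduce all three assertions by playing off Proposition \ref{Proposition:4.1} against the explicit family $(u_{m,0})_{m>0}$ built in Corollary \ref{Corollary:3.1} and its limiting behaviour from Proposition \ref{Proposition:4.2}. The key parametric observation is Remark \ref{Remark:3.6}: the map $m \mapsto u_{m,0}(0) = m\wt{u}_0(0)$ is a continuous strictly increasing bijection of $(0,\infty)$ onto itself, so any prescribed value at the origin has a unique matching member of the family.

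For (i)(a), given the positive radial stable $u \in L^\infty(\RN)$ with $u(x) \to 0$ as $|x|\to\infty$, I select the unique $m^*$ with $u_{m^*,0}(0)=u(0)$. For each $m>m^*$, the strict inequality $u(0)<u_{m,0}(0)$, together with the boundedness of both functions, their decay at infinity (for $u_{m,0}$ this uses $u_{m,0}\le u_S$ and the decay of $u_S$), and the stability of $u_{m,0}$ (Corollary \ref{Corollary:3.1}), allows me to invoke Proposition \ref{Proposition:4.1} with $u_1=u$, $u_2=u_{m,0}$, giving $u<u_{m,0}$ in $\RN$; for $m<m^*$ the roles of $u_1,u_2$ are exchanged and I obtain $u_{m,0}<u$. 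Passing to the limit $m\to m^*$ in both directions via the continuity in $m$ coming from the explicit scaling $u_{m,0}(x)=m\wt{u}_0(m^{1/\theta_0}x)$ sandwiches $u\equiv u_{m^*,0}$, and $\|u\|_{L^\infty(\RN)}=\|u_{m^*,0}\|_{L^\infty(\RN)}=m^*$ identifies $\wt{m}=m^*$.

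For (i)(b), I apply Proposition \ref{Proposition:4.1} with $u_1=u_{m,0}$ and $u_2=u$ for every $m>0$: the required condition $u_1(0)=m\wt{u}_0(0)<\infty=\lim_{|x|\to 0}u(x)$ is automatic from the blowup hypothesis, and the other hypotheses are exactly the assumptions on $u$. Hence $u_{m,0}<u$ in $\RN$, and sending $m\to\infty$ with the pointwise convergence $u_{m,0}\to u_S$ from Proposition \ref{Proposition:4.2} yields $u_S\le u$. For (ii), stability of $u$ is inherited from $u\le u_S$ and \eqref{eq:2.7}, and the bound $u\le u_S$ forces $u(x)\to 0$ at infinity. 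If $u\in L^\infty(\RN)$, part (i)(a) applies and gives $u=u_{\wt{m},0}$. Otherwise, radial symmetry, continuity of $u$ on $\RN\setminus\{0\}$ (Remark \ref{Remark:3.5}) and the domination $u\le u_S$ should force $u(x)\to\infty$ as $|x|\to 0$, and then part (i)(b) with $C=1$ yields $u_S\le u\le u_S$, i.e.\ $u=u_S$.

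The main obstacle will be the one-sided limiting argument in (i)(a): Proposition \ref{Proposition:4.1} cannot be invoked at the critical value $m=m^*$ itself (where the strict inequality $u_1(0)<\lim u_2$ fails), so one must squeeze $u$ from above and below by taking $m\searrow m^*$ and $m\nearrow m^*$ in the pointwise inequalities and use the continuity of the family in $m$. A secondary subtle point appears in (ii), where the unbounded alternative requires excluding that a radial positive solution dominated by $u_S$ could be unbounded near the origin without tending to $+\infty$; this should follow from the regularity of $u$ on $\RN\setminus\{0\}$ together with a monotonicity-in-$r$ argument, but is the one place the plan is less routine than the rest.
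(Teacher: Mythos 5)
Your proposal is correct and follows essentially the same route as the paper: in (i)(a) the paper also matches the value at the origin, applies Proposition \ref{Proposition:4.1} to the pairs $(u_{m_1,0},u)$ and $(u,u_{m_2,0})$ for $m_1<\wt{m}<m_2$, and squeezes by letting $m_1\nearrow\wt{m}$, $m_2\searrow\wt{m}$; (i)(b) and (ii) are argued exactly as you describe, including deducing stability in (ii) from $u\le u_S$ and \eqref{eq:2.7}. The dichotomy in (ii) that you flag as the delicate point is likewise passed over without comment in the paper's proof.
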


\begin{proof}
(i) We first consider the case (a). In this case, we remark that $u \in C(\RN)$ and $u(0) > 0$. 
	Let $(u_{m,0})_{m>0}$ be as in Corollary \ref{Corollary:3.1}. 
	By definition and Corollary \ref{Corollary:4.1}, 
	a map $m \mapsto u_{m,0} (0) : (0,\infty) \to (0,\infty)$ is continuous and strictly increasing. Define $\wt{m}$ by 
	\[
		\wt{m} := \inf \Set{ m \in (0,\infty) | u(0) < u_{m,0} (0) }
		= \sup \Set{ m \in (0,\infty) | u(0) > u_{m,0} (0) }. 
	\]
	For $m_1 < \wt{m} < m_2$, notice that $u_{m_1,0} (0) < u(0) = u_{\wt{m},0} (0) < u_{m_2,0} (0) $. 
	Applying Proposition \ref{Proposition:4.1} for the pairs $(u_{m_1,0}, u)$ and $(u,u_{m_2,0})$, we obtain 
	$u_{m_1,0} \leq u \leq u_{m_2,0}$ in $\RN$ for every $m_1 < \wt{m} < m_2$. 
	Thus, letting $m_1 \nearrow \wt{m}$, $m_2 \searrow \wt{m}$ and noting 
	$u_{m,0}(x) \to u_{\wt{m},0}(x)$ as $m \to \wt{m}$, we have 
	$u_{\wt{m},0} \leq u \leq u_{\wt{m},0}$ in $\RN$, which yields $u \equiv u_{\wt{m},0}$. 
	Hence, $\wt{m} = \| u_{\wt{m},0} \|_{L^\infty(\RN)} = \| u \|_{L^\infty(\RN)}$.

	Next, we consider the case (b). We apply Proposition \ref{Proposition:4.1} for $u_1 = u_{m,0}$ and $u_2 = u$ to get 
	$u_{m,0} \leq u$ in $\RN$ for each $m>0$. 
	Thus, by letting $m \to \infty$, Proposition \ref{Proposition:4.2} gives $u_S \leq u$ in $\RN$.

(ii) If $u \in H^s_{\rm loc} (\RN) \cap L^\infty_{\rm loc} (\RN \setminus \set{0} ) $ is 
	a positive radial solution of \eqref{eq:1.1} with $u \leq u_S$ in $\RN$, then 
	$u$ becomes stable since 
	\[
		p \int_{ \RN } |x|^\ell u^{p-1} \varphi^2 \, \rd x 
		\leq p \int_{ \RN } |x|^\ell u_S^{p-1} \varphi^2 \, \rd x 
		\leq \| \varphi \|_{\dHs (\RN)}^2 \quad 
		\text{for every $\varphi \in C^\infty_c (\RN)$}.
	\]
Hence, (i) yields $u = u_{m,0}$ when $u \in L^\infty(\RN)$ and $u_S \leq u$ when $u(x) \to \infty$ as $|x| \to 0$. 
In the latter case, by the assumption $u \leq u_S$, we obtain $u=u_S$. 
\end{proof}

	Now we complete the proof of Theorem \ref{Theorem:1.1} (ii):
\begin{proof}[Proof of Theorem {\rm\ref{Theorem:1.1}} {\rm(ii)}]
	Let $(u_{m,0})_{m>0}$ be the family of positive radial stable solutions of \eqref{eq:1.1} 
	obtained in Corollary \ref{Corollary:3.1}. By Remark \ref{Remark:3.6}, this family may be rewritten as 
	$(u_{\alpha})_{\alpha>0}$ where $\alpha = u_\alpha(0) = m \wt{u}_0(0)$. 
	Then properties (a)--(e) clearly follow from Corollaries \ref{Corollary:3.1}, \ref{Corollary:4.1}, \ref{Corollary:4.2} 
	and Proposition \ref{Proposition:4.2}. Thus, Theorem \ref{Theorem:1.1} (ii) holds. 
\end{proof}

%%%%%%%%%%%%%%%%%%%%%%%%%%%%%%%%%%%%%%%%%%%%%%%%%%%%%%%%
%%%%%%%%%%%%%%%%%%%%%%%%%%%%%%%%%%%%%%%%%%%%%%%%%%%%%%%%
%%%%%%%%%%%%%%%%%%%%%%%%%%%%%%%%%%%%%%%%%%%%%%%%%%%%%%%%

%%%%%%%%%%%%%%%%%%%%%%%%%%%%%%%%%%%%%%%%%%%%%%%%%%%%%%%%
%%%%%%%%%%%%%%%%%%%%%%%%%%%%%%%%%%%%%%%%%%%%%%%%%%%%%%%%
%%%%%%%%%%%%%%%%%%%%%%%%%%%%%%%%%%%%%%%%%%%%%%%%%%%%%%%%

\section{Proof of Theorems \ref{Theorem:1.2} and \ref{Theorem:1.3}, and Corollary \ref{Corollary:1.1}}
\label{section:5}

%%%%%%%%%%%%%%%%%%%%%%%%%%%%%%%%%%%%%%%%%%%%%%%%%%%%%%%%
%%%%%%%%%%%%%%%%%%%%%%%%%%%%%%%%%%%%%%%%%%%%%%%%%%%%%%%%
%%%%%%%%%%%%%%%%%%%%%%%%%%%%%%%%%%%%%%%%%%%%%%%%%%%%%%%%

In this section, we prove Theorems \ref{Theorem:1.2} and \ref{Theorem:1.3}, and Corollary \ref{Corollary:1.1}. 
Throughout this section, we always assume \eqref{eq:1.2}.

%%%%%%%%%%%%%%%%%%%%%%%%%%%%%%%%%%%%%%%%%%%%%%%%%%%%%%%%%
%%%%%%%%%%%%%%%%%%%%%%%%%%%%%%%%%%%%%%%%%%%%%%%%%%%%%%%%%
%%%%%%%%%%%%%%%%%%%%%%%%%%%%%%%%%%%%%%%%%%%%%%%%%%%%%%%%%

\subsection{Preliminaries}\label{section:5.1}

%%%%%%%%%%%%%%%%%%%%%%%%%%%%%%%%%%%%%%%%%%%%%%%%%%%%%%%%%
%%%%%%%%%%%%%%%%%%%%%%%%%%%%%%%%%%%%%%%%%%%%%%%%%%%%%%%%%
%%%%%%%%%%%%%%%%%%%%%%%%%%%%%%%%%%%%%%%%%%%%%%%%%%%%%%%%%

We first recall the definition of $\lambda (\alpha)$: 
\[
	\lambda (\alpha) := 
	2^{2s} \frac{\Gamma \left( \frac{N+2s}{4} + \frac{\alpha}{2} \right)
		\Gamma \left( \frac{N+2s}{4} - \frac{\alpha}{2} \right)}
	{\Gamma \left( \frac{N-2s}{4} - \frac{\alpha}{2} \right)
		\Gamma \left( \frac{N-2s}{4} + \frac{\alpha}{2} \right) }.
\]
By Definition \ref{Definition:1.2}, we only consider the case 
\begin{equation}\label{eq:5.1}
	p_S(N,\ell) = \frac{N+2s+2\ell}{N-2s} < p < \infty. 
\end{equation}
Under \eqref{eq:5.1}, $p$ is JL-subcritical (resp. JL-supercritical) if and only if 
\begin{equation}\label{eq:5.2}
	p \lambda \left( \beta_{\ell,N,p,s}  \right) > \lambda (0)
	\qquad 
	\left( \text{resp.} \quad  p \lambda \left( \beta_{\ell,N,p,s}  \right) < \lambda (0)  \right), \quad 
	\beta_{\ell,N,p,s}:= \frac{N-2s}{2} - \frac{2s+\ell}{p-1}.
\end{equation}

Next, we change the variable from $p$ to $x$ as follows. 
Noting that $\beta_{\ell,N,p,s}$ is strictly increasing in $p \in (p_{S} (N,\ell), \infty)$
and 
\[
	\lim_{p \to p_S(N,\ell)} \beta_{\ell,N,p,s} = 0, \quad \lim_{p \to \infty} \beta_{\ell,N,p,s} = \frac{N-2s}{2}, 
\]
we set 
\[
	A_{N,s} := \frac{N-2s}{4}, \quad x := \frac{\beta_{\ell,N,p,s}}{2} \in \left( 0, A_{N,s} \right). 
\]
Then $p$ is expressed as 
\[
	p = \frac{N+2s+2 \ell - 4x}{N-2s - 4x}.
\]
It is also convenient to use $A_{N,s}$ instead of $N$ and notice that 
\[
	\begin{aligned}
		 & \frac{N+2s}{4} = A_{N,s} + s, \quad N-2s - 4x = 4A_{N,s} -4x = 4(A_{N,s}-x), 
		\\
		 & N+2s + 2\ell -4x = 4 ( A_{N,s} - x ) + 4s + 2\ell.
	\end{aligned}
\]
Using $x$, we shall study the validity of the following inequality for $x \in (0,A_{N,s})$ instead of \eqref{eq:5.2}: 
\begin{equation}\label{eq:5.3}
	g_{\ell}(x) 
	> \frac{ \left(\Gamma (A_{N,s} + s) \right)^2}{ \left( \Gamma (A_{N,s}) \right)^2} 
	=: \wt{M}_{N,s}
	\qquad 
	\left( \text{resp.} \quad
	g_{\ell}(x) < \wt{M}_{N,s} \right),
\end{equation}
where 
\begin{equation}\label{eq:5.4}
	\begin{aligned}
		g_{\ell}(x) 
		:= \  & \frac{ A_{N,s} -x + s + \frac{\ell}{2} }{A_{N,s}-x}
		\cdot \frac{ \Gamma ( A_{N,s} + s - x ) \Gamma (A_{N,s} + s + x ) }
		{\Gamma (A_{N,s} - x ) \Gamma ( A_{N,s} + x ) } 
		\\
		= \   & \left( A_{N,s} -x + s + \frac{\ell}{2} \right) \frac{ \Gamma ( A_{N,s} + s - x ) \Gamma (A_{N,s} + s + x ) }
		{\Gamma (A_{N,s} + 1 - x  ) \Gamma ( A_{N,s} + x ) } \in C \left( \left[  0, A_{N,s} \right] \right).
	\end{aligned}
\end{equation}
Remark that \eqref{eq:5.3}
is equivalent to the case where $p$ is JL-subcritical (resp. JL-supercritical). 
In addition, $x=0$ (resp. $x=A_{N,s}$) corresponds to $p=p_S(N,\ell)$ (resp. $p=\infty$). 
It is also easily seen that  
	\begin{equation}\label{eq:5.5}
		g_{0}(x) = 
		\frac{ \Gamma ( A_{N,s} + 1+ s - x ) \Gamma (A_{N,s} + s + x ) }
		{\Gamma (A_{N,s} + 1 - x  ) \Gamma ( A_{N,s} + x ) }
	\end{equation}
and 
	\begin{equation}\label{eq:5.6}
		g_{\ell} (x) = \frac{ A_{N,s} - x + s + \frac{\ell}{2}  }{ A_{N,s} - x + s } g_{0}(x).
	\end{equation}

We first prove that $p$ is JL-subcritical if $p$ is close to $p_S(N,\ell)$. 

\begin{lemma}\label{Lemma:5.1}
	There exists a $ p_0 = p_0(\ell,N,p,s) \in (p_S(N,\ell) , \infty) $ such that 
	each $ p \in (p_S(N,\ell) , p_{0})$ is JL-subcritical.  
\end{lemma}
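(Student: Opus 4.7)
The plan is to use the change of variable $x = \beta_{\ell,N,p,s}/2$ and exploit the continuity of $g_\ell$ at the endpoint $x=0$, which corresponds to $p = p_S(N,\ell)$. More precisely, I would evaluate $g_\ell(0)$ explicitly and show that it lies strictly above the threshold $\wt{M}_{N,s}$, so that by continuity the strict inequality persists for small $x>0$, i.e.\ for $p$ slightly larger than $p_S(N,\ell)$.

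First I would substitute $x=0$ into the formula \eqref{eq:5.4}:
\[
    g_\ell(0) = \left( A_{N,s} + s + \frac{\ell}{2} \right) \frac{ \Gamma ( A_{N,s} + s )^2 }{\Gamma (A_{N,s} + 1 ) \Gamma ( A_{N,s} ) }.
\]
Using the identity $\Gamma(A_{N,s}+1) = A_{N,s}\,\Gamma(A_{N,s})$, this simplifies to
\[
    g_\ell(0) = \frac{A_{N,s} + s + \frac{\ell}{2}}{A_{N,s}} \cdot \frac{\Gamma(A_{N,s}+s)^2}{\Gamma(A_{N,s})^2} = \left( 1 + \frac{s + \frac{\ell}{2}}{A_{N,s}} \right) \wt{M}_{N,s}.
\]
Since $\ell > -2s$ by assumption \eqref{eq:1.2}, the coefficient $s + \ell/2$ is strictly positive, and $A_{N,s} = (N-2s)/4 > 0$. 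Therefore $g_\ell(0) > \wt{M}_{N,s}$.

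Next, since $g_\ell \in C([0,A_{N,s}])$ by \eqref{eq:5.4}, there exists $\delta > 0$ such that $g_\ell(x) > \wt{M}_{N,s}$ for every $x \in [0,\delta)$. Finally I would translate this back to the $p$ variable: the map $p \mapsto x = \beta_{\ell,N,p,s}/2$ is continuous and strictly increasing on $(p_S(N,\ell), \infty)$ with $\lim_{p \searrow p_S(N,\ell)} x = 0$, so there exists $p_0 > p_S(N,\ell)$ such that the image of $(p_S(N,\ell), p_0)$ is contained in $(0,\delta)$. For every such $p$ we then have $p\,\lambda(\beta_{\ell,N,p,s}) > \lambda(0)$ by \eqref{eq:5.3}, i.e.\ $p$ is JL-subcritical in the sense of Definition \ref{Definition:1.2}.

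There is no real obstacle here: the lemma is essentially a boundary continuity argument, and the only thing to check carefully is the sign of the correction factor $1 + (s+\ell/2)/A_{N,s}$, which is exactly where the assumption $\ell > -2s$ enters decisively. The genuine analytic content of Theorem \ref{Theorem:1.2} and Theorem \ref{Theorem:1.3} will come from the finer study of $g_\ell$ on the interior and at $x = A_{N,s}$, for which this lemma just serves as the starting point.
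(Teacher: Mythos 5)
Your proposal is correct and follows essentially the same route as the paper: both evaluate $g_\ell$ at $x=0$ (the paper phrases this as the limit $x\searrow 0$ of the two factors in \eqref{eq:5.4}), observe that the prefactor equals $p_S(N,\ell)>1$ — which is exactly your $1+(s+\ell/2)/A_{N,s}$ — so that $g_\ell(0)=p_S(N,\ell)\,\wt{M}_{N,s}>\wt{M}_{N,s}$, and then conclude by continuity and the change of variables. No gaps.
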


\begin{proof}
	It is easily seen from \eqref{eq:1.2} that 
	\[
		\lim_{x \searrow 0} \frac{A_{N,s} - x + s+ \frac{\ell}{2} }{A_{N,s}-x} =
		\frac{A_{N,s} + s+ \frac{\ell}{2} }{A_{N,s}} = p_{S} (N,\ell) > 1. 
	\]
	Furthermore, 	
	\[
		\lim_{x \searrow 0}  \frac{ \Gamma ( A_{N,s} + s - x ) \Gamma (A_{N,s} + s + x ) }
		{\Gamma (A_{N,s} - x ) \Gamma ( A_{N,s} + x ) }
		= \frac{ \left( \Gamma (A_{N,s} + s) \right)^2 }{ \left( \Gamma (A_{N,s}) \right)^2 }. 
	\]
	From these two facts with \eqref{eq:5.4}, 
	we may find an $x_0 = x_0(\ell,N,p,s) \in (0,A_{N,s})$ such that 
	\eqref{eq:5.3} holds for any $x \in (0,x_{0})$. From the change of variables, 
	we see that Lemma \ref{Lemma:5.1} holds. 
\end{proof}

	Let $-2s<\ell\le0$ and set
	\begin{equation}\label{eq:5.7}
		\begin{aligned}
			h_{\ell}(x) := \  & \log g_{\ell}(x) 
			\\
			= \          & \log \left( A_{N,s} + s + \frac{\ell}{2} - x \right) + 
			\log \Gamma (A_{N,s}  + s -x ) - \log \Gamma (A_{N,s} + 1  -x)
			\\
			             & \quad + \log \Gamma (A_{N,s} + s + x) - \log \Gamma (A_{N,s} + x) .
		\end{aligned}
	\end{equation}
	Then we see that \eqref{eq:5.3} is equivalent to 
	\begin{align}
	 & h_{\ell}(x) > 2 \left[ \log \Gamma \left( A_{N,s} +s \right) - \log \Gamma \left( A_{N,s} \right) \right]
	\label{align:5.8}
	\\
	 & \left( \text{resp.} \quad h_{\ell}(x) 
	< 2 \left[ \log \Gamma \left( A_{N,s} + s\right) - \log \Gamma \left( A_{N,s} \right) \right]   \right).
	\label{align:5.9}
	\end{align}
	We next show that $h_{\ell}(x)$ is strictly concave in $[0,A_{N,s}]$. 
	This is helpful to see the range of $x$, where \eqref{align:5.8} or \eqref{align:5.9}, namely \eqref{eq:5.3}, holds.

\begin{lemma}\label{Lemma:5.2}
	Let $-2s < \ell \le 0$ and $h_\ell$ be as in \eqref{eq:5.7}.
		\begin{enumerate}
		\item[{\rm (i)}]
		The function $h_\ell$ is strictly concave in $[0,A_{N,s}]$.
		\item[{\rm (ii)}] 
		Suppose $\ell = 0$. 
		\begin{enumerate}
		\item[{\rm (a)}]
			If $A_{N,s} > 1/2$, then $h_{0}'(x) > 0$ for each $x \in (0, 1/2 )  $,  $h_{0}'( 1/2 ) = 0$ and 
			$h_{0}'(x) < 0$ for each $x \in ( 1/2 , A_{N,s} )$.
		\item[{\rm (b)}] 
			If $A_{N,s} \leq  1/2 $,
			then $h_{0}'(x) > 0$ for all $x \in (0,A_{N,s})$. 
		\end{enumerate}
		\end{enumerate}

\end{lemma}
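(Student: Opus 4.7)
The plan is to introduce the auxiliary function $\phi(y) := \log\Gamma(y+s) - \log\Gamma(y)$ for $y > 0$, which is the natural building block of $h_\ell$. Using the Weierstrass series $\psi'(y) = \sum_{n=0}^{\infty}(y+n)^{-2}$, both $\psi' > 0$ and the strict monotonicity of $\psi'$ on $(0,\infty)$ are immediate, so $\phi'(y) = \psi(y+s) - \psi(y) > 0$ and $\phi''(y) = \psi'(y+s) - \psi'(y) < 0$. In particular, $\phi$ is strictly increasing and strictly concave, with strictly decreasing derivative $\phi'$.

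For part (ii), I will absorb the linear prefactor in $h_0$ into a Gamma function via the functional equation $(A_{N,s}+s-x)\Gamma(A_{N,s}+s-x) = \Gamma(A_{N,s}+s+1-x)$ to obtain the clean identity
\[
h_0(x) = \phi(A_{N,s}+1-x) + \phi(A_{N,s}+x).
\]
Strict concavity on $[0, A_{N,s}]$ is then immediate from $h_0''(x) = \phi''(A_{N,s}+1-x) + \phi''(A_{N,s}+x) < 0$, which also settles the $\ell = 0$ case of (i). For the sign of $h_0'(x) = \phi'(A_{N,s}+x) - \phi'(A_{N,s}+1-x)$, the strict monotonicity of $\phi'$ gives $h_0'(x) = 0$ iff $A_{N,s}+x = A_{N,s}+1-x$, i.e.\ $x = 1/2$. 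Cases (a) and (b) then reduce to whether the critical point $1/2$ lies inside $(0, A_{N,s})$ or past the right endpoint.

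The substantive work is part (i) for $\ell < 0$. On $[0, A_{N,s})$ I will use the recurrence $\log\Gamma(A_{N,s}+1-x) = \log(A_{N,s}-x) + \log\Gamma(A_{N,s}-x)$ to rewrite
\[
h_\ell(x) = \log\frac{A_{N,s}+s+\ell/2-x}{A_{N,s}-x} + \phi(A_{N,s}-x) + \phi(A_{N,s}+x).
\]
Computing $h_\ell''$ directly, the dangerous combination is $(A_{N,s}-x)^{-2} + \phi''(A_{N,s}-x)$, whose two pieces blow up with opposite signs as $x \to A_{N,s}^-$. The key estimate is that the trigamma series yields
\[
\phi''(y) + \frac{1}{y^2} = \frac{1}{(y+s)^2} + \sum_{n=1}^{\infty}\left[\frac{1}{(y+s+n)^2} - \frac{1}{(y+n)^2}\right] \le \frac{1}{(y+s)^2}.
\]
Applied with $y = A_{N,s}-x$, and combined with the bound $A_{N,s}+s+\ell/2-x \le A_{N,s}+s-x$ coming from $\ell \le 0$, the singular pieces cancel and a direct computation gives
\[
h_\ell''(x) \le \phi''(A_{N,s}+x) < 0 \quad \text{for every } x \in [0, A_{N,s}).
\]
Since $h_\ell \in C^2([0, A_{N,s}])$, strict negativity of $h_\ell''$ on $(0, A_{N,s})$ upgrades to strict concavity on the closed interval via a standard mean value argument. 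The main obstacle is exactly arranging this cancellation of singularities cleanly: the natural decomposition is a convex piece plus two concave Gamma pieces, all of which individually diverge at $x = A_{N,s}$, and it is only by isolating the $n=0$ term of the trigamma series and using the sign constraint $\ell \le 0$ that the cancellation becomes quantitative.
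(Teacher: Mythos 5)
Your proposal is correct. For part (i) your argument is, after unwinding, the same computation as the paper's: writing $h_\ell(x)=\log\frac{A_{N,s}+s+\ell/2-x}{A_{N,s}-x}+\phi(A_{N,s}-x)+\phi(A_{N,s}+x)$ with $\phi(y)=\log\Gamma(y+s)-\log\Gamma(y)$ and using $\frac{1}{y^2}-\psi'(y)=-\psi'(y+1)$, your bound $\phi''(y)+\frac{1}{y^2}\le\frac{1}{(y+s)^2}$ is exactly the paper's step of peeling off the $n=0$ term of $\psi'(A_{N,s}+s-x)$ and pairing the tail against $-\psi'(A_{N,s}+1-x)$; the final comparison $\frac{1}{(y+s)^2}\le\frac{1}{(y+s+\ell/2)^2}$ from $\ell\le 0$ is likewise identical. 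Where you genuinely diverge is part (ii): the paper computes $h_0'(x)/s$ as an explicit series and factors $(1-2x)$ out of each numerator, whereas you exploit the symmetric form $h_0'(x)=\phi'(A_{N,s}+x)-\phi'(A_{N,s}+1-x)$ together with the strict monotonicity of $\phi'$ (already available from $\phi''<0$), so the sign of $h_0'$ is read off from whether $x\lessgtr 1/2$ with no further algebra. Your route buys a shorter, more conceptual proof of (ii) that reuses the concavity machinery; the paper's explicit series gives the same conclusion but also exhibits the quantitative factor $(1-2x)(2n+2A_{N,s}+1+s)$. One small point of care: your decomposition of $h_\ell$ degenerates at $x=A_{N,s}$ (where $\log(A_{N,s}-x)$ blows up), but since $g_\ell$ is smooth and positive on the closed interval, $h_\ell\in C^2([0,A_{N,s}])$ and your limiting argument from $(0,A_{N,s})$ to the closed interval is legitimate.
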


\begin{proof}
	(i)
	Denote by $\psi (z)$ the digamma function, that is
	\begin{equation}\label{eq:5.10}
		\psi(z) := \frac{d}{d z} \log \Gamma (z) = \frac{\Gamma'(z)}{\Gamma(z)}.
	\end{equation}
	By \eqref{eq:5.7} and \eqref{eq:5.10} it holds that 
	\begin{equation*}
		\begin{aligned}
			h_{\ell}'(x) & = - \frac{1}{A_{N,s} + s + \ell/2 - x} - \psi (A_{N,s} + s - x)
			+ \psi (A_{N,s}+1-x)
			\\
			        & \qquad + \psi(A_{N,s}+s+x) - \psi(A_{N,s}+x)
		\end{aligned}
	\end{equation*}
	and 
	\begin{equation}\label{eq:5.11}
		\begin{aligned}
			h_{\ell}''(x) & =  - \frac{1}{(A_{N,s}+s+\ell/2 - x)^2}
			+ \psi'(A_{N,s} +s-x) - \psi'(A_{N,s} + 1 -x) 
			\\
			         & \qquad + \psi'(A_{N,s}+s+x) - \psi'(A_{N,s}+x).
		\end{aligned}
	\end{equation}
	For $\psi'(z)$, we have the following expression (see, e.g., \cite{A-64,L-72}) 
	\begin{equation}\label{eq:5.12}
		\psi'(z) = \sum_{n=0}^\infty \frac{1}{(z+n)^2}.
	\end{equation}
	Thus, for $x \in (0,A_{N,s})$, it follows that 
	\[
		\begin{aligned}
			\psi'(A_{N,s} + s+x ) - \psi'(A_{N,s}+x) 
			=  \sum_{n=0}^\infty 
			\left[ \frac{1}{(A_{N,s} + s  +x + n )^2} - \frac{1}{(A_{N,s}+x+n)^2} \right] < 0.
		\end{aligned}
	\]
	For the first three terms in \eqref{eq:5.11}, since $-2s < \ell \leq 0$, we observe that 
	\[
		\begin{aligned}
			     & - \frac{1}{(A_{N,s} + s + \ell/2-x)^2} + \psi' (A_{N,s}+s-x) - \psi'(A_{N,s} + 1- x) 
			\\
			= \, & - \frac{1}{(A_{N,s} + s + \ell/2 - x )^2} + \sum_{n=0}^\infty \frac{1}{(A_{N,s}+s-x+n)^2}
			- \sum_{n=0}^\infty \frac{1}{(A_{N,s}+1-x+n)^2}
			\\
			= \, & - \frac{1}{(A_{N,s}+s+\ell/2-x)^2} + \frac{1}{(A_{N,s}+s-x)^2}
			\\
			     & \quad + \sum_{n=0}^\infty \left\{ \frac{1}{(A_{N,s}+s-x+1+n)^2} - \frac{1}{(A_{N,s}+1-x+n)^2} \right\}
			\\
			< \, & 0.
		\end{aligned}
	\]
	Hence, $h_{\ell}''(x) < 0$ for each $x \in (0,A_{N,s})$ when $-2 s < \ell \leq 0$ and $h_{\ell}$ is strictly concave. 

	(ii)
	We assume $\ell = 0$ and compute $h_0'(x)$. 
	To this end, we use the following expression for $\psi(z)$
	(see, e.g., \cite{A-64}):
	\begin{equation}\label{eq:5.13}
		\psi (z) = - \gamma + \sum_{n=0}^\infty \left( \frac{1}{n+1} - \frac{1}{n+z} \right), \quad 
		\gamma := \lim_{n \to \infty} \left( -\log n + \sum_{k=1}^n \frac{1}{k}  \right) \in (0,1).
	\end{equation}
	Remark that when $\ell = 0$, by \eqref{eq:5.5}, we have 
	\begin{equation}\label{eq:5.14}
		h_0'(x) = - \psi ( A_{N,s} + s + 1 - x ) + \psi(A_{N,s} + 1 - x) + \psi (A_{N,s} + s + x) - \psi (A_{N,s} + x).
	\end{equation}
Since 
	\begin{equation*}
\begin{aligned}
&- \psi (A_{N,s} +1+ s - x) + \psi (A_{N,s}+1-x)\\
= \ &\sum_{n=0}^\infty \left( \frac{1}{n+A_{N,s} +1+ s - x} - \frac{1}{n+A_{N,s}+1-x} \right)\\
= \ &\sum_{n=0}^\infty \frac{-s}{(n+A_{N,s} +1+ s - x)(n+A_{N,s}+1-x)},
\end{aligned} 
	\end{equation*}
and
	\begin{equation*}
	\begin{aligned}
	& \psi(A_{N,s}+s+x) - \psi(A_{N,s}+x)
	\\
	= \ & \sum_{n=0}^\infty \left( -\frac{1}{n+A_{N,s} + s + x} + \frac{1}{n+A_{N,s}+x} \right)\\
	= \ & \sum_{n=0}^\infty \frac{s}{(n+A_{N,s} + s + x)(n+A_{N,s}+x)},
	\end{aligned} 
	\end{equation*}
it follows from \eqref{eq:5.14} that 
	\begin{equation}\label{eq:5.15}
		\begin{aligned}
			&\frac{h_0'(x)}{s} 
			\\
			= & \sum_{n=0}^\infty 
			\frac{(n+A_{N,s} +1+ s - x)(n+A_{N,s}+1-x)-(n+A_{N,s} + s + x)(n+A_{N,s}+x)}{(n+A_{N,s} +1+ s - x)(n+A_{N,s}+1-x)(n+A_{N,s} + s + x)(n+A_{N,s}+x)}.
		\end{aligned}
	\end{equation}
Then we compute the numerator in \eqref{eq:5.15}, that is
	\begin{equation}\label{eq:5.16}
	\begin{aligned}
	&(n+A_{N,s} +1+ s - x)(n+A_{N,s}+1-x)-(n+A_{N,s} + s + x)(n+A_{N,s}+x)\\
	= \ &(-2x+1)(2n+2A_{N,s}+1+s). 
	\end{aligned}
	\end{equation}
Hence, from \eqref{eq:5.15} and \eqref{eq:5.16}, 
(a) and (b) clearly hold. 
\end{proof}

%%%%%%%%%%%%%%%%%%%%%%%%%%%%%%%%%%%%%%%%%%%%%%%%%%%%%%%%%%%%%%%%%%%

\subsection{Analysis at $x=A_{N,s}$}
\label{section:5.2}

%%%%%%%%%%%%%%%%%%%%%%%%%%%%%%%%%%%%%%%%%%%%%%%%%%%%%%%%%%%%%%%%%%%

Let $h_\ell$ be as in \eqref{eq:5.7}. 
Then,
by Lemmata \ref{Lemma:5.1} and \ref{Lemma:5.2}, when $-2s < \ell \leq 0$, 
in order to determine the range of $x$ (namely, $p$), where \eqref{eq:5.3} holds, 
it suffices to see the validity of \eqref{align:5.8} or \eqref{align:5.9} at $x = A_{N,s}$. 
In what follows, we analyze the cases $N=1$ and $N \geq 2$ separately 
since the range of $s$ is different in these two cases.

	Suppose $N=1$. In this case, we have $A_{1,s} = 1/4 - s/2$ and \eqref{align:5.8} 
and \eqref{align:5.9} with $x=A_{1,s}$ become
\begin{align}
	&\log \left( s + \frac{\ell}{2} \right) + \log \Gamma (s) + \log \Gamma \left( \frac{1}{2} \right)
	- \log \Gamma \left( \frac{1}{2} - s \right)
	- 2 \log \Gamma \left( \frac{1}{4} + \frac{s}{2} \right) + 2 \log \Gamma \left( \frac{1}{4} - \frac{s}{2} \right)
	> 0,
	\label{align:5.17}
	\\
		&\log \left( s + \frac{\ell}{2} \right) + \log \Gamma (s) + \log \Gamma \left( \frac{1}{2} \right)
		- \log \Gamma \left( \frac{1}{2} - s \right)
		- 2 \log \Gamma \left( \frac{1}{4} + \frac{s}{2} \right) + 2 \log \Gamma \left( \frac{1}{4} - \frac{s}{2} \right)
		< 0.
		\label{align:5.18}
\end{align}
For $(\ell,s)$, notice that $-2s < \ell \leq 0$ and $0<s< 1/2$ are equivalent to 
$- 1 < \ell \leq 0$ and $- \ell/2 < s < 1/2$. 
To analyze \eqref{align:5.17}, we first fix an $\ell \in (-1,0]$ and vary $s \in (-\ell/2,1/2)$:

\begin{lemma}\label{Lemma:5.3}
	Assume $N=1$, fix an $\ell \in (-1,0]$ and put $s_\ell := - \ell /2$. 
	Then there exists an $ \bar{s}_\ell \in [s_\ell , 1/2)$ such that 
	\eqref{align:5.17} holds if and only if $ s \in ( \bar{s}_\ell , 1/2 )$. 
	Moreover, when $\ell = 0$, $\bar{s}_\ell = 0 = s_\ell$, and 
	when $ - 1 < \ell < 0$, $ \bar{s}_\ell > s_\ell$ and \eqref{align:5.18} holds for $s \in (s_\ell, \bar{s}_\ell)$. 
\end{lemma}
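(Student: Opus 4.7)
The plan is to encode the inequality \eqref{align:5.17} as a single real-valued function
\begin{equation*}
F_\ell(s) := \log\!\left(s + \tfrac{\ell}{2}\right) + \log\Gamma(s) + \log\Gamma\!\left(\tfrac{1}{2}\right) - \log\Gamma\!\left(\tfrac{1}{2} - s\right) - 2\log\Gamma\!\left(\tfrac{1}{4} + \tfrac{s}{2}\right) + 2\log\Gamma\!\left(\tfrac{1}{4} - \tfrac{s}{2}\right)
\end{equation*}
on $(s_\ell, 1/2)$, to analyze its behavior at both endpoints, and then to show it is strictly monotone increasing on that interval. Existence and uniqueness of $\bar s_\ell$, and the sign of $F_\ell$ on either side of $\bar s_\ell$, would then follow from the intermediate value theorem.

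First I would compute the endpoint asymptotics. At $s \downarrow s_\ell = -\ell/2$: when $\ell < 0$ the term $\log(s + \ell/2) \to -\infty$ and all other summands remain finite, so $F_\ell(s) \to -\infty$; when $\ell = 0$ the combination $\log s + \log\Gamma(s) = \log\Gamma(s+1)$ tends to $0$ and the remaining pairs cancel at $s = 0$, giving $F_0(0^+) = 0$. At the right endpoint, setting $\varepsilon := 1/2 - s \downarrow 0$ one has $1/4 - s/2 = \varepsilon/2$, and the expansion $\log\Gamma(z) = -\log z + O(1)$ as $z \downarrow 0$ yields $-\log\Gamma(\varepsilon) + 2\log\Gamma(\varepsilon/2) = -\log\varepsilon + 2\log 2 + O(1) \to +\infty$, so $F_\ell(s) \to +\infty$ regardless of $\ell$.

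The central step is to show $F_\ell'(s) > 0$ on $(s_\ell, 1/2)$. Using $\psi(s+1) = \psi(s) + 1/s$, I would decompose
\begin{equation*}
F_\ell'(s) = F_0'(s) + \left(\frac{1}{s + \ell/2} - \frac{1}{s}\right), \qquad F_0'(s) := \psi(s+1) + \psi\!\left(\tfrac{1}{2} - s\right) - \psi\!\left(\tfrac{1}{4} + \tfrac{s}{2}\right) - \psi\!\left(\tfrac{1}{4} - \tfrac{s}{2}\right),
\end{equation*}
where the parenthesized term is $\geq 0$ for $\ell \leq 0$. For $F_0'$, I would insert the series \eqref{eq:5.13} and regroup the summands into the pairs $[1/(n+1/4+s/2) - 1/(n+s+1)]$ and $[1/(n+1/4-s/2) - 1/(n+1/2-s)]$, which gives
\begin{equation*}
F_0'(s) = \sum_{n=0}^{\infty}\left[\frac{3/4 + s/2}{(n+1/4+s/2)(n+s+1)} + \frac{1/4 - s/2}{(n+1/4-s/2)(n+1/2-s)}\right].
\end{equation*}
For $s \in (0, 1/2)$ both numerators and all denominators are positive, so each summand is positive and $F_0'(s) > 0$.

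Combining these ingredients, for $\ell = 0$ the function $F_0$ starts at $0$ and is strictly increasing, hence $F_0 > 0$ throughout $(0, 1/2)$, giving $\bar s_0 = 0$. For $-1 < \ell < 0$, strict monotonicity together with $F_\ell(s_\ell^+) = -\infty$ and $F_\ell((1/2)^-) = +\infty$ provides a unique zero $\bar s_\ell \in (s_\ell, 1/2)$, with \eqref{align:5.18} on $(s_\ell, \bar s_\ell)$ and \eqref{align:5.17} on $(\bar s_\ell, 1/2)$. The main obstacle I anticipate is the partial-fraction regrouping used to prove $F_0'(s) > 0$: one has to choose the right pairing of the four $\psi$-series so that both resulting numerators are simultaneously nonnegative on the whole of $(0, 1/2)$, and it is precisely the bound $s < 1/2$ that makes the second numerator $1/4 - s/2$ positive --- an alternative pairing (for instance grouping the two arguments involving $s/2$) would only yield term-by-term positivity on a strict subinterval.
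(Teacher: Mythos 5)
Your proposal is correct and follows essentially the same route as the paper's proof: the same function $H_\ell$, the same endpoint limits (including $\log\Gamma(z)=-\log z+O(1)$ at $s\nearrow 1/2$), the same shift $\psi(s)+1/s=\psi(s+1)$ isolating the nonnegative term $1/(s+\ell/2)-1/s$, and the same pairing of the four digamma series to get term-by-term positivity of the derivative. The only difference is cosmetic: you exhibit the numerators $3/4+s/2$ and $1/4-s/2$ explicitly, whereas the paper compares the denominators directly.
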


%\begin{remark}%label{Remark:5.1}%
%	By Lemmata \ref{Lemma:5.1} and \ref{Lemma:5.3}, and the concavity of $h_{\ell}$, 
%	if $- 1 < \ell \leq 0$ and $\bar{s}_\ell < s < 1/2$, 
%	then $p$ is always JL-subcritical.  
%\end{remark}

\begin{proof}
	For $s \in (s_\ell , 1/2)$, let us consider 
	\[
		\begin{aligned}
			H_{\ell}(s) := & \log \left( s + \frac{\ell}{2} \right)
			+\log \Gamma (s) + \log \Gamma \left( \frac{1}{2} \right)
			- \log \Gamma \left( \frac{1}{2} - s \right)
			\\
			          & \quad 
			- 2 \log \Gamma \left( \frac{1}{4} + \frac{s}{2} \right)
			+ 2 \log \Gamma \left( \frac{1}{4} - \frac{s}{2} \right).
		\end{aligned}
	\]
	Note that when $\ell = 0$, we have 
	\begin{equation}\label{eq:5.19}
		s_\ell = 0, \quad \log \left( s + \frac{\ell}{2} \right) + \log \Gamma (s)
		= \log s \Gamma (s) = \log \Gamma (s+1), \quad H_0(0) = 0.
	\end{equation}
	On the other hand, when $\ell <0$, we get $H_{\ell}(s) \to - \infty$
	as $s \searrow s_\ell $.

	Next, we compute $H_{\ell}'(s)$. 
	It follows from \eqref{eq:5.13} that for $s \in (s_\ell,1/2)$
	\[
		\begin{aligned}
			H_{\ell}'(s) & = \frac{1}{s+\frac{\ell}{2}} + \psi(s) + \psi \left( \frac{1}{2} - s \right)
			- \psi \left( \frac{1}{4} + \frac{s}{2} \right) - \psi \left( \frac{1}{4} - \frac{s}{2} \right)
			\\
			        & = \frac{1}{s+\frac{\ell}{2}}
			+ \sum_{n=0}^\infty \left[ - \frac{1}{n+s} - \frac{1}{n+\frac{1}{2}-s}
				+ \frac{1}{n+\frac{1}{4}+ \frac{s}{2}} + \frac{1}{n+ \frac{1}{4} - \frac{s}{2}} \right]
			\\
			        & = \frac{1}{s+\frac{\ell}{2}} - \frac{1}{s}
			+ \sum_{n=0}^\infty  \left[  - \frac{1}{n+1+s} - \frac{1}{n+\frac{1}{2}-s}
				+ \frac{1}{n+\frac{1}{4}+ \frac{s}{2}} + \frac{1}{n+ \frac{1}{4} - \frac{s}{2}} \right].
		\end{aligned}
	\]
	Since $ - 1 < \ell \leq 0$, we see that 
	\[
		\frac{1}{s+\frac{\ell}{2}} - \frac{1}{s} \geq 0.
	\]
	In addition, one sees that 
	\[
		\frac{1}{n+\frac{1}{4}+\frac{s}{2}} - \frac{1}{n+1+s} > 0, \quad 
		\frac{1}{n+\frac{1}{4}-\frac{s}{2}} - \frac{1}{n+\frac{1}{2}-s}
		= \frac{1}{n+ \frac{1}{4}-\frac{s}{2} } - \frac{1}{n + 2 \left( \frac{1}{4} - \frac{s}{2} \right) } > 0.
	\]
	Therefore, 
	\begin{equation}\label{eq:5.20}
		H_{\ell}'(s) > 0 \quad \text{for every $s \in \left(s_\ell,\frac{1}{2}\right)$}.
	\end{equation}
	In particular, by \eqref{eq:5.19}, when $\ell = 0$, 
	we obtain $H_{\ell}(s) > 0$ for each $s \in (0,1/2)$ and 
	\eqref{align:5.17} holds for each $s \in (0,1/2)$.

	To show \eqref{align:5.17} in the case $ \ell < 0$, 
	we observe the behavior of $H_{\ell}(s)$ as $s \nearrow 1/2$. For this purpose, we use the following expansion: 
	\begin{equation}\label{eq:5.21}
		\log \Gamma (z) = - \log z + O(1) \quad \text{as $z \searrow 0$} .
	\end{equation}
	As $s \nearrow 1/2$, we obtain 
	\[
		\begin{aligned}
			 & -\log \Gamma \left( \frac{1}{2} - s \right) = \log \left( \frac{1}{2} - s \right) + O(1), 
			\\ 
			 & 2 \log \Gamma \left( \frac{1}{4} - \frac{s}{2} \right)
			= -2 \log \left( \frac{1}{2} \left( \frac{1}{2} - s \right) \right) + O(1) 
			= 2 \log 2 -2 \log \left( \frac{1}{2} - s \right) + O(1)
		\end{aligned}
	\]
	and 
	\[
		H_{\ell}(s) = - \log \left( \frac{1}{2} - s \right) + O(1) \to \infty \quad 
		\text{as } s \nearrow \frac{1}{2}.
	\]
	From these, we infer that if $\ell < 0$, then by \eqref{eq:5.20} and $H_{\ell}(s) \to - \infty$ as $s \searrow s_\ell$, 
	there exists a unique $\bar{s}_\ell \in (s_\ell, 1/2 )$
	such that $H_{\ell}(\bar{s}_\ell) =0$ and $H_{\ell}(s) > 0$ in $(\bar{s}_\ell , 1/2)$. This completes the proof. 
\end{proof}

	Next, we turn to the case $N \geq 2$. 
As in the above, we consider \eqref{align:5.8} at $x=A_{N,s}$ and set 
\[
	\begin{aligned}
		H_{\ell}(s,N) & := \log \left( s+ \frac{\ell}{2} \right) + \log \Gamma (s)
		+ \log \Gamma \left( \frac{N}{2} \right) - \log \Gamma \left( \frac{N-2s}{2} \right)
		\\
		         & \qquad 
		- 2 \log \Gamma \left( \frac{N+2s}{4} \right) + 2 \log \Gamma \left( \frac{N-2s}{4} \right).
	\end{aligned}
\]
We shall observe when the inequality $H_{\ell}(s,N) > 0$ holds, 
which is equivalent to \eqref{align:5.8} at $x=A_{N,s}$. 
Remark that in Lemmata \ref{Lemma:5.4} and \ref{Lemma:5.5}, 
we also treat not only $\ell \in (-2,0]$, but also $\ell > 0$.

\begin{lemma}\label{Lemma:5.4}
	Suppose $N \geq 2$ and $\ell \in (-2,\infty)$. 
	Then the following hold:
	\begin{enumerate}
		\item[{\rm (i)}]
		      Assume $ \ell \in (-2, 0]$ and set $s_\ell := -\ell/2$. 
		      If $\ell = 0$, then $H_{\ell}(s,N) \to 0$
		      as $s \searrow s_\ell$. 
		      On the other hand, if $- 2< \ell < 0$, 
		      then $ H_{\ell} (s,N) \to - \infty$ as $s \searrow s_\ell$;
		\item[{\rm (ii)}]
		      When $2 \leq N \leq 6$ and $-2 < \ell \leq 0$, $\frac{\partial H_{\ell}}{\partial s} (s,N) > 0$ 
		      for each $s \in (s_\ell,1)$;
		\item[\emph{(iii)}]
		      When $N \geq 6$ and $\ell \geq 0$, $H_{\ell} (\cdot ,N)$ is strictly convex in $ (0,1)$; 
		\item[{\rm (iv)}]
		      When $N=7$ and $\ell = 0$, $ \lim_{s \searrow 0} \frac{\partial H_{\ell}}{\partial s} (s,7) > 0$. 
		      When $N \geq 8$ and $\ell = 0$, there exists a $t_N  \in (0,1] $ such that 
		      $ \frac{\partial H_{\ell}}{\partial s} (s,N) < 0$ for $s \in (0,t_N)$ and 
		      $\frac{\partial H_{\ell}}{\partial s} (s,N) > 0$ if $t_N<1$ and $t_N < s < 1$;
		\item[{\rm (v)}] For each $\ell \in (-2,\infty)$, we have
		      \[
			      \lim_{s \nearrow 1 } H_{\ell} (s,N) = H_{\ell} (1,N) \left\{\begin{aligned}
				       & > 0 &  & \text{if} \ 2 \leq N < 10 + 4 \ell, \\
				       & = 0 &  & \text{if} \ N = 10 + 4 \ell,        \\
				       & < 0 &  & \text{if} \ 10 + 4 \ell < N.
			      \end{aligned}\right.
		      \]
	\end{enumerate}
\end{lemma}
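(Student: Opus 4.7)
The plan is to treat each of the five assertions independently, since they concern different features (limits, monotonicity, convexity, and boundary values) of $H_\ell(\cdot,N)$. Throughout I would rely on the series expansions $\psi(z) = -\gamma + \sum_{n=0}^\infty \big(\tfrac{1}{n+1} - \tfrac{1}{n+z}\big)$ and $\psi'(z) = \sum_{n=0}^\infty (n+z)^{-2}$ already used in the proofs of Lemmata \ref{Lemma:5.2} and \ref{Lemma:5.3}.

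For part (i), I would isolate the potentially singular pair $\log(s+\ell/2) + \log\Gamma(s)$ from the remaining continuous terms. When $\ell = 0$, the identity $s\Gamma(s) = \Gamma(s+1)$ gives $\log(s+\ell/2) + \log\Gamma(s) = \log\Gamma(s+1) \to 0$ as $s \searrow 0$, while the rest of $H_0(s,N)$ is continuous and cancels to $0$ at $s=0$. When $-2<\ell<0$, the term $\log(s+\ell/2)\to-\infty$ as $s\searrow -\ell/2$ while the others remain finite. Part (v) is likewise a direct computation: substituting $s=1$ and applying $\Gamma(z+1)=z\Gamma(z)$ collapses $\log\Gamma(N/2)-\log\Gamma((N-2)/2)$ to $\log((N-2)/2)$ and $-2\log\Gamma((N+2)/4)+2\log\Gamma((N-2)/4)$ to $-2\log((N-2)/4)$. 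Adding $\log(1+\ell/2)$ yields
\[
H_\ell(1,N) \;=\; \log\frac{8+4\ell}{N-2},
\]
whose sign matches $N$ against $10+4\ell$.

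For parts (ii)--(iv) I would compute the derivatives and analyze them via the above series. The chain rule gives
\[
\partial_s H_\ell(s,N) = \frac{1}{s+\ell/2} + \psi(s) + \psi\big(\tfrac{N-2s}{2}\big) - \psi\big(\tfrac{N+2s}{4}\big) - \psi\big(\tfrac{N-2s}{4}\big),
\]
\[
\partial_s^2 H_\ell(s,N) = -\frac{1}{(s+\ell/2)^2} + \psi'(s) - \psi'\big(\tfrac{N-2s}{2}\big) - \tfrac{1}{2}\psi'\big(\tfrac{N+2s}{4}\big) + \tfrac{1}{2}\psi'\big(\tfrac{N-2s}{4}\big).
\]
For (ii) with $\ell\leq 0$ and $2\leq N\leq 6$, I would split off the pole at $s=0$ to obtain $\frac{1}{s+\ell/2}-\frac{1}{s}\geq 0$ and then rewrite the remaining $\psi$-differences as a single series whose general term is manifestly non-negative after pairing consecutive indices. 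For (iii) with $\ell\geq 0$ and $N\geq 6$, I would note that $-(s+\ell/2)^{-2}+\psi'(s) \geq \sum_{n\geq 1}(n+s)^{-2}$ and $\tfrac12[\psi'((N-2s)/4)-\psi'((N+2s)/4)]$ is visibly positive; the duplication identity $\psi'(2z)=\tfrac14[\psi'(z)+\psi'(z+\tfrac12)]$ with $z=(N-2s)/4$ would be used to split the negative contribution $-\psi'((N-2s)/2)$ so that it can be dominated termwise once $N\geq 6$. For (iv) with $\ell=0$, I would use $\psi(s) = -1/s - \gamma + O(s)$ to get $\lim_{s\searrow 0}\partial_sH_0(s,N) = -\gamma + \psi(N/2) - 2\psi(N/4)$ and check its sign at $N=7$ and the monotone reversal for $N\geq 8$ via the rational-plus-$\log 2$ values of $\psi$ at half- and quarter-integers; combined with the sign of $H_0(1,N)$ from (v), this produces the threshold $t_N$.

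I expect the main obstacle to be the termwise-positivity step inside the series for (ii) and (iii): because the sign change occurs precisely at $N=6$ and $N\approx 7$--$8$, any crude bound fails, so the index pairing and the use of the duplication formula must be arranged carefully to extract the exact critical dimension.
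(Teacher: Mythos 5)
Your plan is sound, and for parts (i), (ii) and (v) it coincides with the paper's own proof: the same isolation of $\log(s+\ell/2)+\log\Gamma(s)=\log\Gamma(s+1)$ when $\ell=0$, the same index shift $-\tfrac{1}{n+s}\mapsto-\tfrac{1}{s}-\tfrac{1}{n+1+s}$ followed by combining the four reciprocals over a common denominator (the paper's numerator is $32n^2+4n(N-2s)(-N+6s+8)+(N-2s)\{N(6-N)+2s(3N-2)\}$, whose coefficients are all nonnegative exactly when $N\le 6$ — this is precisely the "careful arrangement" you anticipate at the end), and the same evaluation $H_\ell(1,N)=\log\frac{8+4\ell}{N-2}$. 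Two of your sub-arguments genuinely differ. For (iii) the paper pairs $\tfrac{1}{(n+1+s)^2}-\tfrac{1}{(n+\frac{N-2s}{2})^2}$ and $\tfrac12\bigl[\tfrac{1}{(n+\frac{N-2s}{4})^2}-\tfrac{1}{(n+\frac{N+2s}{4})^2}\bigr]$ termwise, using $1+s<\tfrac{N-2s}{2}$ for $N\ge 6$; your duplication-formula splitting of $-\psi'(\tfrac{N-2s}{2})$ reduces the whole estimate to $\psi'(1+s)\ge\tfrac12\psi'(\tfrac{N+2s}{4})$, which follows from $\tfrac{N+2s}{4}\ge 1+s$ when $N\ge 6$ — cleaner bookkeeping, same input. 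For (iv) the paper proves $\tfrac{\partial^2 H_0}{\partial N\partial s}<0$ on $(0,1)\times[8,\infty)$ to propagate the negativity from $N=8$; you only need the limit $-\gamma+\psi(N/2)-2\psi(N/4)$ to be decreasing in $N$, i.e. $\psi'(N/2)<\psi'(N/4)$, which together with the value $-1/6$ at $N=8$ suffices and is lighter than the paper's computation (trying instead to evaluate $\psi$ at every quarter-integer $N/4$ for all $N\ge 8$ would not terminate, so make sure the monotonicity step is what you actually carry out).

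Two loose ends to repair. First, your closed form for (v) degenerates at $N=2$: there $\Gamma(\tfrac{N-2s}{2})$ and $\Gamma(\tfrac{N-2s}{4})$ both blow up as $s\nearrow 1$, and one must check, as the paper does via $\log\Gamma(z)=-\log z+O(1)$, that $-\log\Gamma(1-s)+2\log\Gamma(\tfrac{1-s}{2})=-\log(1-s)+O(1)\to+\infty$, so the assertion "$>0$" holds in the limit sense. Second, in (iv) the threshold $t_N$ does not come from the sign of $H_0(1,N)$ in (v); it comes from the strict convexity in (iii), which makes $\partial_s H_0(\cdot,N)$ strictly increasing, combined with the negative limit at $s=0$: either $\partial_s H_0<0$ on all of $(0,1)$ (then $t_N=1$) or it vanishes at a unique $t_N<1$ and is positive beyond. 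Part (v) plays no role in that step.
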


\begin{remark}\label{Remark:5.1}
	When $s=1$, it is known that each $p \in (1,\infty)$ is JL-subcritical if and only if 
	$N \leq 10 + 4 \ell$ (see \eqref{eq:1.4} and \cite{DDG}). 
	Since $x=A_{N,s}$ corresponds to $p=\infty$, by Lemma \ref{Lemma:5.4} (v), 
	we see that the case $s=1$ and $p=\infty$ appear in the limit $s \nearrow 1$. 
\end{remark}

\begin{proof}[Proof of Lemma {\rm\ref{Lemma:5.4}}]
	(i) When $\ell = 0$, similarly to \eqref{eq:5.19},
	we have $H_{0} (\cdot, N) \in C([0,1 ) )$ and $H_{0}(0,N) = 0$.
	On the other hand, when $-2 < \ell < 0$, $\lim_{s \searrow s_\ell} H_{\ell} (s,N) = -\infty$ 
	due to the term $\log (s+\ell/2)$ and $s_\ell > 0$.

	(ii) Recalling \eqref{eq:5.13}, we compute $\frac{\partial H_{\ell}}{\partial s} (s,N)$:
		\begin{equation}\label{eq:5.22}
			\begin{aligned}
				\frac{\partial H_{\ell}}{\partial s} (s,N) 
				& = 
				\frac{1}{s+\frac{\ell}{2}} + \psi(s) + \psi \left( \frac{N-2s}{2} \right)
				- \psi \left( \frac{N+2s}{4} \right) - \psi \left( \frac{N-2s}{4} \right)
				\\
                & = 
                \frac{1}{s+\frac{\ell}{2}}
				+ \sum_{n=0}^\infty \left[ - \frac{1}{n+s} - \frac{1}{n + \frac{N-2s}{2} }
				+ \frac{1}{n+ \frac{N+2s}{4} } + \frac{1}{n+\frac{N-2s}{4}} \right]
				\\
                & = 
                \frac{1}{s+\frac{\ell}{2}} - \frac{1}{s}
				+ \sum_{n=0}^\infty 
				\left[ - \frac{1}{n+1+s} - \frac{1}{n + \frac{N-2s}{2}} + \frac{1}{n+ \frac{N+2s}{4}}
				+ \frac{1}{n + \frac{N-2s}{4}} \right].
			\end{aligned}
		\end{equation}
		Since $ - 2 < \ell \leq 0$, we have
			\[
				\frac{1}{s+\frac{\ell}{2}} - \frac{1}{s} \geq 0 \quad \text{for $s \in (s_\ell , 1)$}.
			\]
		On the other hand, we have 
			\[
				\begin{aligned}
				  & - \frac{1}{n+1+s} - \frac{1}{n + \frac{N-2s}{2}} + \frac{1}{n+ \frac{N+2s}{4}}
				+ \frac{1}{n + \frac{N-2s}{4}}
					\\
							= & \ \frac{ 1 + \frac{s}{2} - \frac{N}{4}  }{ \left\{ n + \frac{N+2s}{4} \right\} \left\{ n+1+s \right\} }
							+ \frac{\frac{N-2s}{4}}{ \left\{ n + \frac{N-2s}{4} \right\} \left\{ n + \frac{N-2s}{2} \right\} }
							\\
							= & \ 
							\frac{
								32n^2 + 4 n (N-2s) (-N+6s + 8) + (N-2s) \{  N(6-N) +  2s(3N-2) \}
							}
							{ 32 \left\{ n + \frac{N+2s}{4} \right\} \left\{ n+1+s \right\} \left\{ n + \frac{N-2s}{4} \right\}
								\left\{ n + \frac{N-2s}{2} \right\}   }.
						\end{aligned}
					\]
					From this expression, we infer that for any $\ell \in (-2,0]$ and $2 \leq N \leq 6$, 
	$\frac{\partial H_{\ell}}{\partial s} (s,N) > 0$ for every $s \in (s_\ell,1)$.

	(iii) Assume $N \geq 6$ and $\ell \geq 0$. Then it follows from \eqref{eq:5.12} and \eqref{eq:5.22} that 
	\[
		\begin{aligned}
			     & \frac{\partial^2 H_{\ell}}{\partial s^2} (s,N)
			\\
			= \  & - \frac{1}{\left( s+\frac{\ell}{2} \right)^2 } + \psi'(s) - \psi' \left( \frac{N-2s}{2} \right)
			- \frac{1}{2} \psi' \left( \frac{N+2s}{4} \right) + \frac{1}{2} \psi' \left( \frac{N-2s}{4} \right)
			\\
			= \  & - \frac{1}{\left( s + \frac{\ell}{2} \right)^2}
			+ \sum_{n=0}^\infty \left[ \frac{1}{(n+s)^2} - \frac{1}{\left( n + \frac{N-2s}{2} \right)^2}
				+ \frac{1}{2} \left\{ \frac{1}{\left( n + \frac{N-2s}{4} \right)^2} - \frac{1}{\left( n + \frac{N+2s}{4} \right)^2} \right\}
				\right]
			\\
			= \  & \frac{1}{s^2} - \frac{1}{\left( s + \frac{\ell}{2} \right)^2}
			\\
			     & \quad 
			+ \sum_{n=0}^\infty
			\left[
				\frac{1}{(n+1+s)^2} - \frac{1}{\left( n + \frac{N-2s}{2} \right)^2}
				+ \frac{1}{2} \left\{ \frac{1}{\left( n + \frac{N-2s}{4} \right)^2} - \frac{1}{\left( n + \frac{N+2s}{4} \right)^2} \right\}
				\right].
		\end{aligned}
	\]
	Since $\ell \ge 0$ and $N \ge 6$, we see that 
	\[
		\frac{1}{s^2} - \frac{1}{\left( s + \frac{\ell}{2} \right)^2} \geq 0, \quad 
		1 + s < \frac{N-2s}{2}.
	\]
	Thus, $\frac{\partial^2 H_{\ell}}{\partial s^2} (s,N) > 0$ holds for each $s \in (0,1)$ 
	and $H_\ell (\cdot, N)$ is strictly convex in $(0,1)$.

	(iv) Assume $N=7$ and $\ell = 0$. Then by \eqref{eq:5.13} and \eqref{eq:5.22}, 
	\[
		\begin{aligned}
			\lim_{s \searrow 0} \frac{\partial H_{0}}{\partial s} (s,7)
			 & = \sum_{n=0}^\infty \left[ - \frac{1}{n+1} - \frac{1}{n + \frac{7}{2}} + \frac{2}{n + \frac{7}{4}} \right]
			= \psi (1) + \psi \left( \frac{7}{2} \right) - 2 \psi \left( \frac{7}{4} \right).
		\end{aligned}
	\]
	It follows from \cite[(1.3.3)--(1.3.5)]{L-72} that 
	\[
		\begin{aligned}
			\psi \left( \frac{7}{2} \right)
			= \psi \left( \frac{5}{2} \right) + \frac{2}{5}
			= \psi \left( \frac{3}{2} \right) + \frac{2}{3} + \frac{2}{5}
			= \psi \left( \frac{1}{2} \right) + 2 + \frac{2}{3} + \frac{2}{5}
		\end{aligned}
	\]
	and 
	\[
		-2 \psi \left( \frac{7}{4} \right) = - 2 \left[ \psi \left( \frac{3}{4} \right) + \frac{4}{3} \right]
		= - \frac{8}{3} - 2 \psi \left( \frac{1}{2} \right) - \pi + 2 \log 2.
	\]
	From $\psi(1) = - \gamma$ and $\psi(1/2) = - 2 \log 2 - \gamma$, we infer that 
	\[
		\lim_{s \searrow 0}\frac{\partial H_{0}}{\partial s} (s,7)
		= \psi(1) + \psi \left( \frac{1}{2} \right) + \frac{2}{5} - 2 \psi \left( \frac{1}{2} \right) - \pi + 2 \log 2 
		= \frac{2}{5} + 4 \log 2 - \pi > 0.
	\]
	Hence, the first assertion holds.

	To prove the second assertion, we first suppose $N = 8$ and $\ell = 0$. 
	Then by \eqref{eq:5.22}, we have 
	\[
		\begin{aligned}
			\lim_{s \searrow 0}\frac{\partial H_{0}}{\partial s} (s,8) = \sum_{n=0}^\infty 
			\left[ - \frac{1}{n+1} - \frac{1}{n+4} + \frac{2}{n + 2} \right]
			 & = 
			\left[ -1 - \frac{1}{2} - \frac{1}{3} + 2 \left( \frac{1}{2} + \frac{1}{3} \right)
				\right]
			=  - \frac{1}{6} < 0.
		\end{aligned}
	\]
	Hence, there exists an $\wt{s}_0 \in (0,1)$ such that 
	\begin{equation}\label{eq:5.23}
		\frac{\partial H_{0}}{\partial s} (s,8) < 0 \quad 
		\text{for all $s \in (0, \wt{s}_0 )$}. 
	\end{equation}
	On the other hand, for $H_{0}(s,N)$, we regard $N$ as a real variable in $[8,\infty)$ and 
	differentiate $\frac{\partial H_{0}}{\partial s}$ with respect to $N$ to obtain
	\[
		\begin{aligned}
			\frac{\partial^2 H_{0}}{\partial N \partial s} (s,N)
			 & = \frac{1}{2} \psi' \left( \frac{N-2s}{2} \right) - \frac{1}{4} \psi' \left( \frac{N+2s}{4} \right)
			- \frac{1}{4} \psi' \left( \frac{N-2s}{4} \right)
			\\
			 & = \frac{1}{4}
			\sum_{n=0}^\infty 
			\left[
				\frac{2}{\left( n + \frac{N-2s}{2} \right)^2} - \frac{1}{\left( n + \frac{N+2s}{4} \right)^2}
				- \frac{1}{\left( n + \frac{N-2s}{4} \right)^2}
				\right].
		\end{aligned}
	\]
	Since 
	\[
		\frac{N-2s}{2} > \frac{N+2s}{4} > \frac{N-2s}{4} \quad 
		\text{for every $(s,N) \in (0,1) \times [8,\infty)$}, 
	\]
	we get 
	\[
		\frac{\partial^2 H_{0}}{\partial N \partial s} (s,N) < 0
		\quad \text{for each $(s,N) \in (0,1) \times [8,\infty)$}. 
	\]
	Combining this with \eqref{eq:5.23}, we obtain 
	\[
		\frac{\partial H_{0}}{\partial s} (s, N) < 0 \quad \text{for any $(s,N) \in (0,\wt{s}_0) \times [8,\infty)$}. 
	\]

	Set 
	\[
		t_N := \sup \Set{ t \in (0,1) | \frac{\partial H_{0}}{\partial s} (t,N) < 0 }.
	\]
	Recalling (iii), we have $\frac{\partial H_0}{\partial s} (s,N) < 0$ for every $s \in (0,t_N)$ and 
	$\frac{\partial H_0}{\partial s} (s,N) > 0$ if $t_N<1$ and $t_N < s < 1$.

	(v) Suppose $-2 < \ell < \infty$. We first remark that when $N =2$, by \eqref{eq:5.21}, 
	\[
		\begin{aligned}
			\lim_{s \nearrow 1 } H_{\ell} (s,2) 
			 & = \log \left( 1 + \frac{\ell}{2} \right) + 
			\lim_{s \nearrow 1 } \left[  - \log \Gamma \left( 1 - s \right)
				+ 2 \log \Gamma \left( \frac{1}{2} - \frac{s}{2} \right) \right]
			\\
			 & = \log \left( 1 + \frac{\ell}{2} \right) + 
			\lim_{s \nearrow 1} \left[ \log (1-s) - \log \frac{(1-s)^2}{4} + O(1) \right]
			= \infty.
		\end{aligned}
	\]
	Hence, we consider the case $N \ge 3$. We notice that 
	\[
		\log \Gamma \left( \frac{N}{2} \right) - \log \Gamma \left( \frac{N}{2} - 1 \right)
		= \log \frac{\Gamma \left( N/2 \right)}{\Gamma \left( N/2 - 1 \right) }
		= \log \left( \frac{N}{2} - 1 \right)
	\]
	and that 
	\[
		\begin{aligned}
			2 \log \Gamma \left( \frac{N}{4} - \frac{1}{2} \right) - 2 \log \Gamma \left( \frac{N}{4} + \frac{1}{2} \right)
			= 2 \log \frac{\Gamma (N/4 - 1/2)}{\Gamma (N/4 + 1/2)}
			 & = 2 \log \frac{1}{ \frac{N}{4} - \frac{1}{2} }        \\
			 & = - 2 \log \left( \frac{N}{4} - \frac{1}{2} \right). 
		\end{aligned}
	\]
	Therefore, we obtain 
	\[
		\begin{aligned}
			\lim_{s \nearrow 1 } H_{\ell} (s,N) =H_{\ell} (1,N) 
			 & = \log \left[ \left( 1 + \frac{\ell}{2} \right) \left( \frac{N}{2} - 1 \right)
				\left( \frac{N}{4} - \frac{1}{2} \right)^{-2} \right]
			\\
			 & = \log \left[ \left( 1 + \frac{\ell}{2} \right) 4 \left( \frac{N}{2} - 1 \right)^{-1} \right].
		\end{aligned}
	\]
	Since 
	\[
		\begin{aligned}
			4 \left( 1 + \frac{\ell}{2} \right) \left( \frac{N}{2} - 1 \right)^{-1}
			\geq 1 
			\quad \Leftrightarrow \quad 
			8 + 4 \ell  \geq N - 2  \quad 
			\Leftrightarrow \quad 10 + 4 \ell \geq N,
		\end{aligned}
	\]
	we see that (v) holds. 
\end{proof}

\begin{lemma}\label{Lemma:5.5}
	Let $(\ell,s) \in (-2,\infty) \times (0,1)$ with $-2s < \ell$. 
	Then there exists an $N_\ast = N_\ast (s,\ell) \geq 2$ such that 
	$H_{\ell}(s,N) < 0$ if $N \geq N_\ast$. 
\end{lemma}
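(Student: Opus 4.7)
The plan is to show that for fixed $s \in (0,1)$ and $\ell > -2s$, one has $H_\ell(s,N) \to -\infty$ as $N \to \infty$, which immediately yields the existence of $N_\ast$ with $H_\ell(s,N) < 0$ for all $N \geq N_\ast$. Since the first two terms $\log(s+\ell/2) + \log\Gamma(s)$ are independent of $N$ and contribute a bounded constant, the problem reduces to analyzing the asymptotic behavior of
\[
F(N) := \log\Gamma\!\left(\tfrac{N}{2}\right) - \log\Gamma\!\left(\tfrac{N-2s}{2}\right) - 2\log\Gamma\!\left(\tfrac{N+2s}{4}\right) + 2\log\Gamma\!\left(\tfrac{N-2s}{4}\right)
\]
as $N \to \infty$.

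The main tool will be Stirling's formula, in the form of the standard asymptotic $\log\Gamma(z+a) - \log\Gamma(z) = a\log z + O(1/z)$ as $z \to \infty$ (uniformly for $a$ in compact sets). Applying it with $z = (N-2s)/2$, $a = s$ gives
\[
\log\Gamma\!\left(\tfrac{N}{2}\right) - \log\Gamma\!\left(\tfrac{N-2s}{2}\right) = s\log\!\left(\tfrac{N-2s}{2}\right) + O(1/N) = s\log\!\left(\tfrac{N}{2}\right) + O(1/N),
\]
and with $z = (N-2s)/4$, $a = s$ gives
\[
\log\Gamma\!\left(\tfrac{N+2s}{4}\right) - \log\Gamma\!\left(\tfrac{N-2s}{4}\right) = s\log\!\left(\tfrac{N-2s}{4}\right) + O(1/N) = s\log\!\left(\tfrac{N}{4}\right) + O(1/N).
\]
Combining the two expansions yields
\[
F(N) = s\log\!\left(\tfrac{N}{2}\right) - 2s\log\!\left(\tfrac{N}{4}\right) + O(1/N) = -s\log N + 3s\log 2 + O(1/N),
\]
since $\log(N/2) - 2\log(N/4) = -\log N + 3\log 2$.

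Therefore $H_\ell(s,N) = \log(s+\ell/2) + \log\Gamma(s) - s\log N + 3s\log 2 + O(1/N) \to -\infty$ as $N \to \infty$ (here we use $s > 0$), and one may choose any integer $N_\ast \geq 2$ beyond which $H_\ell(s,N) < 0$. There is no serious obstacle: the argument is a direct application of Stirling, and the essential point is simply that the $\log\Gamma(N/2)$ term grows like $s\log(N/2)$ under the shift by $s$, which is dominated by the term $-2s\log(N/4)$ coming from the pair of gamma factors at scale $N/4$. The hypothesis $\ell > -2s$ is only used to ensure that the $N$-independent constant $\log(s+\ell/2)$ is finite.
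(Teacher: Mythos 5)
Your proof is correct and follows essentially the same route as the paper: both arguments reduce the lemma to the Stirling asymptotic showing that the gamma-quotient part of $H_\ell(s,N)$ behaves like $-s\log N + O(1)$ as $N\to\infty$, so that $H_\ell(s,N)\to-\infty$ and $N_\ast$ exists. The only difference is that the paper additionally establishes $\frac{\partial H_\ell}{\partial N}(s,N)<0$ on $[2,\infty)$ (its \eqref{eq:5.24}) before invoking the asymptotics; as you implicitly note, that monotonicity is not needed for the bare existence of $N_\ast$, so omitting it is harmless here.
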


\begin{proof}
	As in the proof of Lemma \ref{Lemma:5.4} (iv), we regard $N$ as a real variable. 
	Let $(\ell,s) \in (-2,\infty) \times (0,1)$ with $\ell > -2s$. 
	For $N \geq 2$, it follows from  \eqref{eq:5.13} that 
	\[
		\begin{aligned}
			\frac{\partial H_{\ell}}{\partial N} (s,N)
			 & = \frac{1}{2} \psi \left( \frac{N}{2} \right) - \frac{1}{2} \psi \left( \frac{N-2s}{2} \right)
			- \frac{1}{2} \psi \left( \frac{N+2s}{4} \right) + \frac{1}{2} \psi \left( \frac{N-2s}{4} \right)
			\\
			 & = \frac{1}{2} \sum_{n=0}^\infty 
			\left( - \frac{1}{n + \frac{N}{2}} + \frac{1}{n + \frac{N-2s}{2}} + \frac{1}{n+\frac{N+2s}{4}}
			- \frac{1}{n + \frac{N-2s}{4}} \right)
			\\
			 & = 
			\frac{1}{2} \sum_{n=0}^\infty 
			\left[\frac{s}{ \left( n + \frac{N}{2} \right) \left( n + \frac{N-2s}{2} \right) }
				- \frac{s}{ \left( n + \frac{N+2s}{4} \right) \left( n + \frac{N-2s}{4} \right) }
				\right]
			\\
			 & = \frac{s}{32} \sum_{n=0}^\infty 
			\frac{ - (3N^2 - 8Ns + 4s^2) - (8N - 16 s) n }
			{ \left( n + \frac{N}{2} \right) \left( n + \frac{N-2s}{2} \right) \left( n + \frac{N+2s}{4} \right) \left( n + \frac{N-2s}{4} \right)  }.
		\end{aligned}
	\]
	For $N \in [2,\infty)$, we have $3N^2 - 8Ns + 4 s^2 > 0$ and $ 8N - 16s > 0$, which yields 
	\begin{equation}\label{eq:5.24}
		\frac{\partial H_{\ell}}{\partial N} (s,N) < 0 \quad 
		\text{for each $\ell \in (-2,\infty)$, $s \in \left( \max \left\{ 0, -\frac{\ell}{2} \right\} , \  1 \right)$ and $N \in [2,\infty)$}. 
	\end{equation}

	By \eqref{eq:5.24}, to prove Lemma \ref{Lemma:5.5}, 
	it is enough to observe the asymptotic behavior of $H_{\ell}(s,N)$ as $N \to \infty$. 
	To this end, we shall use the following: (see \cite[p.10--p.11]{L-72})
	\[
		\log \Gamma (z) = z \log z - z - \frac{1}{2} \log z + O(1)\quad 
		\text{as $z \to \infty$}.
	\]
	Since $\log (1+t) = t +O(t^2)$ as $ t \searrow 0$, we have 
	\[
		\begin{aligned}
			\log \Gamma \left( \frac{N}{2} \right) - \log \Gamma \left( \frac{N-2s}{2} \right)
			 & = \frac{N}{2} \log \frac{N}{2} - \frac{N-2s}{2} \log \frac{N-2s}{2} + O(1)
			\\
			 & = \frac{N}{2} \log \frac{N}{N-2s} + s \log \frac{N-2s}{2} +O(1) 
			\\
			 & = s\log (N-2s) + O(1)
		\end{aligned}
	\]
	and 
	\[
		\begin{aligned}
			     & 2 \log \Gamma \left( \frac{N-2s}{4} \right) - 2 \log \Gamma \left( \frac{N+2s}{4} \right)
			\\
			= \  & 2 \left[ \frac{N-2s}{4} \log \frac{N-2s}{4} - \frac{N+2s}{4} \log \frac{N+2s}{4} \right] + O(1) 
			\\
			= \  & -2 s \log (N+2s) + O(1),
		\end{aligned}
	\]
	which gives 
	\[
		H_{\ell}(s,N) =s \log (N-2s)  - 2 s \log (N+2s) + O(1) \to - \infty \quad \text{as $N \to \infty$}.
	\]
	From this fact and \eqref{eq:5.24}, we observe that Lemma \ref{Lemma:5.5} holds. 
\end{proof}

\begin{remark}
	By the change of variables and Lemma \ref{Lemma:5.5}, 
	for any given $(\ell,s) \in (-2,\infty) \times (0,1)$ with $\ell > -2s$, 
	if $p$ and $N$ are sufficiently large, then $p$ is JL-supercritical. 
\end{remark}

%%%%%%%%%%%%%%%%%%%%%%%%%%%%%%%%%%%%%%%%%%%%%%%%%%%%%%%%%%
%%%%%%%%%%%%%%%%%%%%%%%%%%%%%%%%%%%%%%%%%%%%%%%%%%%%%%%%%%
%%%%%%%%%%%%%%%%%%%%%%%%%%%%%%%%%%%%%%%%%%%%%%%%%%%%%%%%%%

\subsection{Proof of Theorem \ref{Theorem:1.2}, Corollary \ref{Corollary:1.1} and Theorem \ref{Theorem:1.3}}\label{section:5.3}

%%%%%%%%%%%%%%%%%%%%%%%%%%%%%%%%%%%%%%%%%%%%%%%%%%%%%%%%%%
%%%%%%%%%%%%%%%%%%%%%%%%%%%%%%%%%%%%%%%%%%%%%%%%%%%%%%%%%%
%%%%%%%%%%%%%%%%%%%%%%%%%%%%%%%%%%%%%%%%%%%%%%%%%%%%%%%%%%

We prove Theorem \ref{Theorem:1.2}.

\begin{proof}[Proof of Theorem {\rm\ref{Theorem:1.2}}]
	(i) Let $0<s< \min \{1, N/2\}$, $-2 s < \ell \leq 0$. 
	Remark that the assertion in (i) is equivalent to the following claim: 
	either \eqref{align:5.8} holds for all $x \in [0,A_{N,s})$ or else 
	there exists an $x_{JL} \in (0,A_{N,s})$ such that 
	$h_\ell (x_{JL}) = 2 \left[ \log \Gamma (A_{N,s} + s ) - \log \Gamma (A_{N,s}) \right]$ and 
	\eqref{align:5.8} (resp. \eqref{align:5.9}) holds in $(0,x_{JL})$ (resp. in $(x_{JL}, A_{N,s})$). 
	By Lemmata \ref{Lemma:5.1} and \ref{Lemma:5.2}, it is easily seen that this claim follows and 
	(i) holds.

	(ii) Assume $\ell = 0$. When $N=1$ and $0<s<1/2$,
	Lemma \ref{Lemma:5.3}
	gives 
		\[
			h_{0}(A_{N,s}) > 2 \left[ \log \Gamma (A_{N,s} + s ) - \log \Gamma (A_{N,s}) \right],
		\]
	hence, Lemmata \ref{Lemma:5.1} and \ref{Lemma:5.2} yield
	\[ 
		h_{0}(x) > 2 \left[ \log \Gamma (A_{N,s} + s ) - \log \Gamma (A_{N,s}) \right] 
		\quad \text{for all $x \in [0,A_{N,s}]$}.
	\] 
	Therefore, when $\ell = 0$, $N=1$ and $s \in (0,1/2)$, 
	any $p \in (1,\infty)$ is JL-subcritical.

	When $2 \leq N \leq 7$, from Lemma \ref{Lemma:5.4} (i)--(iv), 
	it follows that 
	\[
		\frac{\partial H_{0}}{\partial s} (s,N) > 0 \quad \text{for each $s \in (0,1)$}, \quad 
		\lim_{s \searrow 0} H_{0}(s, N) = 0. 
	\]
	Thus, $H_{0}(s,N) > 0$ holds for every $2 \leq N \leq 7$ and $s \in (0,1)$. 
	Since $H_{0}(s,N) > 0$ is equivalent to 
	$h_{0}(A_{N,s}) > 2 \left[ \log \Gamma (A_{N,s} + s ) - \log \Gamma (A_{N,s}) \right]$, 
	the assertion for $2 \leq N \leq 7$ holds.

	Suppose $N=8,9$. 
	In this case, by Lemma \ref{Lemma:5.4} (i), (iii), (iv) and (v), there exists a unique $s_N \in (0,1)$ such that 
	$H_{0}(s,N) < 0 = H_{0}(s_N,N) < H_{0}(t,N)$ hold for $0<s<s_N<t<1$.
	By Lemma \ref{Lemma:5.2} and the fact
	\[
		\begin{aligned}
			&H_{0}(s,N) = h_0(A_{N,s}) - 2 \left[ \log \Gamma (A_{N,s} + s ) - \log \Gamma (A_{N,s}) \right],
			\\
			&h_0(0) - 2 \left[  \log \Gamma (A_{N,s} + s ) - \log \Gamma \left( A_{N,s} \right) \right] > 0
		\end{aligned}
	\]
	when $0<s<s_N$, there exists a unique  $x_{JL} \in (0,A_{N,s})$ such that 
	\begin{equation}\label{eq:5.25}
		h_{0}(x_{JL}) = 2 \left[ \log \Gamma (A_{N,s} + s ) - \log \Gamma (A_{N,s}) \right].
	\end{equation}
	On the other hand, for every $s \in [s_N,1)$, we get  
	$h_{0}(x) > 2 \left[ \log \Gamma (A_{N,s} + s ) - \log \Gamma (A_{N,s}) \right]$
	in $[0,A_{N,s})$. Therefore, the assertion for $N=8,9$ holds.

	Finally, when $N \geq 10$, Lemma \ref{Lemma:5.4} (i), (iii), (iv) and (v) yield 
	$H_{0}(s,N) < 0$
	for all $s \in (0,1)$. Hence, by Lemma \ref{Lemma:5.2}, 
	we may find a unique $x_{JL} \in (0,A_{N,s})$ satisfying \eqref{eq:5.25}. 
	Thus, the assertion for $N \geq 10$ holds.

	(iii) According to Lemmata \ref{Lemma:5.1}--\ref{Lemma:5.3}, 
	we may prove the assertion for $N=1$ as in the case (ii). Therefore, we omit the details for $N=1$.
	
	 Suppose $2 \leq N < 10 + 4\ell $. From Lemma \ref{Lemma:5.4} (i) and (v), 
	 we find $s_{N,\ell}$, $\tilde{s}_{N,\ell}\in ( - \ell/2 , 1 )$ with $s_{N,\ell}\le\tilde{s}_{N,\ell}$ 
	 such that $H_{\ell}(s,N)<0$ for $s\in (- \ell/2, s_{N,\ell})$ and $H_{\ell}(s,N)>0$ for $s\in (\tilde{s}_{N,\ell},1)$. 
	 In particular, when $ 2 \leq N \leq 6$, Lemma \ref{Lemma:5.4} (ii) yields $s_{N,\ell}=\tilde{s}_{N,\ell}$. 
	 Then, by the same argument as in the case $N=8, 9$ of (ii), the assertion for $2 \leq N < 10 + 4\ell $ holds.
	
	When $N \geq 10+ 4\ell$, applying Lemma \ref{Lemma:5.4} (ii) and (v), 
	we derive $H_{\ell}(s,N)<0$ for $s\in (-\ell/2,1)$ and $3 \leq N \leq 6$. 
	Then, it follows from \eqref{eq:5.24} that $H_{\ell}(s,N)<0$ for $s\in (-\ell/2,1)$ and $N \geq 3$. 
	Hence, by the same method as in the case $N \geq 10$ of (ii), the assertion for $N \geq 10+ 4\ell$ holds.
\end{proof}

	Next we show Corollary \ref{Corollary:1.1}:

	\begin{proof}[Proof of Corollary {\rm\ref{Corollary:1.1}}]
We only treat $p > p_{S}(N,\ell)$.

(i) In this case, by Theorem \ref{Theorem:1.2} (ii) and \eqref{eq:5.3}, we know that $g_{0}(x) > \wt{M}_{N,s}$ 
for all $x \in [0,A_{N,s})$. From $\ell > 0$ and \eqref{eq:5.6}, we observe that 
$g_{\ell} (x) > g_{0} (x) > \wt{M}_{N,s}$ for any $x \in [0,A_{N,s})$. 
Hence, each $p > p_{S}(N,\ell)$ is JL-subcritical.

(ii) Since $-2s < \ell < 0$, \eqref{eq:5.6} gives $g_{\ell} (x) < g_{0} (x) $ for each $x \in [0,A_{N,s})$. 
By $s \in (0,s_N)$ and Theorem \ref{Theorem:1.2}, there exists a unique $x_0 \in (0,A_{N,s})$ such that 
$g_{0} (x_0) = \wt{M}_{N,s} $. Thus, $g_{\ell} (x_0) < \wt{M}_{N,s}$ and Theorem \ref{Theorem:1.2} 
yield $p_{JL} < \infty$. This fact implies that we may choose $s_{N,\ell}$ in Theorem \ref{Theorem:1.2} (iii) 
to satisfy $s_{N,\ell} \geq s_N$.
	\end{proof}

	Finally, we prove Theorem \ref{Theorem:1.3}:

	\begin{proof}[Proof of Theorem {\rm\ref{Theorem:1.3}}]
We remark that due to $N \geq 8$ and $0<s<1$, $A_{N,s} >1/2$ holds. 
Moreover, $g_0(A_{N,s}) < \wt{M}_{N,s}$ holds provided $N=8,9$ with $0<s<s_N$ or $10 \leq N$ with $0<s<1$. 
We shall find an $\wt{s}_N \in (0,1)$ with $\wt{s}_N \leq s_N$ when $N=8,9$ such that if $s \in (0,\wt{s}_N)$, then 
there exists an $x_1 = x_1(N,s) \in (1/2, A_{N,s})$ such that 
	\[
		g_0(x_1) < \wt{M}_{N,s}, \quad 
		\ell_{1,s} := \frac{2s}{ M_{0,s} }  \left( \wt{M}_{N,s} - M_{0,s} \right), \quad 
		\ell_{2,s} :=  \frac{2}{ M_{1,s} } \left( A_{N,s} - x_1 + s \right) \left( \wt{M}_{N,s} - M_{1,s} \right)
	\]
satisfy Theorem \ref{Theorem:1.3} (i) and (ii) with $\ell_{1,s} < \ell_{2,s}$ where 
	\[
		M_{0,s} := g_0(A_{N,s}), \quad M_{1,s} := g_0(x_1).
	\]

	We remark that \eqref{eq:5.6} gives
	\begin{equation}\label{eq:5.26}
		\begin{aligned}
			\wt{M}_{N,s} \geq g_\ell (x_1) = \frac{A_{N,s} - x_1 + s + \frac{\ell}{2} }{A_{N,s} - x_1 + s } M_{1,s} 
			\quad 
			&\Leftrightarrow \quad \frac{\ell}{2} M_{1,s} \leq \left( A_{N,s} - x_1 + s \right) \left( \wt{M}_{N,s} - M_{1,s} \right)
			\\
			&\Leftrightarrow \quad \ell \leq \ell_{2,s}
		\end{aligned}
	\end{equation}
and 
	\begin{equation}\label{eq:5.27}
		\wt{M}_{N,s} \geq g_{\ell} (A_{N,s}) = \frac{s+\frac{\ell}{2}}{s} M_{0,s} 
		\quad \Leftrightarrow \quad 
		\frac{\ell}{2} M_{0,s} \leq s \left( \wt{M}_{N,s} - M_{0,s} \right) 
		\quad \Leftrightarrow \quad 
		\ell \leq \ell_{1,s}.
	\end{equation}
Assuming $\ell_{1,s} < \ell_{2,s}$ for a while, we check Theorem \ref{Theorem:1.3} (i) and (ii). 
Indeed, if $s \in (0,\wt{s}_N)$ and $\ell \in (0, \ell_{1,s} )$, \eqref{eq:5.27} and Lemma \ref{Lemma:5.1} 
yield $g_\ell (A_{N,s}) < \wt{M}_{N,s} $ and $g_\ell(x) > \wt{M}_{N,s}$ for sufficiently small $x >0$. 
Therefore, Theorem \ref{Theorem:1.3} (i) holds by virtue of \eqref{eq:5.3}. 
On the other hand, if $s \in (0,\wt{s}_N)$ and $\ell \in (\ell_{1,s} , \ell_{2,s})$, then 
\eqref{eq:5.26} and \eqref{eq:5.27} give $g_\ell (x_1) < \wt{M}_{N,s} < g_\ell (A_{N,s})$. 
By Lemma \ref{Lemma:5.1} and \eqref{eq:5.3}, Theorem \ref{Theorem:1.3} (ii) holds.

	Now let us show the existence of $\wt{s}_N$ and $\ell_{1,s} < \ell_{2,s}$ for all $s \in (0,\wt{s}_N)$. 
Set 
	\[
		L(x) := \frac{2}{g_0 (x)} \left( A_{N,s} - x + s  \right) \left( \wt{M}_{N,s} - g_{0} (x) \right).
	\]
Then $L(A_{N,s}) = \ell_{1,s}$ and $L(x_1) = \ell_{2,s}$. 
We shall prove 
$L'(A_{N,s}) < 0$ for each $s \in (0,\wt{s}_N) $. 
By 
	\[
		\begin{aligned}
			L'(A_{N,s}) 
			&= - \frac{2s}{M_{0,s}^2} \left( \wt{M}_{N,s} - M_{0,s} \right) 
			g_0'(A_{N,s}) - \frac{2}{M_{0,s}} \left( \wt{M}_{N,s} - M_{0,s} \right) 
			- \frac{2s}{M_{0,s}} g_0'(A_{N,s})
			\\
			&= \frac{2}{M_{0,s}^2} 
			\left[ - M_{0,s} \left( \wt{M}_{N,s} - M_{0,s} \right) - s \wt{M}_{N,s} g_0'(A_{N,s}) \right].
		\end{aligned}
	\]
From \eqref{eq:5.3} and \eqref{eq:5.5}, we have $\wt{M}_{N,s} \to 1$, $M_{0,s} = g_0(A_{N,s}) \to 1$ as $s \searrow 0$. 
Therefore, 
	\begin{equation}\label{eq:5.28}
		\lim_{s \searrow 0} \frac{L'(A_{N,s})}{s} 
		= -2 \left[  \frac{\partial \wt{M}_{N,s}}{\partial s} \Big|_{s=0} - \frac{\partial M_{0,s}}{\partial s} \Big|_{s=0} 
		+ \lim_{s \searrow 0} g_0'(A_{N,s}) \right].
	\end{equation}

	It follows from \eqref{eq:5.3} that 
	\[
		\frac{\partial \wt{M}_{N,s}}{\partial s} \Big|_{s=0}
		= \wt{M}_{N,s} \frac{\partial }{\partial s} \log \wt{M}_{N,s} \Big|_{s=0} 
		= 2 \psi \left( \frac{N}{4} \right). 
	\]
On the other hand, by \eqref{eq:5.14}, $h_0'(A_{N,s}) \to 0$ as $s \searrow 0$. 
Since $g_0'(A_{N,s}) = g_0(A_{N,s}) h_0'(A_{N,s})$, we see that 
	\[
		\lim_{s \searrow 0} g_0'(A_{N,s}) = 0.
	\]
Moreover, 
	\[
		\frac{\partial M_{0,s}}{\partial s} \Big|_{s=0} = 
		\frac{\partial}{\partial s}  \left( \frac{ \Gamma (1+s) \Gamma (2A_{N,s} + s ) }{\Gamma ( 2 A_{N,s} )} \right) 
		 \Big|_{s=0} = \psi(1) + \psi \left( \frac{N}{2} \right).
	\]
Therefore, \eqref{eq:5.28} yields 
	\[
		\lim_{s \searrow 0} \frac{L'(A_{N,s})}{s} 
		= -2 \left[ 2 \psi \left( \frac{N}{4} \right) - \psi (1) - \psi \left( \frac{N}{2} \right) \right].
	\]
Since \eqref{eq:5.12} gives  
	\[
		\frac{d}{dN}\left[ 2 \psi \left( \frac{N}{4} \right) - \psi (1) - \psi \left( \frac{N}{2} \right) \right] 
		= \frac{1}{2} \left[ \psi' \left( \frac{N}{4} \right) - \psi' \left( \frac{N}{2} \right) \right] > 0,
	\]
by $N \geq 8$, we observe that 
	\[
		\lim_{s \searrow 0} \frac{L'(A_{N,s})}{s} 
		\leq -2 \left[ 2 \psi (2) - \psi (1) - \psi (4) \right] = - \frac{1}{3} < 0.
	\]
Hence, there exists an $\wt{s}_N>0$ such that $L'(A_{N,s}) < 0$ for any $s \in (0,\wt{s}_N)$ and 
$\ell_{1,s} < \ell_{2,s}$ follows if we choose $x_1$ close to $A_{N,s}$.

	Finally, we prove the existence of $(\ell_{3,s})$. 
By the assumption $N=8,9$ with $0<s<s_N$ or $N \geq 10$, \eqref{eq:5.3}, Lemma \ref{Lemma:5.2} (ii) and 
Theorem \ref{Theorem:1.2} (ii), there exists a unique $\wt{x}_{N,s} \in (0,A_{N,s})$ such that 
$g_0(\wt{x}_{N,s}  ) = \wt{M}_{N,s}$. 
For any $\ell>0$, \eqref{eq:5.6} and Lemma \ref{Lemma:5.2} (ii) yield 
	\[
		\begin{aligned}
			\min_{\wt{x}_{N,s} \leq x \leq A_{N,s} } g_{\ell} (x) 
			&> 
			\left[ \min_{ \wt{x}_{N,s} \leq x \leq A_{N,s}  } \frac{A_{N,s} - x + s + \frac{\ell}{2} }{A_{N,s} - x + s} \right] 
			\left[ \min_{ \wt{x}_{N,s} \leq x \leq A_{N,s} }  g_0(x) \right]
			\\
			&= \frac{A_{N,s} - \wt{x}_{N,s} + s + \frac{\ell}{2} }{A_{N,s} - \wt{x}_{N,s} + s } M_{0,s}.
		\end{aligned}
	\]
We set 
	\[
		\ell_{3,s} := \frac{2}{M_{0,s}} \left( A_{N,s} - \wt{x}_{N,s} + s \right) \left( \wt{M}_{N,s} - M_{0,s} \right).
	\]
Then it is easily seen that 
	\[
		\begin{aligned}
			\frac{A_{N,s} - \wt{x}_{N,s} + s + \frac{\ell}{2} }{A_{N,s} - \wt{x}_{N,s} + s } M_{0,s} 
			\geq \wt{M}_{N,s}
			\quad &\Leftrightarrow \quad 
			\frac{\ell}{2} M_{0,s} \geq \left( A_{N,s} - \wt{x}_{N,s} + s \right) \left( \wt{M}_{N,s} - M_{0,s} \right)
			\\
			&\Leftrightarrow \quad 
			\ell \geq \ell_{3,s}.
		\end{aligned}
	\]
Since $g_{\ell} (x) > g_0(x)$ for any $x \in [0, \wt{x}_{N,s} ]$ and $\ell>0$, if $\ell \geq \ell_{3,s}$, then 
	\[
		\min_{0 \leq x \leq A_{N,s}}  g_{\ell} (x) > \wt{M}_{N,s}.
	\]
This completes the proof. 
	\end{proof}

\section*{Acknowledgments}
The first author (S.H.) was supported by JSPS KAKENHI Grant Numbers JP 
20J01191.
The second author (N.I.) was supported by JSPS KAKENHI Grant Numbers JP 
17H02851, 19H01797 and 19K03590. 
The third author (T.K.) was supported by JSPS KAKENHI Grant Numbers JP 18H01126, 
19H05599 and 20K03689.

%%%%%%%%%%%%%%%%%%%%%%%%%%%%%%%%%%%%%%%%%%%%%%%%%%%%%%%%
%%%%%%%%%%%%%%%%%%%%%%%%%%%%%%%%%%%%%%%%%%%%%%%%%%%%%%%%
%%%%%%%%%%%%%%%%%%%%%%%%%%%%%%%%%%%%%%%%%%%%%%%%%%%%%%%%
\appendix

\section{Appendix A}\label{section:A}

In this Appendix, we provide the details of the proof that $\cO_+$ is bounded in the proof of Lemma \ref{Lemma:4.4}.
We recall that there exists a $\gamma_0 \in C( [0,1] , \ov{\HS} )$ such that 
\[
	\gamma_0 (0) = (x_{n_0}, 0), \quad 
	\gamma_0(1) = (x_{n_0+1}, 0), \quad 
	\gamma_0(\tau) \in \cO_{k,-} \quad \text{for each $\tau \in (0,1)$},
\]
where $V(X) = U_2(X) - U_1(X)$, 
$V(x_{n_0},0), V(x_{n_0+1},0) < 0 < V(y_0,0)$ and $|x_{n_0}| < |y_0| < |x_{n_0+1}|$. 
In what follows, by $V(x,t) = V(|x|,t)$, 
set $\gamma_1(\tau) := ( | \gamma_{0,x}(\tau) | , \gamma_{0,t} (\tau) )$
where $\gamma_0(\tau) = (\gamma_{0,x} (\tau) , \gamma_{0,t} (\tau) ) \in \RN \times [0,\infty)$. 
Then, 
\[
	\begin{aligned}
		&\gamma_1 \in C \left( [0,1] , [0,\infty)^2 \right), \quad 
		\gamma_1 (0) = (|x_{n_0}|, 0), \quad 
		\gamma_1 (1) = ( |x_{n_0+1}|, 0 ), 
		\\
		& 
		V( \gamma_1(\tau) ) < 0 \quad \text{for each $\tau \in [0,1]$}.
	\end{aligned}
\]

To proceed, we choose an $R_0>0$ so that 
\begin{equation}\label{eq:A.1}
	\max_{\tau \in [0,1]} \left\{ \gamma_{1,|x|} (\tau) + \gamma_{1,t} (\tau) \right\} < R_0,
\end{equation}
where $\gamma_1 (\tau) = ( \gamma_{1,|x|}(\tau) , \gamma_{1,t} (\tau) )$. 
Now we prove the following result:

\begin{lemma}\label{Lemma:A.1}
	If $\gamma_2 = (\gamma_{2,|x|}, \gamma_{2,t})  \in C \left( [0,1] , [0,\infty)^2 \right)$
	satisfies 
	\begin{equation}\label{eq:A.2}
		\gamma_2(0) = (|y_0|,0), \quad 
		\gamma_{2,t} (\tau) > 0 \quad \text{for each $0 < \tau \leq 1$}, \quad 
		R_0 < \gamma_{2,|x|} (1) + \gamma_{2,t} (1),
	\end{equation}
	then there exist $0 < \tau_1, \tau_2 < 1$ such that 
	$\gamma_1(\tau_1) = \gamma_{2} (\tau_2)$. 
\end{lemma}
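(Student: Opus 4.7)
The plan is a winding-number argument obtained by reflecting both paths across the $r$-axis. Extend $\gamma_1$ to a closed loop $\wt{\gamma}_1 : [0,2] \to \R^2$ by
\[
\wt{\gamma}_1(\tau) := \begin{cases} \gamma_1(\tau), & 0 \le \tau \le 1, \\ \bigl(\gamma_{1,|x|}(2-\tau),\, -\gamma_{1,t}(2-\tau)\bigr), & 1 \le \tau \le 2, \end{cases}
\]
and extend $\gamma_2$ analogously to $\wt{\gamma}_2 : [-1,1] \to \R^2$. Set $p := (|y_0|, 0)$. First I would observe that $\wt{\gamma}_1$ avoids $p$: on $\gamma_1$ we have $V < 0$ while $V(p) > 0$, and the reflected half only meets the $r$-axis at points $(\gamma_{1,|x|}(\tau_0),0)$ with $\gamma_{1,t}(\tau_0) = 0$, which inherit the same separation from $p$.

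Next I would compute the winding number $n(\wt{\gamma}_1, p)$. Since $\wt{\gamma}_1(\tau) - p$ lies in the closed upper half plane for $\tau \in [0,1]$ and in the closed lower half plane for $\tau \in [1,2]$, and avoids $0$, one can lift $\arg$ to take values in $[0,\pi]$ on $[0,1]$ and in $[-\pi,0]$ on $[1,2]$. Using $|x_{n_0}| < |y_0| < |x_{n_0+1}|$, the boundary evaluations give $\pi$ at $\tau = 0$, $0$ at $\tau = 1$, and $-\pi$ at $\tau = 2$, so $n(\wt{\gamma}_1, p) = -1$.

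Finally I would suppose, for contradiction, that $\gamma_1(\tau_1) \neq \gamma_2(\tau_2)$ for all $\tau_1, \tau_2 \in (0,1)$. A short case check rules out every possible intersection of $\wt{\gamma}_1$ and $\wt{\gamma}_2$: pairings involving a reflected piece force both second coordinates to vanish simultaneously, which (using $\gamma_{2,t}(\sigma) > 0$ for $\sigma > 0$) reduces to the equality $\gamma_1(\tau) = p$ that is already excluded; endpoint and norm cases use $|y_0| \notin \{|x_{n_0}|, |x_{n_0+1}|\}$ together with \eqref{eq:A.1}--\eqref{eq:A.2}. Hence $\sigma \mapsto n(\wt{\gamma}_1, \wt{\gamma}_2(\sigma))$ is a continuous integer-valued function on $[-1,1]$, and therefore constant. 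At $\sigma = \pm 1$ the point $\wt{\gamma}_2(\sigma)$ lies outside the $\ell^1$-ball of radius $R_0$, which by \eqref{eq:A.1} contains the image of $\wt{\gamma}_1$, so the winding number there is $0$; at $\sigma = 0$ it equals $-1$. This contradiction produces the desired interior intersection.

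The main obstacle I anticipate is the winding-number computation itself: $\gamma_1$ is only continuous and may re-enter the $r$-axis at points other than its endpoints, which would a priori allow the argument lifting to jump between $0$ and $\pi$. The crucial observation is that whenever $\gamma_{1,t}(\tau) = 0$ one necessarily has $\gamma_{1,|x|}(\tau) \neq |y_0|$, so $\wt{\gamma}_1(\tau) - p$ remains in the closed upper half plane minus the origin on $[0,1]$ (a set on which $\arg$ admits a continuous single-valued branch into $[0,\pi]$); the same applies to $[1,2]$ with the lower half plane. This pins down the lift and reduces the computation to the three boundary values above.
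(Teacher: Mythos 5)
Your argument is correct, and it reaches the conclusion by a genuinely different route from the paper. The paper works with the two-variable map $\eta(\tau_1,\tau_2):=\gamma_2(\tau_2)-\gamma_1(\tau_1)$ on the square $[0,1]^2$ and shows $\deg(\eta,[0,1]^2,(0,0))=-1$ by a linear homotopy to an explicit gradient field $F$, the substance being the sign analysis of $\eta_1,\eta_2$ on $\partial[0,1]^2$ (with an $\varepsilon_0$-neighborhood of the corners to handle the places where $\eta_2$ vanishes). You instead close $\gamma_1$ up by reflecting it across the $r$-axis, compute the winding number of the resulting loop about $(|y_0|,0)$ by lifting the argument separately on the closed upper and lower half-planes (this is exactly where $|x_{n_0}|<|y_0|<|x_{n_0+1}|$ enters, giving the values $\pi$, $0$, $-\pi$ and hence winding number $-1$), and then use continuity of the winding number in the base point along $\gamma_2$. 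The two computations detect the same linking of the two arcs, but yours trades the two-dimensional degree and the corner bookkeeping for a one-dimensional winding number plus the reflection trick; the price is the disjointness case analysis, which you handle correctly: intersections involving a reflected piece force both $t$-coordinates to vanish, hence $\sigma=0$ and $\gamma_1(\tau)=(|y_0|,0)$, which is excluded (either by the sign of $V$, as you argue, or simply because $\gamma_{1,t}>0$ on $(0,1)$ and $|y_0|\notin\{|x_{n_0}|,|x_{n_0+1}|\}$); the endpoint $\sigma=1$ is excluded by \eqref{eq:A.1}--\eqref{eq:A.2}, and since the image of $\widetilde{\gamma}_1$ lies in the convex $\ell^1$-ball of radius $R_0$ while $\gamma_2(1)$ does not, the winding number there is $0$, yielding the contradiction. (Extending $\gamma_2$ to $[-1,1]$ is harmless but unnecessary; restricting to $\sigma\in[0,1]$ already suffices.) Both proofs are complete; yours is arguably slightly more elementary in that it only uses winding numbers of planar loops rather than Brouwer degree on a square.
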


As a consequence of Lemma \ref{Lemma:A.1}, 
the component $\cO_+\subset \HS$ of $[V<0]$ satisfying $(y_0,0) \in \ov{\cO_{+}}$ becomes bounded. 
In fact, if $\cO_+$ is unbounded, then from the fact that $\cO_{+}$ is open and connected, 
we may find $\gamma_3 \in C([0,1], \ov{\HS})$ such that 
\[
	\gamma_3(0) = (y_0,0), \quad 
	\gamma_{3,\tau} (\tau) \in \cO_+ \quad \text{for each $0 < \tau \leq 1$}, \quad 
	2 R_0 < |\gamma_3 (1) |. \quad 
\]
However, by setting $\gamma_2(\tau) := ( |\gamma_{3,x}(\tau) |, \gamma_{3,t} (\tau) )$, 
$\gamma_2$ satisfies \eqref{eq:A.2} and 
there exist $0<\tau_1 , \tau_2 < 1$ so that $\gamma_1 (\tau_1) = \gamma_2(\tau_2) \in \cO_{+} \cap \cO_{k,-}$. 
However, this is a contradiction since $V<0$ on $\cO_{k,-}$ and $V > 0$ on $\cO_{+}$. 
Therefore, $\cO_{+}$ must be bounded.

What remains to prove is Lemma \ref{Lemma:A.1}:

\begin{proof}[Proof of Lemma {\rm\ref{Lemma:A.1}}]
	From \eqref{eq:A.1}, we observe that $\gamma_1$ does not intersect with $\gamma_2$ in the region 
	\[
		\Omega := \Set{ (z,t) \in [0,\infty)^2 | z+t > R_0 }.
	\]
	Since $\Omega$ is path-connected, without loss of generality, 
	we may suppose that $\gamma_2(1) = (R_0,R_0)$.

	Next, consider a function $\eta: [0,1]^2 \to \R^2$ defined by 
	\[
		\eta(\tau_1 , \tau_2) = 
		\begin{pmatrix}
			\eta_1 (\tau_1 , \tau_2) \\
			\eta_2 (\tau_1, \tau_2)
		\end{pmatrix}
		:= \gamma_2 (\tau_2) - \gamma_1 (\tau_1)
		= \begin{pmatrix}
			\gamma_{2,|x|} (\tau_2) - \gamma_{1,|x|} (\tau_1) \\
			\gamma_{2,t} (\tau_2) - \gamma_{1,t} (\tau_1)
		\end{pmatrix}.
	\]
	To prove Lemma \ref{Lemma:A.1}, it is enough to find $(\tau_1,\tau_2) \in (0,1)^2$ so that 
	$\eta(\tau_1,\tau_2) = (0,0)$. To this end, we shall show 
	\begin{equation}\label{eq:A.3}
		\deg \left( \eta, [0,1]^2, (0,0) \right) \neq 0.
	\end{equation}

	To see \eqref{eq:A.3}, we will exploit the homotopy invariance of degree 
	and observe $\eta$ on $\partial [0,1]^2$. 
When $(\tau_1 , \tau_2) = (0,\tau_2)$ with $0<\tau_2 \leq 1$, 
	recalling $\gamma_{2,t} (\tau) > 0$ for $ 0 < \tau \leq 1$
	and $\gamma_{1,t} (0) = 0$, we have 
	\begin{equation}\label{eq:A.4}
		\eta_2 (0,\tau_2) = \gamma_{2,t}(\tau_2) > 0 \quad \text{for every $0 < \tau_2 \leq 1$}.
	\end{equation}
	Similarly, when $(\tau_1, \tau_2) = (\tau_1,0)$ with $0 < \tau_1 < 1$, 
	it follows that 
	\begin{equation}\label{eq:A.5}
		\eta_2(\tau_1,0) = - \gamma_{1,t} (\tau_1) < 0 \quad \text{for every $0 < \tau_1 < 1$}.
	\end{equation}
	On the other hand, since $\gamma_{1,t}(1) = 0$ and $\gamma_{2,t} (1) =R_0$, 
	it follows that 
	\begin{equation}\label{eq:A.6}
		\begin{aligned}
			 & \eta_2(1,\tau_2) = \gamma_{2,t} (\tau_2) > 0       &          & \text{for every $0<\tau_2 \leq 1$},
			\\
			 & \eta_2(\tau_1,1) = R_0 - \gamma_{1,t} (\tau_1) > 0
			 &                                                    & \text{for every $0 \leq \tau_1 \leq 1$}. 
		\end{aligned}
	\end{equation}
	Next, by $\eta (0,0) = ( |y_0| - |x_{n_0}| , 0 )$, $\eta(1,0) = ( |y_0| - |x_{n_0+1}| , 0 )$
	and the continuity of $\gamma_i$, we may find $ 0 < \e_0 < 1/4$ satisfying 
	\begin{equation}\label{eq:A.7}
		\begin{aligned}
			 & 0 \leq \tau_1, \tau_2 \leq \e_0       &                              & \Rightarrow 
			 &                                       & \eta_1(\tau_1,\tau_2) > 0, 
			\\
			 & 0 \leq 1 - \tau_1 , \tau_2 \leq \e_0 
			 &                                       & \Rightarrow 
			 &                                       & \eta_1 (\tau_1,\tau_2) < 0. 
		\end{aligned}
	\end{equation}
	With this choice of $\e_0$, we define $F$ by 
	\[
		F(\tau_1 , \tau_2) 
		= \begin{pmatrix}
			F_1 (\tau_1, \tau_2) \\ F_2(\tau_1,\tau_2)
		\end{pmatrix} := \nabla 
		\left[ - \left( \tau_1 - \frac{1}{2} \right)^2 + \left( \tau_2 - \frac{\e_0}{2} \right)^2 \right]
		= \begin{pmatrix}
			-2 \tau_1 + 1 \\ 2 \tau_2 - \e_0
		\end{pmatrix}.
	\]
	Then it follows that 
	\begin{equation}\label{eq:A.8}
		\begin{aligned}
			 & 0 \leq \tau_1 \leq \e_0     &  & \Rightarrow &  & F_1(\tau_1,\tau_2) = 1 - 2 \tau_1 > 0,
			\\
			 & 1 - \e_0 \leq \tau_1 \leq 1 &  & \Rightarrow &  & F_1(\tau_1,\tau_2) = 1 - 2 \tau_1 < 0,
			\\
			 & \e_0 \leq \tau_2 \leq 1     &  & \Rightarrow &  & F_2(0,\tau_2) = F_2(1,\tau_2) = 2 \tau_2 - \e_0 > 0,
			\\
			 & 0 \leq \tau_1 \leq 1        &  & \Rightarrow &  & F_2(\tau_1,0) = - \e_0 < 0,
			\\
			 & 0 \leq \tau_1 \leq 1        &  & \Rightarrow &  & F_2 (\tau_1,1) = 2 - \e_0 > 0.
		\end{aligned}
	\end{equation}

	Finally, we define 
	\[
		H_\theta (\tau_1, \tau_2) := \theta \eta(\tau_1,\tau_2) + (1-\theta) F(\tau_1,\tau_2).
	\]
	By \eqref{eq:A.4}--\eqref{eq:A.8}, it is easily seen that $H_\theta(\tau_1, \tau_2) \neq (0,0)$
	for all $(\tau_1,\tau_2) \in \partial [0,1]^2$ and $\theta \in [0,1]$. 
	Hence, the homotopy invariance of degree implies 
	\[
		\deg \left( \eta, [0,1]^2 , (0,0) \right) = 
		\deg \left( F , [0,1]^2 , (0,0) \right) = -1.
	\]
	Thus, \eqref{eq:A.3} holds and we complete the proof. 
\end{proof}

%%%%%%%%%%%%%%%%%%%%%%%%%%%%%%%%%%%%%%%%%%%%%%%%%%%%%%%%
%%%%%%%%%%%%%%%%%%%%%%%%%%%%%%%%%%%%%%%%%%%%%%%%%%%%%%%%
%%%%%%%%%%%%%%%%%%%%%%%%%%%%%%%%%%%%%%%%%%%%%%%%%%%%%%%%

%%%%%%%%%%%%%%%%%%%%%%%%%%%%%%%%%%%%%%%%%%%%%%%%%%%%%%%%
%%%%%%%%%%%%%%%%%%%%%%%%%%%%%%%%%%%%%%%%%%%%%%%%%%%%%%%%
%%%%%%%%%%%%%%%%%%%%%%%%%%%%%%%%%%%%%%%%%%%%%%%%%%%%%%%%

\section{Appendix B}\label{section:B}

Here we prove some technical lemmata. 

\begin{lemma}\label{Lemma:B.1}
	Let $\varphi \in W^{2,\infty} ((0,\infty))$ with $\varphi(0) = 0$
	and $u \in C^\infty_c(\RN)$ with $u \geq 0$ in $\RN$. 
	Assume that $\varphi '(t) > 0$ and $\varphi '$ is nonincreasing in $(0,\infty)$. Then 
	\begin{equation}\label{eq:B.1}
		\left( - \Delta \right)^s \left( \varphi (u) \right) (x)
		\geq \varphi' \left( u(x) \right) \left( - \Delta \right)^s u(x) \quad 
		\text{for each $x \in \RN$.}
	\end{equation}
\end{lemma}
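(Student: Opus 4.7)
The plan is to reduce the operator inequality to the classical pointwise concavity inequality for $\varphi$, and then integrate. The key observation is that since $\varphi'$ is nonincreasing and positive on $(0,\infty)$, and $\varphi(0)=0$, the function $\varphi$ extends continuously to $[0,\infty)$ as a concave function with $\varphi'(0+) \in (0,\infty)$ (finiteness follows from $\varphi' \in L^\infty$, which is a consequence of $\varphi \in W^{2,\infty}((0,\infty))$). Concavity yields, for all $a,b \in [0,\infty)$,
\[
\varphi(a) - \varphi(b) \ \geq\ \varphi'(a)\,(a-b),
\]
where at $a=0$ we read $\varphi'(a)$ as $\varphi'(0+)$. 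This is the pointwise engine of the whole argument.

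First, since $u \in C^\infty_c(\RN)$ and $\varphi \in W^{2,\infty}((0,\infty))$ with $\varphi(0)=0$, one checks that $w := \varphi(u)$ lies in $C^{1,1}_c(\RN)$, so $(-\Delta)^s w(x)$ and $(-\Delta)^s u(x)$ are both well-defined by the symmetric representation
\[
(-\Delta)^s f(x) \ =\ \frac{C_{N,s}}{2} \int_{\RN} \frac{2 f(x) - f(x+z) - f(x-z)}{|z|^{N+2s}}\, dz,
\]
whose integrand is bounded near $z=0$ by $\|D^2 f\|_{L^\infty}|z|^{2-N-2s}$ (integrable against $|z|^{N-1}$ at the origin for all $s\in(0,1)$) and decays at infinity thanks to compact support. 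This bypasses any principal value issue.

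Second, apply the concavity inequality with $a := u(x)$ and $b := u(x\pm z)$ (both are nonnegative since $u\geq 0$). This gives two scalar inequalities which, upon adding, yield
\[
2\varphi(u(x)) - \varphi(u(x+z)) - \varphi(u(x-z)) \ \geq\ \varphi'(u(x))\bigl(2u(x) - u(x+z) - u(x-z)\bigr)
\]
for every $z \in \RN$. Dividing by $|z|^{N+2s}$, integrating over $\RN$ (the integrals converge by the $C^{1,1}$-regularity argument above), and multiplying by $C_{N,s}/2 > 0$ yields exactly \eqref{eq:B.1}, since $\varphi'(u(x))$ is a constant with respect to $z$ and can be pulled outside the integral.

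The only subtle point is the case $u(x) = 0$: then $\varphi'(u(x))$ should be interpreted as $\varphi'(0+)$, which is finite and positive as noted above, and the concavity inequality extends to this limiting case by continuity. This is the main technical obstacle, but it is handled by the finiteness of $\varphi'(0+)$ inherited from $\varphi \in W^{2,\infty}$. No further structure of the fractional Laplacian is needed beyond the symmetric integral representation, which is why the proof reduces essentially to the classical scalar concavity inequality.
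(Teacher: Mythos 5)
Your proof is correct and rests on the same key idea as the paper's: the pointwise concavity inequality $\varphi(a)-\varphi(b)\geq\varphi'(a)(a-b)$ (equivalently, the monotonicity of $\varphi'$) applied inside the integral representation of $(-\Delta)^s$. The paper uses the one-sided principal-value form together with the mean-value identity $\varphi(u(y))-\varphi(u(x))=\int_0^1\varphi'(u(x)+\theta(u(y)-u(x)))\,d\theta\,(u(y)-u(x))$, whereas you use the symmetric second-difference form and the tangent-line inequality, but these are merely cosmetic variants of the same argument.
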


\begin{proof}
	By $\varphi(u) \in W^{2,\infty} (\RN)$ and the fact 
	$\supp \varphi (u)$ is compact, we have 
	\[
		\left(  - \Delta  \right)^s \left( \varphi (u) \right) (x)
		= C_{N,s} \lim_{\e \to 0} \int_{|y-x| \geq \e}
		\frac{ \varphi(u(x)) - \varphi(u(y)) }{|x-y|^{N+2s}} \, dy.
	\]
	Since 
	\[
		\varphi (u(y)) - \varphi(u(x)) = \int_{0}^1 \varphi' \left( u(x) + \theta (u(y) - u(x) ) \right)  d \theta  
		\left(u(y) - u(x) \right),
	\]
	we observe that 
	\begin{equation}\label{eq:B.2}
		\begin{aligned}
			     & \left( - \Delta \right)^s \left( \varphi(u) \right) (x) - \varphi' \left( u(x) \right) \left( - \Delta \right)^s u(x) 
			\\
			= \  & 
			C_{N,s} \lim_{\e \to 0}
			\int_{|y-x| \geq \e}
			\frac{d y}{|x-y|^{N+2s}} 
			\\
			     & \qquad \times 
			\left[ \int_{0}^{1} \varphi' \left( u(x) + \theta (u(y) - u(x) ) \right) d \theta
				\left( u(x) - u(y) \right) - \varphi' \left( u(x) \right) \left( u(x) - u(y) \right)
				\right]
			\\
			= \  & 
			C_{N,s} \lim_{\e \to 0}
			\int_{|y-x| \geq \e} \frac{d y}{|x-y|^{N+2s}} 
			\int_{0}^{1} \left( u(x) - u(y) \right) 
			\\
			& \hspace{5.5cm} \times 
			\left\{ 
			\varphi' \left( u(x) + \theta \left( u(y) - u(x) \right) \right) - \varphi'(u(x)) 
			\right\} d \theta .
		\end{aligned}
	\end{equation}
	Since $\varphi'(t) $ is nonincreasing in $t$, we see that 
	for each $x,y \in \RN$ and $\theta \in (0,1) $, 
	\[
		\left( u(x) - u(y) \right) 
		\left\{ 
		\varphi' \left( u(x) + \theta \left( u(y) - u(x) \right) \right) - \varphi'(u(x)) 
		\right\}  \geq 0.
	\]
	Using this inequality and \eqref{eq:B.2}, we infer that \eqref{eq:B.1} holds. 
\end{proof}

\begin{lemma}\label{Lemma:B.2}
	\begin{enumerate}
	\item[{\rm (i)}]
		Let $u \in \Hsloc (\RN) \cap L^1(\RN , (1+|x|)^{-N-2s} d x  )$
		satisfy $u(x) \to 0$ as $|x| \to \infty$. 
		Then, $U(X) := (P_s (\cdot, t) \ast u) (x) \to 0 $ as $|X| \to \infty$. 
	\item[{\rm (ii)}] If 
	$u \in \Hsloc (\RN) \cap C (\RN \setminus \set{0} ) \cap L^1(\RN , (1+|x|)^{-N-2s} \rd x  )$ 
	satisfies $u(x) \to 0 $ as $|x| \to \infty$ and $u(x) \to \infty$ as $|x| \to 0$, then 
	$U(X) \to \infty$ as $|X| \to 0$. 
	\end{enumerate}

\end{lemma}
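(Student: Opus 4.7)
The plan is to exploit two standard properties of the Poisson kernel $P_s(x,t) = p_{N,s} t^{2s}(|x|^2+t^2)^{-(N+2s)/2}$: the mass-one identity $\int_{\RN} P_s(y,t) \, dy = 1$ for every $t>0$, and the pointwise bound $P_s(x-y,t) \le p_{N,s} t^{2s}(|x-y|^2+t^2)^{-(N+2s)/2}$ which yields strong decay far from $y=x$. Under the weighted $L^1$ assumption on $u$, the convolution $U(X)=(P_s(\cdot,t)\ast u)(x)$ is well-defined pointwise for every $(x,t)\in\HS$.

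For part (i), fix $\e>0$ and choose $R>0$ so large that $|u(y)|<\e$ for $|y|\ge R$. Split $U(X)=I_1^R(X)+I_2^R(X)$ according to whether $|y|\le R$ or $|y|>R$. The mass-one property gives $|I_2^R(X)|\le \e$ immediately. For $I_1^R$, I would verify that whenever $|X|=\sqrt{|x|^2+t^2}\ge 4R$ and $|y|\le R$ one has $|x-y|^2+t^2\ge |X|^2/4$, by handling separately the cases $|x|\ge 2R$ (so $|x-y|\ge|x|/2$) and $|x|<2R$ (so necessarily $t\ge|X|/2$). This yields $P_s(x-y,t)\le C|X|^{-N-2s}t^{2s}\le C|X|^{-N}$, hence $|I_1^R(X)|\le C|X|^{-N}\|u\|_{L^1(B_R)}$, where the weighted $L^1$ hypothesis forces $u\in L^1(B_R)$. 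Letting $|X|\to\infty$ gives $\limsup_{|X|\to\infty}|U(X)|\le\e$, proving (i).

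For part (ii), given $M>0$, the continuity of $u$ on $\RN\setminus\set{0}$ together with $u\to\infty$ at the origin allows me to choose $r>0$ such that $u(y)\ge 2M$ for $0<|y|\le r$. Split $U(X)=J_1(X)+J_2(X)$ analogously. For $|x|\le r/2$, the change of variable $z=x-y$ gives the inclusion $\set{|x-z|\le r}\supset\set{|z|\le r/2}$, so
\[
J_1(X) \ge 2M\int_{|z|\le r/2}P_s(z,t)\,dz,
\]
and the rescaling $z=tw$ turns the right side into $2M\int_{|w|\le r/(2t)}P_s(w,1)\,dw$, which tends to $2M$ as $t\to 0$ by the mass-one property and the dominated convergence theorem. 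For $J_2$, on $\set{|y|>r,\ |x|\le r/2}$ we have $|x-y|\ge|y|/2$, hence $P_s(x-y,t)\le Ct^{2s}|y|^{-N-2s}$, giving
\[
|J_2(X)|\le C t^{2s}\int_{|y|>r} |u(y)|\,|y|^{-N-2s}\,dy\le C_r\, t^{2s}\int_{ \RN } |u(y)|(1+|y|)^{-N-2s}\,dy\longrightarrow 0
\]
as $t\to 0$. Combining, $U(X)\ge M$ for all $X$ close enough to the origin, which completes (ii).

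The main technical point is the joint limit in (ii): the mass concentration $\int_{|y|\le r}P_s(x-y,t)\,dy\to 1$ must hold uniformly as $(x,t)\to(0,0)$, not merely as $t\to 0$ with $x$ fixed. Restricting to $|x|\le r/2$ ensures the fixed ball $\set{|z|\le r/2}$ remains inside the shifted ball $\set{|x-z|\le r}$, after which only the one-parameter limit in $t$ survives and is handled by the explicit rescaling. Everything else reduces to routine splitting-and-estimating with the weighted $L^1$ control.
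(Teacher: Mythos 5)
Your proof is correct. Part (i) is essentially the paper's argument: the same split of $u$ into its restriction to $B_R$ and its complement, the mass-one identity for the far piece, and kernel decay for the compactly supported piece; you merely package the decay estimate as a single bound $|I_1^R(X)|\le C|X|^{-N}\|u\|_{L^1(B_R)}$ valid for $|X|\ge 4R$, where the paper instead treats the regimes $t\ge t_\e$ and $\{0<t\le t_\e,\ |x|\ge r_\e\}$ separately; the two are interchangeable. Part (ii) is where you genuinely diverge. The paper truncates, setting $u_L:=\min\{u,L\}$, observes $U\ge U_L=P_s(\cdot,t)\ast u_L$ by positivity of the kernel, and then invokes continuity of $U_L$ on $\ov{\HS}$ together with $U_L(x,0)=u_L(x)\to L$ to conclude $U\ge L/2$ near the origin. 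You instead prove the mass-concentration statement $\int_{|z|\le r/2}P_s(z,t)\,dz\to 1$ by explicit rescaling and control the tail $J_2$ via the weighted $L^1$ norm, correctly noting that restricting to $|x|\le r/2$ keeps the fixed ball $\{|z|\le r/2\}$ inside the shifted domain so that only the one-parameter limit in $t$ remains. Your route is more self-contained, since it does not rely on the (true but not entirely free) fact that the Poisson extension of a bounded continuous function is continuous up to the boundary, at the cost of a slightly longer computation; the paper's route is shorter modulo that regularity input.
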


\begin{proof}
(i)
	From the assumption, for any given $\e>0$, there exists an $R_\e>0$ so that $|u(x)| \leq \e$ for $|x| \geq R_\e$. 
	Write 
	\[
		u(x) = \chi_{B_{R_\e}} (x) u(x) + (1- \chi_{B_{R_\e}} (x)  ) u(x) =: u_{\e,1}(x) + u_{\e,2}(x), 
		\quad U_{\e,i} (X) := \left( P_s(\cdot, t) \ast u_{\e,i} \right) (x).
	\]
	By $\| u_{\e,2} \|_{L^\infty(\RN)} \leq \e$, we see that 
	\[
		\sup_{ t > 0} \| U_{\e,2} (\cdot, t) \|_{L^\infty(\RN)} \leq \sup_{ t > 0} \| P_s (\cdot, t) \|_{L^1(\RN)} 
		\| u_{\e,2} \|_{L^\infty(\RN)} \leq \e. 
	\]

	On the other hand, we see from the compactness of $\supp u_{\e,1}$  that 
	$u_{\e,1} \in L^1(\RN)$ and 
	\begin{align}
			\left| U_{\e,1} (X) \right| 
			&\leq \int_{ |y| \leq R_\e } \frac{p_{N,s} t^{2s} }{(|x-y|^2 + t^2)^{ \frac{N+2s}{2} }} \left| u_{\e,1} (y) \right| dy
			\label{align:B.3}
			\\
			&\leq C \int_{ |y| \leq R_\e } t^{2s} t^{ - (N+2s) } \left| u_{\e,1} (y) \right| dy
			= C \| u_{\e,1} \|_{L^1(\RN)} t^{-N}.
			\nonumber
	\end{align}
	Hence, there exists a $t_\e>0$ such that 
	\[
		\sup_{ t \geq t_\e, x \in \RN} |U_{\e,1} (X) | \leq \e.  
	\]

	Finally, notice that $|x-y| \geq |x|/2$ for all $|x| \geq 2 R_\e$ and $|y| \leq R_\e$. 
Thus, in \eqref{align:B.3}, if $|x| \geq 2 R_\e$, then 
	\[
		\begin{aligned}
			\left| U_{\e,1} (X) \right| 
			& \leq 
			\int_{ |y| \leq R_\e } p_{N,s} t^{2s} 2^{N+2s} |x|^{-N-2s} 
			\left| u_{\e,1} (y) \right| dy
			= C_{N,s} t^{2s} \| u_{\e,1} \|_{L^1(\RN)} |x|^{-N-2s}.
		\end{aligned}
	\]
Therefore, we may find an $r_\e>0$ such that 
	\[
		\sup_{ 0 < t \leq t_\e, |x| \geq r_\e  } |U_{\e,1} (X)| \leq \e.
	\]
	Now we conclude that $|X| \geq t_\e + r_\e$ implies $|U(X)| \leq 2 \e$.

	(ii) For each $L>0$, set 
	\[
		\begin{aligned}
			u_L(x) 
			&:= 
			\min \{ u(x) , L \} \in \Hsloc (\RN) \cap C(\RN) \cap L^\infty(\RN) \cap L^1(\RN, (1+|x|)^{-N-2s} d x  ),
			\\
			U_L(X) 
			&:= 
			\left( P_s \left( \cdot , t \right) \ast u_L \right) (x) \in C( \ov{\HS} ).
		\end{aligned}
	\]
Since $u_L(x) \leq u(x)$ in $\RN$, we have $U_L(X) \leq U(X)$ for each $L>0$. 
From the assumption $u(x) \to \infty$ as $|x| \to 0$ and $U_L(x,0) = u_L(x)$, 
there exists an $r_L >0$ such that 
	\[
		|y| + t \leq r_L \quad \Rightarrow \quad \frac{L}{2} \leq U_L(y,t) \leq U(y,t).
	\]
Since $L>0$ is arbitrary, $U(X) \to \infty$ as $|X| \to 0$. 
\end{proof}

%
%\noindent
%{\bf Acknowledgment.}
%XXXXX
%
%%%%%%%%%%%%%%%%%%%%%%%%%%%%%%%%%%%%%%
%%%%%%%%%%%%    references    %%%%%%%%%%%%%%%%%%
%%%%%%%%%%%%%%%%%%%%%%%%%%%%%%%%%%%%%%

\end{document}